\documentclass[10pt,a4paper]{amsart}
\usepackage{a4wide}
\usepackage{geometry} 
\newgeometry{tmargin=2.5cm, bmargin=3cm, lmargin=2.5cm, rmargin=2.5cm}
\usepackage{amssymb}
\usepackage{enumerate}
\usepackage{esint}
\usepackage[colorlinks=true, urlcolor=red, linkcolor=red, citecolor=blue]{hyperref}

\theoremstyle{plain}
\newtheorem{theorem}{Theorem}[section]
\newtheorem{lemma}[theorem]{Lemma}
\newtheorem{corollary}[theorem]{Corollary}
\newtheorem{proposition}[theorem]{Proposition}

\theoremstyle{definition}
\newtheorem{definition}[theorem]{Definition}
\newtheorem{remark}[theorem]{Remark}

\numberwithin{equation}{section}

\DeclareMathOperator{\supp}{supp}
\DeclareMathOperator*{\osc}{osc}
\DeclareMathOperator*{\essosc}{ess\,osc}
\DeclareMathOperator*{\esssup}{ess\,sup}
\DeclareMathOperator*{\essinf}{ess\,inf}
\DeclareMathOperator*{\essliminf}{ess\,liminf}
\DeclareMathOperator{\DIV}{div}
\DeclareMathOperator{\dist}{dist}
\newcommand{\R}{{\mathbb{R}}}
\newcommand{\rn}{{\mathbb{R}^n}}
\newcommand{\cA}{{\mathcal{A}}}
\newcommand{\cF}{\mathcal{F}}
\newcommand{\cS}{\mathcal{S}}
\newcommand{\cW}{\mathcal{W}}
\newcommand{\dx}{\,{\mathrm{d}}x}
\newcommand{\dy}{\,{\mathrm{d}}y}
\newcommand{\dt}{\,\mathrm{d}t}
\newcommand{\dmu}{\,\mathrm{d}\mu}
\newcommand{\dtau}{\,\mathrm{d}\tau}

\makeatletter
\@namedef{subjclassname@2020}{\textup{2020} Mathematics Subject Classification}
\makeatother

\title[Orlicz Potential Theory]{Orlicz Potential Theory:\\ Balayage, Riesz Measures, and Very Weak Solutions}

\author{Iwona Chlebicka}
\address{Institute of Applied Mathematics and Mechanics, University of Warsaw,  Warsaw 2-097, Poland}
\email{i.chlebicka@mimuw.edu.pl}

\author{Minhyun Kim}
\address{Department of Mathematics \& Research Institute for Natural Sciences, Hanyang University, 04763 Seoul, Republic of Korea}
\email{minhyun@hanyang.ac.kr}

\author{Ying Li}
\address{School of Mathematics, Harbin Institute of Technology, Harbin 150001, China}
\email{lymath@hit.edu.cn}

\author{Chao Zhang}
\address{School of Mathematics and Institute for Advanced Study in Mathematics, Harbin Institute of Technology, Harbin 150001, China}
\email{czhangmath@hit.edu.cn}

\subjclass[2020]{35J62, 35J70, 31C15, 31C05}
\keywords{Balayage, capacity, measure data problem, nonlinear potential theory, obstacle problem, Orlicz growth, polar set, renormalized solution, superharmonic function}

\begin{document}

\begin{abstract}
We develop a nonlinear potential theory for elliptic equations with Orlicz growth under general monotonicity and growth conditions, without any homogeneity or scaling assumptions.

The lack of scaling invariance prevents the use of many classical tools from nonlinear potential theory. To overcome this difficulty, we establish a new framework that includes global H\"older regularity for obstacle problems, a balayage theory, the construction and analysis of Riesz measures associated with superharmonic functions, the identification of capacitary potentials, capacitary estimates for polar sets, and the quasicontinuity of superharmonic functions.

As an application of this theory, we prove that the classes of superharmonic functions and renormalized solutions to elliptic measure data problems coincide. This extends the classical equivalence theory from the homogeneous $p$-growth setting to general Orlicz growth and is new even for power-growth operators without homogeneity assumptions.
\end{abstract}

\maketitle

\section{Introduction}

\subsection{Objectives}
\label{ssec:obj}
Nonlinear potential theory studies elliptic equations of the form
\begin{equation}
    \label{eq:intro}
-\DIV \cA(x,\nabla u)=\mu \quad\text{in }\Omega\subset\R^n,
\end{equation}
with possibly rough data $\mu$ through the fine properties of their solutions, with central roles played by superharmonic functions, capacities, balayage, and Riesz measures. In the classical setting of $p$-Laplace type equations, nonlinear potential theory has become a fundamental tool for the analysis of singularities and measure data problems; see, for instance, \cite{HKM06,Kil11super,KM92,KM94,KM14}. At the same time, equations with Orlicz growth have attracted considerable attention as a natural extension of the standard $p$-growth framework; see, e.g.,~\cite{Bar15,Chl20,Chl23,CGZG19,CKW,CYZG,CM17,Lie91}. While substantial progress has been made in the variational and regularity theory of such equations, the corresponding potential theory remains far less developed. In particular, the classical connection between superharmonic functions and renormalized solutions, established in the homogeneous $p$-growth setting in \cite{Kil11super} under the assumption
\begin{equation}\label{eq-homogeneity}
\cA(x, \lambda \xi) = \lambda |\lambda|^{p-2} \cA(x, \xi),
\end{equation}
has no counterpart in the Orlicz framework. Many classical tools of nonlinear potential theory rely on scaling, whereas in the Orlicz setting scalar multiples of superharmonic functions (or solutions) generally fail to remain superharmonic (or solutions).

The purpose of this paper is to develop a scaling-free potential theory, where instead of a homogeneous $p$-growth operator from~\eqref{eq-homogeneity}, we consider the one that satisfies $\cA(x,\xi)\cdot\xi\approx G(\xi)$ for a doubling function~$G$, while also providing new insights into $p$-growth problems (see Remark~\ref{rem:p} for more details). Beyond their intrinsic interest, these potential-theoretic results play a central role in proving the equivalence between renormalized solutions to measure data problems and the corresponding generalized superharmonic functions. Main results are presented in Section~\ref{ssec:main-results}.

\subsection{Historical remarks}

The study of elliptic measure data problems has a long history, particularly in the standard $p$-growth setting. When the datum is merely a Radon measure, the classical weak formulation is generally inadequate, while distributional solutions may be pathological~\cite{Serrin-pat}. Foundational approaches to address this issue were developed in \cite{BBGGPV95,BG89,BG92,Dal96}. A different perspective comes from nonlinear potential theory, where superharmonic functions, defined with the use of comparison principle, associated with $p$-Laplace type equations provide a natural framework for singular right-hand sides. The connection between the two approaches was established in \cite{KM92,KM94} through sharp potential estimates for superharmonic solutions with nonnegative measure data. This viewpoint is compatible with approximation-based notions such as SOLA whenever both formulations are available; see \cite{BG89,BG92,KM92,KLP13}. We refer to \cite{HKM06} for the classical theory and to \cite{KM14} for an overview of further potential estimates.

The variational approach led to the notion of renormalized solutions. Originating in kinetic theory \cite{DL89}, this notion was later adapted to nonlinear elliptic and parabolic equations with rough data \cite{BM97,BGDM93,DMOP99}. Its key feature is the use of truncations of the solution, which makes it possible to formulate the equation even when the solution does not belong to the natural energy space. In the elliptic measure data setting, a systematic theory was developed in \cite{DMOP99}, based on the capacitary decomposition
$\mu=\mu_0+\mu_s$, where $\mu_0$ is diffuse and $\mu_s$ is concentrated on a set of zero capacity. Further results can be found in \cite{BMMP02,Blanchard13MA,Petitta08AMPA,Petitta11jee}.

The relation between the potential-theoretic and variational viewpoints was exploited in \cite{Kil11super} in the homogeneous $p$-growth setting. Superharmonic functions and renormalized solutions were shown to be locally equivalent: superharmonic functions satisfy the renormalized formulation with respect to their Riesz measures, while renormalized solutions admit superharmonic representatives.  

Let us now turn to equations with Orlicz growth. The study of elliptic equations with non-power growth conditions goes back to \cite{gos,Talenti} and was systematically developed in \cite{Lie91}. For measure data problems with Orlicz or more general growth, several notions of generalized solutions have been investigated. Existence, uniqueness, and regularity results for solutions obtained via approximation were established in \cite{Chl20,CGZG19,CM17}. Renormalized solutions constitute another important part of the theory. For $L^1$ data, they were studied in generalized Orlicz spaces in \cite{Gwiazda18JDE,Gwiazda12JDE}. For general bounded measures, a systematic theory was developed in \cite{Chl23}, where renormalized and approximable solutions were shown to exist and coincide. Thus, the approximation-based theory, usually phrased in terms of SOLA in the standard $p$-growth setting, has a natural counterpart in the Orlicz framework through approximable solutions.

The potential-theoretic side of the Orlicz theory has also developed in parallel. Generalized superharmonic functions associated with strongly nonlinear operators were introduced and studied in~\cite{Che22Gen}. Pointwise estimates in terms of nonlinear potentials were obtained in~\cite{Bar15,Benyaiche23pa,CGZG24}; in particular, two-sided Wolff potential estimates for superharmonic functions were established in \cite{CGZG24}. Fine potential-theoretic aspects of generalized Orlicz spaces, such as capacities, Wiener-type criteria, removable sets, and Kellogg-type properties, were studied in~\cite{BHH18,CK21remov,HJ22,LL21}. Related developments have also been made for systems~\cite{CKW,Chlebicka23cvpde,CYZG}, anisotropic problems~\cite{ACCZG19}, and nonlocal or mixed operators~\cite{CSYZ,kim25wolff,kim23}.

Nonlinear potential theory in the Orlicz-growth setting remains far less developed than in the homogeneous case, mainly due to the lack of scaling. Consequently, several fundamental tools, including balayage, Riesz measures, capacitary estimates for polar sets, and quasicontinuity of superharmonic functions, are unavailable in this setting. The aim of this paper is to develop a scaling-free potential theory and apply it to establish the equivalence between renormalized solutions and generalized superharmonic functions for measure data problems with Orlicz growth.

\subsection{Main results}\label{ssec:main-results}

We provide new potential-theoretic toolkit for the study on the measure data problem within the framework of Orlicz growth. Specifically, we consider the equation
\begin{equation}\label{eq:main}
-\DIV \cA(x,\nabla u)=\mu \quad\text{in }\Omega,
\end{equation}
where $\Omega \subset \rn$, $n \geq 2$, is a bounded open set, $\mu$ is a nonnegative Radon measure, and $\cA:\Omega \times \rn \to \rn$ is a Carath\'eodory vector field. Here, the growth of $\cA$ is governed by a function $G:[0, \infty) \to [0, \infty)$ defined by
\begin{equation}\label{eq:G}
G(t)=\int_0^t g(\tau)\dtau,
\end{equation}
where $g: [0, \infty) \to [0, \infty)$ is a strictly increasing continuous function.

Throughout the paper, we always assume that $G$ satisfies
\begin{equation}\label{eq:Gg}
pG(t) \le tg(t) \leq qG(t) \quad\text{for all }t \geq 0,
\end{equation}
with constants $1<p \leq q<\infty$, and that the vector field $\cA$ satisfies the strict monotonicity
\begin{equation}\label{eq-monotonicity}
(\cA(x,\xi)-\cA(x,\zeta))\cdot(\xi-\zeta)>0\quad\text{for every }\xi, \zeta \in \rn \text{ with }\xi \neq \zeta,
\end{equation}
and the coercivity and growth conditions
\begin{equation}\label{ass-op}
\begin{cases}
c_1^\cA G(|\xi|)\leq \cA(x,\xi)\cdot\xi, \\
|\cA(x,\xi)|\leq c_2^\cA g(|\xi|),
\end{cases}
\end{equation}
for some constants $c_1^\cA,c_2^\cA>0$. We emphasize that we do not impose any homogeneity assumption on $\cA$ in the gradient variable. In particular, even in the power case $G(t)=t^p/p$, we do not assume \eqref{eq-homogeneity}.\newline

Understanding the properties of generalised superharmonic functions is fundamental in nonlinear potential theory and the analysis of related quasilinear elliptic equations with measure data. As the natural generalization of classical superharmonic functions, they underpin key tools such as nonlinear potentials, capacities, and comparison principles. These properties are essential for existence, regularity, and pointwise estimates of solutions when standard Sobolev theory breaks down. This motivates the need for broader notions of generalised superharmonic functions, namely $\cA$-superharmonic function, through lower semicontinuity and comparison with $\cA$-harmonic functions (given by Definition~\ref{def-supersolution}).
\begin{definition}\label{def:Ash}
A lower semicontinuous function $u:\Omega\to \mathbb{R}\cup\{\infty\}$ is said to be \emph{$\cA$-superharmonic} in $\Omega$ (denoted $u \in \cS_{\cA}(\Omega)$) if $u\not\equiv\infty$ in each component of $\Omega$, and for every open set $U\Subset \Omega$ and every $h\in C(\overline{U})$ that is $\cA$-harmonic in $U$, the inequality $u\geq h$ on $\partial U$ implies $u\geq h$ in $U$.
\end{definition}

An important role in nonlinear potential theory is played by the exceptional sets on which superharmonic functions may blow up. \begin{definition}\label{def-polar}
A set $E \subset \rn$ is said to be \emph{$\cA$-polar} if there exists an open neighborhood $\mathcal{O}$ of $E$ and an $\cA$-superharmonic function $u$ in $\mathcal{O}$ such that $E \subset \{x \in \mathcal{O}: u(x) = \infty\}$.
\end{definition}
We show that such singular sets are negligible from the capacitary point of view; see Section~\ref{sec2} for the definition of the associated $G$-capacity.
\begin{theorem}\label{th-1}
Every $\cA$-polar set has $G$-capacity zero.
\end{theorem}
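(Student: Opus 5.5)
Since capacity is countably subadditive and the question is local, I will first reduce to a single ball. Let $E\subset\{u=\infty\}$ with $u\in\cS_\cA(\mathcal O)$. Cover $\mathcal O$ by countably many balls $B$ with $2B\Subset\mathcal O$; it then suffices to prove $\mathrm{cap}_G(\{u=\infty\}\cap \overline B)=0$ for each such $B$. Since $u$ is lower semicontinuous, it is bounded below on $\overline{2B}$. Adding a constant keeps $u$ $\cA$-superharmonic, because $\cA$ depends only on $\nabla u$. So I may assume $u\ge 0$ (indeed $u\ge1$) in $2B$. Set $K_k=\{x\in\overline B:u(x)>k\}$. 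Then $\{u=\infty\}\cap\overline B\subset K_k$ for every $k$, and $K_k$ is relatively open in $\overline B$. It is therefore enough to show that $\mathrm{cap}_G(K_k,2B)\to0$ as $k\to\infty$, using outer regularity of the capacity or working directly with the open sets $\{u>k\}\cap B'$ for a slightly larger ball $B'$.

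The key tool is truncation. Each $u_k=\min\{u,k\}$ is a bounded $\cA$-superharmonic function, hence (by the results of Section~\ref{sec2}) it lies in $W^{1,G}_{\rm loc}$ and is an $\cA$-supersolution. Because there is no scaling, I cannot use $u/k$ as the test function as in the $p$-homogeneous case. Instead I will use $v_k=\min\{u,k\}/k\cdot\eta$ with a cutoff $\eta\in C_c^\infty(2B)$, $\eta=1$ on $B'$. This function equals $1$ on $\{u>k\}\cap B'$, so it is admissible for the capacity of that set, and I need to bound $\int_{2B} G(|\nabla v_k|)\dx$. The modular bound I aim for is
\[
\int_{2B\cap\{u<k\}} G\Big(\frac{|\nabla u|}{k}\Big)\eta^{q}\dx \le C\,k^{-(p-1)}\,(\text{something bounded}),
\]
together with the easy term $\int G(k^{-1}u_k|\nabla\eta|)\le \int_{2B} G(|\nabla\eta|)\cdot(\ldots)$. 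Since $u_k/k\le1$, this second term is not small by itself. To handle it I will use instead $v_k=(\min\{u,k\}-k/2)_+/(k/2)\cdot\eta$, which vanishes on $\{u\le k/2\}$. That set has measure tending to the full measure, since $u<\infty$ a.e.: $\cA$-superharmonic functions are locally integrable, being in $L^{s}_{\rm loc}$ by the weak Harnack and integrability results assumed from earlier sections. The cutoff term is then at most $C|\{u>k/2\}\cap 2B|\to0$.

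For the gradient term I will test the supersolution inequality for $u_k$ with $\phi=(k-u_k)\eta^q\ge0$. This gives the Caccioppoli-type estimate
\[
\int_{\{u<k\}}\cA(x,\nabla u)\cdot\nabla u\,\eta^q\le q\int (k-u_k)\eta^{q-1}|\cA(x,\nabla u)||\nabla\eta|\dx .
\]
Using \eqref{ass-op}, Young's inequality for $G$ and its conjugate, and $g(t)t\approx G(t)$, I obtain $\int_{\{u<k\}}G(|\nabla u|)\eta^q\le C\int G(k|\nabla\eta|)\le C k^q$. This is too crude, because I need $G(|\nabla u|/k)$ to be small. The main obstacle is exactly this loss of homogeneity. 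I would try first the logarithmic/level-set test $\phi=\eta^q\big(1-(1+u_k)^{1-p}\big)$-type choices adapted to $G$, or better, a dyadic layer decomposition. On each layer $\{2^j\le u<2^{j+1}\}$, test with $\phi=(\min\{u,2^{j+1}\}-2^j)_+\eta^q$. By the computation above this gives $\int_{\text{layer}}G(|\nabla u|)\eta^q\lesssim \int_{\{u>2^j\}}G(2^{j}|\nabla\eta|)$, and then $G(|\nabla u|/2^j)\le 2^{-jp}G(|\nabla u|)$ by \eqref{eq:Gg}, so the layer contribution is $\lesssim 2^{j(q-p)}\cdot|\{u>2^j\}\cap 2B|\cdot C_\eta$. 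Summing over the layers between $k/2$ and $k$ (a single layer if $k$ is dyadic) and combining with $|\{u>k/2\}|\to0$ yields $\int G(|\nabla v_k|)\to0$ provided $k^{q-p}|\{u>k/2\}\cap2B|\to0$. That holds because $u\in L^{s}_{\rm loc}$ for some $s>q-p$. If only a smaller $s$ is available, I would replace the measure bound by the sharper weak-type estimate $|\{u>k\}\cap 2B|\lesssim k^{-s}$ for all $s<\frac{n(p-1)}{n-p}$-type exponents coming from the Wolff-potential bounds of \cite{CGZG24}. Finally, since $G$ is doubling, modular convergence to $0$ implies capacity convergence to $0$, which proves the theorem.
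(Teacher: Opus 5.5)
\textbf{The gap is in your dyadic layer estimate.} The test function $(\min\{u,2^{j+1}\}-2^j)_+\eta^q$ is nondecreasing in $u$. For a supersolution, the inequality $\int\cA(x,\nabla u_{2^{j+1}})\cdot\nabla\phi\dx\ge 0$ therefore gives only a \emph{lower} bound for $\int_{\{2^j<u<2^{j+1}\}}\cA(x,\nabla u)\cdot\nabla u\,\eta^q\dx$. Your earlier test $(k-u_k)\eta^q$ is nonincreasing and correctly signed, so "the computation above" does not carry over. Put differently, you have discarded the term $\int(\min\{u,2^{j+1}\}-2^j)_+\eta^q\,\mathrm{d}\mu[u_{2^{j+1}}]\ge0$, which cannot be dropped.

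Caccioppoli bounds with right-hand side supported on superlevel sets are subsolution estimates. Compare Lemma~\ref{lem:re}: there the increasing test function is legitimate only because $u_-$ is a subsolution. Your alternative $\eta^q(1-(1+u_k)^{1-p})$ is also increasing in $u$ and has the same sign problem.

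The claimed bound is in fact false. Take the $p$-Laplacian and $u(x)=|x|^{(p-n)/(p-1)}$, $1<p<n$, with $\eta=1$ near the origin. The right-hand side $\int_{\{u>2^j\}}G(2^j|\nabla\eta|)\dx$ vanishes for all large $j$. But by the coarea formula and constancy of the flux, the layer energy is a fixed positive multiple of $2^j$ (the Riesz mass at $0$ times $2^j$).

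A correctly signed layer test, such as $(2^{j+1}-\max\{\min\{u,2^{j+1}\},2^j\})\eta^q$, produces the right-hand side $q2^j\int_{\{u<2^{j+1}\}}\eta^{q-1}|\cA(x,\nabla u)||\nabla\eta|\dx$ over the whole sublevel set. Estimating this with your crude bound $\int_{\{u<k\}}G(|\nabla u|)\eta^q\dx\lesssim k^q$ only gives something of order $2^{jq}$ again, which is not small after dividing by $2^{jp}$.

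Separately, $u\in L^s_{\rm loc}$ with $s>q-p$ is not available in general. Lemma~\ref{lem-super-int} only gives $g(|u|)\in L^m_{\rm loc}$ for $m<n/(n-1)$, roughly $s<n(p-1)/(n-1)$, and this can lie far below $q-p$.

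\textbf{What is missing, and how to repair it.} Your reduction to a ball and the choice $v_k=\frac{(\min\{u,k\}-k/2)_+}{k/2}\eta$ are sound. What you lack is a layer-energy bound growing strictly slower than $k^p$. The natural one is the linear bound \eqref{eq-grad-k}, $\int_{U\cap\{u<k\}}G(|\nabla u|)\dx\le Ck$.

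The paper does not obtain this from cutoffs. It uses the balayage $\hat R^u_U(V)$, whose truncations lie in $W^{1,G}_0(V)$ by Lemma~\ref{lem-R-boundary}, together with the monotonicity of the layer energies $a_i$ in the proof of Lemma~\ref{lem-super-int}. Alternatively, once $g(|\nabla u|)\in L^1_{\rm loc}$ is known from that lemma, the correctly signed layer test above gives a linear bound directly.

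With \eqref{eq-grad-k} and $\eta\in C^\infty_c(U)$, your argument closes. You get $\int G(|\nabla v_k|)\dx\le Ck^{1-p}+C|\{u>k/2\}\cap U|$ and $\int G(v_k)\dx\le G(1)|\{u>k/2\}\cap U|$. Both tend to $0$ because $u<\infty$ a.e., and no condition like $s>q-p$ is needed.

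Repaired this way, your proof is shorter than the paper's. The paper goes through the dual capacity (Lemma~\ref{lem-cap-dual}), the identification of capacitary potentials (Proposition~\ref{prop57}), the mass comparison of $\hat R^1_K$ and $\hat R^\lambda_K$ (Proposition~\ref{prop-64}), and Lemma~\ref{lem-cap-pot}. Both routes, however, rest on the same global input, the balayage with zero boundary values, and that is exactly what your proposal is missing.
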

The proof relies on the development of several potential-theoretic tools in Sections~\ref{sec4-1}--\ref{sec4-6}. Specifically, we first establish global H\"older regularity for obstacle problems under the structural assumptions \eqref{eq:G}--\eqref{ass-op}; see Theorem~\ref{thm-bdry-reg} in Section~\ref{sec4-1}. This regularity serves as a key ingredient in the construction and approximation of balayages. We then develop a scale-free balayage theory and prove its connection with obstacle problems (Theorem~\ref{thm-obs-open}); see Sections~\ref{sec4-3} and \ref{sec4-4}. Using balayage, we establish the integrability needed to define Riesz measures associated with arbitrary $\cA$-superharmonic functions; see Section~\ref{sec4-5}. We further identify the $\cA$-potential of a compact set as the capacitary potential and derive quantitative estimates for the Riesz measures of balayages corresponding to different constant obstacles; see Section~\ref{sec4-6}. These estimates imply that every $\cA$-polar set has zero $G$-capacity; see Section~\ref{sec4-7}.

Theorem~\ref{th-1} extends a fundamental principle of nonlinear potential theory to the non-homogeneous Orlicz setting and provides the basis for the fine analysis of $\cA$-superharmonic functions. In particular, it leads to the following regularity property proven also in Section~\ref{sec4-7}. See \eqref{G-cap} in Section~\ref{sec2} for a relevant notion of capacity and Definition~\ref{def-quasicont} introducing quasicontinuity.

\begin{theorem}\label{thm-quasicont}
If $u \in \cS_{\cA}(\Omega)$, then $u$ is $G$-quasicontinuous in $\Omega$.
\end{theorem}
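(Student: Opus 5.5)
We follow the classical route of Kilpeläinen (and Heinonen--Kilpeläinen--Martio): truncate $u$ to get Sobolev functions, use the quasicontinuity of Sobolev functions, and control the set where the truncations fail to converge uniformly by Theorem~\ref{th-1} together with the capacitary estimates behind it.

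Quasicontinuity is local and capacity is countably subadditive. So it suffices to fix $D\Subset\Omega$ and $\varepsilon>0$ and find an open set $V$ with $\mathrm{cap}_G(V)<\varepsilon$ such that $u|_{D\setminus V}$ is continuous and real-valued. For $k\in\mathbb N$ set $u_k=\min\{u,k\}$. Each $u_k$ is a bounded $\cA$-superharmonic function, hence a supersolution in $W^{1,G}_{loc}(\Omega)$ (Section~\ref{sec2}). Being lower semicontinuous and equal pointwise to its $\essliminf$ representative, $u_k$ is the canonical representative. Sobolev functions in $W^{1,G}$ are $G$-quasicontinuous, and a supersolution's $\essliminf$ representative agrees with its quasicontinuous representative up to a set of capacity zero. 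Therefore, for each $k$ there is an open $V_k$ with $\mathrm{cap}_G(V_k)<\varepsilon 2^{-k-1}$ such that $u_k|_{D\setminus V_k}$ is continuous.

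Next we handle the set where $u$ is large. Let $E_k=\{x\in D: u(x)>k\}$, which is open by lower semicontinuity. On $D\setminus E_k$ we have $u=u_k$. The key estimate needed is
\[
\mathrm{cap}_G(E_k\cap D)\to 0\quad\text{as }k\to\infty .
\]
We would prove this with the quantitative bounds of Section~\ref{sec4-6} on the Riesz measures of balayages for different constant obstacles. The balayage $\hat R_{u_k}$ of $\min\{u,k\}$ on a neighborhood of $\overline{E_k\cap D}$ in a ball $B\Supset D$ is, after rescaling the level via the Orlicz estimates, comparable to the capacitary potential of $E_k\cap D$. Testing its equation with $\min\{\hat R,k\}/k$ should then give
\[
\mathrm{cap}_G(E_k\cap D)\lesssim \Phi(k)\,\mu_u(\overline B),
\]
with $\Phi(k)\to0$ controlled through the $p,q$ bounds \eqref{eq:Gg}, for instance $G(k/r)$ against $kg(k/r)$ normalized by $k$. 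This step is essentially the estimate used to prove Theorem~\ref{th-1}: the set $\{u=\infty\}\cap D=\bigcap_k E_k$ has capacity zero by that theorem, but monotone continuity of capacity from above fails for general decreasing sets. So we really need the quantitative decay rather than just Theorem~\ref{th-1}, and this is the main obstacle, since scaling is unavailable.

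To conclude, choose $k_0$ with $\mathrm{cap}_G(E_{k_0}\cap D)<\varepsilon/2$ and set $V=(E_{k_0}\cap D)\cup\bigcup_k V_k$. Then $\mathrm{cap}_G(V)<\varepsilon$, and on $D\setminus V$ we have $u=u_{k_0}$, which is continuous there. If the quantitative decay proves too hard, we would instead use outer regularity of capacity at the polar set $\{u=\infty\}$. By Theorem~\ref{th-1} there is an open $W\supset\{u=\infty\}\cap\overline D$ of small capacity. On the compact set $\overline D\setminus W$, $u$ is finite and lower semicontinuous, so $u\le k$ there for some $k$ (lower semicontinuity only gives a lower bound, so this must be argued through the structure of $u$ rather than lower semicontinuity alone). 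This reduction would again require the Section~\ref{sec4-6} estimates, so we regard the capacitary decay of superlevel sets as the crux.
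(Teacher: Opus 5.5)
Your main line is the paper's: show each truncation $u_k=\min\{u,k\}$ is $G$-quasicontinuous, show $C_G(\{u>k\}\cap V)\to 0$ from the Section~\ref{sec4-6} estimates, and glue on the complement of the superlevel set, where $u=u_N$. However, two steps are asserted rather than proved.

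The first is the truncation step. The claim that the $\essliminf$ representative of a bounded supersolution agrees $G$-q.e.\ with its quasicontinuous representative is exactly what has to be shown. The paper proves it as follows. It approximates $u_k$ from below by continuous $\cA$-superharmonic $v_i\nearrow u_k$ \cite[Proposition~4.5]{Che22Gen} and bounds them in $W^{1,G}(V)$ by Lemma~\ref{lem-uniform-bound}. It then passes to Mazur convex combinations $w_j$ converging strongly, and applies \cite[Lemma~16]{BHH18} to get uniform convergence off open sets of small $G$-capacity. The limit is identified with $u_k$ itself, not merely with some quasicontinuous representative, by a monotone squeeze. If $w_j$ is built from $v_j,v_{j+1},\dots$, then $v_j\le w_j\le u_k$, so $w_j\to u_k$ everywhere.

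The second is the decay step, which you need not re-derive. Lemma~\ref{lem-cap-pot} with Lemma~\ref{lem-cap-dual} and \cite[Theorem~27]{BHH18} gives $C_G(\{u>k\}\cap V)\le C\bigl(k^{1-p}+k^{\frac{1+q}{1-q}}\bigr)\mu[\hat R^u_U](\Omega')$. Your own heuristic is in fact sound and needs no scaling of $\cA$. For $v=\hat R^u_U(\Omega')$, the function $T_k(v)/k\in W^{1,G}_0(\Omega')$ equals $1$ on the open set $\{v>k\}\supset\{u>k\}\cap U$. Then Lemma~\ref{lem-barg}(ii) and the second inequality in \eqref{eq-grad-k} give $\mathrm{cap}_G(\{v>k\},\Omega')\le k^{-p}\int_{\{v<k\}}G(|\nabla v|)\dx\le Ck^{1-p}$. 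Dividing a capacity test function by $k$ is harmless; scaling only destroys superharmonicity, which is not needed here.

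Finally, the fallback you discard works and is shorter, because no upper bound on $u$ is needed. Choose open sets $V_k$ with $C_G(V_k)<\varepsilon 2^{-k-1}$ such that $u_k|_{\Omega\setminus V_k}$ is continuous. By Theorem~\ref{th-1} and the definition of $C_G$, choose an open $W\supset\{u=\infty\}$ with $C_G(W)<\varepsilon/2$. On $S=\Omega\setminus(W\cup\bigcup_k V_k)$ the function $u$ is finite. For $x\in S$, pick $k>u(x)$. Continuity of $u_k|_S$ at $x$ gives $u_k<k$, hence $u=u_k$, on $S\cap B_\delta(x)$ for some $\delta>0$, so $u|_S$ is continuous at $x$. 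Thus Theorem~\ref{th-1} plus quasicontinuity of all truncations already proves the statement, without any quantitative decay of $\{u>k\}$.
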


Having established that $\cA$-superharmonic functions are continuous outside sets of arbitrarily small $G$-capacity, we now have the fine regularity framework needed in the Orlicz setting. This property is essential for proving their equivalence with renormalized solutions, as it ensures that integrals against diffuse measures are well defined independently of the chosen representative. We therefore turn to the main application of this framework: the equivalence between $\cA$-superharmonic functions and renormalized solutions.

The formulation of renormalized solutions intrinsically relies on the decomposition of the measure $\mu = \mu_0 + \mu_s$, where $\mu_0$ is absolutely continuous with respect to the $G$-capacity, and $\mu_s$ is the singular part concentrated on a set of zero $G$-capacity. See Section \ref{sec2-4} for the discussion on this decomposition.

In what follows, for $k>0$ and $t\in \mathbb{R}$, the symmetric truncation operator $T_k$ is defined by
\begin{equation}\label{eq-truncation}
T_{k}(t):=\max\{-k,\min\{t, k\}\}.
\end{equation}
It is known in~\cite[Lemma~2.1]{BBGGPV95} that if $u$ is a function such that $T_k(u)\in W^{1,G}_{\rm loc}(\Omega)$ for every $k>0$, then there exists a unique measurable function $Z_u:\Omega\to\rn$ such that
\begin{equation}\label{eq-generalized-grad}
\nabla T_{k}(u)=\chi_{\{|u|<k\}}Z_u\quad \text{a.e.\ in }\Omega\text{ for every } k>0.
\end{equation}
With a slight abuse of notation, we write $\nabla u$ for this generalized gradient $Z_u$, which allows us to define one more notion of a solution to~\eqref{eq:main}.

\begin{definition}\label{def-ren}
Let $\mu$ be a nonnegative Radon measure on $\Omega$. We say that a measurable function $u: \Omega \to \overline{\R}$ is a \emph{renormalized solution} to \eqref{eq:main} if
\begin{equation}\label{eq:renormalizedpro-intro}
T_{k}(u) \in W^{1,G}_{\rm loc}(\Omega) \quad\text{for all $k>0$,} \quad g(|\nabla u|) \in L_{\rm loc}^{1}(\Omega),
\end{equation}
and
\begin{equation}\label{eq-renormalized-intro}
\int_{\Omega}\cA(x,\nabla u)\cdot\nabla (h(u)\varphi)\dx = \int_{\Omega}h(u)\varphi\dmu_0+ h(\infty)\int_{\Omega}\varphi\dmu_s
\end{equation}
for all $\varphi \in C^{\infty}_{c}(\Omega)$ and all $h\in W^{1,\infty}(\mathbb{R})$ such that $h'$ has compact support. Here $h(\infty)=\lim_{t \to \infty}h(t)$.
\end{definition}

The split form of the right-hand side in \eqref{eq-renormalized-intro} reflects a basic obstruction. Sobolev functions are defined only up to sets of zero capacity, while the singular part $\mu_s$ is concentrated precisely on such a set. Hence the integral $\int_\Omega h(u)\varphi\dmu_s$ is not canonically defined in terms of Sobolev representatives. The renormalized formulation avoids this ambiguity by assigning the asymptotic value $h(\infty)$ to the singular part of the measure.

We establish the exact equivalence between $\cA$-superharmonic functions and renormalized solutions to \eqref{eq:main}, extending \cite{Kil11super} to equations with Orlicz growth. The first implication shows that every variational solution admits a precise potential-theoretic representative.

\begin{theorem}\label{thm-ren-sup}
Let $\mu$ be a nonnegative Radon measure in $\Omega$, and let $u$ be a renormalized solution to \eqref{eq:main}. Then there exists an $\cA$-superharmonic function $\tilde{u}$ in $\Omega$ such that $\tilde{u}=u$ almost everywhere in $\Omega$.
\end{theorem}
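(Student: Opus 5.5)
Following the strategy of \cite{Kil11super}, we show that the truncations $\min\{u,k\}$ are (weak) $\cA$-supersolutions, pass to their lower semicontinuous regularizations, and let $k\to\infty$.

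\emph{Step 1: truncations are supersolutions.} Fix $k>0$ and a nonnegative $\varphi\in C_c^\infty(\Omega)$. We first test \eqref{eq-renormalized-intro} with $h=h_{k,\varepsilon}$, where $h_{k,\varepsilon}=1$ on $(-\infty,k]$, $h_{k,\varepsilon}=0$ on $[k+\varepsilon,\infty)$, and $h_{k,\varepsilon}$ is linear in between. Then $h(\infty)=0$, so the singular part drops out, and
\[
\int_\Omega h(u)\,\cA(x,\nabla u)\cdot\nabla\varphi\dx-\frac1\varepsilon\int_{\{k<u<k+\varepsilon\}}\cA(x,\nabla u)\cdot\nabla u\,\varphi\dx=\int_\Omega h(u)\varphi\dmu_0\ge0 .
\]
The second term on the left is nonpositive by \eqref{ass-op}. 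As $\varepsilon\to0$, the first term converges to $\int_{\{u\le k\}}\cA(x,\nabla u)\cdot\nabla\varphi\dx$ by dominated convergence, using $g(|\nabla u|)\in L^1_{\rm loc}$. Since $\nabla u=0$ a.e. on $\{u=k\}$, this limit equals $\int_\Omega\cA(x,\nabla \min\{u,k\})\cdot\nabla\varphi\dx$. Hence
\[
\int_\Omega\cA(x,\nabla u_k)\cdot\nabla\varphi\dx\ge0,\qquad u_k:=\min\{u,k\}.
\]
There is a subtlety at this point: $u$ may be unbounded below, so $u_k$ need not lie in $W^{1,G}_{\rm loc}$. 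We handle this by also cutting from below with $\max\{\cdot,-m\}$. Using the family $h$ equal to $1$ on $[-m,k]$ and with slopes on both sides, the lower transition layer contributes $+\frac1\varepsilon\int_{\{-m-\varepsilon<u<-m\}}\cA\cdot\nabla u\,\varphi\ge0$, which has the wrong sign. We therefore first show that $u$ is locally bounded below. For this we use nonnegativity of $\mu$: a comparison argument shows $u^-$ is controlled, i.e.\ $T_m(u)$ is a supersolution with respect to the lower cut. If that fails, we instead work with $\min\{u,k\}$ and use the fact, established in \cite{Chl23}, that renormalized solutions with nonnegative data satisfy $u\ge$ a local $\cA$-harmonic minorant.

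\emph{Step 2: superharmonic representatives.} By the standard theory (Definition~\ref{def-supersolution} and the results on supersolutions recalled earlier), a locally bounded $W^{1,G}_{\rm loc}$ supersolution $v$ has an $\cA$-superharmonic representative $\hat v(x)=\essliminf_{y\to x}v(y)$. Set $\tilde u_k:=\widehat{u_k}$. Since the $u_k$ increase with $k$, so do the $\tilde u_k$. Define $\tilde u:=\lim_{k\to\infty}\tilde u_k$.

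\emph{Step 3: the limit.} An increasing limit of $\cA$-superharmonic functions is $\cA$-superharmonic unless it is identically $\infty$ on a component; this property follows directly from Definition~\ref{def:Ash} and lower semicontinuity of increasing limits. Since $u$ is finite a.e. (because $T_k(u)\in W^{1,G}_{\rm loc}$ and $g(|\nabla u|)\in L^1_{\rm loc}$ give $|\{|u|>k\}|\to0$ locally), we have $\tilde u<\infty$ a.e., so $\tilde u\in\cS_\cA(\Omega)$. Finally, $\tilde u_k=u_k$ a.e. for each $k$, which gives $\tilde u=u$ a.e.

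\emph{Main obstacle.} We expect the main difficulty to be the lower truncation in Step 1, i.e.\ ensuring the supersolutions are in $W^{1,G}_{\rm loc}$ and locally bounded below so that the essential liminf representative works. We would try to prove directly that $\max\{u_k,-m\}$ is a supersolution via a $\mu$-independent test using $h$ that is constant $1$ below $k$. Since $h\in W^{1,\infty}$ with $h'$ compactly supported, the constant extension below is admissible. This shows that $\min\{u,k\}$ is a supersolution in the sense of generalized gradients, and then we apply the minimum principle of the Orlicz supersolution theory to obtain local lower bounds.
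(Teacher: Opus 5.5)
Your Steps 1--3 match the paper's concluding argument: testing with $h_{k,\varepsilon}$, passing to the $\essliminf$ representative via Lemma~\ref{lem-super-liminf}, and taking the increasing limit via Proposition~\ref{prop-conv-super}. \textbf{The gap is that all three steps presuppose $u$ is locally essentially bounded below, and your proposal never proves this.} Without it, $\min\{u,k\}$ need not lie in $W^{1,G}_{\rm loc}(\Omega)$. The inequality of Step 1, which involves only the generalized gradient, then does not make $\min\{u,k\}$ an $\cA$-supersolution in the sense of Definition~\ref{def-supersolution}, and Lemma~\ref{lem-super-liminf} does not apply. This lower bound is the real content of the theorem.

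None of your suggested remedies works:
\begin{itemize}
\item Cutting from below fails, as you noticed yourself: the maximum of a supersolution and a constant is not a supersolution in general.
\item \cite{Chl23} contains no result giving a local $\cA$-harmonic minorant for a local renormalized solution. That paper treats the Dirichlet problem, where nonnegativity comes from the zero boundary datum. Here $u$ carries no boundary information, and building a minorant on a subdomain by comparison would need a lower bound for $u$ on its boundary, which is exactly what is missing.
\item Invoking the minimum principle or weak Harnack inequality is circular. These apply to genuine $W^{1,G}_{\rm loc}$-supersolutions, or to $\cA$-superharmonic functions, which are locally bounded below by lower semicontinuity. A minimum principle would in any case only bound $u$ below by boundary values you do not control.
\end{itemize}

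What is needed is a local $L^\infty$ bound for $u_-$ extracted directly from the renormalized formulation; the paper proves this before the steps you describe.
\begin{itemize}
\item Test \eqref{eq-renormalized-intro} with $h(-t)h_\varepsilon(t)$, where $h_\varepsilon(t)=\frac1\varepsilon\min\{\varepsilon,t_+\}-1$ is nonpositive and vanishes at $+\infty$. This removes the $\mu_s$-term and gives the $\mu_0$-term a favorable sign.
\item Letting $\varepsilon\to0$ gives $\int_\Omega\widetilde{\cA}(x,\nabla u_-)\cdot\nabla(h(u_-)\varphi)\dx\le 0$ for the reflected operator $\widetilde{\cA}(x,\xi)=-\cA(x,-\xi)$ and all nonnegative $h,\varphi$ (Lemma~\ref{lem:au-neg}).
\item This subsolution inequality can only be tested with bounded functions of $u_-$, because $u_-$ is not known to lie in $W^{1,G}_{\rm loc}$, and truncating $u_-$ from above destroys the subsolution property.
\item Bounded test functions $\varsigma_\varepsilon(u_-)\varphi^q$ built from $\bar g$ give a Caccioppoli estimate (Lemma~\ref{lem:re}).
\item A De Giorgi iteration with levels defined through $\bar g^{-1}$ (Lemma~\ref{lem:bound}, Proposition~\ref{prop:bound2}) then yields $u_-\le M$ a.e.\ on compact sets.
\item $M$ is finite because $g(|u|)\in L^m_{\rm loc}$ for $m<\frac{n}{n-1}$ (Lemma~\ref{lem-int-rem}), which follows from the energy bound $\int_U G(|\nabla T_k(u)|)\dx\le Ck$.
\end{itemize}

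A smaller point: in Step 3, a.e.\ finiteness of $u$ does not follow from $T_k(u)\in W^{1,G}_{\rm loc}$ and $g(|\nabla u|)\in L^1_{\rm loc}$ alone; a function identically $+\infty$ has constant truncations. It has to come from the definition or from the equation, as in Lemma~\ref{lem-int-rem}.
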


For the converse implication, we recall the definition of the Riesz measure associated with an $\cA$-superharmonic function. For any $\cA$-superharmonic function $u$ in $\Omega$, there exists a unique nonnegative Radon measure $\mu[u]$, called the \emph{Riesz measure} of $u$, such that $u$ solves \eqref{eq:main} in the sense of distributions, i.e.,
\begin{equation*}
\int_{\Omega}\cA(x,\nabla u)\cdot \nabla \varphi\dx =\int_{\Omega} \varphi\dmu[u] \quad\text{for all $\varphi\in C^{\infty}_c(\Omega)$};
\end{equation*}
see Proposition~\ref{prop-Riesz}. The second implication shows that this distributional formulation automatically has the renormalized structure.

\begin{theorem}\label{thm-sup-ren}
Let $u$ be an $\cA$-superharmonic function in $\Omega$, and let $\mu[u]$ be its Riesz measure. Then $u$ is a renormalized solution to \eqref{eq:main} with datum $\mu[u]$.
\end{theorem}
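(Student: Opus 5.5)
We follow the strategy of \cite{Kil11super}, with scaling arguments replaced by the Orlicz toolkit developed above. \textbf{Step 1: regularity of truncations.} For $k>0$, $\min\{u,k\}$ is a bounded $\cA$-superharmonic function, so it is a weak supersolution in $W^{1,G}_{\rm loc}(\Omega)$. Since $u$ is locally bounded below by lower semicontinuity, $T_k(u)\in W^{1,G}_{\rm loc}(\Omega)$, and the generalized gradient $\nabla u$ from \eqref{eq-generalized-grad} exists. The integrability $g(|\nabla u|)\in L^1_{\rm loc}(\Omega)$ is exactly what makes the Riesz measure of Proposition~\ref{prop-Riesz} well defined (Section~\ref{sec4-5}), so \eqref{eq:renormalizedpro-intro} holds. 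Moreover, $u$ is finite outside the $\cA$-polar set $\{u=\infty\}$, which has zero $G$-capacity by Theorem~\ref{th-1}. By Theorem~\ref{thm-quasicont}, $u$ is $G$-quasicontinuous, so $\int h(u)\varphi\dmu_0$ is well defined for the diffuse part.

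\textbf{Step 2: Riesz measures of truncations.} Set $u_k=\min\{u,k\}$ and $\mu_k=\mu[u_k]$. We expect to show three things:
\begin{enumerate}[(a)]
\item $\mu_k\lfloor\{u<k\}=\mu[u]\lfloor\{u<k\}$, understood quasi-everywhere via the quasicontinuous representative;
\item $\mu_k$ is diffuse, since $u_k\in W^{1,G}_{\rm loc}$ is a bounded supersolution;
\item $\mu_k\lfloor\{u\ge k\}\to\mu_s$ weakly as $k\to\infty$, while $\mu[u]\lfloor\{u<k\}\to\mu_0$.
\end{enumerate}
For (a), test the weak formulations with $\varphi\,\psi_\varepsilon(u)$, where $\psi_\varepsilon$ are Lipschitz cutoffs of $[-\infty,k-\varepsilon]$. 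Both $u$ and $u_k$ have the same gradient on $\{u<k\}$, and quasicontinuity lets us pass to the limit in the measure integrals. For (c), use $\mu[u]=\mu_0+\mu_s$, with $\mu_s$ concentrated on $\{u=\infty\}$ (capacity zero). The diffuse part gives no mass to $\{u=\infty\}$, so $\mu_0\lfloor\{u<k\}\to\mu_0$. The weak convergence $\mu_k\to\mu[u]$ follows from $\nabla u_k\to\nabla u$ a.e. together with the local $L^1$ convergence of $\cA(x,\nabla u_k)$, obtained by dominated convergence using $g(|\nabla u|)\in L^1_{\rm loc}$.

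\textbf{Step 3: the renormalized identity.} Fix $h$ with $\supp h'\subset[-M,M]$ and $k>M$. Then $h(u)=h(u_k)$, and $h(u_k)\varphi\in W^{1,G}\cap L^\infty$ with compact support, so it is an admissible test function for the measure $\mu_k$ (diffuse, by Step 2(b)). This gives
\[
\int_\Omega\cA(x,\nabla u)\cdot\nabla(h(u)\varphi)\dx=\int_{\{u<k\}}h(u)\varphi\dmu[u]+h(k)\int_{\{u\ge k\}}\varphi\dmu_k,
\]
where the left-hand side is independent of $k$ because $\nabla h(u)$ vanishes on $\{|u|\ge M\}$. Since $h(k)=h(\infty)$ for $k>M$, letting $k\to\infty$ and using Step 2(c) yields \eqref{eq-renormalized-intro}.

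The main obstacle is Step 2(c): proving that the mass of $\mu_k$ on $\{u\ge k\}$ converges exactly to the singular part $\mu_s$, and that no diffuse mass escapes. Concretely, one needs $\mu_k(\{u\ge k\}\cap K)\to\mu_s(K)$ together with the vanishing of $\mu_0(\{u\ge k\})$. Homogeneous proofs handle this with capacitary estimates obtained by rescaling. Here I would instead use the estimates for Riesz measures of balayages with constant obstacles from Section~\ref{sec4-6}. These bound $\mu_k(\{u\ge k\}\cap K)$ by the $G$-capacity of $\{u>k\}$ times a factor controlled via \eqref{eq:Gg}. Combining this bound with the decomposition result of Section~\ref{sec2-4}, applied to the limit measure, identifies the limit as concentrated on $\{u=\infty\}$, and hence equal to $\mu_s$.
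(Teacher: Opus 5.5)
Your proposal has a genuine gap, and it is in Step~2(a), which you treat as routine. The identity $\mu_k\lfloor_{\{u<k\}}=\mu[u]\lfloor_{\{u<k\}}$ is in fact equivalent to the theorem itself. By linearity in $h$ and \eqref{eq-riesz}, the theorem reduces to the case $h(\infty)=0$. For such $h$, the renormalized identity is the same statement as (a), via the weak formulation for $u_k$ and Proposition~\ref{prop-mu-s}. Your justification tests the distributional identity \eqref{eq-riesz} for $u$ with $\varphi\psi_\varepsilon(u)$, and that is exactly what is unavailable. Identity \eqref{eq-riesz} holds only for $C^\infty_c$ test functions. The measure $\mu[u]$ is not in $(W^{1,G}_0)'$, since $\mu_s$ charges a set of zero capacity. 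And $\cA(x,\nabla u)$ is only in $L^1_{\rm loc}$; it lies in $L^{G^\ast}$ only on the sublevel sets $\{u<k\}$, which are not open. If you approximate $\varphi\psi_\varepsilon(u)$ by smooth functions, neither side passes to the limit. On the left, the approximating gradients leave $\{u<k\}$. On the right, quasi-everywhere convergence gives $\mu_0$-a.e.\ convergence but says nothing on $\{u=\infty\}$, where $\mu_s$ lives. What you can prove legitimately is weaker. Testing the equations of $u_k$ and $u_{k'}$ (both in $W^{1,G}_{\rm loc}$, with diffuse Riesz measures) with $\varphi\psi_\varepsilon(u)$ shows that $\nu_k:=\mu_{k'}\lfloor_{\{u<k\}}$ does not depend on $k'>k$. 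Then $\mu_{k'}\rightharpoonup\mu[u]$, evaluated on compact subsets of $\{u<k\}$, gives $\nu_k\le\mu[u]\lfloor_{\{u<k\}}$. The reverse inequality is the entire difficulty: it says that no diffuse mass escapes to the levels $\{u=k'\}$ as $k'\to\infty$. Nothing in your Steps~1--3 addresses it.

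You have also misplaced the obstacle. Once (a) is known, (c) is immediate: $\mu_k\lfloor_{\{u\ge k\}}=\mu_k-\mu[u]\lfloor_{\{u<k\}}\rightharpoonup\mu[u]-\mu[u]\lfloor_{\{u<\infty\}}=\mu_s$ by Proposition~\ref{prop-mu-s} and Theorem~\ref{th-1}. Conversely, the tool you suggest for (c) cannot replace (a). Bounding $\mu_k(\{u\ge k\}\cap K)$ by a growth factor times the capacity of $\{u>k\}$ gives only a uniform bound, as it must, since the mass of $\mu_s$ sits there. Moreover, measures carried by sets of vanishing capacity can still have a diffuse weak limit. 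When points have zero capacity, normalized measures on ever finer clouds of tiny balls of vanishing total capacity converge to Lebesgue measure.

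The missing ingredient is a quantitative bound showing that the energy of $u$ on $\{\lambda<u<2\lambda\}$, away from the singular set, is $o(\lambda)$. The paper obtains it in two parts:
\begin{itemize}
\item It cuts off, via $\theta_\varepsilon$, a compact set $K_\varepsilon\subset\{u=\infty\}$ carrying all but $\varepsilon$ of $\mu_s$. On this cutoff $h(u)$ is constant, so \eqref{eq-riesz} suffices there.
\item Off $K_\varepsilon$, it uses two-sided Wolff potential bounds to compare $u$ with the auxiliary solution $w_\varepsilon$ of $-\DIV\cA(x,\nabla w_\varepsilon)=\mu\lfloor_{\Omega_3\setminus K_\varepsilon}$. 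By Lemma~\ref{lem-energy-lambda}, the level-set energy of $w_\varepsilon$ is at most $C\lambda\bigl(\mu_0(\{\cW_{\mu,r}>\lambda/L\}\cap\overline{\Omega}_3)+\varepsilon\bigr)$. The $\mu_0$-term vanishes as $\lambda\to\infty$ because $\{\cW_{\mu,r}=\infty\}\cap\overline{\Omega}_3\subset\{u=\infty\}$ has zero capacity.
\end{itemize}

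A smaller point: in Step~3, the integrals with $\nabla u_k$ and with $\nabla u$ differ by $h(k)\int_{\{u\ge k\}}\cA(x,\nabla u)\cdot\nabla\varphi\dx$. This term is not zero, although it tends to $0$.
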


Consequently, $\cA$-superharmonic functions and renormalized solutions coincide, up to the choice of the lower semicontinuous representative. In particular, the variational and potential-theoretic formulations describe the same class of solutions to \eqref{eq:main}.  We point out that Theorem~\ref{thm-sup-ren} is new even in the standard power-growth case $  G(t)=t^p/p  $, when the operator $  \cA  $ is not assumed to satisfy the homogeneity condition.

\begin{remark}\label{rem:p} Although the equivalence between renormalized solutions and $\cA$-superharmonic functions in~\cite{Kil11super} does not explicitly list~\eqref{eq-homogeneity} in assumptions and its proof does not directly use it, it relies on potential-theoretic facts such as the capacitary negligibility of polar sets from~\cite{HKM06} and~\cite{Mikkonen}. These results are deeply rooted in the classical nonlinear potential theory, which makes extensive use of scaling arguments. Without homogeneity, scalar multiples of $  \cA  $-superharmonic functions are not necessarily $  \cA  $-superharmonic, and the standard scaling techniques become unavailable. Thus, our result provides a genuinely scale-free counterpart to the classical equivalence theory even in the power-growth case.
\end{remark}
Finally, let us comment that Theorems~\ref{thm-ren-sup} and~\ref{thm-sup-ren} are of particular interest when the function $G$ does not grow too fast at infinity.
\begin{remark}
 The sharp Orlicz--Sobolev embedding (see \cite{Cian96}), which corresponds to the subcritical exponent regime $p\leq n$ in the standard $p$-growth case, requires the condition
\begin{equation*}
\int^{\infty}\left(\frac{t}{G(t)}\right)^{\frac{1}{n-1}}\dt = \infty.
\end{equation*}
If $G$ exhibits such fast growth at infinity that this condition is violated, we are equipped with the continuous embedding $W_0^{1,G}(\Omega)\hookrightarrow C(\overline{\Omega})$, provided that $\partial\Omega$ is sufficiently smooth (see \cite{Cian-pisa}). In this case, the Riesz measure $\mu[u]$ automatically belongs to the dual space $(W^{1,G}_0(\Omega))'$. Consequently, the problem reduces to the classical weak formulation. In this regime, the weak, renormalized, and $\cA$-superharmonic formulations coincide in a rather direct way. See also~\cite{CM17}.
\end{remark}

\subsection{Methods}
The main contribution of this paper is twofold: the development of a nonlinear potential theory for elliptic equations with non-homogeneous Orlicz growth (in particular under no counterpart of assumption~\eqref{eq-homogeneity}), and its application to the equivalence between renormalized solutions and $\cA$-superharmonic functions. The implication from renormalized solutions to $\cA$-superharmonic functions is established in Section~\ref{sec3}. The converse implication is substantially more delicate and relies on the potential-theoretic framework developed in Section~\ref{sec4}; the proof is completed in Section~\ref{sec5}. Let us summarize the main ideas and difficulties.

\medskip
\noindent\textbf{From renormalized solutions to $\cA$-superharmonic functions.}
In Section~\ref{sec3}, we prove Theorem~\ref{thm-ren-sup}. The main difficulty arises from the limited a priori regularity of renormalized solutions, so constructing an $\cA$-superharmonic representative requires proving that the solution does not blow down to $-\infty$. Inspired by the ideas from \cite{Kil11super,KM92,KM94,kim25wolff}, we study the negative part $u_-$ through suitable nonlinear test functions in the renormalized formulation. This leads to a Caccioppoli-type estimate and a De Giorgi iteration scheme adapted to the Orlicz setting, which yields local boundedness of $u_-$. We can then pass to the lower semicontinuous regularizations of the truncations and apply the Harnack convergence theorem.

\medskip
\noindent\textbf{Potential theory without scaling.}
The converse implication requires a detailed understanding of $\cA$-superharmonic functions. In the homogeneous $p$-growth setting, many fine properties of superharmonic functions rely on scaling and homogeneity, for instance through rescaling arguments for solutions or capacitary potentials. Such tools are not available in the Orlicz setting, where scalar multiples of solutions or superharmonic functions generally fail to preserve the structure of the equation. To overcome this difficulty, in Section~\ref{sec4} we develop a novel nonlinear potential theory adapted to non-homogeneous Orlicz growth. Starting from obstacle problems, we establish global H\"older regularity, develop a scale-free balayage theory and its relation to obstacle problems, prove integrability properties and the Riesz measure theorem for $\cA$-superharmonic functions, and derive capacitary estimates. These results imply that $\cA$-polar sets have zero $G$-capacity and yield the $G$-quasicontinuity of $\cA$-superharmonic functions.

\medskip
\noindent\textbf{From $\cA$-superharmonic functions to renormalized solutions.}
Let $u$ be an $\cA$-superharmonic function with Riesz measure $\mu=\mu_0+\mu_s$. In Section~\ref{sec5}, we prove Theorem~\ref{thm-sup-ren} by combining the potential theory of Section~\ref{sec4} with a localization argument near the singular set of $\mu_s$. The key ingredients are the $G$-quasicontinuity of $\cA$-superharmonic functions and the concentration property $\mu_s(\{u<\infty\})=0$, which allow us to identify the renormalized right-hand side with integration against the full Riesz measure. After introducing an auxiliary comparison problem and suitable truncation and cutoff procedures, we establish the required distributional identity and conclude the renormalized formulation.

\medskip
\noindent\textbf{A potential-theoretic heart.}
We emphasize that Theorems~\ref{thm-ren-sup} and \ref{thm-sup-ren} are not the sole contributions of this work. A substantial part of the paper develops a nonlinear potential theory for non-homogeneous Orlicz growth. Since this theory comprises several interconnected results rather than a single theorem, we outline its main components in Section~\ref{ssec:main-results}. We regard this framework as a central contribution of the paper: it is essential for proving Theorem~\ref{thm-sup-ren} and of independent interest in its own right.
We expect the potential-theoretic framework developed here to provide a basis for further study of fine properties of solutions and measure data problems in non-homogeneous settings.

\section{Preliminaries}\label{sec2}

In this section, we recall the definitions of growth functions, the relevant function spaces, $\cA$-supersolutions with their relation to $\cA$-superharmonic functions, capacities, and properties of renormalized solutions. We also collect several auxiliary results used in the sequel. Throughout the paper, we always assume that $\Omega \subset \rn$, $n \geq 2$, is a bounded open set, and that the structural assumptions \eqref{eq:G}--\eqref{ass-op} hold. Finally, the letters $c$ and $C$ denote positive constants that may change from line to line.

\subsection{Growth functions and relevant function spaces}

A function $G:[0,\infty)\to [0,\infty)$ is called an \emph{$N$-function} if $G(t)>0$ for $t>0$, satisfy~\eqref{eq:G} for some nondecreasing, left-continuous function $g:[0,\infty)\to [0,\infty)$, and
\begin{equation*}
\lim_{t\to \infty}\frac{G(t)}{t}=\lim_{t\to 0^+}\frac{t}{G(t)}=\infty.
\end{equation*}
As mentioned before, we assume throughout the paper that $G$ is an $N$-function satisfying $G \in C^1([0, \infty))$ and \eqref{eq:Gg}, with a strictly increasing function $g=G'$. We define $\bar{g}(t):=\frac{G(t)}{t}$ and $\bar{g}(0):=0$. Then, the assumption \eqref{eq:Gg} reads as
\begin{equation}\label{eq-barg-comp}
p\bar{g}(t)\leq g(t)\leq q\bar{g}(t).
\end{equation}
Moreover, $\bar g$ is strictly increasing on $[0,\infty)$, and hence invertible, since
\begin{equation*}
\bar{g}'(t) = \frac{tg(t)-G(t)}{t^2} \geq (p-1)\frac{G(t)}{t^2}>0 \quad\text{for all }t>0.
\end{equation*}
A key advantage of $\bar{g}$ is that it satisfies
\begin{equation}\label{eq-barg-der}
(p-1)\bar{g}(t)\leq t\bar{g}'(t)\leq (q-1)\bar{g}(t).
\end{equation}
Note that this property with $\bar{g}$ replaced by $g$ does not follow from \eqref{eq:Gg}. Let us summarize several basic algebraic inequalities involving the growth functions $G$, $g$, and $\bar{g}$. See, for instance, \cite[Section~2]{kim23}.

\begin{lemma}\label{lem-barg}
Let $c=p^{1/(q-1)}/q^{1/(p-1)}$. Under the assumptions on $G$ and $g$ described above, the following properties hold for all $t>0$.
\begin{enumerate}[\textup{(}i\textup{)}]
\item
For all $\lambda \geq 1$, it holds that
\begin{alignat*}{2}
&\lambda^p G(t) \leq G(\lambda t) \leq \lambda^q G(t),
&\quad&\lambda^{1/q} G^{-1}(t) \leq G^{-1}(\lambda t) \leq \lambda^{1/p} G^{-1}(t), \\
&\lambda^{p-1} \bar{g}(t) \leq \bar{g}(\lambda t) \leq \lambda^{q-1} \bar{g}(t),
&&\lambda^{1/(q-1)} \bar{g}^{-1}(t) \leq \bar{g}^{-1}(\lambda t) \leq \lambda^{1/(p-1)} \bar{g}^{-1}(t), \\
&\tfrac{p}{q} \lambda^{p-1} g(t) \leq g(\lambda t) \leq \tfrac{q}{p} \lambda^{q-1} g(t),
&&c \lambda^{1/(q-1)} g^{-1}(t) \leq g^{-1}(\lambda t) \leq c^{-1} \lambda^{1/(p-1)} g^{-1}(t).
\end{alignat*}
\item
For all $0<\lambda \leq 1$, it holds that
\begin{alignat*}{2}
&\lambda^q G(t) \leq G(\lambda t) \leq \lambda^p G(t),
&\quad&\lambda^{1/p} G^{-1}(t) \leq G^{-1}(\lambda t) \leq \lambda^{1/q} G^{-1}(t), \\
&\lambda^{q-1} \bar{g}(t) \leq \bar{g}(\lambda t) \leq \lambda^{p-1} \bar{g}(t),
&&\lambda^{1/(p-1)} \bar{g}^{-1}(t) \leq \bar{g}^{-1}(\lambda t) \leq \lambda^{1/(q-1)} \bar{g}^{-1}(t), \\
&\tfrac{p}{q} \lambda^{q-1} g(t) \leq g(\lambda t) \leq \tfrac{q}{p} \lambda^{p-1} g(t),
&&c \lambda^{1/(p-1)} g^{-1}(t) \leq g^{-1}(\lambda t) \leq c^{-1} \lambda^{1/(q-1)} g^{-1}(t).
\end{alignat*}
\item
For all $a, b \geq 0$ and $\delta>0$,
\begin{align*}
g(a)b \leq \delta g(a)a + g(b/\delta)b, \\
\bar{g}(a)b \leq \delta \bar{g}(a)a + \bar{g}(b/\delta)b.
\end{align*}
\end{enumerate}
\end{lemma}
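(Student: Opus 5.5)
The plan is to derive everything from the two differential inequalities \eqref{eq:Gg} and \eqref{eq-barg-der} by integrating logarithmic derivatives, and then to obtain the statements for inverses and for $g$ from those for $G$ and $\bar g$.

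\emph{Doubling bounds for $G$ and $\bar g$.} For $G$, \eqref{eq:Gg} says $p/s \le G'(s)/G(s) \le q/s$ for $s>0$. Integrating over $s$ from $t$ to $\lambda t$ when $\lambda\ge1$, and over $[\lambda t,t]$ when $\lambda\le1$, gives
\begin{equation*}
\lambda^p G(t)\le G(\lambda t)\le \lambda^q G(t) \quad (\lambda\ge1),
\end{equation*}
together with the reversed powers for $\lambda\le1$. In exactly the same way, \eqref{eq-barg-der} reads $(p-1)/s\le \bar g'(s)/\bar g(s)\le (q-1)/s$, and integrating gives the bounds for $\bar g$ with exponents $p-1$ and $q-1$.

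\emph{Bounds for $g$.} By \eqref{eq-barg-comp}, $p\bar g\le g\le q\bar g$. Hence
\begin{equation*}
g(\lambda t)\le q\bar g(\lambda t)\le q\lambda^{q-1}\bar g(t)\le \tfrac{q}{p}\lambda^{q-1}g(t),
\end{equation*}
and the lower bound and the case $\lambda\le1$ follow in the same way.

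\emph{Inverse functions.} For $G^{-1}$ and $\bar g^{-1}$, both functions are strictly increasing bijections of $[0,\infty)$, so the bounds invert. Set $s=G^{-1}(t)$ and $\lambda\ge1$. Then $G(\lambda^{1/p}s)\ge \lambda G(s)=\lambda t$, so $G^{-1}(\lambda t)\le\lambda^{1/p}G^{-1}(t)$. Likewise $G(\lambda^{1/q}s)\le \lambda t$ gives the lower bound. The same argument handles $\lambda\le1$ and $\bar g^{-1}$.

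For $g^{-1}$, which is well defined since $g$ is strictly increasing and continuous, one cannot invert a clean power bound. Instead we pass through $\bar g$: from $p\bar g\le g\le q\bar g$ one gets
\begin{equation*}
\bar g^{-1}(t/q)\le g^{-1}(t)\le \bar g^{-1}(t/p).
\end{equation*}
Combining this with the $\bar g^{-1}$ bounds, for $\lambda\ge1$,
\begin{equation*}
g^{-1}(\lambda t)\le \bar g^{-1}(\lambda t/p)\le \lambda^{1/(p-1)}\bar g^{-1}(t/p).
\end{equation*}
It remains to compare $\bar g^{-1}(t/p)$ with $\bar g^{-1}(t/q)$, which is at most $(q/p)^{1/(p-1)}\bar g^{-1}(t/q)\le (q/p)^{1/(p-1)}g^{-1}(t)$. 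The constants have to be arranged to reach exactly $c=p^{1/(q-1)}/q^{1/(p-1)}$. Writing $\bar g^{-1}(t/p)$ via the scaling factor $1/p\le1$ and $\bar g^{-1}(t/q)$ via $1/q$ yields factors $p^{-1/(q-1)}$ and $q^{-1/(p-1)}$ in the appropriate directions. This constant bookkeeping is the only delicate point, and I expect it to produce precisely $c$ or a better constant.

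\emph{Young-type inequality (iii).} Consider two cases:
\begin{itemize}
\item[] If $b\le \delta a$, then $g(a)b\le \delta g(a)a$.
\item[] If $b>\delta a$, then $a<b/\delta$, so monotonicity gives $g(a)b\le g(b/\delta)b$.
\end{itemize}
In either case the sum bound holds. The same argument works for $\bar g$, which is also increasing.
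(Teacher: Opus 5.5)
Your proof is correct. It is the standard derivation: the paper gives no argument of its own for this lemma and refers to \cite[Section~2]{kim23}, where the properties are obtained in this way — integrating the logarithmic-derivative bounds \eqref{eq:Gg} and \eqref{eq-barg-der}, inverting the resulting power bounds, and passing to $g$ and $g^{-1}$ through $p\bar g\le g\le q\bar g$.

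The constant bookkeeping you left open does close. The $\bar g^{-1}$ scaling bounds with factors $1/p,1/q\le 1$ give $q^{-1/(p-1)}\bar g^{-1}(t)\le g^{-1}(t)\le p^{-1/(q-1)}\bar g^{-1}(t)$, and combining these with the $\bar g^{-1}$ bounds yields exactly $c$. Your first route, comparing $\bar g^{-1}(t/p)$ with $\bar g^{-1}(t/q)$, gives the constants $(p/q)^{1/(p-1)}$ (lower) and $(q/p)^{1/(p-1)}$ (upper). These are even better than $c$ and $c^{-1}$, since $p^{1/(q-1)}\le p^{1/(p-1)}$.
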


The \emph{conjugate function} of an $N$-function $H$ is defined by \[H^\ast(\tau) := \sup_{t \geq 0} {(t\tau-H(t))}\quad\text{for all}~ \tau \geq 0.\]
Directly from this definition, one can infer the following \emph{Young's inequality}
\begin{equation}\label{eq-Young}
t\tau \leq H(t) + H^\ast(\tau) \quad\text{for all}~ t, \tau \geq 0.
\end{equation} 
The following lemma will also be useful, and its proof can be found in \cite[Lemma 2.2]{kim23}.

\begin{lemma}\label{lem-H}
Let $a \in (0, p]$ and set $H(t) := G(t^{1/a})$. Then $c\, G(t) \leq H^\ast(G(t)/t^a) \leq C G(t)$ for some $C \geq c>0$. In particular, $c\,G(t) \leq G^\ast(g(t)) \leq C G(t)$.
\end{lemma}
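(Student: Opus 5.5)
The last statement is Lemma \ref{lem-H}: for $a\in(0,p]$ and $H(t)=G(t^{1/a})$ we want $c\,G(t)\le H^\ast(G(t)/t^a)\le C\,G(t)$. The plan is to bound $H^\ast$ above and below at the point $\tau=G(t)/t^a$ using only the scaling properties of Lemma~\ref{lem-barg}. We then specialize to $a=1$, where $\tau=\bar g(t)$ and $\bar g\approx g$ by \eqref{eq-barg-comp}.

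\emph{Lower bound.} By definition, $H^\ast(\tau)\ge s\tau-H(s)$ for every $s\ge0$. Take $s=(\lambda t)^a$ with $\lambda\in(0,1]$ small. Then
\[
s\tau-H(s)=\lambda^a G(t)-G(\lambda t)\ge (\lambda^a-\lambda^p)G(t),
\]
using $G(\lambda t)\le\lambda^pG(t)$ from Lemma~\ref{lem-barg}(ii). If $a<p$, fix $\lambda=1/2$, which gives the constant $2^{-a}-2^{-p}>0$.

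If $a=p$, this choice degenerates, so we use $G(\lambda t)\le \lambda^q G(t)$ for $\lambda\ge1$ instead. Then
\[
s\tau-H(s)\ge(\lambda^p-\lambda^q)G(t),
\]
which is useless when $q>p$. A better route for $a=p$ is to use the derivative. The function $s\mapsto s\tau-H(s)$ has derivative $\tau-H'(s)$, and
\[
H'(s)=\frac{g(s^{1/a})\,s^{1/a}}{a s},
\]
so $H'(t^a)=\frac{g(t)t}{a t^a}\ge \frac{p}{a}\tau$. This shows $H$ has superlinear structure compared with $\tau$. For the lower bound we instead pick $\lambda<1$ and use the lower estimate $G(\lambda t)\ge\lambda^qG(t)$ from (ii) in the other direction:
\[
G(\lambda t)=\int_0^{\lambda t}g\le \lambda t\, g(\lambda t)\le q\,G(\lambda t).
\]
So directly, $G(\lambda t)\le\lambda^pG(t)$ gives only $\lambda^p-\lambda^p=0$ when $a=p$. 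To fix this, note that the inequality $G(\lambda t)\le\lambda^pG(t)$ is strict in a quantitative way: $G(\lambda t)/(\lambda t)^p$ is nondecreasing in $\lambda$, but we need a gap. Since $a\le p$, we reduce to the case $a<p$ via the monotonicity of $\tau\mapsto$ conjugates under composition, or more simply we replace $p$ by $p'=(p+1)/2$. This is not legitimate, since $p$ is sharp. The plan is therefore to use $a\le p$ only through $s=(\lambda t)^a$ with $\lambda=\lambda_0$ small and accept that $a=p$ is the hardest case. I expect to settle it via the identity $H^\ast(H'(s))=sH'(s)-H(s)$ at $s=t^a$, which gives
\[
H^\ast(H'(t^a))=\tfrac1a\,tg(t)-G(t)\ge\bigl(\tfrac pa-1\bigr)G(t).
\]
This again degenerates exactly when $a=p$. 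The fix is to compare $\tau$ with $H'(\sigma)$ for a smaller $\sigma$: because $H^\ast$ is increasing and convex with $H^\ast(0)=0$, a lower bound at a comparable argument suffices, provided $H^\ast$ is doubling. Doubling of $H^\ast$ needs $H$ to satisfy a $\nabla_2$ condition, i.e. $sH'(s)\ge (p/a)H(s)$ with $p/a>1$, which fails at $a=p$. So at $a=p$ the lemma requires the strict growth $tg(t)\ge pG(t)$ combined with strict monotonicity of $g$. I would handle this by a compactness-free argument: $\tau=G(t)/t^p\le g(t)/(p\,t^{p-1})$. This is the main obstacle, and I would check the cited source \cite[Lemma 2.2]{kim23} for how the endpoint is treated.

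\emph{Upper bound.} We need $s\tau-H(s)\le CG(t)$ for all $s$. Write $s=(\lambda t)^a$. For $\lambda\le1$, we have $s\tau\le\lambda^aG(t)\le G(t)$. For $\lambda\ge1$,
\[
s\tau-H(s)\le \lambda^aG(t)-\lambda^pG(t)\le0,
\]
since $a\le p$. Hence $C=1$ works.

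\emph{Special case.} For $a=1$ we get $H=G$ and $\tau=\bar g(t)\in[g(t)/q,\,g(t)/p]$. Monotonicity of $G^\ast$, the bounds above, and the fact that $G^\ast$ is doubling give $G^\ast(g(t))\approx G(t)$. The doubling of $G^\ast$ follows from $tg(t)\ge pG(t)$ with $p>1$, which is the $\nabla_2$ condition for $G$. The lower bound is in fact immediate from the identity $G^\ast(g(t))=tg(t)-G(t)\ge(p-1)G(t)$.
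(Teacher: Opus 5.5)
Your upper bound is correct for every $a\in(0,p]$: testing $s=(\lambda t)^a$ and splitting into $\lambda\le 1$ and $\lambda\ge1$ gives $H^\ast(G(t)/t^a)\le G(t)$. Your choice $\lambda=1/2$ correctly gives the lower bound with $c=2^{-a}-2^{-p}$ when $a<p$. The ``in particular'' part is also right. In fact Young's equality $G^\ast(g(t))=tg(t)-G(t)$ alone gives $(p-1)G(t)\le G^\ast(g(t))\le (q-1)G(t)$, with no doubling of $G^\ast$ needed. The paper gives no argument of its own here; it only cites \cite[Lemma~2.2]{kim23}.

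The genuine gap is the lower bound at the endpoint $a=p$, which you leave open. Your suggestion that strict growth plus strict monotonicity of $g$ would rescue it is wrong: at $a=p$ the lower bound is false. Your own remark that $H$ loses the $\nabla_2$ property at $a=p$ is the real reason, and it is fatal rather than technical. The step using $G(\lambda t)\ge\lambda^q G(t)$ also goes in the wrong direction for bounding $\lambda^aG(t)-G(\lambda t)$ from below.

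To see the failure, note that $G(\sigma)/\sigma^p$ is nondecreasing by \eqref{eq:Gg}. Hence every $s\ge t^p$ contributes nonpositively to the supremum. Substituting $s=\sigma^p$ gives
\[
H^\ast\bigl(G(t)/t^p\bigr)=\sup_{0<\sigma\le t}\sigma^p\Bigl(\frac{G(t)}{t^p}-\frac{G(\sigma)}{\sigma^p}\Bigr),
\]
which vanishes whenever $G(\sigma)/\sigma^p$ is constant on $(0,t]$.

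Concretely, take $G(t)=t^p/p$. This is admissible with $q=p$, and $g(t)=t^{p-1}$ is strictly increasing. Then $H(s)=s/p$, $\tau=1/p$, and $H^\ast(1/p)=0<G(t)$ for every $t>0$. The same failure occurs for all $t\le 1$ with $q>p$, if $G(t)=t^p/p$ on $[0,1]$ and $g(t)=t^{q-1}$ for $t\ge1$.

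So the statement as written cannot be proved. The correct version takes $a\in(0,p)$ with $c$ depending on $p-a$, while the upper bound holds for all $a\le p$. This is all the paper needs: it applies the lemma only with $a=1$ and $a=\tilde p<p$, and only uses the upper bound.
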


Next, we recall several function spaces associated with the growth functions $G$. The Orlicz space $L^G(\Omega)$ is defined as 
\begin{equation*}
	L^{G}(\Omega):=\left\{u:\Omega \to \mathbb{R}~\text{measurable}:\int_{\Omega}G(|u|)\dx< \infty \right\},
\end{equation*}
endowed with the Luxemburg norm 
\begin{equation*}
	\|u\|_{L^G(\Omega)}:=\inf\left\{\lambda>0:\int_{\Omega}G \left(\frac{|u|}{\lambda}\right)\dx\leq 1\right\}.
\end{equation*}
We define the Orlicz--Sobolev space $W^{1,G}(\Omega)$ by
\begin{equation*}
	W^{1,G}(\Omega)=\left\{u\in W_{\rm loc}^{1,1}(\Omega): u,|\nabla u|\in L^G(\Omega)\right\},
\end{equation*}
where the gradient is understood in the distributional sense. The norm on this space is given by  
\begin{equation*}
	\|u\|_{W^{1,G}(\Omega)}:=\inf\left\{\lambda>0:\int_{\Omega}G \left(\frac{|u|}{\lambda}\right)\dx+\int_{\Omega}G \left(\frac{|\nabla u|}{\lambda}\right)\dx\leq 1\right\}.
\end{equation*}
It is well known that the spaces $L^G(\Omega)$ and $W^{1,G}(\Omega)$ are separable and reflexive under assumption~\eqref{eq:Gg}. The space $W_0^{1,G}(\Omega)$ denotes the closure of $C^{\infty}_0(\Omega)$ with respect to this norm, and $(W_0^{1,G}(\Omega))'$ denotes its dual space. The space $W^{1, G}_{\mathrm{loc}}(\Omega)$ is defined as usual.

\subsection{\texorpdfstring{$\cA$}{A}-supersolutions and \texorpdfstring{$\cA$}{A}-superharmonic functions}

Let us define $\cA$-supersolutions and $\cA$-superharmonic functions, and summarize several of their key properties.

\begin{definition}\label{def-supersolution}
A function $u\in W^{1,G}_{\rm loc}(\Omega)$ is called an \emph{$\cA$-supersolution} to
\begin{equation}\label{eq-sec2-new}
-\DIV\cA(x,\nabla u)=0\quad\text{in }\Omega,
\end{equation}
if
\begin{equation}\label{eq-supersolution}
\int_{\Omega}\cA(x,\nabla u)\cdot\nabla \varphi\dx\geq 0  \quad \text{for all nonnegative }\varphi\in C^{\infty}_c(\Omega).
\end{equation}
Similarly, $u$ is called an \emph{$\cA$-subsolution} to \eqref{eq-sec2-new} if the above inequality holds with $\leq$ in place of $\geq$. Moreover, $u$ is called an \emph{$\cA$-solution} if it is both an $\cA$-supersolution and an $\cA$-subsolution. A function $u$ is said to be \emph{$\cA$-harmonic} in $\Omega$ if it is an $\cA$-solution in $\Omega$ and $u \in C(\Omega)$.    
\end{definition}

Note that neither $-u$ nor $\lambda u$ (for $\lambda>0$, $\lambda \neq 1$) is in general an $\cA$-sub- or $\cA$-supersolution when $u$ is an $\cA$-supersolution. A similar observation holds for the $\cA$-superharmonic and $\cA$-subharmonic functions discussed below. This stems from the non-homogeneous nature of the operator $\cA$, i.e., the lack of any counterpart of a typical assumption in the $p$-growth case recalled in~\eqref{eq-homogeneity}, cf.~\cite{HKM06,Mikkonen}.

\medskip
Recall that the definition of an $\cA$-superharmonic function is introduced in Definition~\ref{def:Ash}. An $\cA$-subharmonic function can be defined in a similar way: an upper semicontinuous function $u:\Omega\to \mathbb{R}\cup\{-\infty\}$ is said to be \emph{$\cA$-subharmonic} in $\Omega$ if $u\not\equiv -\infty$ in each component of $\Omega$, and for every open set $U\Subset \Omega$ and every $h\in C(\overline{U})$ that is $\cA$-harmonic in $U$, the inequality $u\leq h$ on $\partial U$ implies $u\leq h$ in $U$.

It is straightforward to see from definition that the truncation $\min\{u, k\}$ of an $\cA$-superharmonic function $u$ with level $k \in \mathbb{R}$ is also an $\cA$-superharmonic function.

\medskip

$\cA$-supersolutions and $\cA$-superharmonic functions are closely related to each other, which can be summarized as follows.

\begin{lemma}[{\cite[Lemma~4.4]{Che22Gen}}]\label{lem-supersol-cS}
If $u$ is an $\cA$-supersolution of \eqref{eq-sec2-new} that satisfies
\begin{equation}\label{eq-u-essliminf}
u(x) = \essliminf_{y \to x} u(y)\quad\text{for every }\ x \in \Omega,
\end{equation}
then $u\in\cS_{\cA}(\Omega)$.
\end{lemma}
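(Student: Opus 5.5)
The plan is to verify Definition~\ref{def:Ash} directly. Lower semicontinuity is immediate from \eqref{eq-u-essliminf}: for fixed $x$ and $\varepsilon>0$ there is $r>0$ with $u\ge u(x)-\varepsilon$ a.e.\ in $B(x,r)$, so for every $z\in B(x,r/2)$ we have $u(z)=\essliminf_{y\to z}u(y)\ge u(x)-\varepsilon$. Since $u\in W^{1,G}_{\rm loc}(\Omega)\subset L^1_{\rm loc}(\Omega)$ and \eqref{eq-u-essliminf} realizes $u$ as an essential lower limit of an a.e.\ finite function, $u$ is finite a.e.; in particular $u\not\equiv\infty$ on any component. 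Hence only the comparison property needs proof.

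For the comparison, I will fix $U\Subset\Omega$ and $h\in C(\overline U)$, $\cA$-harmonic in $U$, with $u\ge h$ on $\partial U$. Fix $\varepsilon>0$. By lower semicontinuity of $u$ and continuity of $h$ on the compact set $\partial U$, the set $K_\varepsilon=\{x\in\overline U: u(x)\le h(x)-\varepsilon\}$ is compact and disjoint from $\partial U$, so it lies in some open $D\Subset U$. Then $(h-\varepsilon-u)_+$ vanishes on $U\setminus \overline D$, and therefore belongs to $W^{1,G}_0(U)$. (One may also replace $u$ by $\min\{u,\sup_{\overline U}h\}$, which remains a supersolution by the standard lattice property, so that everything is bounded.) Next, testing the supersolution inequality for $u$ and the equation for $h-\varepsilon$ (also $\cA$-harmonic, because $\cA$ depends only on the gradient) against $\eta=(h-\varepsilon-u)_+$ gives
\[
\int_{\{h-\varepsilon>u\}}(\cA(x,\nabla h)-\cA(x,\nabla u))\cdot(\nabla h-\nabla u)\dx\le 0 .
\]
Strict monotonicity \eqref{eq-monotonicity} then forces $\nabla\eta=0$ a.e., so $\eta=0$ a.e.\ in $U$, because $\eta\in W^{1,G}_0(U)$. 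Thus $u\ge h-\varepsilon$ a.e.\ in $U$.

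To pass from an a.e.\ inequality to a pointwise one, I again use \eqref{eq-u-essliminf} together with continuity of $h$: for $x\in U$, $u(x)=\essliminf_{y\to x}u(y)\ge\lim_{y\to x}h(y)-\varepsilon=h(x)-\varepsilon$. Letting $\varepsilon\to0$ finishes the argument.

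The main technical point is justifying the test function. First, $\eta$ is not smooth, so the inequality \eqref{eq-supersolution} must be extended by density to nonnegative functions in $W^{1,G}_0$ with compact support. This uses that $\cA(x,\nabla u)\in L^{G^*}_{\rm loc}$, which follows from \eqref{ass-op} and Lemma~\ref{lem-H}, together with the fact that nonnegative compactly supported $W^{1,G}$ functions can be approximated by nonnegative mollifications in the modular sense, thanks to $\Delta_2$ from \eqref{eq:Gg}. Second, I need $h\in W^{1,G}(D)$ on the compact region $\overline D\subset U$ where $\eta$ is supported; this holds since $h\in W^{1,G}_{\rm loc}(U)$.
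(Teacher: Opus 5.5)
Your comparison argument is correct. The compact set $K_\varepsilon$, the test function $(h-\varepsilon-u)_+$ supported in some $D\Subset U$, strict monotonicity, and the final passage from ``a.e.'' to ``everywhere'' via \eqref{eq-u-essliminf} form the standard proof for this statement (cf.\ \cite{HKM06}). The paper gives no argument of its own and simply cites \cite[Lemma~4.4]{Che22Gen}.

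There is one point you never verify. Definition~\ref{def:Ash} requires $u:\Omega\to\R\cup\{\infty\}$, so you must show $u(x)>-\infty$ for every $x\in\Omega$; your lower semicontinuity step also tacitly takes $u(x)$ finite.

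This does not follow from $u\in W^{1,G}_{\rm loc}(\Omega)$ and \eqref{eq-u-essliminf} alone. The essential lower limit of a Sobolev function can equal $-\infty$ at some points (think of $-|x|^{-\alpha}$ with small $\alpha>0$ when $q<n$). Your comparison argument cannot exclude it either. Comparing with a constant on a sphere around a point where $u=-\infty$ needs $u>-\infty$ on that sphere, and the closed null set $\{u=-\infty\}$ could meet every such sphere.

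The missing input is a regularity fact: $\cA$-supersolutions are locally essentially bounded below. For example, testing the supersolution inequality with $(k-u)_+\eta^q$ gives a Caccioppoli inequality for $(k-u)_+$. The De Giorgi iteration used in the proof of Lemma~\ref{lem-loc-bdd-obs} (see \cite[Proposition~5.5]{HHL21}) then yields $\esssup_{B_{R/2}}(k-u)_+<\infty$ for every ball $B_R\Subset\Omega$. Equivalently, you can use local boundedness of the $\widetilde{\cA}$-subsolution $-u$, where $\widetilde{\cA}(x,\xi)=-\cA(x,-\xi)$. Hence $\essliminf_{y\to x}u(y)>-\infty$ everywhere.

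Note that the companion Lemma~\ref{lem-super-liminf} lists this lower bound as an explicit hypothesis, whereas here it has to be derived. With this step added, your proof is complete.
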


\begin{lemma}\label{lem-super-liminf}
If $u$ is an $\cA$-supersolution of \eqref{eq-sec2-new} that is locally essentially bounded below, then $u$ has a lower semicontinuous representative satisfying \eqref{eq-u-essliminf}.
\end{lemma}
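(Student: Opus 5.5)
The plan is to show that the function
\[
\tilde u(x):=\essliminf_{y\to x}u(y)
\]
is a representative of $u$. Since $u$ is locally essentially bounded below, $\tilde u$ takes values in $\R\cup\{\infty\}$, and it is lower semicontinuous directly from the definition of the essential lower limit. What remains is to prove $\tilde u=u$ a.e. Once that is known, $\tilde u$ satisfies \eqref{eq-u-essliminf} automatically, because changing $u$ on a null set does not change $\essliminf$.

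\textbf{Step 1: weak Harnack inequality.} By translating $u$ by a constant, we may assume $u\ge 0$ on a given ball $B_{2R}\Subset\Omega$; this is allowed because constants preserve the supersolution property even without homogeneity. For nonnegative $\cA$-supersolutions with Orlicz growth under \eqref{eq:Gg}--\eqref{ass-op}, I would invoke the weak Harnack inequality from the literature (Lieberman; also used in \cite{Che22Gen}):
\[
\essinf_{B_r} u \ \ge\ c\,\Bigl(\fint_{B_{2r}} u^{s}\dx\Bigr)^{1/s}-C r
\]
for some small $s>0$. Equivalently, one can use the De Giorgi-type lower estimate in its ``measure-to-pointwise'' form. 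The additive term $Cr$ accounts for the lack of scaling; a form that is quantitative in $G$ would also suffice.

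\textbf{Step 2: $\tilde u\le u$ a.e.} This holds at every Lebesgue point $x$ of $u$, since
\[
\essinf_{B_r(x)}u\le\fint_{B_r(x)}u\dx\to u(x).
\]

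\textbf{Step 3: $\tilde u\ge u$ a.e.} Here I would use the truncation trick. Fix a real level $\lambda$. The function $\min\{u,\lambda\}$ is again an $\cA$-supersolution; this follows because the minimum of two supersolutions is a supersolution, which is standard via the test function $\varphi\,\min\{1,(\lambda-u)_+/\varepsilon\}$. Apply the weak Harnack inequality to
\[
v:=\min\{u,\lambda\}-\essinf_{B_{2r}(x)}u\ \ge 0
\]
on $B_{2r}(x)$. This yields
\[
\fint_{B_r(x)}\bigl(\min\{u,\lambda\}-\essinf_{B_{r}(x)}u\bigr)^{s}\dx\ \lesssim\ \Bigl(\essinf_{B_{r/2}(x)}u-\essinf_{B_{2r}(x)}u+Cr\Bigr)^{s}.
\]
The quantities $\essinf_{B_\rho(x)}u$ are monotone in $\rho$ and bounded above by $\lambda$ after truncation, so they converge to $\min\{\tilde u(x),\lambda\}$ as $\rho\to0$. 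Hence the right-hand side tends to $0$. At a Lebesgue point $x$ of $\min\{u,\lambda\}$, the left-hand side then forces
\[
\min\{u(x),\lambda\}=\min\{\tilde u(x),\lambda\}.
\]
Letting $\lambda\to\infty$ along the integers gives the reverse inequality a.e.

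\textbf{Main obstacle.} The delicate point is the availability of a weak Harnack inequality in the non-homogeneous setting with the right dependence on $r$ and on the level. Since $u$ cannot be rescaled, the estimate must be applied to $v$ without normalizing, and the additive correction must vanish as $r\to0$ uniformly for bounded levels. If such a reference form is not available, I would instead prove the needed fact directly by a De Giorgi iteration on the sets $\{v<k\}$ with Caccioppoli estimates for $(k-v)_+$, giving
\[
\essinf_{B_{r/2}}v\ge k/2\quad\text{whenever } |\{v<k\}\cap B_r|\le\delta|B_r|,
\]
and this suffices for the argument of Step 3.
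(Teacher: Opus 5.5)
Your proof is correct and is essentially the paper's argument: apply the weak Harnack inequality to the truncated supersolution minus its essential infimum on a ball, note that truncation makes $\essinf_{B_{r/2}(x)}-\essinf_{B_{2r}(x)}\to0$, and conclude at Lebesgue points. The only difference is organization, plus one fix: the paper treats bounded $u$ first (showing $\essliminf_{y\to x}u(y)=\lim_{r\to0}\fint_{B_r(x)}u\dx$ at every $x$) and then passes to the increasing limit of truncations, whereas you compare $\min\{u,\lambda\}$ with $\min\{\tilde u,\lambda\}$ directly, and you should subtract $\essinf_{B_{2r}(x)}\min\{u,\lambda\}$ instead of $\essinf_{B_{2r}(x)}u$ so that $v\ge0$ even when $\lambda<\essinf_{B_{2r}(x)}u$.
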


The proof of Lemma~\ref{lem-super-liminf} is almost identical to that of \cite[Theorem 3.63]{HKM06}, but we include it here for the sake of completeness.

\begin{proof}
Since $u$ is locally essentially bounded below by hypothesis, we may work locally and assume that $u$ is nonnegative. First, we assume that $M := \esssup_{\Omega} u < \infty$. Fix a point $x \in \Omega$ and a ball $B_r=B_r(x)$ such that $B_{2r} \subset \Omega$. Set $m_r := \essinf_{B_r} u$.   If $M=m_r$, then $u\equiv M$  a.e. on $B_r$, and the desired conclusion is trivial. Hence we assume $M>m_r$ for the following estimates.

 Applying the weak Harnack inequality~\cite[Proposition 3.20]{CGZG24} to $(u-m_r)$, we obtain, for some $s \in (0,1)$,
\begin{align*}
m_{\frac{r}{2}} - m_r+\frac{r}{2}
&\ge c \left( \fint_{B_r} (u(y) - m_r)^s \dy \right)^{1/s}  \ge c \, (M - m_r)^{(s-1)/s} \left( \fint_{B_r} (u(y)- m_r) \dy \right)^{1/s}.
\end{align*}
	This inequality can be rewritten as
	\begin{equation*}
	0 \le \fint_{B_r} u(y) \dy - m_r \le C (M - m_r)^{1-s} \left( m_{\frac{r}{2}} - m_r+\frac{r}{2} \right)^s \le c M^{1-s} \left( m_{\frac{r}{2}} - m_r+\frac{r}{2} \right)^s.
	\end{equation*}
	The right-hand side tends to zero as $r \to 0$. Consequently, for each $x \in \Omega$,
	\begin{equation}\label{eq-qc}
	\essliminf_{y \to x} u(y) = \lim_{r \to 0} \fint_{B_r(x)} u(y) \dy.
	\end{equation}
	By the Lebesgue differentiation theorem, the right-hand side of \eqref{eq-qc} equals $u(x)$ almost everywhere. Therefore, the statement holds for bounded $\cA$-supersolutions.
	
	Next, we treat the general case. For each $k \in \mathbb{N}$, the function $u_k := \min\{u,k\}$ is a bounded $\cA$-supersolution. By the first part of the proof, $u_k$ has a lower semicontinuous representative satisfying \eqref{eq-u-essliminf}. Thus, we may assume that $u$ is lower semicontinuous as the limit of an increasing sequence of lower semicontinuous functions. Let $\tilde{u}(x):=\essliminf_{y \to x} u(y)$. Then for almost every $x$ we have
	\begin{equation*}
	u(x) \le \liminf_{y \to x} u(y) \le \tilde{u}(x)\le \liminf_{r \to 0} \fint_{B_r(x)} u(y) \dy = u(x)
	\end{equation*}
 by the Lebesgue differentiation theorem. Hence $\tilde{u} = u$ a.e.\ and $\tilde{u}(x) = \essliminf_{y \to x} \tilde{u}(y)$.
\end{proof}

\begin{lemma}[{\cite[Lemma 4.6]{Che22Gen}}]\label{lem-cS-supersol}
If $u\in \cS_{\cA}(\Omega)$ is locally bounded above, then $u\in W_{\rm loc}^{1,G}(\Omega)$ and $u$ is an $\cA$-supersolution of \eqref{eq-sec2-new}.
\end{lemma}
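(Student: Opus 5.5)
The plan is to reduce to the unbounded case via the truncations $u_k:=\min\{u,k\}$, and to obtain Sobolev regularity from a Caccioppoli estimate for $\cA$-superharmonic functions combined with approximation by solutions of obstacle problems. Since $\min\{u,k\}$ is again $\cA$-superharmonic and here $u$ is locally bounded above, it suffices to work on a subdomain $D\Subset\Omega$ on which $u$ is bounded above. Since $u$ is lower semicontinuous, it is also bounded below on $\overline{D'}$ for $D\Subset D'\Subset\Omega$. So locally $u$ is bounded, say $0\le u\le M$ after adding a constant; adding constants preserves $\cA$-superharmonicity because $\cA$ depends only on $\nabla u$.

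\textbf{Step 1 (approximation).} Since $u$ is lower semicontinuous, choose an increasing sequence $\psi_j\in C^\infty(\overline{D'})$ with $\psi_j\uparrow u$ pointwise on $\overline{D'}$ and $0\le\psi_j\le M$. On a smooth (or regular) domain $D'' \Subset \Omega$, let $v_j$ be the solution of the obstacle problem with obstacle $\psi_j$ and boundary values $\psi_j$. This is a standard existence result via monotone operator theory in the reflexive space $W^{1,G}$, using \eqref{eq-monotonicity} and \eqref{ass-op}. Each $v_j$ is an $\cA$-supersolution and is continuous, because the obstacle is continuous; I expect this continuity can be quoted, e.g.\ from \cite{Che22Gen} or \cite{CGZG24}. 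In particular $v_j\ge\psi_j$. By the comparison principle, which follows from Definition~\ref{def:Ash} applied on the open set $\{v_j>\psi_j\}$ where $v_j$ is $\cA$-harmonic, we get $\psi_j\le v_j\le u$. Consequently $v_j\to u$ pointwise, and $0\le v_j\le M$.

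\textbf{Step 2 (uniform energy bound).} Test the supersolution inequality for $v_j$ with $\varphi=\eta^q(M-v_j)\ge0$, where $\eta\in C_c^\infty(D'')$ is a cutoff. Using \eqref{ass-op} and Young's inequality \eqref{eq-Young} together with Lemma~\ref{lem-H}, which gives $G^\ast(g(t))\approx G(t)$, and absorbing terms via Lemma~\ref{lem-barg}(iii), this yields
\[
\int \eta^q G(|\nabla v_j|)\dx\le C\int G(M|\nabla\eta|)\dx .
\]
The point is that the right-hand side is independent of $j$, and no scaling is used. Hence $(v_j)$ is bounded in $W^{1,G}(D)$. By reflexivity, a subsequence converges weakly, and the limit must be $u$. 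Therefore $u\in W^{1,G}_{\rm loc}$.

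\textbf{Step 3 (supersolution property).} This is the main obstacle, since weak convergence of gradients does not let us pass to the limit in the nonlinear $\cA(x,\nabla v_j)$. I would first prove $\nabla v_j\to\nabla u$ a.e.\ by the Minty/Browder argument. Test the inequalities for $v_j$ with $\eta(u-v_j)\ge0$, which is admissible since $v_j\le u$. This gives $\int\eta\,\cA(x,\nabla v_j)\cdot\nabla(u-v_j)\ge -\int \cA(x,\nabla v_j)\cdot\nabla\eta\,(u-v_j)\to0$, where the right side vanishes by dominated convergence and the bound on $g(|\nabla v_j|)$ in $L^{G^\ast}$. Combined with weak convergence, this gives
\[
\limsup\int\eta\,(\cA(x,\nabla v_j)-\cA(x,\nabla u))\cdot\nabla(v_j-u)\le0 .
\]
Strict monotonicity then forces $\nabla v_j\to\nabla u$ a.e. (along a subsequence), by the standard Leray--Lions lemma. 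Vitali's theorem, with the $L^{G^\ast}$ bound giving equi-integrability of $\cA(x,\nabla v_j)$, lets us pass to the limit in $\int\cA(x,\nabla v_j)\cdot\nabla\varphi\ge0$. Hence $u$ is an $\cA$-supersolution in $D$, and since $D$ was arbitrary, also in $\Omega$.
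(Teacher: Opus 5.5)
Your argument is correct and follows essentially the standard route behind \cite[Lemma~4.6]{Che22Gen}, which the paper cites rather than proves (it is the Orlicz counterpart of the classical argument in \cite{HKM06}): approximate $u$ from below by continuous solutions of obstacle problems with smooth obstacles, obtain a uniform Caccioppoli bound (your Step~2 is exactly Lemma~\ref{lem-uniform-bound}), and pass to the limit through a.e.\ convergence of gradients (Lemma~\ref{lem-mea-weakcon} would also do this, without needing $v_j\le u$). One point should be stated explicitly: the comparison $v_j\le u$ via Definition~\ref{def:Ash} on $\{v_j>\psi_j\}$ requires $v_j\in C(\overline{D''})$ with $v_j=\psi_j$ on $\partial D''$, because $\overline{\{v_j>\psi_j\}}$ may reach $\partial D''$; for smooth $D''$ this is supplied by Theorem~\ref{thm-bdry-reg}.
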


As a corollary of Lemmas~\ref{lem-supersol-cS} and~\ref{lem-cS-supersol}, we obtain the following result. For the subharmonic part, one applies the preceding lemmas to the reflected operator $\widetilde{\mathcal A}(x,\xi):=-\mathcal A(x,-\xi)$, which satisfies the same structural assumptions \eqref{eq-monotonicity}--\eqref{ass-op}.

\begin{corollary}
A function $u$ is $\cA$-harmonic in $\Omega$ if and only if $u$ is both $\cA$-superharmonic and $\cA$-subharmonic in $\Omega$.
\end{corollary}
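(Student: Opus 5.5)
The corollary asserts that $u$ is $\cA$-harmonic in $\Omega$ if and only if it is both $\cA$-superharmonic and $\cA$-subharmonic. We prove the two implications separately, the second being the substantive one.

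\textbf{$\cA$-harmonic $\Rightarrow$ super- and subharmonic.} Let $u$ be $\cA$-harmonic. Then $u\in C(\Omega)$, so it is both lower and upper semicontinuous and finite everywhere; in particular $u\not\equiv\pm\infty$ on any component. Since $u\in C(\Omega)$ is an $\cA$-supersolution, $u(x)=\lim_{y\to x}u(y)=\essliminf_{y\to x}u(y)$ for every $x$, so \eqref{eq-u-essliminf} holds and Lemma~\ref{lem-supersol-cS} gives $u\in\cS_\cA(\Omega)$.

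For subharmonicity, set $\widetilde\cA(x,\xi):=-\cA(x,-\xi)$ and $v:=-u$. We check that $v$ is an $\widetilde\cA$-supersolution:
\[
\int_\Omega \widetilde\cA(x,\nabla v)\cdot\nabla\varphi\dx=-\int_\Omega\cA(x,\nabla u)\cdot\nabla\varphi\dx\ge 0
\]
for nonnegative $\varphi\in C^\infty_c(\Omega)$, because $u$ is an $\cA$-subsolution. The field $\widetilde\cA$ satisfies \eqref{eq-monotonicity}--\eqref{ass-op} with the same constants. Hence Lemma~\ref{lem-supersol-cS} gives $v\in\cS_{\widetilde\cA}(\Omega)$.

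It remains to translate this back, i.e.\ to verify that $w$ is $\cA$-subharmonic exactly when $-w$ is $\widetilde\cA$-superharmonic. This follows from the definitions: $h$ is $\cA$-harmonic in $U$ if and only if $-h$ is $\widetilde\cA$-harmonic in $U$, and the comparison inequalities on $\partial U$ and in $U$ simply flip sign. Thus $u$ is $\cA$-subharmonic.

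\textbf{Super- and subharmonic $\Rightarrow$ $\cA$-harmonic.} Let $u$ be both $\cA$-superharmonic and $\cA$-subharmonic. Then $u$ is both lower and upper semicontinuous, hence continuous as a map into $\overline{\R}$. Moreover $u>-\infty$ (superharmonic functions take values in $\R\cup\{\infty\}$) and $u<\infty$ (subharmonic functions take values in $\R\cup\{-\infty\}$). So $u$ is real-valued and continuous, and in particular locally bounded.

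Since $u$ is locally bounded above, Lemma~\ref{lem-cS-supersol} yields $u\in W^{1,G}_{\rm loc}(\Omega)$ and that $u$ is an $\cA$-supersolution. Applying the same lemma to $-u$, which is $\widetilde\cA$-superharmonic by the sign-flip remark above and locally bounded above, shows that $-u$ is an $\widetilde\cA$-supersolution. By the displayed identity, this is equivalent to $u$ being an $\cA$-subsolution. Hence $u$ is an $\cA$-solution, and since $u\in C(\Omega)$, it is $\cA$-harmonic.

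\textbf{Main obstacle.} The only point needing any care is the sign-flip bookkeeping with $\widetilde\cA$: checking that $\widetilde\cA$ is Carath\'eodory and inherits \eqref{eq-monotonicity} and \eqref{ass-op}, and that the notions of harmonic, superharmonic and supersolution transform consistently under $u\mapsto -u$. Each of these is a direct verification.
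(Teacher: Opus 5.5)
Your proof is correct and follows the same route as the paper, which obtains the corollary from Lemmas~\ref{lem-supersol-cS} and~\ref{lem-cS-supersol} and handles the subharmonic half by applying them to the reflected operator $\widetilde{\cA}(x,\xi)=-\cA(x,-\xi)$. Your write-up spells out details the paper leaves implicit: the sign-flip correspondence between the function classes, and why $u$ is real-valued, continuous and locally bounded.
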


We also present the strong minimum principle for $\cA$-superharmonic functions.

\begin{proposition} \label{prop-min}
Let $\Omega$ be a bounded domain and let $u \in\cS_{\cA}(\Omega)$. If $u$ attains its minimum inside $\Omega$, then $u$ is a constant function.
\end{proposition}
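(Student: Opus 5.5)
The plan is the classical one: reduce to the case of a bounded superharmonic function attaining its minimum, and then apply the weak Harnack inequality to show that the minimum set is both open and closed. Set $m:=\min_\Omega u$ and assume it is attained at $x_0\in\Omega$. Then $m>-\infty$, since $u$ takes values in $\R\cup\{\infty\}$. Define
\[
E:=\{x\in\Omega: u(x)=m\}.
\]
This set is nonempty. It is also relatively closed in $\Omega$, because $u$ is lower semicontinuous and $u\ge m$, so $E=\{u\le m\}$. Since $\Omega$ is connected, it will suffice to prove that $E$ is open. Then $E=\Omega$, and $u\equiv m$ is constant.

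To show openness, fix $x\in E$ and a ball $B_{2r}(x)\Subset\Omega$. Put $v:=\min\{u,m+1\}$. By the remark after Definition~\ref{def:Ash}, $v$ is again $\cA$-superharmonic. It is bounded, so Lemma~\ref{lem-cS-supersol} makes it an $\cA$-supersolution in $W^{1,G}_{\rm loc}(\Omega)$. The function $w:=v-m\ge 0$ is a nonnegative supersolution of the translated operator, which is the same operator since $\cA$ does not depend on $u$.

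Next I need the pointwise value to be controlled by the essential infimum. For this I use the lower semicontinuous representative from Lemma~\ref{lem-super-liminf}. I also need to know that $v$ coincides with it everywhere, i.e.\ $v(y)=\essliminf_{z\to y}v(z)$. This fact is standard for $\cA$-superharmonic functions, but is it available here? If not, it can be obtained by comparing $v$ on small balls with the $\cA$-harmonic function having boundary values given by continuous functions below $v$, via the comparison principle in Definition~\ref{def:Ash}.

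Granting it, we get $\essinf_{B_\rho(x)} w=0$ for every small $\rho$. Indeed, $w(x)=0$ and $w(x)=\essliminf_{y\to x}w(y)$, and $w\ge0$.

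Now apply the weak Harnack inequality \cite[Proposition 3.20]{CGZG24} as in the proof of Lemma~\ref{lem-super-liminf}:
\[
\Big(\fint_{B_r(x)} w^s\dy\Big)^{1/s}\le C\big(\essinf_{B_{r/2}(x)} w + r\big).
\]
Here lies the main obstacle. In the Orlicz setting the weak Harnack inequality carries the additive term $r$, because of the lack of scaling. With $\essinf w=0$ it only gives $\fint_{B_r} w^s\le (Cr)^s$, not $w=0$ a.e. To remove it, I would use the intrinsic, scale-free form of the weak Harnack inequality. This is the version in which the additive term is $r\,\bar g^{-1}(\cdot)$ of the data, and it vanishes for a homogeneous supersolution; equivalently, it is the form with no additive term for supersolutions of $-\DIV\cA=0$, which the cited result should give when there is no right-hand side. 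With that version, $\essinf_{B_{r/2}} w=0$ forces $w=0$ a.e.\ in $B_r(x)$.

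If only the version with the $+r$ term is available, I would instead argue via the weak Harnack inequality at all smaller scales $\rho\le r$ together with a De Giorgi-type measure-to-pointwise lemma. The aim would be to show $|\{w>\varepsilon\}\cap B_{r/2}|=0$ for every $\varepsilon>0$. Suppose instead that this set has positive measure. The expansion-of-positivity step, applied with the intrinsic rescaling by $\bar g$, should then give $\essinf w>0$ near $x$, a contradiction.

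Finally, once $w=0$ a.e.\ in $B_{r/2}(x)$, the everywhere identity $v(y)=\essliminf_{z\to y} v(z)$ gives $v\equiv m$ on $B_{r/2}(x)$. Since $m<m+1$, this means $u=v=m$ there, so $B_{r/2}(x)\subset E$. Hence $E$ is open, which concludes the proof.
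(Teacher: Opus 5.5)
Your first step, truncating at a level above $m$, is the paper's reduction. For $k>m$, $\min\{u,k\}$ is $\cA$-superharmonic, finite everywhere, and attains $m$. The paper then simply invokes the strong minimum principle for a.e.\ finite $\cA$-superharmonic functions \cite[Theorem~4]{Che22Gen} and lets $k\to\infty$. You instead try to reprove that theorem, and two steps remain unproved.

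The first is the one you flag yourself. The weak Harnack inequality \cite[Proposition~3.20]{CGZG24} carries the additive $r$. Your ``intrinsic'' version without it is only asserted, and the expansion-of-positivity alternative is only a sketch. The missing idea is the amplitude rescaling from the proof of Proposition~\ref{prop-dense}. The function $w/k$ is a supersolution for $\cA_k(x,\xi)=\cA(x,k\xi)$, which satisfies \eqref{eq-monotonicity}--\eqref{ass-op} with $G_k(t)=k^{-1}G(kt)$ and the same $p,q,c_1^{\cA},c_2^{\cA}$. As long as the weak Harnack constants depend only on these data (as the paper uses there for the H\"older and Harnack constants), multiplying by $k$ gives
\[
\Big(\fint_{B_r(x)}w^s\dy\Big)^{1/s}\le C\big(\essinf_{B_{r/2}(x)}w+kr\big)\quad\text{for every }k>0,
\]
and letting $k\to0$ removes the term.

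The second gap is more substantive. To pass from $w(x)=0$ to $\essinf_{B_\rho(x)}w=0$ you need $v(x)\ge\essliminf_{y\to x}v(y)$. Lower semicontinuity only gives the reverse inequality, which is all you use at the end. Lemma~\ref{lem-super-liminf} only says that \emph{some} representative of $v$ has this property, not that $v$ itself does.

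Your proposed justification does not deliver it. Definition~\ref{def:Ash} only allows comparison with $h$ satisfying $h\le v$ at \emph{every} point of $\partial B$. Suppose $v\ge m+\delta$ a.e.\ near $x$ while $v(x)=m$. Then the relatively closed null set $\{v\le m+\delta/2\}$ contains $x$ and may a priori meet every sphere around $x$. Such comparison functions therefore give no lower bound for $v(x)$ near $m+\delta$. Closing this would require identifying limits of these harmonic extensions with the Sobolev solution of the Dirichlet problem, which is a genuinely extra argument.

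The cleanest repair is to run the open--closed argument pointwise, avoiding essential infima altogether:
\begin{enumerate}[(1)]
\item Suppose $u(y)>m$ for some $y\in B_r(x)$ and set $B:=B_{|y-x|}(x)$. Lower semicontinuity gives a cap of $\partial B$ on which $u>m+\epsilon$.
\item Take a smooth $\phi$ with $m\le\phi\le u$ on $\partial B$ and $\phi>m$ somewhere on the cap. Let $h$ be the $\cA$-harmonic function in $B$ with boundary values $\phi$; it is continuous on $\overline B$ by Theorem~\ref{thm-bdry-reg}.
\item Then $h\ge m$, and $h(x)\le u(x)=m$ by Definition~\ref{def:Ash}.
\item The rescaled weak Harnack inequality, applied to the continuous function $h-m$, forces $h\equiv m$ in $B$. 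This contradicts $h=\phi>m$ somewhere on $\partial B$.
\end{enumerate}
This version needs no truncation and works even when $u$ is unbounded.
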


\begin{proof}
This is proved in \cite[Theorem 4]{Che22Gen} when $u$ is finite a.e.\ in $\Omega$. Now, assume that $u \in \cS_{\cA}(\Omega)$ attains its minimum $m$ inside $\Omega$. Then, for any $k > m$, the function $u_k:=\min\{u, k\} \in \cS_{\cA}(\Omega)$ is finite everywhere and attains the same minimum $m$. By the aforementioned result, $u_k \equiv m$. Since this holds for every $k>m$, we conclude that $u \equiv m$ in $\Omega$.
\end{proof}

The following convergence theorem for $\cA$-superharmonic functions plays a fundamental role in the sequel. This result was established in \cite[Theorem~2]{Che22Gen} under the assumption that each $\cA$-superharmonic function in the sequence is finite almost everywhere, but we observe that this assumption is actually redundant for the validity of the proof.

\begin{proposition}
    \label{prop-conv-super}
The pointwise limit of a nondecreasing sequence of $\cA$-superharmonic functions in a domain $\Omega$ is either $\cA$-superharmonic in $\Omega$ or identically infinite in $\Omega$.
\end{proposition}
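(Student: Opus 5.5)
Let $u_1\le u_2\le\cdots$ be $\cA$-superharmonic in the domain $\Omega$ and set $u:=\lim_{j}u_j=\sup_j u_j$. I will check the three requirements of Definition~\ref{def:Ash} for $u$, with the comparison property and lower semicontinuity handled directly and finiteness handled by a dichotomy.

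\textbf{Lower semicontinuity.} Each $u_j$ is lower semicontinuous with values in $\R\cup\{\infty\}$. A pointwise supremum of lower semicontinuous functions is lower semicontinuous, so $u$ is lower semicontinuous. It also takes values in $\R\cup\{\infty\}$, because $u\ge u_1>-\infty$.

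\textbf{Comparison principle.} Let $U\Subset\Omega$ be open and let $h\in C(\overline U)$ be $\cA$-harmonic in $U$ with $u\ge h$ on $\partial U$. Fix $\varepsilon>0$. For each $y\in\partial U$ we have $\lim_j u_j(y)=u(y)>h(y)-\varepsilon$, and since $u_j$ is lower semicontinuous and $h$ is continuous, there is an index $j_y$ and a neighbourhood $V_y$ of $y$ with $u_{j_y}>h-\varepsilon$ on $V_y\cap\overline U$. The sets $\{u_j>h-\varepsilon\}\cap\partial U$ are relatively open and increase in $j$, and they cover the compact set $\partial U$, so a single index $J$ satisfies $u_J>h-\varepsilon$ on $\partial U$.

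The difficulty is that $h-\varepsilon$ need not be $\cA$-harmonic, since the operator has no homogeneity. Only additive constants are needed here, though, and $\cA(x,\nabla(h-\varepsilon))=\cA(x,\nabla h)$, so $h-\varepsilon$ is $\cA$-harmonic in $U$ and continuous on $\overline U$. The comparison property of $u_J$ then gives $u_J\ge h-\varepsilon$ in $U$, hence $u\ge h-\varepsilon$ in $U$. Letting $\varepsilon\to0$ yields $u\ge h$ in $U$. This step uses no finiteness assumption on the $u_j$, which is why the hypothesis in \cite[Theorem~2]{Che22Gen} can be dropped.

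\textbf{The dichotomy.} If $u\not\equiv\infty$ in $\Omega$, then, since $\Omega$ is a domain and therefore has a single component, $u$ satisfies all three requirements and is $\cA$-superharmonic. Otherwise $u\equiv\infty$, which is the other alternative in the statement. I do not expect a real obstacle anywhere in this argument: the only points needing care are the compactness step on $\partial U$ and the observation that translating an $\cA$-harmonic function by a constant keeps it $\cA$-harmonic.
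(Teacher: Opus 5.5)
Your proof is correct. The paper gives no argument of its own for this statement. It cites \cite[Theorem~2]{Che22Gen} and only remarks that the a.e.-finiteness hypothesis assumed there is not needed.

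You instead verify Definition~\ref{def:Ash} directly:
\begin{itemize}
\item the supremum of the lower semicontinuous functions $u_j$ is lower semicontinuous;
\item compactness of $\partial U$ yields a single index $J$ with $u_J>h-\varepsilon$ on $\partial U$;
\item the comparison property of $u_J$ is applied to $h-\varepsilon$, which is still $\cA$-harmonic.
\end{itemize}

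This is the standard proof of such Harnack-type principles. It also shows concretely why the paper's remark holds. The only structural property of $\cA$ used is that adding a constant preserves $\cA$-harmonicity, and this holds without homogeneity. Finiteness of the $u_j$ is never used.

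Relative to the paper, your version has the advantage of being checkable directly against the definition used here, without relying on a reference that states a stronger hypothesis.
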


A similar result holds for $\cA$-harmonic functions as well. See \cite[Corollary~4.9]{Che22Gen} for its proof.

\begin{proposition}
    \label{prop-conv-har}
The pointwise limit of a nondecreasing sequence of $\cA$-harmonic functions in a domain $\Omega$ is either $\cA$-harmonic in $\Omega$ or identically infinite in $\Omega$.
\end{proposition}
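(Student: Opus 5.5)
The claim is the Harnack-type convergence result of Proposition~\ref{prop-conv-har}: a nondecreasing sequence $(h_j)$ of $\cA$-harmonic functions converges either to an $\cA$-harmonic function or to $+\infty$ identically. I would first show that the limit $h:=\lim_j h_j$ is $\cA$-superharmonic or identically infinite, and then show that a finite limit is also $\cA$-subharmonic.

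\textbf{Step 1 (superharmonic or infinite).} Each $h_j$ is $\cA$-harmonic, hence $\cA$-superharmonic by the Corollary following Lemma~\ref{lem-cS-supersol}. Proposition~\ref{prop-conv-super} then says that $h$ is either $\cA$-superharmonic in $\Omega$ or identically infinite. In the latter case we are done, so assume from now on that $h \in \cS_{\cA}(\Omega)$.

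\textbf{Step 2 (local boundedness of a finite limit).} This is the main obstacle. I need $h$ to be finite everywhere and locally bounded. I would apply the Harnack inequality for nonnegative $\cA$-solutions with Orlicz growth (as in \cite{CGZG24}) to $h_j-h_1\ge 0$. This function is not itself an $\cA$-solution, since the operator is not translation-compatible in that sense. However, $h_j - \inf_{B} h_1$ is a nonnegative $\cA$-solution on a ball $B \Subset \Omega$, because constant shifts do preserve solutions. The Orlicz Harnack inequality has an additive term, of the form $\sup_{B_r} v \le C(\inf_{B_r} v + r)$, mirroring the weak Harnack used in the proof of Lemma~\ref{lem-super-liminf}. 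It gives
\[
\sup_{B_r} h_j \le C\,\inf_{B_r} h_j + C',
\]
with constants independent of $j$. Now the set $\{h<\infty\}$ is nonempty, since $h$ is superharmonic and hence not identically infinite. If $\inf_{B_r} h_j$ stays bounded for some point of $\{h<\infty\}$, then $h$ is bounded on $B_r$. A chain-of-balls argument shows that $\{h<\infty\}$ and its complement are both open. By connectedness of the domain, $h$ is then finite everywhere and locally bounded.

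\textbf{Step 3 (convergence and identification).} Since $h$ is locally bounded, Caccioppoli estimates give $h_j$ bounded in $W^{1,G}_{\rm loc}$. Uniform local Hölder estimates, whose constants depend only on the sup bounds, give local equicontinuity. By Arzel\`a--Ascoli together with monotonicity, $h_j \to h$ locally uniformly, so $h$ is continuous. To pass to the limit in the equation, I would use the standard monotonicity (Minty/Browder) trick: testing with $(h_j-h)\eta$ shows $\nabla h_j\to\nabla h$ a.e., and then Vitali's theorem with the growth bound \eqref{ass-op} passes the limit in $\int \cA(x,\nabla h_j)\cdot\nabla\varphi\dx = 0$. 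Hence $h$ is an $\cA$-solution that is continuous, that is, $\cA$-harmonic. An alternative finish is to note that $h$ is also $\cA$-subharmonic, as a locally uniform limit of continuous subharmonic functions, and then invoke the Corollary.
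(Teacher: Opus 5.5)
Your argument is correct and is essentially the classical Harnack convergence proof. It obtains local boundedness from the Orlicz Harnack inequality applied to the nonnegative $\cA$-solutions $h_j-\inf_B h_1$, which is legitimate because constant shifts preserve solutions. It then uses an open--closed argument for $\{h<\infty\}$, uniform Caccioppoli and H\"older bounds giving locally uniform convergence, and a monotonicity argument to pass to the limit in the equation. The paper does not prove this itself but cites \cite[Corollary~4.9]{Che22Gen}, which is the standard argument you reproduce; the limit passage in your Step~3 could also be shortened by applying Lemma~\ref{lem-mea-weakcon} with $\mu[h_j]=0$.
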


\subsection{Capacities}\label{sec2-3}

We recall three capacities that will be used throughout the paper: the (Sobolev) $G$-capacity, the relative $G$-capacity, and a dual capacity associated with the operator $\cA$.

First, for $E \subset \rn$, we define the \emph{$G$-capacity} of Sobolev type by
\begin{equation}\label{G-cap}
C_{G}(E):=\inf \int_{\rn} (G(|\varphi|)+G(|\nabla \varphi|)) \dx,
\end{equation}
where the infimum is taken over all nonnegative functions $\varphi\in W^{1,G}(\rn)$ such that $\varphi\geq 1$ in an open set containing $E$. It is known that $C_G$ is a Choquet capacity; see~\cite[Remark~10]{BHH18}. In particular, every Borel set $E\subset\rn$ satisfies
\begin{equation}\label{eq-Choquet}
C_G(E) = \sup {\{ C_G(K):\ K \subset E\text{ is compact} \}}.
\end{equation}
Moreover, $C_G$ is monotone and countably subadditive; see~\cite[Theorem~9]{BHH18}.

Next, we consider the \emph{relative $G$-capacity}, denoted by $\mathrm{cap}_G$. For a compact set $K\subset \Omega$, we define
\begin{equation*}
	\mathrm{cap}_{G}(K,\Omega):=\inf \int_{\Omega}G(|\nabla \varphi|)\dx,
\end{equation*}
where the infimum is taken over all nonnegative functions $\varphi\in C^\infty_c(\Omega)$ such that $\varphi\geq 1$ on $K$. For an open set $U\subset \Omega$ and an arbitrary set $ E\subset \Omega$, we define
\begin{equation*}
	\mathrm{cap}_{G}(U,\Omega):=\sup{\{\mathrm{cap}_{G}(K,\Omega):\ K \subset U \text{ is compact}\}},
\end{equation*}
and 
\begin{equation}\label{eq-G-U}
	\mathrm{cap}_{G}(E,\Omega):=\inf{\{\mathrm{cap}_{G}(U,\Omega):\
	E \subset U \subset \Omega \text{ and } U \text{ is open}\}},
\end{equation}
respectively. Note that \eqref{eq-G-U} is consistent with the definition for compact sets; see~\cite[Proposition~21]{BHH18}. It follows immediately from the definition that $\mathrm{cap}_G$ is monotone increasing with respect to the first argument and monotone decreasing with respect to the second. Moreover, $\mathrm{cap}_G$ is countably subadditive; see \cite[Theorem~24]{BHH18}. We shall also use the following continuity property.

\begin{lemma}[{\cite[Theorem~24]{BHH18}}]\label{lem-cap-limit}
If $E_1 \subset E_2 \subset \cdots \subset \Omega$, then $\mathrm{cap}_G\left(\bigcup_{i=1}^\infty E_i, \Omega\right) = \lim_{i \to \infty} \mathrm{cap}_G(E_i, \Omega).$
\end{lemma}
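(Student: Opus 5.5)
The easy inequality comes from monotonicity. Since $\mathrm{cap}_G(\cdot,\Omega)$ is monotone in the first argument, $\lim_{i}\mathrm{cap}_G(E_i,\Omega)$ exists and is at most $\mathrm{cap}_G(\bigcup_i E_i,\Omega)$. For the reverse inequality we may assume $L:=\lim_i \mathrm{cap}_G(E_i,\Omega)<\infty$. I would follow the classical lattice argument (as in the $p$-case, \cite[Theorem~2.2]{HKM06}), replacing $|\nabla u|^p$ by the modular $\int_\Omega G(|\nabla u|)\dx$.

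\textbf{Step 1: a Sobolev characterization of the capacity of open sets.} For an open $U\subset\Omega$ I would prove
\[
\mathrm{cap}_G(U,\Omega)=\inf\Big\{\int_\Omega G(|\nabla u|)\dx:\ u\in W^{1,G}_0(\Omega),\ u\ge 1 \text{ a.e.\ in } U\Big\}.
\]
For "$\le$": given a compact $K\subset U$ and an admissible $u$, take a cutoff-free approximation. Approximate $u$ in $W^{1,G}_0(\Omega)$ by $\varphi_j\in C^\infty_c(\Omega)$; these converge in modular because $G\in\Delta_2$ by \eqref{eq:Gg}. Then mollify $(1+\delta)u$ so that it is $\ge 1$ on a neighbourhood of $K$, and let $\delta\to 0$ using Lemma~\ref{lem-barg}. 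For "$\ge$": choose $\varphi\in C_c^\infty(\Omega)$ nearly optimal for compact sets $K_j\uparrow U$. I expect this direction to be the main technical obstacle, because the sup over compacts must be converted into a single Sobolev competitor. I would use reflexivity of $W^{1,G}_0(\Omega)$ and the uniform bound on $\int G(|\nabla\varphi_j|)$. Together with a Poincaré inequality in $\Omega$, this gives a weak limit that is $\ge 1$ a.e.\ on $U$. Weak lower semicontinuity of the convex modular then yields the claim. Truncating by $\min\{\cdot,1\}$ lets us work with $0\le u\le 1$.

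\textbf{Step 2: near-optimal competitors.} Fix $\varepsilon>0$. By \eqref{eq-G-U} choose open sets $U_i\supset E_i$ with $\mathrm{cap}_G(U_i,\Omega)<\mathrm{cap}_G(E_i,\Omega)+\varepsilon 2^{-i}$. By Step 1 choose $u_i\in W^{1,G}_0(\Omega)$ with $0\le u_i\le 1$, $u_i= 1$ a.e.\ on $U_i$, and $\int G(|\nabla u_i|)<\mathrm{cap}_G(E_i,\Omega)+\varepsilon 2^{-i}$. Set $v_k:=\max\{u_1,\dots,u_k\}$.

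\textbf{Step 3: induction.} The key pointwise identity is
\[
G(|\nabla\max\{u,v\}|)+G(|\nabla\min\{u,v\}|)=G(|\nabla u|)+G(|\nabla v|) \quad\text{a.e.}
\]
The function $\min\{v_k,u_{k+1}\}$ equals $1$ a.e.\ on the open set $U_k\cap U_{k+1}\supset E_k$, so its energy is at least $\mathrm{cap}_G(E_k,\Omega)$ by Step 1 and monotonicity. Hence
\[
\int G(|\nabla v_{k+1}|)\le \int G(|\nabla v_k|)+\int G(|\nabla u_{k+1}|)-\mathrm{cap}_G(E_k,\Omega).
\]
By induction this gives
\[
\int G(|\nabla v_k|)\le \mathrm{cap}_G(E_k,\Omega)+\varepsilon\sum_{i\le k}2^{-i}\le L+\varepsilon.
\]

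\textbf{Step 4: passing to the limit.} The sequence $v_k$ is increasing, bounded by $1$, and bounded in $W^{1,G}_0(\Omega)$. By reflexivity it converges weakly to $v=\sup_k v_k\in W^{1,G}_0(\Omega)$. Moreover $v=1$ a.e.\ on the open set $\bigcup_i U_i\supset\bigcup_i E_i$. Weak lower semicontinuity of the modular and Step 1 give
\[
\mathrm{cap}_G\Big(\bigcup_i E_i,\Omega\Big)\le\int G(|\nabla v|)\le L+\varepsilon,
\]
and letting $\varepsilon\to0$ concludes the proof.
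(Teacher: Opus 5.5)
Your argument is correct and is essentially the classical lattice proof from \cite{HKM06} (near-optimal Sobolev competitors for open supersets, the $\max$/$\min$ energy identity, induction, weak compactness), carried over to the modular $\int G(|\nabla u|)$; the paper itself gives no proof and simply quotes \cite[Theorem~24]{BHH18}, which rests on this argument. The one step to repair is the ``$\le$'' half of Step~1: mollifying $(1+\delta)u$ need not give a function compactly supported in $\Omega$. Instead, truncate so that $0\le u\le 1$ (hence $u=1$ a.e.\ on $U$), take nonnegative $\varphi_j\in C^\infty_c(\Omega)$ with $\varphi_j\to u$ in $W^{1,G}(\Omega)$ and $\eta\in C^\infty_c(U)$ with $0\le\eta\le1$, $\eta=1$ near $K$, and use $\eta+(1-\eta)\varphi_j$, which equals $1$ near $K$ and converges to $u$ in $W^{1,G}(\Omega)$.
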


We also recall the following results for capacity zero sets. We refer to \cite{BHH18,Chl23,CK21remov,HJ22} for further properties of $G$-capacity and relative $G$-capacity in a more general framework.

\begin{lemma}[{\cite[Theorem~27 and Proposition~29]{BHH18}}]\label{lem-cap-zero}
Let $E \subset \Omega$. Then $C_{G}(E)=0$ if and only if $\mathrm{cap}_{G}(E,\Omega)=0$. In this case, we simply say that $E$ has $G$-capacity zero.
\end{lemma}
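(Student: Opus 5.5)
The plan is to compare the two capacities through their test functions, passing through compact sets and open sets, and to use the cited results from \cite{BHH18} mainly as a model for the details.

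\textbf{From $\mathrm{cap}_G(E,\Omega)=0$ to $C_G(E)=0$.} Suppose $\mathrm{cap}_G(E,\Omega)=0$, and fix $\varepsilon>0$.
\begin{enumerate}[(i)]
\item By \eqref{eq-G-U} there is an open $U$ with $E\subset U\subset\Omega$ and $\mathrm{cap}_G(U,\Omega)<\varepsilon$.
\item Fix a compact $K\subset U$ and take a test function $\varphi\in C_c^\infty(\Omega)$ with $\varphi\ge 1$ on $K$ and $\int G(|\nabla\varphi|)<\varepsilon$. We may replace $\varphi$ by $\min\{\varphi_+,1\}$ (after mollifying slightly to keep it admissible).
\item Since $\Omega$ is bounded, an Orlicz--Poincaré inequality on a ball containing $\Omega$ gives $\int G(|\varphi|)\le C\int G(|\nabla\varphi|)$. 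Here $C$ depends on $\mathrm{diam}\,\Omega$ and on $p,q$ through Lemma~\ref{lem-barg}.
\item The test function must be $\ge1$ on an open set containing $K$. Use $(1+\delta)\varphi$, which is $\ge 1$ on a neighbourhood of $K$ by continuity; its energy is controlled by the $q$-doubling property.
\end{enumerate}
This yields $C_G(K)\le C\varepsilon$ for every compact $K\subset U$. Since $U$ is open, hence Borel, the Choquet property \eqref{eq-Choquet} gives $C_G(U)\le C\varepsilon$. By monotonicity $C_G(E)\le C\varepsilon$, and letting $\varepsilon\to0$ gives $C_G(E)=0$.

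\textbf{From $C_G(E)=0$ to $\mathrm{cap}_G(E,\Omega)=0$.} Here one must localize, because the test functions for $C_G$ are not compactly supported in $\Omega$.
\begin{enumerate}[(i)]
\item Cover $\Omega$ by countably many balls $B_i$ with $2B_i\Subset\Omega$, and set $E_i=E\cap B_i$. Then $C_G(E_i)=0$.
\item By countable subadditivity of $\mathrm{cap}_G(\cdot,\Omega)$, it suffices to show $\mathrm{cap}_G(E_i,\Omega)=0$ for each $i$. By monotonicity in the second argument, it is enough to show $\mathrm{cap}_G(E_i,2B_i)=0$.
\item Fix a cutoff $\eta\in C_c^\infty(2B_i)$ with $\eta=1$ on $B_i$ and $|\nabla\eta|\le C/r_i$.
\item Given $\varepsilon$, take $\varphi\ge1$ on an open set $V\supset E_i$ with $\int G(\varphi)+G(|\nabla\varphi|)<\varepsilon$, and truncate to $0\le\varphi\le1$. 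Then $\psi=\eta\varphi$ is $\ge1$ on the open set $V\cap B_i$.
\item Its energy satisfies
\[
\int G(|\nabla\psi|)\le C\int G(|\nabla\varphi|)+C(r_i)\int G(\varphi)\le C(r_i)\,\varepsilon,
\]
using doubling and Lemma~\ref{lem-barg}.
\end{enumerate}

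\textbf{Main obstacle: admissibility of $\psi$.} The function $\psi$ lies in $W^{1,G}_0(2B_i)$, but it is not smooth. For any compact $K\subset V\cap B_i$ it has to be approximated by $C_c^\infty$ functions that are $\ge1$ on $K$. I would first try mollifying $(1+\delta)\psi$. Mollification gives convergence in $W^{1,G}$, by density of smooth functions under the $\Delta_2$ condition. It also preserves the lower bound near $K$, because $(1+\delta)\psi\ge1+\delta$ on the open set $V\cap B_i$ and mollification averages over a small neighbourhood of $K$. This bounds $\mathrm{cap}_G(K,2B_i)$ for every compact $K\subset V\cap B_i$. Taking the supremum over such $K$ bounds $\mathrm{cap}_G(V\cap B_i,2B_i)$, and then the infimum over open sets in \eqref{eq-G-U} bounds $\mathrm{cap}_G(E_i,2B_i)$. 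A similar approximation issue arises in the first direction, but it is milder there.
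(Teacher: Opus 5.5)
Your argument is correct. The paper gives no proof of this lemma; it imports it from \cite{BHH18}, where it is established in the more general generalized Orlicz setting. You instead give a self-contained proof adapted to the present autonomous doubling $G$, by the standard route. One direction uses a modular Poincar\'e inequality for functions compactly supported in the bounded set $\Omega$, together with the inner regularity \eqref{eq-Choquet} of $C_G$ on open sets. The other direction uses cutoff localization, countable subadditivity of $\mathrm{cap}_G(\cdot,\Omega)$, and its monotonicity in the second argument.

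A bonus of your first half is that it yields the linear bound $C_G(E)\le C\,\mathrm{cap}_G(E,\Omega)$, with $C$ depending only on $p$, $q$ and $\mathrm{diam}\,\Omega$. This is exactly the form in which the paper later invokes \cite[Theorem~27]{BHH18} in the proof of Theorem~\ref{thm-quasicont}.

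Two steps can be streamlined.
\begin{enumerate}
\item In the first direction, the truncation and the mollification are superfluous. Admissible functions for $\mathrm{cap}_G(K,\Omega)$ are already nonnegative, and the test class for $C_G$ requires no smoothness. Continuity of $\varphi$ alone gives the open set $\{\varphi>1/(1+\delta)\}\supset K$.
\item In your ``main obstacle'', the factor $1+\delta$ is not needed. After truncation, $\psi=1$ a.e.\ on $V\cap B_i$, so $\rho_t*\psi=1$ on $K$ once $t<\dist(K,\partial(V\cap B_i))$, and $\rho_t*\psi$ has compact support in $2B_i$ for small $t$. Jensen's inequality then gives $\int G(|\nabla(\rho_t*\psi)|)\dx\le\int G(|\nabla\psi|)\dx$ directly, with no limiting argument in $W^{1,G}$.
\end{enumerate}
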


\begin{lemma}[{\cite[Lemma~11]{BHH18}}]\label{lem-cap-measure}
If $E \subset \rn$, then $C_G(E) \geq c\, |E|_{\mathrm{outer}}$ for some $c>0$, where $|\cdot|_{\mathrm{outer}}$ denotes the Lebesgue outer measure. In particular, every set of $G$-capacity zero has Lebesgue measure zero.
\end{lemma}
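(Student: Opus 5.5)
The plan is to prove the measure bound by a capacitary covering argument that uses only Orlicz--Sobolev (Poincaré) inequalities and the doubling of $G$ from Lemma~\ref{lem-barg}; no scaling of $\cA$ is needed. By the Choquet property \eqref{eq-Choquet} and the definition \eqref{G-cap}, it suffices to bound $|U|$ for an open set $U\supset E$ and a nonnegative $\varphi\in W^{1,G}(\rn)$ with $\varphi\ge 1$ on $U$. Here $|U|$ dominates $|E|_{\mathrm{outer}}$.

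The argument rests on a single pointwise comparison. On $U$ we have $\varphi\ge1$, so $1\le \varphi\le \varphi^{\min\{1,\cdot\}}$ there. Since $G$ is increasing, $G(1)\le G(|\varphi|)$ on $U$. Integrating gives
\begin{equation*}
G(1)\,|U|\le \int_U G(|\varphi|)\dx\le \int_{\rn}\bigl(G(|\varphi|)+G(|\nabla\varphi|)\bigr)\dx.
\end{equation*}
Taking the infimum over all admissible $\varphi$ yields $C_G(E)\ge G(1)\,|E|_{\mathrm{outer}}$. This uses only that $G(1)>0$, which holds because $G$ is an $N$-function.

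So the inhomogeneous capacity $C_G$ contains the zeroth-order term $G(|\varphi|)$, and that term alone already controls the measure. The covering and Poincaré machinery I first had in mind is unnecessary.

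The only point needing care is measurability. The set $E$ is arbitrary, which is why outer measure appears. Since $U$ is open, $|U|$ is a genuine Lebesgue measure and $|E|_{\mathrm{outer}}\le |U|$ by monotonicity. The final statement, that sets of $G$-capacity zero are Lebesgue null, follows at once with $c=G(1)$.

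I expect no real obstacle; the proof is one line once one notices that the infimum in \eqref{G-cap} includes $G(|\varphi|)$.
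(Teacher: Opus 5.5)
Your argument is correct and is the standard one: $\varphi\ge 1$ a.e.\ on the open set $U\supset E$, and $G$ is increasing with $G(1)>0$, so the zeroth-order term alone gives $C_G(E)\ge G(1)\,|E|_{\mathrm{outer}}$; this is the mechanism behind \cite[Lemma~11]{BHH18}, and the paper itself gives no proof beyond that citation. Delete the meaningless chain $1\le\varphi\le\varphi^{\min\{1,\cdot\}}$ and the unneeded appeal to \eqref{eq-Choquet}, since you only use the definition \eqref{G-cap} and the monotonicity of $G$.
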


Finally, we introduce a dual capacity associated with $\cA$. Let $0\leq\nu\in (W^{1,G}_0(\Omega))'$. We denote by $u_\nu\in W^{1,G}_0(\Omega)$ the unique function satisfying
\begin{equation}\label{eq-u-nu}
\int_\Omega \cA(x,\nabla u_\nu)\cdot\nabla\varphi\dx= \int_\Omega \varphi \,\mathrm{d}\nu \quad\text{for all }\varphi\in W^{1,G}_0(\Omega),
\end{equation}
whose existence and uniqueness follow from the monotone operator theory; see, for instance,~\cite[Corollary~III.1.8]{KS80}. For $E\subset\Omega$, define
\begin{equation*}
\widetilde{\mathrm{cap}}_{\cA}(E,\Omega)=\sup\left\{\nu(\Omega):\ 0\leq\nu\in (W^{1,G}_0(\Omega))',\ \supp\nu\subset E,\ 0\leq u_\nu\leq1\ \text{a.e.\ in }\Omega\right\}.
\end{equation*}
We will prove in Lemma~\ref{lem-cap-dual} that the dual capacity $\widetilde{\mathrm{cap}}_{\cA}$ is comparable to the relative $G$-capacity $\mathrm{cap}_G$.

\subsection{Renormalized solutions}\label{sec2-4}

Let us analyse the notion of renormalized solutions from Definition~\ref{def-ren}. We consider $-\DIV\cA(x,\nabla u)=\mu$ with a nonnegative Radon measure $\mu$ on $\Omega$. Every nonnegative Radon measure $\mu$ admits a unique decomposition $\mu=\mu_0+\mu_s$, where $\mu_0 \ll C_G$ and $\mu_s \perp C_G$. More precisely, $\mu_0(E)=0$ for every Borel set $E$ of $G$-capacity zero, and there exists a Borel set $F$ of $G$-capacity zero such that $\mu_s(\Omega \setminus F)=0$. This is the standard decomposition of a measure; see \cite[Lemma~2.1]{FST91} and, in the generalized Orlicz setting, \cite[Remark~1.3]{Chl23}. This allows us to define a renormalized solution as in Definition~\ref{def-ren}. All terms appearing in \eqref{eq-renormalized-intro} are finite under the assumption \eqref{eq:renormalizedpro-intro}. Indeed, since $h'=0$ outside $[-k, k]$ for some $k>0$, we have $h'(u) \nabla u=h'(u)\nabla T_k(u)$, allowing us to bound the left-hand side of \eqref{eq-renormalized-intro} by
\begin{equation*}
c_2^{\cA} \int_{\supp \varphi} \left( qG(|\nabla T_k(u)|) \|\varphi\|_{\infty} \|h'\|_{\infty} + g(|\nabla u|) \|\nabla \varphi\|_{\infty} \|h\|_{\infty} \right) \dx < \infty,
\end{equation*}
where we have used \eqref{ass-op} and \eqref{eq:Gg}. The right-hand side of \eqref{eq-renormalized-intro} is finite, due to the boundedness of $h$ and $\varphi$ together with the local finiteness of the Radon measures $\mu_0$ and $\mu_s$, since $\supp\varphi \Subset \Omega$.

\begin{remark}\label{rmk-ren}
As established in \cite[Proposition~6.2]{Chl23}, for any nonnegative bounded measure $\mu$, the Dirichlet problem  $-\DIV\cA(x,\nabla u)=\mu$ with zero boundary data (in the sense that $T_k(u) \in W^{1,G}_0(\Omega)$ for every $k>0$) admits a renormalized solution $u$ such that $\mu_{0}(\{u=\infty\})=\mu_s(\{u<\infty\})=0$. As we shall see in Proposition~\ref{prop-mu-s}, $\cA$-superharmonic functions have similar properties.
\end{remark}

\section{Renormalized solutions have superharmonic representatives}\label{sec3}

In this section, we prove that every renormalized solution admits an $\cA$-superharmonic representative. Our approach is inspired by the classical results of~\cite{Kil11super, KM92, KM94}, as well as recent developments in nonlocal problems~\cite{kim25wolff}. Let us present first auxiliary lemmas needed in the proof that any renormalized solution $u$ of \eqref{eq:main} in $\Omega$ is locally essentially bounded below.
\begin{lemma}\label{lem:au-neg}
    If $u$ is a  renormalized solution of \eqref{eq:main} in $\Omega$, then  $u_-$ satisfies
\begin{equation}\label{eq:rebound}
\int_{\Omega} \widetilde{\cA}(x,\nabla u_{-})\cdot \nabla (h(u_{-})\varphi)\dx \leq 0
\end{equation}
for any nonnegative $\varphi\in C^{\infty}_{c}(\Omega)$ and any nonnegative function $h\in W^{1,\infty}(\mathbb{R})$ such that $h'$ has compact support.
\end{lemma}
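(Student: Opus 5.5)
The plan is to test the renormalized formulation \eqref{eq-renormalized-intro} with a renormalization that only sees the negative values of $u$. Given a nonnegative $h\in W^{1,\infty}(\R)$ with $\supp h'\subset[-k,k]$, set
\[
H(t):=h(t_-)=h(\max\{-t,0\}),
\]
so that $H(u)=h(u_-)$. Here $\widetilde{\cA}(x,\xi)=-\cA(x,-\xi)$ is the reflected operator introduced before the Corollary in Section~\ref{sec2}. The function $H$ is Lipschitz, but $H'$ need not have compact support: it equals $-h'(-t)$ for $t<0$ and $0$ for $t>0$. Moreover $H$ is constant, equal to $h(0)$, on $[0,\infty)$, so $H(\infty)=h(0)\ge 0$.

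There is an issue when $h(t)$ is not constant for large $t$. This is harmless: since $\supp h'\subset[-k,k]$, the function $h$ is constant for $t\ge k$, hence $H$ is constant for $t\le -k$. Thus $H\in W^{1,\infty}(\R)$ with $\supp H'\subset[-k,0]$, and $H$ is an admissible test renormalization.

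Insert $H$ into \eqref{eq-renormalized-intro}. The right-hand side is
\[
\int_\Omega h(u_-)\varphi\dmu_0+h(0)\int_\Omega\varphi\dmu_s\ \ge 0,
\]
because $h\ge0$, $\varphi\ge0$ and $\mu\ge0$. For the left-hand side we use the generalized gradient. By \eqref{eq-generalized-grad}, a.e. on $\{u<0\}$ we have $\nabla u=-\nabla u_-$, while $\nabla u_-=0$ a.e. on $\{u\ge0\}$. Hence, with $\varphi$ and $h$ fixed,
\[
\nabla\bigl(h(u_-)\varphi\bigr)=h(u_-)\nabla\varphi+h'(u_-)\varphi\,\nabla u_- .
\]
We then split $\Omega$ into $\{u<0\}$ and $\{u\ge 0\}$.

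On $\{u<0\}$, the identity $\nabla u=-\nabla u_-$ gives
\[
\cA(x,\nabla u)=\cA(x,-\nabla u_-)=-\widetilde{\cA}(x,\nabla u_-).
\]

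On $\{u\ge0\}$, the term $h'(u_-)\varphi\nabla u_-$ vanishes. What remains there is
\[
h(0)\int_{\{u\ge0\}}\cA(x,\nabla u)\cdot\nabla\varphi\dx .
\]
This term does not have a sign, and handling it is the main obstacle.

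To remove it, I would first prove the claim for $h$ with $h(0)=0$. For the general case, approximate $h$ by $h_\varepsilon(t):=h(t)\min\{t/\varepsilon,1\}$ on $t\ge0$. This changes $\nabla(h(u_-)\varphi)$ only on $\{0<u_-<\varepsilon\}$, at the cost of an extra term
\[
\frac1\varepsilon\int_{\{0<u_-<\varepsilon\}}h(u_-)\varphi\,\widetilde{\cA}(x,\nabla u_-)\cdot\nabla u_-\dx\ \ge 0,
\]
by \eqref{ass-op}. This term has the favorable sign, so the inequality for $h_\varepsilon$ gives an inequality for the remaining terms. Letting $\varepsilon\to0$ by dominated convergence, with $g(|\nabla u|)\in L^1_{\rm loc}$ and $h_\varepsilon(u_-)\to h(u_-)\chi_{\{u_-> 0\}}$, these converge to the left-hand side of \eqref{eq:rebound}. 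Note that $\widetilde{\cA}(x,0)=0$ on $\{u_-=0\}$ a.e. by \eqref{ass-op}, so that set contributes nothing.

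It remains to settle the case $h(0)=0$, and there I must double-check the sign. For such $h$ we get
\[
-\int_\Omega\widetilde{\cA}(x,\nabla u_-)\cdot\nabla(h(u_-)\varphi)\dx=\int h(u_-)\varphi\dmu_0\ \ge0,
\]
which is exactly \eqref{eq:rebound}. The delicate point is verifying that each passage stays within admissible renormalizations and that the limits are justified. Both follow from the integrability conditions in \eqref{eq:renormalizedpro-intro}, and from the fact that $T_k(u)\in W^{1,G}_{\rm loc}$ controls $\nabla u_-$ on $\{u_-<k\}$.
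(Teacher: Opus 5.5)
Your argument is correct and is essentially the paper's proof. Composing your two steps, you test with $h(u_-)\min\{u_-/\varepsilon,1\}$, which vanishes on $\{u\ge 0\}$; so the $\mu_s$-term drops, the $\mu_0$-term has the right sign, and the $\varepsilon$-ramp term is signed by coercivity before you let $\varepsilon\to0$ by dominated convergence. The paper does the same in one step with the mirror-image, nonpositive renormalization $h(-t)\bigl(\tfrac1\varepsilon\min\{\varepsilon,t_+\}-1\bigr)$, whose ramp sits on $\{0<u<\varepsilon\}$ rather than on $\{-\varepsilon<u<0\}$.
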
\begin{proof}
 We decompose $u$ into its positive and negative parts as $u=u_{+}-u_{-}$, where $u_+:=\max\{u, 0\}$ and $u_-:=\max\{-u, 0\}$. Define $\widetilde{\cA}(x, \xi)=-\cA(x, -\xi)$. Note that $\widetilde{\cA}$ satisfies the exact same monotonicity, coercivity, and growth conditions \eqref{eq-monotonicity} and \eqref{ass-op}, with the same constants $c_1^{\cA}$ and $c_2^{\cA}$. We choose
\begin{equation*}
h_{\varepsilon}(t):=\tfrac{1}{\varepsilon}\min\{\varepsilon,t_+\}-1,\quad \varepsilon>0,
\end{equation*}
and define $H_\varepsilon(t) = h(-t)h_\varepsilon(t)$. Notice that $H_\varepsilon \in W^{1,\infty}(\mathbb{R})$ and its derivative $H_\varepsilon'$ has compact support. Plugging $H_\varepsilon$ and $\varphi$ into \eqref{eq-renormalized-intro}, we evaluate both sides. For the right-hand side, since $h_\varepsilon(\infty)=0$ and $h_\varepsilon(t) \leq 0$ everywhere, we have
\begin{equation*}
H_{\varepsilon}(\infty) \int_{\Omega}\varphi\dmu_s=0 \quad\text{and}\quad \int_{\Omega}H_{\varepsilon}(u)\varphi\dmu_0\leq 0.
\end{equation*}
For the left-hand side, we observe that
\begin{equation*}
\int_{\Omega} \cA(x,\nabla u ) \cdot \nabla h_{\varepsilon}(u)h(-u)\varphi\dx \geq c_1^{\cA}\int_{\Omega} G(|\nabla u|) h_\varepsilon'(u)h(-u)\varphi\dx \geq 0
\end{equation*}
by \eqref{ass-op}. Combining these bounds, we obtain
\begin{equation*}
\int_{\Omega}\cA(x,\nabla u)\cdot \nabla (h(-u)\varphi)h_{\varepsilon}(u)\dx \leq 0.
\end{equation*}
By the dominated convergence theorem, passing to the limit as $\varepsilon \to 0$ gives \eqref{eq:rebound}. \end{proof}
We stress that Lemma~\ref{lem:au-neg} implies that $u_{-}$ is a nonnegative $\widetilde{\cA}$-subsolution for which a priori integrability requirements are not necessarily fulfilled. However, $\min\{u_{-},k\}\in W^{1,G}_{\rm loc}(\Omega)$ for all $k>0$. In the following series of lemmas, we use Lemma~\ref{lem:au-neg} to show that $u$ is locally essentially bounded below, or equivalently that $u_{-}$ is locally essentially bounded. Once this is established, the assumption $T_{k}(u)\in W_{\rm loc}^{1,G}(\Omega)$ will allow us, in the proof of Theorem~\ref{thm-ren-sup}, to deduce that $\min\{u,k\}\in W^{1,G}_{\rm loc}(\Omega)$ for all $k>0$.

\begin{lemma}\label{lem:re}
Let $u$ be a renormalized solution to \eqref{eq:main} in $B_R=B_R(x_0)$ with a nonnegative Radon measure $\mu$. For $k\in\mathbb R$, define $v:=\frac{(u_--k)_+}{R}$, and fix $d>0$. Then, for any $\tau>1$, there exists a constant $C=C(p, q, \tau, c^\cA_1, c^\cA_2)>0$ such that
\begin{equation*}
\int_{B_{R/2}} \left( \frac{\bar{g}(d+v)}{\bar{g}(d)} \right)^{\frac{1-\tau}{q-1}} \frac{G(R|\nabla v|)}{d+v} \dx \leq C\bar{g}(d) \int_{B_R \cap \{v>0\}} \left( \frac{\bar{g}(d+v)}{\bar{g}(d)} \right)^{\tau} \dx.
\end{equation*}  
\end{lemma}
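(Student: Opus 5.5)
We start from Lemma~\ref{lem:au-neg}: $w:=u_-$ satisfies
\[
\int_{\Omega}\widetilde{\cA}(x,\nabla w)\cdot\nabla(h(w)\varphi)\dx\le 0
\]
for nonnegative $\varphi\in C^\infty_c(\Omega)$ and nonnegative $h\in W^{1,\infty}$ with $h'$ compactly supported. The aim is a Caccioppoli-type inequality with a negative-power weight, in the spirit of the Kilpel\"ainen--Mal\'y estimates for subsolutions, adapted to the $\bar g$-scaling.

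\textbf{Test function.} Fix a cutoff $\eta\in C^\infty_c(B_R)$ with $0\le\eta\le1$, $\eta=1$ on $B_{R/2}$, $|\nabla\eta|\le 4/R$, and take $\varphi=\eta^q$. Since the left-hand side of the desired inequality carries a weight decreasing in $v$, choose $h$ decreasing in $w$. Concretely, with $v=(w-k)_+/R$, set
\[
h(w)=\Phi(v),\qquad \Phi(s)=\bar g(d)^{-1}\Bigl[\Bigl(\tfrac{\bar g(d)}{\bar g(d+s)}\Bigr)^{\frac{\tau-1}{q-1}}\cdot\text{(normalisation)}\Bigr],
\]
or more simply $\Phi(s)=\psi(0)-\psi(s)$ with $\psi$ decreasing. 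We want $h\ge0$ and $h'\le0$ on $\{v>0\}$, with $-h'(w)\,R\approx (d+v)^{-1}\bigl(\bar g(d+v)/\bar g(d)\bigr)^{(1-\tau)/(q-1)}$ up to constants. Such an $h$ does not have $h'$ compactly supported, so truncate: replace $v$ by $\min\{v,M\}$, which is allowed because $T_k(u)\in W^{1,G}_{\rm loc}$, and let $M\to\infty$ at the end by monotone convergence. The sign must work out: for a subsolution one uses $\varphi h$ with $h$ increasing in the usual setting. Here I would use $h(w)=\psi(0)-\psi(v)$, which is increasing in $w$ because $\psi$ is decreasing. Then $h'(w)\,|\nabla w|^2$-type terms appear with a good sign on the left.

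\textbf{Estimate.} Expanding with $\nabla(h\varphi)=h'\nabla w\,\varphi+h\nabla\varphi$ gives
\[
\int h'(w)\,\widetilde{\cA}\cdot\nabla w\,\eta^q\le q\int h\,|\widetilde{\cA}|\,\eta^{q-1}|\nabla\eta|.
\]
By \eqref{ass-op}, the left side is at least $c_1^\cA\int h'(w)\,G(|\nabla w|)\,\eta^q$, and the right side is at most $c\int h\,g(|\nabla w|)\,\eta^{q-1}/R$. Apply Young's inequality (Lemma~\ref{lem-barg}(iii), or \eqref{eq-Young} with Lemma~\ref{lem-H}) to the term $g(|\nabla w|)\cdot h/(R h')$, with $\delta$ small, absorbing $\delta\, h'\,G(|\nabla w|)\eta^q$. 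This leaves $\int h'\,\bar g\bigl(h/(\delta R h')\bigr)\,h/(R h')$. Writing everything in terms of $v$ (so $R|\nabla v|=|\nabla w|$ on $\{v>0\}$), choose $\psi$ so that $h/(R h')\lesssim d+v$. Then the remainder is bounded by
\[
\int h\,\bar g(d+v)/R\;\lesssim\;\bar g(d)\int_{\{v>0\}}\bigl(\bar g(d+v)/\bar g(d)\bigr)^{\tau}\cdot(\text{normalisation}),
\]
using \eqref{eq-barg-der} and Lemma~\ref{lem-barg}(i) to compare $\bar g((d+v)/\delta)$ with $\bar g(d+v)$.

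\textbf{Main obstacle.} The delicate step is choosing $\psi$ so that two requirements hold simultaneously. First, the ratio $h/h'$ must be controlled by $d+v$ uniformly; this needs $h\approx$ const near $v=0$ and therefore forces the shift $d>0$. Second, the exponents must produce exactly $(1-\tau)/(q-1)$ on the left and $\tau$ on the right. I would try $\psi(s)=\int_s^\infty (d+t)^{-1}\bigl(\bar g(d+t)/\bar g(d)\bigr)^{(1-\tau)/(q-1)}\,dt$. This integral is finite, since by \eqref{eq-barg-der} the integrand decays like $(d+t)^{-1-(p-1)(\tau-1)/(q-1)}$, and $\psi(0)\approx1$ by that same decay. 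This gives $h\le\psi(0)$ and $h'R=-\psi'(v)$, the desired weight. For $h/(Rh')\lesssim d+v$, however, one would need $h$ comparable to $\psi(0)-\psi(v)$, which is small near $v=0$. So I may instead need to split $\{v\text{ small}\}$ versus large, or test with $h\cdot(\text{extra factor }\bar g(d+v)^{\tau})$, using the doubling bounds of Lemma~\ref{lem-barg}(i) to close the estimate. This is where I expect the real work to lie.
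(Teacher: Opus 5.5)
\textbf{Verdict: real gap, easy to close.} Your setup is the paper's. You test Lemma~\ref{lem:au-neg} with a bounded increasing function of $v$ times $\eta^q$, and you put the ratio $h/h'$ inside Young's inequality, which is equivalent to the paper's pointwise choice of $\delta$. Your $h=\psi(0)-\psi(v)$ is even comparable to the paper's $\varsigma(v)=1-(\bar g(d+v)/\bar g(d))^{(1-\tau)/(q-1)}$: by \eqref{eq-barg-der} both have $v$-derivative comparable to $(d+v)^{-1}X^{-1}$, where $X:=(\bar g(d+v)/\bar g(d))^{(\tau-1)/(q-1)}\ge 1$.

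The gap is that you never close the estimate, and the obstacle you describe is misdiagnosed. You aim for $h/(Rh')\lesssim d+v$. For a bounded increasing $h$ that is positive on $\{v>0\}$ this is impossible: it says $\frac{d}{dv}\log h\ge \frac{c}{d+v}$, which forces $h$ to grow like a power of $d+v$. For your $h$ the failure occurs at large $v$, not near $v=0$; there the ratio is $\approx v$. So the remedies you float, splitting off small $v$ or inserting an extra unbounded factor $\bar g(d+v)^{\tau}$, are aimed at a requirement that is both unattainable and unnecessary.

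What you actually need is weaker, and your own $\psi$ already gives it. You have $h\le\psi(0)\le\frac{q-1}{(p-1)(\tau-1)}$ and $R\,h'(w)=-\psi'(v)=\frac{1}{(d+v)X}$, so $\frac{h}{Rh'}\le\psi(0)\,(d+v)X$.

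After absorbing the $\delta$-term with a fixed small $\delta$, the remainder is at most
\[
\frac{C}{R}\,\bar g\!\left(C\delta^{-1}(d+v)X\,\tfrac{R|\nabla\eta|}{\eta}\right)\tfrac{R|\nabla\eta|}{\eta}\,\eta^q\chi_{\{v>0\}}.
\]
Lemma~\ref{lem-barg}(i),(ii) splits off the cutoff factor $(R|\nabla\eta|)^p\eta^{q-p}+(R|\nabla\eta|)^q\le C$. Since $X\ge 1$,
\[
\bar g\bigl((d+v)X\bigr)\le X^{q-1}\bar g(d+v)=\bar g(d)\left(\frac{\bar g(d+v)}{\bar g(d)}\right)^{\tau}.
\]
This is exactly the claimed right-hand side. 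The exponent $\tau$ there, rather than $1$, is precisely what pays for the factor $X$ in the ratio. With this observation your argument becomes the paper's proof.

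\textbf{Truncation order.} Do not send $M\to\infty$ only after Young's inequality. On $\{v\ge M\}$ the truncated $h_M$ has $h_M'=0$ but $h_M>0$, so nothing can be absorbed there. Pass to the limit first in
\[
\int h_M'(w)\,\widetilde{\cA}(x,\nabla w)\cdot\nabla w\,\eta^q\dx\le q\int h_M(w)\,|\widetilde{\cA}(x,\nabla w)|\,\eta^{q-1}|\nabla\eta|\dx,
\]
using monotone convergence on the left and dominated convergence on the right via $g(|\nabla u|)\in L^1_{\rm loc}$. This is what the paper does with $\varsigma_\varepsilon$.
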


\begin{proof}
For $\varepsilon \in (0,1)$, we define the function
\begin{equation*}
\varsigma_{\varepsilon}(t) := 1- \min \left\{ \frac{\bar{g}(d+(t-k)_+/R)}{\bar{g}(d)}, \frac{1}{\varepsilon} \right\}^{\frac{1-\tau}{q-1}}.
\end{equation*}
Let $\varphi \in C^\infty_c(B_R)$ be a nonnegative cutoff function such that $\varphi \equiv 1$ on $B_{R/2}$, $0 \leq \varphi \leq 1$, and $|\nabla \varphi| \leq 4/R$. Note that the inequality \eqref{eq:rebound} is valid for bounded nonnegative $W^{1,G}_0(\Omega)$-functions by approximation. Taking $\varsigma_\varepsilon(u_-)\varphi^q$ as a test function, it follows from \eqref{eq:rebound} and the structural conditions \eqref{ass-op} for $\widetilde{\cA}$ that
\begin{align}\label{eq-energy}
\begin{split}
0 \geq& \int_{B_R} \widetilde{\cA}(x,\nabla u_{-})\cdot \nabla (\varsigma_{\varepsilon}(u_{-})\varphi^q)\dx \\ 
\geq &c^{\cA}_1\int_{B_R} G(|\nabla u_-|) \varsigma_{\varepsilon}'(u_-) \varphi^q \dx - qc^{\cA}_2\int_{B_R} g(|\nabla u_-|) \varsigma_{\varepsilon}(u_-) \varphi^{q-1} |\nabla \varphi| \dx \\
&\to \frac{c^\cA_1}{R} \int_{B_R} G(R |\nabla v|) \varsigma'(v) \varphi^q \dx- qc^\cA_2 \int_{B_R} g(R|\nabla v|) \varsigma(v) \varphi^{q-1} |\nabla\varphi| \dx
\end{split}
\end{align}
as $\varepsilon \to 0$, where
\begin{equation*}
\varsigma(v) = 1- \left( \frac{\bar{g}(d+v)}{\bar{g}(d)} \right)^{\frac{1-\tau}{q-1}}.
\end{equation*}
Using \eqref{eq-barg-der}, we have
\begin{equation*}
\varsigma'(v) = \frac{\tau-1}{q-1} \left( \frac{\bar{g}(d+v)}{\bar{g}(d)} \right)^{\frac{1-\tau}{q-1}-1} \frac{\bar{g}'(d+v)}{\bar{g}(d)} \geq (\tau-1)\frac{p-1}{q-1} \left( \frac{\bar{g}(d+v)}{\bar{g}(d)} \right)^{\frac{1-\tau}{q-1}} \frac{1}{d+v},
\end{equation*}
and hence
\begin{equation}\label{eq-1}
\frac{c^\cA_1}{R} \int_{B_R} G(R|\nabla v|) \varsigma'(v) \varphi^q \dx \geq \frac{c}{R} \int_{B_R} \left( \frac{\bar{g}(d+v)}{\bar{g}(d)} \right)^{\frac{1-\tau}{q-1}} \frac{G(R|\nabla v|)}{d+v} \varphi^q \dx
\end{equation}
for some $c=c(p, q, \tau, c^\cA_1)>0$. 
On the other hand, observing that $\varsigma \leq \chi_{\{v>0\}}$ and using \eqref{eq-barg-comp} and Lemma~\ref{lem-barg}(iii) with $\delta>0$, we obtain
\begin{align}\label{eq-2}
\begin{split}
\int_{B_R} g(R|\nabla v|) \varsigma(v) \varphi^{q-1} |\nabla \varphi| \dx
&\leq q \int_{B_R \cap \{v>0\}} \bar{g}(R|\nabla v|) \frac{|\nabla \varphi|}{\varphi} \varphi^q \dx \\
&\leq q \int_{B_R \cap \{v>0\}} \left( \delta G(R|\nabla v|) + \bar{g}\left( \frac{1}{\delta} \frac{|\nabla \varphi|}{\varphi} \right) \frac{|\nabla \varphi|}{\varphi} \right) \varphi^q \dx.
\end{split}
\end{align}
If we take
\begin{equation*}
\delta := \frac{c}{2q^2 c_2^{\cA} R} \left( \frac{\bar{g}(d+v)}{\bar{g}(d)} \right)^{\frac{1-\tau}{q-1}} \frac{1}{d+v} > 0,
\end{equation*}
then combining \eqref{eq-energy}, \eqref{eq-1}, \eqref{eq-2}, and using Lemma~\ref{lem-barg}(i) and (ii) yields
\begin{align*}
\int_{B_R} &\left( \frac{\bar{g}(d+v)}{\bar{g}(d)} \right)^{\frac{1-\tau}{q-1}} \frac{G(R|\nabla v|)}{d+v} \varphi^q \dx \\
&\leq C \int_{B_R \cap \{v>0\}} \bar{g}\left( \left( \frac{\bar{g}(d+v)}{\bar{g}(d)} \right)^{\frac{\tau-1}{q-1}} (d+v) \frac{R|\nabla \varphi|}{\varphi} \right) \frac{R|\nabla \varphi|}{\varphi} \varphi^q \dx \\
&\leq C \int_{B_R \cap \{v>0\}} \bar{g}\left( \left( \frac{\bar{g}(d+v)}{\bar{g}(d)} \right)^{\frac{\tau-1}{q-1}} (d+v) \right) \left( \left( \frac{R|\nabla \varphi|}{\varphi} \right)^{p} + \left( \frac{R|\nabla \varphi|}{\varphi} \right)^{q} \right) \varphi^q \dx \\
&\leq C \int_{B_R \cap \{v>0\}} \bar{g}(d) \left( \frac{\bar{g}(d+v)}{\bar{g}(d)} \right)^{\tau} (|R\nabla \varphi|^p \varphi^{q-p} + |R\nabla \varphi|^q) \dx
\end{align*}
for some $C=C(p, q, \tau, c^\cA_1, c^\cA_2)>0$. We conclude as $\varphi=1$ on $B_{R/2}$, $0\leq \varphi \leq 1$, and $R|\nabla \varphi| \leq 4$.
\end{proof}

\begin{lemma}\label{lem:bound}
Let $u$ be a renormalized solution to \eqref{eq:main} in $B_R=B_R(x_0)$ with a nonnegative Radon measure $\mu$. For $k \in \R$, define $v:=\frac{(u_--k)_+}{R}$, and fix $d>0$. Let $\tilde p\in(1,\min\{p,n\})$, $\tilde p^\ast:=n\tilde p/(n-\tilde p)$, and
\begin{equation*}
1<\tau<\frac{n}{n-\tilde p+\tilde p/q}.
\end{equation*}
Then there exists a constant $C_0>0$ such that
\begin{equation}\label{eq:b1}
\left( \fint_{B_{R/2}} \left(\frac{\bar{g}(v)}{\bar{g}(d)}\right)^{\tau}\dx\right)^{\tilde{p}/\tilde{p}^\ast}\leq  C_0\theta^{\tilde{p}/\tilde{p}^\ast}_{R}+ C_0\fint_{B_{R}}\left(\frac{\bar{g}(v)}{\bar{g}(d)}\right)^{\tau}\dx,
\end{equation}
where $\theta_{R}=|B_{R}\cap \{v>0\}|/|B_{R}|$. Moreover, there exists a constant $C>0$ such that
\begin{equation}\label{eq:b2}
\left(\fint_{B_{R/2}} \bar{g}^{\tau}(v)\dx\right)^{1/\tau} \leq C\theta^{\rho_1}_{R}\left(\fint_{B_{R}}\bar{g}^{\tau}(v)\dx\right)^{1/\tau},
\end{equation}
where $\rho_1=\frac{1}{\tau}(1-\tilde{p}/\tilde{p}^\ast)>0$.
\end{lemma}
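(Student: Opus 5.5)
The idea is to feed Lemma~\ref{lem:re} into the Sobolev inequality with exponent $\tilde p$, applied to a suitable power-type function of $v$. Define
\begin{equation*}
w:=\Phi(v):=\int_0^{v}\Big(\frac{\bar g(d+s)}{\bar g(d)}\Big)^{\frac{\tau}{\tilde p^\ast}\cdot\frac{\tilde p^\ast}{1}\cdot\frac{1}{\tilde p^\ast}}\,\mathrm{d}s .
\end{equation*}
More concretely, pick $w$ so that $w^{\tilde p^\ast}\gtrsim R^{-\tilde p^\ast}\,\ldots\big(\bar g(v)/\bar g(d)\big)^{\tau}$ on $\{v>d\}$, and so that $|\nabla w|^{\tilde p}$ is controlled by the integrand on the left of Lemma~\ref{lem:re}.

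The Sobolev step is to apply the Sobolev--Poincaré inequality for $\eta w$, with $\eta$ a cutoff between $B_{R/4}$ and $B_{R/2}$, or simply apply Lemma~\ref{lem:re} on $B_{R/2}$ and $B_R$ with the radii adjusted. This gives
\begin{equation*}
\Big(\fint_{B_{R/2}} w^{\tilde p^\ast}\Big)^{\tilde p/\tilde p^\ast}\lesssim R^{\tilde p}\fint |\nabla w|^{\tilde p}+\fint w^{\tilde p}.
\end{equation*}

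To bound $|\nabla w|^{\tilde p}=\Phi'(v)^{\tilde p}|\nabla v|^{\tilde p}$, use Young's inequality in the form $|R\nabla v|^{\tilde p}\le \epsilon\,G(R|\nabla v|)/\lambda + C\lambda^{\tilde p/(p-\tilde p)}\ldots$. The cleanest route is to note that $\tilde p<p$ gives $t^{\tilde p}\le C\,G(t)/G(s)\,s^{\tilde p}+s^{\tilde p}$ for any $s>0$, by Lemma~\ref{lem-barg}. We then choose $s=s(v)$ so that the weight $\Phi'(v)^{\tilde p}s^{\tilde p}/G(s)$ equals $\big(\bar g(d+v)/\bar g(d)\big)^{\frac{1-\tau}{q-1}}/(d+v)$ times $1/\bar g(d)$. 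With this choice the gradient term is bounded by Lemma~\ref{lem:re} by $C\fint_{B_R\cap\{v>0\}}(\bar g(d+v)/\bar g(d))^\tau$. The leftover term $\Phi'(v)^{\tilde p}s^{\tilde p}$ is likewise a power of $\bar g(d+v)/\bar g(d)$. The restriction $\tau<n/(n-\tilde p+\tilde p/q)$ is exactly what makes the exponents compatible, so that $(\bar g(v)/\bar g(d))^{\tau}\lesssim w^{\tilde p^\ast}$ while all right-hand terms are $\lesssim(\bar g(d+v)/\bar g(d))^\tau$. Matching these exponents through \eqref{eq-barg-der} is the main obstacle, and I would first try pure powers $\Phi'(v)=(\bar g(d+v)/\bar g(d))^{\alpha}(d+v)^{-\beta}$ and solve for $\alpha,\beta$ in terms of $p,q,\tau,\tilde p$.

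Finally, split $(\bar g(d+v)/\bar g(d))^\tau\le C\chi_{\{v>0\}}+C(\bar g(v)/\bar g(d))^\tau$, using doubling of $\bar g$: on $\{v\le d\}$ it is bounded by a constant, and on $\{v>d\}$ it is comparable to $\bar g(v)$. The constant part contributes $\theta_R$, and $\theta_R\le\theta_R^{\tilde p/\tilde p^\ast}$ since $\theta_R\le1$. This gives \eqref{eq:b1}.

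For \eqref{eq:b2}, choose $d$ by $\bar g(d)^\tau:=\theta_R^{-1}\fint_{B_R}\bar g^\tau(v)$. Then the right-hand side of \eqref{eq:b1} is $\le 2C_0\theta_R^{\tilde p/\tilde p^\ast}$, so
\begin{equation*}
\fint_{B_{R/2}}\bar g^\tau(v)\le C\bar g(d)^\tau\theta_R=C\theta_R^{1-\tilde p/\tilde p^\ast}\fint_{B_R}\bar g^\tau(v).
\end{equation*}
Taking $\tau$-th roots gives \eqref{eq:b2} with $\rho_1=(1-\tilde p/\tilde p^\ast)/\tau$. If $\fint_{B_R}\bar g^\tau(v)=0$ there is nothing to prove.
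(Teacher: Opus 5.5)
The step that fails is your derivation of \eqref{eq:b2}. With your normalization $\bar g(d)^\tau=\theta_R^{-1}\fint_{B_R}\bar g^\tau(v)$ you have $\theta_R\,\bar g(d)^\tau=\fint_{B_R}\bar g^\tau(v)$. Your chain therefore only gives $\fint_{B_{R/2}}\bar g^\tau(v)\le C\fint_{B_R}\bar g^\tau(v)$. The asserted equality with $C\theta_R^{1-\tilde p/\tilde p^\ast}\fint_{B_R}\bar g^\tau(v)$ is false, and the decay factor $\theta_R^{\rho_1}$ is lost. That factor is the whole content of \eqref{eq:b2} and is what drives the iteration in Proposition~\ref{prop:bound2}. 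The cause is that your choice makes the second term on the right of \eqref{eq:b1} equal to $C_0\theta_R$, which is much smaller than the first term, so the slack is wasted.

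Choose instead $\bar g(d)^\tau:=\theta_R^{-\tilde p/\tilde p^\ast}\fint_{B_R}\bar g^\tau(v)$. This is possible because $\bar g$ is an increasing bijection of $[0,\infty)$; the cases $\theta_R=0$ or a vanishing or infinite average are trivial. This choice balances both terms at $C_0\theta_R^{\tilde p/\tilde p^\ast}$. Then \eqref{eq:b1} yields $\fint_{B_{R/2}}(\bar g(v)/\bar g(d))^\tau\le(2C_0)^{\tilde p^\ast/\tilde p}\theta_R$. Hence $\fint_{B_{R/2}}\bar g^\tau(v)\le C\theta_R\bar g(d)^\tau=C\theta_R^{1-\tilde p/\tilde p^\ast}\fint_{B_R}\bar g^\tau(v)$, which is \eqref{eq:b2}. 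The paper achieves the same balancing by normalizing $d$ with the left-hand quantity: $1/\bar g(d)=M^{\tilde p^\ast/(\tau\tilde p)}\theta_R^{1/\tau}/A_{R/2}$ with $M=1+C_0$.

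For \eqref{eq:b1} your route is the paper's. You apply the Sobolev inequality with exponent $\tilde p$ on $B_{R/2}$ to a power of $\bar g(d+v)/\bar g(d)$, control the gradient by Lemma~\ref{lem:re}, and split $(\bar g(d+v)/\bar g(d))^\tau\chi_{\{v>0\}}\le C\chi_{\{v>0\}}+C(\bar g(v)/\bar g(d))^\tau$. Your elementary bound $t^{\tilde p}\le s^{\tilde p}G(t)/G(s)+s^{\tilde p}$ (valid since $\tilde p<p$), used with $s=d+v$, is exactly what the paper extracts from Young's inequality for $H(t)=G(t^{1/\tilde p})$ and Lemma~\ref{lem-H}. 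It is a fine and slightly simpler substitute.

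Your first formula for $\Phi$ is garbled, however: the exponent collapses to $\tau/\tilde p^\ast$ and the factor $(d+v)^{-1}$ is missing. You also leave the exponent matching undone, though it does close. Take $\Phi'(v)=(d+v)^{-1}(\bar g(d+v)/\bar g(d))^{\alpha}$. Lemma~\ref{lem:re} then needs $\alpha\tilde p\le\frac{q-\tau}{q-1}$, which also makes the leftover $(\bar g(d+v)/\bar g(d))^{\alpha\tilde p}$ harmless since $\frac{q-\tau}{q-1}\le\tau$. On the other side, $w^{\tilde p^\ast}\gtrsim(\bar g(v)/\bar g(d))^\tau$ on $\{v\ge d\}$ needs $\alpha\tilde p^\ast\ge\tau$. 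Both conditions hold if and only if $\tau\le n/(n-\tilde p+\tilde p/q)$. Choosing $\alpha=\frac{q-\tau}{\tilde p(q-1)}$ and, by \eqref{eq-barg-der}, simply $w=(\bar g(d+v)/\bar g(d))^{\alpha}-1$ recovers exactly the paper's function, with $\alpha=\tau/\tilde\tau$.
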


\begin{proof}
Set
\begin{equation*}
\tilde{\tau} := \frac{\tilde{p}(q-1)}{q-\tau}\tau.
\end{equation*}
It is easy to check that $\tilde{p}<\tilde{\tau}<\tilde{p}^\ast$. We define
\begin{equation*}
w := \left( \frac{\bar{g}(d+v)}{\bar{g}(d)} \right)^{\tau/\tilde{\tau}} - 1.
\end{equation*}
Then, we have
\begin{align*}
\left( \frac{\bar{g}(v)}{\bar{g}(d)} \right)^\tau
&\leq \chi_{\{0<v<d\}} + \left( \frac{\bar{g}(d+v)}{\bar{g}(d)} \right)^{\tau \tilde{p}^\ast/\tilde{\tau}} \chi_{\{v\geq d\}} \leq \chi_{\{v>0\}} + (w+1)^{\tilde{p}^\ast} \chi_{\{v>0\}} \leq C \chi_{\{v>0\}} + C w^{\tilde{p}^\ast}
\end{align*}
for some $C=C(p, \tilde{p})>0$, which yields
\begin{equation}\label{eq-gvd}
\fint_{B_{R/2}} \left( \frac{\bar{g}(v)}{\bar{g}(d)} \right)^\tau \dx \leq C\theta_R + C \fint_{B_{R/2}} w^{\tilde{p}^\ast} \dx.
\end{equation}

We apply the Sobolev inequality to obtain
\begin{equation}\label{eq-sobolev}
\left( \fint_{B_{R/2}} w^{\tilde{p}^\ast} \dx \right)^{\tilde{p}/\tilde{p}^\ast} \leq C \fint_{B_{R/2}} w^{\tilde{p}} \dx + CR^{\tilde{p}} \fint_{B_{R/2}} |\nabla w|^{\tilde{p}} \dx =: J_1 + J_2.
\end{equation}
Using Lemma~\ref{lem-barg}(i), we have
\begin{equation*}
w^{\tilde{p}} \leq \left( \frac{\bar{g}(2d)}{\bar{g}(d)} \right)^{\tau \tilde{p}/\tilde{\tau}} \chi_{\{0<v<d\}} + \left( \frac{\bar{g}(2v)}{\bar{g}(d)} \right)^{\tau \tilde{p}/\tilde{\tau}} \chi_{\{v \geq d\}} \leq C \chi_{\{v>0\}} + C \left( \frac{\bar{g}(v)}{\bar{g}(d)} \right)^{\tau},
\end{equation*}
so we deduce that
\begin{equation}\label{eq-J1}
J_1 \leq C\theta_R + C \fint_{B_{R/2}} \left( \frac{\bar{g}(v)}{\bar{g}(d)} \right)^\tau \dx.
\end{equation}

For $J_2$, we observe that $\nabla w=0$ a.e.\ on $\{v=0\}$. Thus,  by \eqref{eq-barg-der} we have
\begin{align*}
|\nabla w|^{\tilde{p}}
&\leq \left| \frac{\tau}{\tilde{\tau} R} \left( \frac{\bar{g}(d+v)}{\bar{g}(d)} \right)^{\tau/\tilde{\tau}-1} \frac{\bar{g}'(d+v)}{\bar{g}(d)} \nabla u_- \right|^{\tilde{p}} \chi_{\{v>0\}} \\
&\leq \frac{C}{R^{\tilde{p}}} \left( \frac{\bar{g}(d+v)}{\bar{g}(d)} \right)^{\tau \tilde{p}/\tilde{\tau}} \frac{|\nabla u_-|^{\tilde{p}}}{(v+d)^{\tilde{p}}} \chi_{\{v>0\}}.
\end{align*}
Let $H(t) = G(t^{1/\tilde{p}})$, then Young's inequality in \eqref{eq-Young} and Lemma~\ref{lem-H} show that on $\{v>0\}$,
\begin{equation*}
\frac{G(d+v)}{(d+v)^{\tilde{p}}} |\nabla u_-|^{\tilde{p}} \leq H^\ast \left( \frac{G(d+v)}{(d+v)^{\tilde{p}}} \right) + H(|\nabla u_-|^{\tilde{p}}) \leq CG(d+v) + G(R|\nabla v|).
\end{equation*}
Combining these estimates gives
\begin{equation*}
|\nabla w|^{\tilde{p}} \leq \frac{C}{R^{\tilde{p}}} \left( \frac{\bar{g}(d+v)}{\bar{g}(d)} \right)^{\tau {\tilde{p}}/\tilde{\tau}} \left( 1 + \frac{G(R|\nabla v|)}{G(d+v)} \right) \chi_{\{v>0\}}.
\end{equation*}
Note that
\begin{equation*}
\left( \frac{\bar{g}(d+v)}{\bar{g}(d)} \right)^{\tau \tilde{p}/\tilde{\tau}} \chi_{\{v>0\}} \leq C \chi_{\{v>0\}} + C \left( \frac{\bar{g}(v)}{\bar{g}(d)} \right)^{\tau}.
\end{equation*}
Using Lemma~\ref{lem:re} and recalling $\tau\tilde{p}/\tilde{\tau}-1=(1-\tau)/(q-1)$ imply that
\begin{align}\label{eq-J2}
\begin{split}
J_2
&\leq C \fint_{B_{R/2}} \left( \frac{\bar{g}(d+v)}{\bar{g}(d)} \right)^{\tau \tilde{p}/\tilde{\tau}} \left( 1 + \frac{G(R|\nabla v|)}{G(d+v)} \right) \chi_{\{v>0\}} \dx\\
&\leq C \theta_R + C \fint_{B_{R/2}} \left( \frac{\bar{g}(v)}{\bar{g}(d)} \right)^\tau \dx+ C \fint_{B_R} \left( \frac{\bar{g}(d+v)}{\bar{g}(d)} \right)^{\tau} \chi_{\{v>0\}} \dx \\
&\leq C \theta_R + C \fint_{B_R} \left( \frac{\bar{g}(v)}{\bar{g}(d)} \right)^\tau \dx.
\end{split}
\end{align}
Therefore, \eqref{eq:b1} follows from \eqref{eq-gvd}, \eqref{eq-sobolev}, \eqref{eq-J1}, \eqref{eq-J2}, and the fact that $\theta_R \leq \theta_R^{\tilde{p}/\tilde{p}^\ast}$.

Next, let us deduce \eqref{eq:b2} from \eqref{eq:b1}. We set $M:=1+C_0$ and 
\begin{equation*}
	A_R:=\left(\fint_{B_R} \bar{g}^{\tau}(v)\dx\right)^{1/\tau}.
\end{equation*}
If $A_{R/2}=0$ or $\theta_R=0$, then there is nothing to prove. Hence, we assume that $A_{R/2}>0$ and $\theta_R>0$. If we choose $d$ so that
\begin{equation*}
	\frac{1}{\bar{g}(d)}=\frac{M^{\tilde{p}^\ast/(\tau \tilde{p})}\theta^{1/\tau}_R}{A_{R/2}},
\end{equation*}
which implies that 
\[\theta^{\tilde{p}/\tilde{p}^\ast}_R=\left(\frac{A_{R/2}}{\bar{g}(d)}\right)^{\tau\tilde{p}/\tilde{p}^\ast}-C_0 \theta_R^{\tilde{p}/\tilde{p}^\ast},\]
then it follows from \eqref{eq:b1} that
\begin{equation*}
\begin{split}
\theta^{\tilde{p}/\tilde{p}^\ast}_R
&=\left(\fint_{B_{R/2}}\left(\frac{\bar{g}(v)}{\bar{g}(d)}\right)^\tau\dx\right)^{\tilde{p}/\tilde{p}^\ast}-C_0 \theta_R^{\tilde{p}/\tilde{p}^\ast}\leq C_0 \left(\frac{A_R}{\bar{g}(d)}\right)^{\tau}= C_0M^{\tilde{p}^\ast/\tilde{p}}\theta_R \left(\frac{A_R}{A_{R/2}}\right)^\tau.
\end{split}
\end{equation*}
This implies that $A_{R/2} \leq C\theta^{\rho_1}_{R}A_R$ for some $C>0$, which proves \eqref{eq:b2}.
\end{proof}

\begin{lemma}\label{lem-int-rem}
If $u$ is a renormalized solution to \eqref{eq:main} in $\Omega$ with a nonnegative Radon measure $\mu$, then $g(|u|)\in L^m_{\rm loc}(\Omega)$ for any $m<\frac{n}{n-1}$.
\end{lemma}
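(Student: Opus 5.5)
We aim for a level-set decay estimate of Boccardo--Gallou\"et type, expressed through $\bar g$ rather than powers. Since the statement is local, it suffices to show that $\int_B g(|u|)^m\dx<\infty$ for a ball $B$ with $4B\Subset\Omega$. The plan has three steps: energy bounds on slabs, a Sobolev-type inequality on truncations that avoids cutoff terms, and an iteration for $\phi(k):=|\{|u|>k\}\cap B|$. By \eqref{eq-barg-comp}, $g\approx\bar g$, so it is enough to treat $\bar g(|u|)^m$.

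\emph{Step 1 (slab energy).} Fix a cutoff $\varphi\in C^\infty_c(2B)$ with $\varphi=1$ on $B$. In \eqref{eq-renormalized-intro} we test with
\[
h(t)=T_k(t)\cdot\tfrac1{k}\bigl(1-\min\{(|t|-k)_+,1\}\bigr),
\]
or more simply with the two standard choices $h=T_{k}$ (suitably modified so that $h'$ has compact support) and $h=T_1(t-T_k(t))$. Both terms on the right-hand side are bounded by $\mu(2B)$ times the size of $h$. The $\nabla\varphi$ term on the left is bounded by $c_2^{\cA}\|h\|_\infty\|\nabla\varphi\|_\infty\int_{2B}g(|\nabla u|)\dx$, which is finite by \eqref{eq:renormalizedpro-intro}. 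Together with \eqref{ass-op}, this yields
\[
\int_{B\cap\{k<|u|<2k\}}G(|\nabla u|)\dx\le Mk\qquad\text{for all }k\ge1,
\]
where $M$ depends on $\mu(2B)$ and $\|g(|\nabla u|)\|_{L^1(2B)}$ and is independent of $k$.

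\emph{Step 2 (Sobolev without cutoff).} Since $u$ is finite a.e., there is $k_0\ge1$ with $|\{|u|>k_0\}\cap B|\le |B|/2$. For $k\ge k_0$, the function $w_k:=\bigl(T_{2k}(|u|)-T_k(|u|)\bigr)$ vanishes on at least half of $B$. Hence the $L^1$-isoperimetric (Sobolev--Poincar\'e) inequality on $B$ applies to $w_k$ and gives
\[
k\,\phi(2k)^{\frac{n-1}{n}}\le C\int_{B\cap\{k<|u|<2k\}}|\nabla u|\dx .
\]
This avoids the $\nabla\varphi$ terms, which would otherwise cost a factor $G(k)$. To pass from $|\nabla u|$ to the Orlicz energy without scaling, we use the pointwise bound
\[
s\le \frac{G(s)}{\bar g(\lambda)}+\lambda\qquad\text{for all } s,\lambda\ge0,
\]
which holds because $\bar g$ is increasing. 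Combined with Step 1, this gives
\[
k\,\phi(2k)^{\frac{n-1}{n}}\le C\Bigl(\frac{Mk}{\bar g(\lambda)}+\lambda\,\phi(k)\Bigr)\qquad\text{for every }\lambda>0 .
\]

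\emph{Step 3 (iteration and integration).} We choose $\lambda=\lambda(k)$ to balance the two terms; for instance, $\lambda$ is defined through $\bar g(\lambda)\lambda\,\phi(k)\approx Mk$. Using the growth bounds of Lemma~\ref{lem-barg}(i)--(ii), we then run a dyadic induction on $k=2^jk_0$ to obtain
\[
\phi(k)\le C\,\bar g(k)^{-\frac{n}{n-1}+\varepsilon}\qquad\text{for every }\varepsilon>0 .
\]
Finally, the layer-cake formula gives
\[
\int_B\bar g(|u|)^m\dx\le C+\int_{k_0}^\infty m\,\bar g(k)^{m-1}\bar g'(k)\,\phi(k)\,\mathrm dk .
\]
By \eqref{eq-barg-der}, $\bar g'(k)\approx\bar g(k)/k$ and $\bar g(k)\gtrsim k^{p-1}$. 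Therefore the integrand is bounded by $C\,\bar g(k)^{m-\frac{n}{n-1}+\varepsilon}/k$, which is integrable at infinity whenever $m<\frac n{n-1}$, after choosing $\varepsilon$ small.

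The main obstacle is Step 3. Because $\phi(k)$ appears on both sides with different arguments, closing the recursion $k\mapsto 2k$ requires care, since $\bar g$ is only doubling with exponents between $p-1$ and $q-1$. If the direct dyadic induction does not close, the first fallback is to replace Steps 2--3 by a weighted Caccioppoli inequality in the spirit of Lemma~\ref{lem:re}, applied to $|u|$ with the weight $(\bar g(d+v)/\bar g(d))^{(1-\tau)/(q-1)}$. We would then read off the integrability from the $\tau$-range $\tau<n/(n-1)$ obtained in the limit $\tilde p\to1$.
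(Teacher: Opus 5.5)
Your proposal is correct. It follows the same skeleton as the paper: a truncation energy bound from \eqref{eq-renormalized-intro}, then a Sobolev inequality on level sets, then Marcinkiewicz decay. The difference is in the second step. The paper stops after proving $\int_U G(|\nabla T_k(u)|)\dx\le Ck$ and defers the rest to the Orlicz--Sobolev level-set argument from the Dirichlet-problem literature, which is set up for truncations with zero boundary values; the localization is left implicit. You carry that step out in a local, self-contained way. Your slab functions $w_k$ vanish on half of $B$, so the $L^1$ Sobolev--Poincar\'e inequality applies without cutoffs. The splitting $s\le G(s)/\bar g(\lambda)+\lambda$ then replaces any Orlicz embedding. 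Only the classical exponent $n/(n-1)$ enters, which is exactly what the statement needs; an Orlicz--Sobolev route can give sharper decay, but that is not needed here. Your route also makes explicit that a.e.\ finiteness of $u$ is used, through the choice of $k_0$. This must be read as part of Definition~\ref{def-ren}, since otherwise $u\equiv\infty$ with $\mu=0$ would formally qualify. It cannot be avoided, because local energy bounds are blind to additive constants, and the paper's argument uses it tacitly as well. One minor point: $h=T_{2k}$ is admissible as it stands (its derivative is $\chi_{(-2k,2k)}$), so Step~1 needs no modification.

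The step you flag as the main obstacle closes easily if you do not balance $\lambda$ against $\phi(k)$ and simply take $\lambda=k$. Step~2 then gives $\phi(2k)^{(n-1)/n}\le C\bigl(M\bar g(k)^{-1}+\phi(k)\bigr)$ for $k\ge k_0$. Set $k_j=2^jk_0$. The bound $\phi(k_j)\le A\,\bar g(k_j)^{-n/(n-1)}$ passes from $j$ to $j+1$ provided that $CM\le 2^{-q}A^{(n-1)/n}$ and $CA^{1/n}\bar g(k_0)^{-1/(n-1)}\le 2^{-q}$; here you use $\bar g(k_{j+1})\le 2^{q-1}\bar g(k_j)$ and $\bar g(k_j)\ge\bar g(k_0)$. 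Both conditions, together with the base case $\phi(k_0)\le A\,\bar g(k_0)^{-n/(n-1)}$, can be met by taking $k_0$ large, since $\phi(k_0)\to0$ and $\bar g(k_0)\to\infty$. This yields $\phi(k)\le C\,\bar g(k)^{-n/(n-1)}$, which is exactly the weak-$L^{n/(n-1)}$ bound stated in the paper, with no $\varepsilon$-loss. Your layer-cake computation then finishes the proof. The fallback through Lemma~\ref{lem:re} would not have worked in any case. That lemma rests on the subsolution inequality of Lemma~\ref{lem:au-neg}, which holds for $u_-$ but not for $u_+$.
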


\begin{proof}
Let $U \Subset \Omega$ be open. We choose a cutoff function $\varphi \in C^\infty_c(\Omega)$ such that $\varphi=1$ on $U$ and $0\leq \varphi \leq 1$ in $\Omega$. Let $h(t)=T_k(t)$ be the truncation given in \eqref{eq-truncation}. Clearly, $h \in W^{1,\infty}(\mathbb{R})$ and $h'(t) = \chi_{(-k, k)}(t)$ has compact support. It follows from \eqref{eq-renormalized-intro} that
\begin{equation*}
\int_{\Omega}\cA(x,\nabla u)\cdot \nabla u \chi_{\{|u|<k\}} \varphi\dx = \int_{\Omega}T_k(u)\varphi\dmu_0+ k\int_{\Omega}\varphi\dmu_s - \int_{\Omega}\cA(x,\nabla u)\cdot \nabla \varphi\, T_k(u)\dx.
\end{equation*}
Since $\nabla u=\nabla T_k(u)$ a.e.\ on $\chi_{\{|u|<k\}}$, by  \eqref{ass-op} we obtain 
\begin{equation*}
c_1^{\cA} \int_{U} G(|\nabla T_k(u)|) \dx \leq \int_{\Omega}\cA(x,\nabla u)\cdot \nabla u \chi_{\{|u|<k\}} \varphi\dx.
\end{equation*}
On the other hand, we have
\begin{equation*}
\left| \int_{\Omega}T_k(u)\varphi\dmu_0+ k\int_{\Omega}\varphi\dmu_s \right| \leq k\mu(\supp \varphi)
\end{equation*}
and by Cauchy--Schwartz inequality and  \eqref{ass-op} we get
\begin{equation*}
\left| \int_{\Omega}\cA(x,\nabla u)\cdot \nabla \varphi\, T_k(u)\dx \right| \leq c_2^{\cA} \|\nabla \varphi\|_{\infty} k\int_{\supp \varphi} g(|\nabla u|) \dx.
\end{equation*}
The last integral is finite by the assumption \eqref{eq:renormalizedpro-intro}. Consequently,
\begin{equation}\label{eq-trunc-energy}
\int_{U} G(|\nabla T_k(u)|) \dx \leq Ck \quad\text{for all }k>0.
\end{equation}

A standard argument based on the Sobolev embedding, conducted as in \cite[Theorem~3.2]{CM17} or \cite[Proposition 6.3]{CGZG19}, yields Marcinkiewicz regularity $g(|u|)\in L^{\frac{n}{n-1},\infty}(U)$ from \eqref{eq-trunc-energy}. In particular, this implies that $g(|u|)\in L^m(U)$ for all $m<\frac{n}{n-1}$.
\end{proof}

We are in a position to infer the boundedness from below of a renormalized solution.

\begin{proposition}\label{prop:bound2}
Let $u$ be a renormalized solution to \eqref{eq:main} in $\Omega$ with a nonnegative Radon measure $\mu$. Then $u$ is locally essentially bounded below.
\end{proposition}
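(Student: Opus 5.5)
The plan is to run a De Giorgi iteration on the decay estimate \eqref{eq:b2} of Lemma~\ref{lem:bound}, with integrability supplied by Lemma~\ref{lem-int-rem}. The goal is to show that $u_-$ is essentially bounded on $B_{R/2}(x_0)$ for every ball with $B_R(x_0)\Subset\Omega$; a covering argument then gives local essential boundedness from below.

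Fix such a ball and choose $\tilde p\in(1,\min\{p,n\})$ and $\tau$ as in Lemma~\ref{lem:bound}. We may also take $\tau<\frac{n}{n-1}$, since $\frac{n}{n-\tilde p+\tilde p/q}>1$ and we only need $\tau$ close to $1$. With this choice, Lemma~\ref{lem-int-rem} gives $\bar g(u_-)\le g(|u|)/p\in L^\tau(B_R)$. This makes the starting quantity $\fint_{B_R}\bar g^\tau((u_--k_0)_+/R)\dx$ finite, which is the role of that lemma.

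Next, set up the iteration. Define radii $R_j:=R/2+R/2^{j+1}$, levels $k_j:=k_\infty(1-2^{-j})$ with $k_\infty>0$ to be chosen, and
\[
Y_j:=\Bigl(\fint_{B_{R_j}}\bar g^\tau\bigl((u_--k_j)_+/R\bigr)\dx\Bigr)^{1/\tau}.
\]
Lemma~\ref{lem:bound} is stated for the pair $B_{R/2}\subset B_R$, but its proof, which rests on Lemma~\ref{lem:re}, works for any pair $B_{r}\subset B_{s}$ with cutoff gradient $\le C/(s-r)$. The cost is a factor $(s/(s-r))^{q}\lesssim 2^{jq}$ in the constants, and the normalization $v=(u_--k)_+/R$ stays fixed with $R$. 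This gives
\[
Y_{j+1}\le C2^{jq}\,\theta_{j}^{\rho_1}\,Y_{j},\qquad \theta_j:=\frac{|B_{R_j}\cap\{u_->k_{j+1}\}|}{|B_{R_j}|}.
\]
To close the recursion, bound $\theta_j$ by Chebyshev. On $\{u_->k_{j+1}\}$ we have $(u_--k_j)/R\ge k_\infty 2^{-j-1}/R$, so
\[
\theta_j\le Y_j^\tau\big/\bar g\bigl(k_\infty2^{-j-1}/R\bigr)^\tau .
\]
By Lemma~\ref{lem-barg}(ii),
\[
\bar g\bigl(k_\infty2^{-j-1}/R\bigr)\ge 2^{-(j+1)(q-1)}\bar g(k_\infty/R).
\]
Combining these yields
\[
Y_{j+1}\le C\,b^{j}\,\bar g(k_\infty/R)^{-\tau\rho_1}\,Y_j^{1+\tau\rho_1}
\]
for some $b>1$. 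The standard fast-geometric-convergence lemma then gives $Y_j\to0$ provided $Y_0\le C^{-1/(\tau\rho_1)}b^{-1/(\tau\rho_1)^2}\,\bar g(k_\infty/R)$. Since $\bar g(t)\to\infty$ as $t\to\infty$ and $Y_0\le(\fint_{B_R}\bar g^\tau(u_-/R))^{1/\tau}<\infty$, such a $k_\infty$ exists. Hence $u_-\le k_\infty$ a.e.\ in $B_{R/2}$.

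I expect the main obstacle to be the Caccioppoli step inside Lemma~\ref{lem:bound}, namely justifying the general radii. The test function $\varsigma_\varepsilon(u_-)\varphi^q$ has to be admissible in \eqref{eq:rebound}, and its dependence on $\tau$ must remain uniform along the iteration. One also has to keep track of the homogeneity-free structure: working with the ratio $\bar g(v)/\bar g(d)$ through Lemma~\ref{lem-barg} is what avoids rescaling $u$. A secondary point to check is that $u_->0$ only on $\{u<0\}$, so finite levels are unaffected by $\mu_s$, which lives at $h(\infty)$; Lemma~\ref{lem:au-neg} already encodes this.
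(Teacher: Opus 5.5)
Your argument is correct, but it takes a different route from the paper's.

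\textbf{The paper's route.} The paper runs a Kilpel\"ainen--Mal\'y-type iteration on balls $B^i=B_{2^{-i-1}R}(x_0)$ that shrink to the centre. It normalizes $v$ by the outer radius and defines the levels recursively by
$l_{i+1}=l_i+R_i\bar g^{-1}\bigl(2^{c_0 i}\varepsilon^{-1}\bigl(\fint_{B^i}\bar g^\tau((u_--l_i)_+/R_i)\dx\bigr)^{1/\tau}\bigr)$.
Consecutive balls have ratio $1/2$, so \eqref{eq:b2} applies exactly as stated. This gives $l_{i+1}-l_i\le C\varepsilon^{\rho_1\tau/(q-1)}(l_i-l_{i-1})$, hence $l_j\le 2l_1\le M$ uniformly in $x_0\in K$. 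The bound $u_-\le M$ then follows from a contradiction at a Lebesgue density point.

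\textbf{Your route.} You run a classical De Giorgi iteration on a fixed ball with radii $R/2+R2^{-j-1}$, and close the nonlinear recursion with Chebyshev. This gives boundedness on all of $B_{R/2}$ directly, with no density-point step. The price is that you must re-run Lemmas~\ref{lem:re} and~\ref{lem:bound} for arbitrary concentric pairs $B_r\subset B_s$, which the paper's scheme avoids. That extension is routine: the only change is that $R|\nabla\varphi|\le 4$ becomes $R|\nabla\varphi|\le CR/(s-r)$. The paper's shrinking-ball scheme is also the type of iteration usually used for pointwise, Wolff-potential-type bounds.

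\textbf{One bookkeeping correction.} The factor in your recursion is not $2^{jq}$. The factor $(R/(s-r))^q$ enters $C_0$ in \eqref{eq:b1}, and the passage to \eqref{eq:b2} produces the constant $\bigl(C_0(1+C_0)^{\tilde p^*/\tilde p}\bigr)^{1/\tau}$. So you get $2^{j\gamma}$ with, for example, $\gamma=q(1+\tilde p^*/\tilde p)/\tau$. Your closing argument only needs some geometric growth $b^j$, so this is harmless.
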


\begin{proof}
Let $K$ be a compact subset of $\Omega$. For $x_0 \in K$ and $R<\dist(K, \partial \Omega)$, we define $R_i:=2^{-i-1}R$ and $B^i:=B_{R_i}(x_0)$, $i=0,1,\dots$. Fix $\tilde{p} \in (1, \min\{p, n\})$ and assume
\begin{equation*}
1<\tau<\min\left\{\frac{n}{n-\tilde{p}+\tilde{p}/q},\frac{n}{n-1}\right\}.
\end{equation*}
Let $\varepsilon\in (0,1)$ and $c_0>0$ be constants to be determined later. Set $l_0=0$ and for $i=0,1,\dots$
recursively define
\begin{equation}\label{eq-l-j}
	l_{i+1}:=l_i+ R_i\bar{g}^{-1}\left(\frac{2^{c_0i}}{\varepsilon}\left(\fint_{B^i}\bar{g}^\tau\left(\frac{(u_{-}-l_i)_+}{R_i}\right)\dx\right)^{1/\tau}\right).
\end{equation}
We also define $\theta_{i}:=|B^{i-1}\cap \{u_{-}>l_i\}|/|B^{i-1}|$ for $i=1,2,\dots\,$. Note that due to~\eqref{eq-l-j} it holds
\begin{equation}\label{eq:b3}
	\begin{split}
		\theta_{i}&\leq \frac{1}{|B^{i-1}|}\int_{B^{i-1}\cap\{u_{-}>l_i\}}\left(\frac{\bar{g}((u_{-}-l_{i-1})_+/R_{i-1})}{\bar{g}((l_i-l_{i-1})_+/R_{i-1})}\right)^\tau\dx\\
		&\leq \frac{1}{|B^{i-1}|}\int_{B^{i-1}}\left(\frac{\bar{g}((u_--l_{i-1})_+/R_{i-1})}{\bar{g}((l_i-l_{i-1})_+/R_{i-1})}\right)^\tau\dx=\frac{\varepsilon^\tau}{2^{c_0(i-1)\tau}}.
	\end{split}
\end{equation}
Also, using Lemma~\ref{lem-barg}(i)
\begin{equation*}
\begin{split}
		\bar{g}\left(\frac{l_{i+1}-l_i}{R_i}\right)&= \frac{2^{c_0i}}{\varepsilon}\left(\fint_{B^i}\bar{g}^\tau\left(\frac{(u_{-}-l_i)_+}{R_i}\right)\dx\right)^{1/\tau}\\
	&\leq \frac{2^{q-1}2^{c_0i}}{\varepsilon}\left(\fint_{B^i}\bar{g}^\tau\left(\frac{(u_{-}-l_{i})_+}{R_{i-1}}\right)\dx\right)^{1/\tau}.
\end{split}
\end{equation*}
Applying \eqref{eq:b2} with $v=\frac{(u_{-}-l_{i})_+}{R_{i-1}}$ in $B^{i-1}$ for $i\geq 1$, and then using $(u_--l_i)_+ \leq (u_--l_{i-1})_+$, Lemma~\ref{lem-barg}(ii), and \eqref{eq:b3}, we obtain
\begin{equation*}
	\begin{split}
		\bar{g}\left(\frac{l_{i+1}-l_i}{R_i}\right)
		&\leq \frac{2^{q-1}2^{c_0i}}{\varepsilon}\theta^{\rho_1}_{i}\left(\fint_{B^{i-1}}\bar{g}^\tau\left(\frac{(u_{-}-l_{i})_+}{R_{i-1}}\right)\dx\right)^{1/\tau}\leq \frac{2^{q-1}2^{c_0i}}{\varepsilon}\theta^{\rho_1}_{i}\frac{\varepsilon}{2^{c_0(i-1)}}\bar{g}\left(\frac{l_i-l_{i-1}}{R_{i-1}}\right)\\
		& \leq C\frac{\varepsilon^{\tau\rho_1}}{2^{c_0\tau\rho_1 i}}\bar{g}\left(\frac{l_i-l_{i-1}}{R_{i}}\right),
	\end{split}
\end{equation*}
that is,
\begin{equation}\label{eq-sec3-barg}
	\bar{g}\left(\frac{l_{i+1}-l_i}{R_i}\right)\leq C \frac{\varepsilon^{\tau\rho_1}}{2^{c_0\tau\rho_1 i}}\bar{g}\left(\frac{l_i-l_{i-1}}{R_i}\right)\leq C\varepsilon^{\tau\rho_1}\bar{g}\left(\frac{l_i-l_{i-1}}{R_i}\right),
\end{equation}
where $C$ is independent of $i$. Applying $\bar{g}^{-1}$ to both sides of \eqref{eq-sec3-barg} and using Lemma~\ref{lem-barg}(i) and (ii), we obtain
\begin{equation*}
	l_{i+1}-l_i \leq C\varepsilon^{\frac{\rho_1 \tau}{q-1}}(l_i-l_{i-1}),
\end{equation*}
which implies that for $j\geq 2$,
\begin{equation*}
	l_j=l_1+\sum^{j-1}\limits_{i=1}(l_{i+1}-l_i)\leq l_1 +C\varepsilon^{\frac{\rho_1 \tau}{q-1}}l_{j-1}.
\end{equation*}
Choosing $\varepsilon\in (0,1)$ sufficiently small so that $C\varepsilon^{\frac{\rho_1 \tau}{q-1}} <\frac{1}{2}$, we conclude that
\begin{align*}
l_j
\leq 2l_1&\leq CR \bar{g}^{-1}\left(\frac{1}{\varepsilon}\left(\fint_{B_{R/2}}\bar{g}^\tau\left(\frac{2u_{-}}{R}\right)\dx\right)^{1/\tau}\right) \\
&\leq CR \bar{g}^{-1}\left(\frac{1}{\varepsilon}\sup_{x \in K} \left(\fint_{B_{R/2}(x)}\bar{g}^\tau\left(\frac{2u_{-}}{R}\right)\dx\right)^{1/\tau}\right) =: M.
\end{align*}
Since $\tau<\frac{n}{n-1}$, Lemma~\ref{lem-int-rem} shows that $M$ is finite. Note that $M$ is independent of $x_0 \in K$.

We now prove that $u_- \leq M$ a.e.\ on $K$. To this end, we define for $\delta>0$
\begin{equation*}
E_\delta := K \cap \{u_- > M +\delta\}.
\end{equation*}
Assume to the contrary that $|E_\delta|>0$ for some $\delta>0$. Choose a Lebesgue density point $x_0 \in E_\delta$. Applying the above construction with this center $x_0$, we have $l_i \leq M < M+\delta$ for all $i$, and hence
\begin{equation*}
E_\delta \cap B^{i-1} \subset B^{i-1} \cap \{u_- > l_i\}.
\end{equation*}
Therefore, by using \eqref{eq:b3} we obtain
\begin{equation*}
\frac{|E_\delta \cap B^{i-1}|}{|B^{i-1}|} \leq \theta_i \leq \frac{\varepsilon^\tau}{2^{c_0(i-1)\tau}} \to 0 \quad\text{as }i \to \infty.
\end{equation*}
This contradicts to the fact that $x_0$ is a Lebesgue density point of $E_\delta$. Thus, $|E_\delta|=0$ for every $\delta>0$. Letting $\delta \to 0$ gives $u_- \leq M$ a.e.\ on $K$.
\end{proof}

Now, we are ready to prove that every renormalized solution has an $\cA$-superharmonic representative.

\begin{proof}[Proof of Theorem~\ref{thm-ren-sup}]
By Proposition~\ref{prop:bound2}, $u$ is locally essentially bounded below. Hence, for each  $k>0$, the truncation $u_k:=\min\{u,k\}$ belongs to $W^{1,G}_{\rm loc}(\Omega)$. We first show that $u_k$ is an $\cA$-supersolution of \eqref{eq-sec2-new}.

Let $\varphi\in C^{\infty}_{c}(\Omega)$ be nonnegative. For $\varepsilon>0$ and $k>0$, define
\begin{equation*}
    h_{k,\varepsilon}(t):=\tfrac{1}{\varepsilon}\min\{(k+\varepsilon-t)_{+},\varepsilon\}.
\end{equation*}
Since $h'_{k,\varepsilon}(t)\leq 0$, it follows that
\begin{equation*}
    \int_{\Omega}\cA(x,\nabla u) \cdot \nabla u \, h'_{k,\varepsilon}(u)\varphi\dx\leq 0.
\end{equation*}
Moreover, the nonnegativity of $\mu$ and $\varphi$ implies
\begin{equation*}
    \int_{\Omega}h_{k,\varepsilon}(u)\varphi\dmu_0+h_{k,\varepsilon}(\infty)\int_{\Omega}\varphi\dmu_s\geq 0.
\end{equation*}
Testing the equation \eqref{eq-renormalized-intro} with $h_{k, \varepsilon}(u)$ and combining the above inequalities, we deduce that
\begin{equation*}
\int_{\Omega}\cA(x,\nabla u)\cdot \nabla \varphi \, h_{k,\varepsilon}(u) \dx \geq 0.
\end{equation*}
As $\varepsilon\to 0$, we observe that $h_{k, \varepsilon}(u) \to \chi_{\{u\leq k\}}$ everywhere. Applying the dominated convergence theorem, we conclude that
\begin{equation*}
    \int_{\Omega}\cA(x,\nabla(T_k u))\cdot \nabla \varphi\dx \geq 0,
\end{equation*}
By Proposition~\ref{prop:bound2}, $u$ is locally essentially bounded below, which ensures that $T_ku\in W_{\rm loc}^{1,G}(\Omega)$. This shows that $T_ku$ is an $\cA$-supersolution of \eqref{eq-sec2-new}.

It follows from Lemma~\ref{lem-super-liminf} that $T_ku$ has a lower semicontinuous representative $\tilde{u}_k$ such that $\tilde{u}_k(x)=\essliminf_{y \to x} \tilde{u}_k(y)$ for every $x\in \Omega$. Since $\{T_ku\}$ is an increasing sequence, so is $\{\tilde{u}_k\}$. Since $\tilde{u}_k=T_ku$ a.e., we have $\tilde u:=\lim_{k\to\infty}\tilde u_k=u$ a.e. In particular, by Lemma~\ref{lem-int-rem}, the limit is not identically infinite. Thus, Proposition~\ref{prop-conv-super} shows that $\tilde{u} \in \cS_{\cA}(\Omega)$. Therefore, $\tilde{u}$ serves as the desired representative of $u$.
\end{proof}

By virtue of Theorem~\ref{thm-ren-sup}, the solvability of the Dirichlet problem for renormalized solutions naturally translates into the solvability of the problem for $\cA$-superharmonic functions as a corollary. For the Dirichlet problem $\mu$ is required to be bounded.

\begin{corollary}\label{cor-DP}
Let $\mu$ be a nonnegative bounded Radon measure on $\Omega$. Then there exist a nonnegative $\cA$-superharmonic function $u$ in $\Omega$ and a nonnegative renormalized solution $v$ to \eqref{eq:main} such that $u=v$ a.e.\ in $\Omega$ and $T_k(u) \in W^{1, G}_0(\Omega)$ for all $k>0$.
\end{corollary}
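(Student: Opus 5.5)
The plan is to combine the existence result recalled in Remark~\ref{rmk-ren} with Theorem~\ref{thm-ren-sup}. First, since $\mu$ is a nonnegative bounded Radon measure on the bounded open set $\Omega$, \cite[Proposition~6.2]{Chl23} gives a renormalized solution $v$ of the Dirichlet problem $-\DIV\cA(x,\nabla v)=\mu$ with $T_k(v)\in W^{1,G}_0(\Omega)$ for every $k>0$. I would check that this global notion, tested against $W^{1,G}_0$ functions with $h$ as in Definition~\ref{def-ren}, implies the local formulation \eqref{eq:renormalizedpro-intro}--\eqref{eq-renormalized-intro}. For this I would restrict to $\varphi\in C^\infty_c(\Omega)$ and use that $g(|\nabla v|)\in L^1(\Omega)$ is part of, or a consequence of, the estimates in \cite{Chl23}. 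Thus $v$ is a renormalized solution in the sense of Definition~\ref{def-ren}.

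Second, I would show that $v\ge 0$ a.e. One route is to test the global formulation with $h(v)\,T_\varepsilon(v_-)$-type functions, or equivalently to use the comparison/uniqueness theory of \cite{Chl23}. The simplest argument is a truncation one. Take $h=h_\varepsilon$ as in the proof of Lemma~\ref{lem:au-neg}, but now with the test function $T_m(v_-)\in W^{1,G}_0(\Omega)$ in place of $\varphi$. Since the right-hand side is nonpositive, this yields $\int_\Omega G(|\nabla T_m(v_-)|)\dx\le 0$. Hence $T_m(v_-)=0$ by the zero boundary values, and so $v\ge0$. I expect this to be the main (though mild) obstacle, because it requires admitting non-smooth test functions vanishing on $\partial\Omega$. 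That admission is justified by the global formulation in \cite{Chl23} rather than by Definition~\ref{def-ren}. Alternatively, \cite{Chl23} may already provide nonnegativity directly for nonnegative data.

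Third, Theorem~\ref{thm-ren-sup} gives an $\cA$-superharmonic $u$ with $u=v$ a.e. By its construction, $u=\lim_k\tilde u_k$, where $\tilde u_k$ is the essliminf representative of $T_k v\ge0$. Hence $u(x)\ge 0$ everywhere, since an essential liminf of a.e.\ nonnegative functions is nonnegative. Finally, $T_k(u)=T_k(v)$ a.e. Membership in $W^{1,G}_0(\Omega)$ depends only on the a.e.\ class, so $T_k(u)\in W^{1,G}_0(\Omega)$ for all $k>0$, which completes the proof.
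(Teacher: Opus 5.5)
Your proposal is correct and follows the paper's proof: existence from \cite[Proposition~6.2]{Chl23}, then Theorem~\ref{thm-ren-sup}, then $T_k(u)=T_k(v)$ a.e.\ to get the zero boundary values. The only difference is how you obtain $v\ge 0$. You test the global renormalized formulation with $h_\varepsilon(v)T_m(v_-)=-T_m(v_-)$, which needs $W^{1,G}_0\cap L^\infty$ test functions to be admissible in \cite{Chl23}, as you note. The paper instead passes to the a.e.\ limit of weak solutions with nonnegative approximating data, which are nonnegative by the weak maximum principle.
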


\begin{proof}
By~\cite[Proposition~6.2]{Chl23}, there exists a renormalized solution $v$ to \eqref{eq:main} such that $T_k(v) \in W^{1,G}_0(\Omega)$ for all $k>0$. Since $\mu \geq 0$, $v$ may be chosen nonnegative. Indeed, choosing nonnegative bounded approximations of $\mu$, the corresponding weak solutions are nonnegative by the weak maximum principle, and the a.e.\ limit remains nonnegative.

Applying Theorem~\ref{thm-ren-sup} to $v$, we obtain an $\cA$-superharmonic representative $u$ with $u=v$ a.e. In particular, $u$ is nonnegative. Finally, $T_k(u)=T_k(v)$ a.e., so $T_k(u)\in W^{1,G}_0(\Omega)$ for every $k>0$.
\end{proof}

\section{Nonlinear potential theory and obstacle problems}\label{sec4}

Prior to proving that any $\cA$-supeharmonic function is a renormalized solution (Theorem~\ref{thm-sup-ren}), we develop a scale-free nonlinear potential theory. Main goals of this section are the $G$-quasicontinuity of $\cA$-superharmonic functions and the fact that $\cA$-polar sets have zero $G$-capacity. To this end, we develop the necessary potential-theoretic tools, including obstacle problems, balayage, and capacitary estimates.

\subsection{Obstacle problems and boundary regularity}\label{sec4-1}

We introduce the set
\[
\mathcal{K}_{\psi, \vartheta}(\Omega) := \left\{ v \in W^{1,G}(\Omega) : v \ge \psi \text{ a.e.\ in } \Omega \text{ and } v - \vartheta \in W^{1,G}_0(\Omega) \right\},
\]
where $\psi: \Omega \to \overline{\mathbb{R}}$ is an obstacle and $\vartheta \in W^{1,G}(\Omega)$ is a boundary datum. A function $u \in \mathcal{K}_{\psi,\vartheta}(\Omega)$ is called a solution to the \emph{$\mathcal{K}_{\psi, \vartheta}(\Omega)$-obstacle problem} if it satisfies the variational inequality
\begin{equation}\label{eq-obs}
\int_{\Omega} \mathcal{A}(x,\nabla u) \cdot \nabla (v-u) \dx \ge 0 \qquad \text{for all } v \in \mathcal{K}_{\psi,\vartheta}(\Omega).
\end{equation}
The following theorem summarizes the fundamental results on the existence, uniqueness, and interior regularity of the $\mathcal{K}_{\psi, \vartheta}(\Omega)$-obstacle problem.

\begin{proposition}
    \cite[Theorem 2]{CK21remov}\label{prop-known}
Let $\psi, \vartheta \in W^{1,G}(\Omega)$ be such that $\mathcal{K}_{\psi, \vartheta}(\Omega)\neq \emptyset$. Then, the $\mathcal{K}_{\psi, \vartheta}(\Omega)$-obstacle problem is solvable, and the solution is unique up to a set of measure zero. Moreover, the solution is an $\cA$-supersolution of \eqref{eq-sec2-new}. If, in addition, $\psi \in C(\Omega)$ ($\psi \in C^\alpha(\Omega)$ for some $\alpha \in (0,1)$, resp.), then the solution $u$ is continuous (H\"older continuous with exponent $\alpha$, resp.) in $\Omega$ and is an $\cA$-solution to \eqref{eq-sec2-new} in the open set $\{x \in \Omega: u(x)>\psi(x)\}$.
\end{proposition}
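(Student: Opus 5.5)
The proposition is cited from \cite[Theorem 2]{CK21remov}, but we outline how one would prove it directly within the present framework. We treat existence/uniqueness, the supersolution property, and then regularity, in that order.

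\emph{Existence and uniqueness.} The set $\mathcal{K}_{\psi,\vartheta}(\Omega)$ is nonempty by assumption. It is convex, and it is closed in $W^{1,G}(\Omega)$: a.e.\ convergence of subsequences preserves $v\ge\psi$, and $W^{1,G}_0(\Omega)$ is closed. The operator $v\mapsto -\DIV\cA(x,\nabla v)$ maps $W^{1,G}(\Omega)$ into the dual, because $G^*(g(t))\le CG(t)$ by Lemma~\ref{lem-H}. It is monotone by \eqref{eq-monotonicity}, and hemicontinuous since $\cA$ is Carath\'eodory. It is coercive on $\mathcal{K}_{\psi,\vartheta}$: this follows from \eqref{ass-op} and \eqref{eq:Gg} together with a Poincar\'e inequality for $v-\vartheta\in W^{1,G}_0(\Omega)$. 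The Lions--Stampacchia / Browder--Minty theory for variational inequalities \cite{KS80} then gives a solution. For uniqueness, take two solutions $u_1,u_2$, test each inequality with the other, and add. This gives $\int(\cA(x,\nabla u_1)-\cA(x,\nabla u_2))\cdot\nabla(u_1-u_2)\le 0$. Strict monotonicity forces $\nabla u_1=\nabla u_2$ a.e., and since $u_1-u_2\in W^{1,G}_0(\Omega)$, we get $u_1=u_2$ a.e.

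\emph{Supersolution property.} Given $0\le\varphi\in C^\infty_c(\Omega)$, the function $v=u+\varphi$ lies in $\mathcal{K}_{\psi,\vartheta}(\Omega)$. Inserting it into \eqref{eq-obs} gives \eqref{eq-supersolution} directly.

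\emph{Regularity.} Fix a continuous (resp.\ $C^\alpha$) obstacle $\psi$ and a ball $B_r(x_0)\Subset\Omega$. Set $m=\sup_{B_r}\psi$ (resp.\ $m=\psi(x_0)+Cr^\alpha$). Then the functions $(u-k)_+$ with $k\ge m$ are admissible test perturbations, since $v=u-\eta^q(u-k)_+$ still lies above $\psi$. This yields the Caccioppoli and De Giorgi-class inequalities for $u$ above level $m$. From below, $u$ is a supersolution, so the Orlicz weak Harnack inequality \cite[Proposition 3.20]{CGZG24} applies. We combine the two estimates as in the classical argument:
\[
\osc_{B_{r/2}}u\le \gamma\osc_{B_r}u+\osc_{B_r}\psi,\qquad \gamma<1.
\]
Iterating this yields continuity, and Hölder continuity with exponent $\min\{\alpha,\alpha_0\}$, where $\alpha_0$ is the exponent for solutions (one assumes $\alpha\le\alpha_0$, as in the cited result).

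The main obstacle is precisely this oscillation decay. Because $G$ is non-homogeneous, the De Giorgi iteration must be performed in intrinsic form, with $G$ evaluated at level differences divided by $r$, as in Lemma~\ref{lem:re}; one cannot simply normalize the oscillation to $1$. Finally, on the open set $\{u>\psi\}$ (open by continuity), take $\varphi\in C^\infty_c$ supported where $u>\psi+\delta$. Then $u\pm\varepsilon\varphi\in\mathcal K_{\psi,\vartheta}(\Omega)$ for small $\varepsilon$, and this gives equality in \eqref{eq-supersolution}, i.e.\ $u$ is an $\cA$-solution there.
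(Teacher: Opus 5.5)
The paper gives no proof of this proposition; it quotes it from \cite[Theorem~2]{CK21remov}. Your outline is the standard argument and it is sound. That covers monotone-operator existence, uniqueness from strict monotonicity, the supersolution property via $v=u+\varphi$, Caccioppoli/De Giorgi for $(u-k)_+$ at levels $k\ge\sup_{B_r}\psi$, and the two-sided perturbation $u\pm\varepsilon\varphi$ in $\{u>\psi\}$.

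The closest argument the paper carries out itself is the boundary estimate of Theorem~\ref{thm-bdry-reg}. There the infimum is controlled not by weak Harnack but by running the same De Giorgi scheme on $(u-k)_-$, which is admissible at every level since $u$ is a supersolution. In the interior this symmetric route works with the usual alternative between $\{u>(M+m)/2\}$ and $\{u\le(M+m)/2\}$ in $B_{r/2}$. The case $(M+m)/2<\sup_{B_r}\psi$ is trivial, since then $M-m<2\osc_{B_r}\psi$.

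Your weak-Harnack route is shorter but needs two adjustments in this Orlicz setting:
\begin{enumerate}
\item The weak Harnack inequality of \cite{CGZG24} carries an additive $r$ (visible in the proof of Lemma~\ref{lem-super-liminf}). So the decay reads $\osc_{B_{r/2}}u\le\gamma\osc_{B_{2r}}u+C(\osc_{B_{2r}}\psi+r)$.
\item The sup bound of Lemma~\ref{lem-loc-bdd-obs} is a $G$-mean. It must be converted into the small $L^s$-mean that weak Harnack controls. This can be done via $G(\lambda t)\le\lambda^pG(t)$ and $G^{-1}(\lambda t)\le\lambda^{1/q}G^{-1}(t)$ for $\lambda\le1$ (Lemma~\ref{lem-barg}), plus the usual iteration in the radii.
\end{enumerate}
Neither adjustment affects the conclusion.

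Your caveat on the exponent is a necessary correction rather than a hedge. $\cA$ is only measurable in $x$. Take bounded data $\vartheta$ and a constant obstacle $\psi\equiv-M$ with $-M<\inf\vartheta$. Then the constraint is inactive and $u$ is simply the $\cA$-harmonic function with data $\vartheta$. In general such a function is not $C^{\alpha}$ for $\alpha$ close to $1$, even though $\psi$ is $C^\alpha$ for every $\alpha$. So one only gets exponent $\min\{\alpha,\alpha_0\}$, with an arbitrarily small loss when the two coincide. This is harmless for the paper, which only uses some H\"older exponent with locally uniform constants: equicontinuity in Lemma~\ref{lem-R-obs} and interior regularity in Theorem~\ref{thm-bdry-reg}. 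Your quantitative oscillation inequality supplies exactly that.
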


The main goal of this section is to prove H\"older regularity of the $\mathcal{K}_{\psi, \vartheta}(\Omega)$-obstacle problem up to the boundary. For this purpose, we develop the De Giorgi--Nash--Moser theory. We begin with the local boundedness up to the boundary.

\begin{lemma}\label{lem-loc-bdd-obs}
Assume $B_R=B_R(x_0)$ satisfy $\Omega \cap B_R \neq \emptyset$. Let $\vartheta \in W^{1,G}(\Omega) \cap C(\overline{\Omega} \cap B_R)$. Let $u$ be a solution to the $\mathcal{K}_{\psi, \vartheta}(\Omega)$-obstacle problem.
\begin{enumerate}[\textup{(}i\textup{)}]
\item
If
$M_{\psi,\vartheta}:= \max \left\{ \esssup_{\Omega \cap B_R} \psi, \esssup_{\partial \Omega \cap B_R} \vartheta \right\} < \infty$,
then for any $k \geq M_{\psi,\vartheta}$ it holds
\begin{equation*}
\esssup_{\Omega \cap B_{R/2}} {(u-k)_+} \leq CR G^{-1} \left( \frac{1}{|B_R|} \int_{\Omega \cap B_R} G\left( \frac{(u-k)_+}{R} \right) \dx \right).
\end{equation*}
\item
If
$m_{\psi,\vartheta}:= \essinf_{\partial \Omega \cap B_R} \vartheta > -\infty$,
then for any $k \leq m_{\psi,\vartheta}$ it holds
\begin{equation*}
\esssup_{\Omega \cap B_{R/2}} {(u-k)_-} \leq CR G^{-1} \left( \frac{1}{|B_R|} \int_{\Omega \cap B_R} G\left( \frac{(u-k)_-}{R} \right) \dx \right).
\end{equation*}
\end{enumerate}
\end{lemma}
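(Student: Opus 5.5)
The plan is a De Giorgi iteration carried out directly in the Orlicz scale, using truncations that are admissible competitors in the obstacle problem. For (i), fix $k \ge M_{\psi,\vartheta}$ and a cutoff $\eta \in C^\infty_c(B_R)$ with $0\le\eta\le1$. Test \eqref{eq-obs} with $v := u - \eta^q (u-k)_+$. This is admissible for two reasons. First, on $\{u>k\}$ we have $v \ge u - (u-k) = k \ge \psi$, and elsewhere $v=u$, so $v\ge\psi$. Second, because $k\ge \esssup_{\partial\Omega\cap B_R}\vartheta$ and $\vartheta$ is continuous on $\overline\Omega\cap B_R$, the function $\eta^q(u-k)_+$ lies in $W^{1,G}_0(\Omega)$; here one uses $u-\vartheta\in W^{1,G}_0(\Omega)$ and the fact that $(u-k)_+ \le (u-\vartheta)_+ + (\vartheta-k)_+$ with $(\vartheta-k)_+$ vanishing near $\partial\Omega\cap\supp\eta$ up to an approximation. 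I expect this boundary membership to be the most delicate technical point. The cleanest route is to replace $k$ by $k+\varepsilon$, note that $(\vartheta - k-\varepsilon)_+$ vanishes in a relative neighbourhood of $\partial\Omega\cap\supp\eta$ by continuity, and then let $\varepsilon\to0$.

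Plugging in this test function and using \eqref{ass-op} together with Lemma~\ref{lem-barg}(iii) (Young-type absorption, exactly as in the proof of Lemma~\ref{lem:re}) should give a Caccioppoli inequality for $w:=(u-k)_+$ extended by zero outside $\Omega$:
\[
\int_{B_r} G(|\nabla w|)\eta^q \dx \le C \int_{B_{r'}} G\Bigl(\frac{w}{r'-r}\Bigr)\dx ,\qquad r<r'\le R,
\]
valid uniformly for every level $k\ge M_{\psi,\vartheta}$.

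Next, run the iteration. Set $r_j:=\tfrac R2(1+2^{-j})$ and $k_j:=k+d(1-2^{-j})$, with $d>0$ to be chosen, and let
\[
Y_j:=\frac1{|B_R|}\int_{B_{r_j}} G\Bigl(\frac{(u-k_j)_+}{R}\Bigr)\dx .
\]
Because the zero extension of $w$ belongs to $W^{1,G}(B_R)$, we may apply a Sobolev inequality. The natural choice is to apply the $W^{1,1}$ or $\tilde p$-Sobolev inequality to $G((u-k_{j+1})_+/R)^{1/\tilde p}$-type quantities, in the spirit of Lemma~\ref{lem:bound} with $H(t)=G(t^{1/\tilde p})$ and Lemma~\ref{lem-H}. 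Combining this with Chebyshev's inequality, $|\{u>k_{j+1}\}\cap B_{r_j}|\le |B_R|\,Y_j/G(2^{-j-1}d/R)$, and with the doubling bounds in Lemma~\ref{lem-barg}(i),(ii), we aim for the recursion
\[
\frac{Y_{j+1}}{G(d/R)} \le C\, b^{j}\Bigl(\frac{Y_j}{G(d/R)}\Bigr)^{1+\beta}
\]
for some $\beta>0$ and $b>1$. Choosing $d$ so that $G(d/R)=C_*Y_0$, the standard fast-geometric-convergence lemma yields $Y_j\to0$. Hence $(u-k-d)_+=0$ a.e.\ in $\Omega\cap B_{R/2}$, and $d=R\,G^{-1}(C_*Y_0)\le CR\,G^{-1}(Y_0)$, which is statement (i).

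Statement (ii) follows the same scheme with $v:=u+\eta^q(u-k)_-$. Since we are increasing $u$, the obstacle constraint is automatic, which explains why only the boundary datum enters $m_{\psi,\vartheta}$. Boundary admissibility comes from $k\le\essinf_{\partial\Omega\cap B_R}\vartheta$, handled by the same $\varepsilon$-shift. The sign flip is absorbed by applying the estimates to the reflected operator $\widetilde\cA(x,\xi)=-\cA(x,-\xi)$, which satisfies the same structure.

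Besides the boundary admissibility, the main obstacle I expect is obtaining a genuine superlinear exponent $1+\beta$ without scaling. I would handle it through the intermediate power $\tilde p<\min\{p,n\}$ and the Lemma~\ref{lem-H} trick, as in the proof of Lemma~\ref{lem:bound}, so that every constant depends only on $n,p,q,c_1^\cA,c_2^\cA$.
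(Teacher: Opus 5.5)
Your proposal is correct and follows the paper's route: the same competitors $u-\eta^q(u-k)_+$ and $u+\eta^q(u-k)_-$, a Caccioppoli inequality for the zero extension of $(u-k)_\pm$, and then a standard De Giorgi iteration, which the paper does not carry out but cites from \cite[Proposition~5.5]{HHL21}. Your $\varepsilon$-shift argument for boundary admissibility fills in a step the paper takes for granted. The iteration also closes more easily than you expect: the plain $W^{1,1}$-Sobolev inequality applied to $G(\zeta w/R)$, combined with Chebyshev, already gives the superlinear exponent $1+1/n$, so the $\tilde p$ detour is unnecessary.
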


\begin{proof}
{\it (i)} Let $R/2 \leq \rho < r \leq R$. Let $\eta \in C^\infty_c(B_r)$ satisfy $0 \leq \eta \leq 1$ in $B_r$ and $\eta=1$ on $B_\rho$. Then, for any $k \geq M$, the function $v:=u-(u-k)_+\eta^q \in \mathcal{K}_{\psi, \vartheta}(\Omega)$ is admissible for the variational inequality \eqref{eq-obs}. Thus, a standard argument shows that $u$ satisfies a Caccioppoli type estimate
\begin{equation*}
\int_{\Omega \cap B_\rho} G(|\nabla (u-k)_+|) \dx \leq C \left( \frac{R}{r-\rho} \right)^q \int_{\Omega \cap B_r} G\left( \frac{(u-k)_+}{R} \right) \dx.
\end{equation*}
Let $w$ be the zero extension of $(u-k)_+$ to $B_R$. Then the above estimate reads as
\begin{equation}\label{eq-Caccioppoli}
\int_{B_\rho} G(|\nabla w|) \dx \leq C \left( \frac{R}{r-\rho} \right)^q \int_{B_r} G\left( \frac{w}{R} \right) \dx,
\end{equation}
which is sufficient to deduce
\begin{equation*}
\esssup_{B_{R/2}} w \leq CR G^{-1} \left( \fint_{B_R} G\left( \frac{w}{R}\right) \dx \right),
\end{equation*}
by using a standard iteration argument; see for instance \cite[Proposition~5.5]{HHL21}.

{\it (ii)} The proof follows the same lines upon the choice of an admissible function $v:=u+(u-k)_-\eta^q \in \mathcal{K}_{\psi, \vartheta}(\Omega)$ for any $k \leq m$.
\end{proof}

We now state the main theorem of this section, which establishes the global H\"older regularity of solutions to the $\mathcal{K}_{\psi, \vartheta}(\Omega)$-obstacle problem.

\begin{theorem}\label{thm-bdry-reg}
Assume that $\rn \setminus \Omega$ satisfies the measure density condition, i.e.\ there exists $R_0>0$ such that
\begin{equation}\label{eq-MDC}
\inf_{x_0 \in \partial \Omega} \inf_{R \in (0, R_0)} \frac{|B_R(x_0) \setminus \Omega|}{|B_R(x_0)|} >0.
\end{equation}
Suppose that either $\psi$ is H\"older continuous on $\overline{\Omega}$ or $\psi \equiv -\infty$, that $\vartheta \in \mathcal{K}_{\psi, \vartheta}(\Omega)$ is H\"older continuous on $\overline{\Omega}$, and that $\psi \leq \vartheta$ on $\partial \Omega$. Then the solution to the $\mathcal{K}_{\psi, \vartheta}(\Omega)$-obstacle problem is H\"older continuous on $\overline{\Omega}$.
\end{theorem}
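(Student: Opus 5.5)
The plan is to combine interior Hölder continuity from Proposition~\ref{prop-known} with a boundary oscillation decay estimate, and then patch the two together by the usual covering argument. In the interior, if $\psi$ is Hölder continuous, Proposition~\ref{prop-known} gives $u\in C^{\alpha}_{\rm loc}(\Omega)$. If $\psi\equiv-\infty$, then $u$ is an $\cA$-solution and is locally Hölder continuous by the De Giorgi theory (the Caccioppoli inequality \eqref{eq-Caccioppoli} holds for all levels). What we need is a quantitative form: for $B_{2r}(x)\subset\Omega$,
\[
\osc_{B_\rho(x)}u\le C(\rho/r)^{\alpha}\osc_{B_r(x)}u+C\,\osc_{B_r(x)}\psi .
\]
I would obtain this by the same De Giorgi argument used below, applied without the boundary.

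The core step is the decay of oscillation at a boundary point $x_0\in\partial\Omega$. Fix $R<R_0$ and set
\[
M(R)=\esssup_{\Omega\cap B_R}u,\qquad m(R)=\essinf_{\Omega\cap B_R}u,\qquad \omega(R)=M(R)-m(R).
\]
Boundedness of $u$ near $x_0$ comes from Lemma~\ref{lem-loc-bdd-obs}. Next consider the levels $k\ge \max\{\sup_{B_R\cap\Omega}\psi,\sup_{\partial\Omega\cap B_R}\vartheta\}$. For these, the zero extension $w$ of $(u-k)_+$ satisfies the Caccioppoli inequality \eqref{eq-Caccioppoli} in the whole ball $B_R$. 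Likewise, $(u-k)_-$ with $k\le\inf_{\partial\Omega\cap B_R}\vartheta$ satisfies the analogous inequality from part (ii). The key point is that $w$ vanishes on $B_R\setminus\Omega$, which by \eqref{eq-MDC} occupies a fixed proportion $\gamma$ of the ball. This gives the density hypothesis of the De Giorgi lemma for free. I would run the Orlicz De Giorgi iteration of \cite[Proposition~5.5]{HHL21} in the version "$|\{w>0\}\cap B_R|\le(1-\gamma)|B_R|$ implies a reduction of the supremum". To make it scale-free, I would work with the levels $k_j=M-2^{-j}\omega$ and measure things through $G(\omega/R)$ rather than through powers. Then the isoperimetric (De Giorgi) lemma and the shrinking of the positivity set give
\[
\esssup_{\Omega\cap B_{R/4}}u\le M(R)-\sigma\bigl(M(R)-k_0\bigr),\qquad k_0=\max\Bigl\{\tfrac{M+m}{2},\sup_{B_R\cap\Omega}\psi,\sup_{\partial\Omega\cap B_R}\vartheta\Bigr\},
\]
with $\sigma\in(0,1)$ depending only on the data and on $\gamma$. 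The lower side is handled symmetrically using $\inf\vartheta$. Since $\psi\le\vartheta$ on $\partial\Omega$ and both are Hölder, the boundary terms are controlled by $\osc_{B_R\cap\overline\Omega}\vartheta+\osc_{B_R\cap\overline\Omega}\psi\le CR^{\beta}$. The lower obstacle only helps, because $u\ge\psi$. Combining the two sides yields
\[
\omega(R/4)\le(1-\sigma/2)\,\omega(R)+CR^{\beta},
\]
and iterating gives $\omega(r)\le C(r/R_0)^{\alpha'}$ for some $\alpha'>0$.

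The main obstacle is making the reduction-of-oscillation step independent of $\omega$, since there is no homogeneity. Rescaling $u/\omega$ is not allowed. Instead I would apply the De Giorgi lemma directly at the scale $\omega/R$: the Caccioppoli inequality \eqref{eq-Caccioppoli} involves $G(w/R)$ on the right and $G(|\nabla w|)$ on the left. Using Lemma~\ref{lem-barg}(i)--(ii), with $G(\lambda t)\approx$ powers between $p$ and $q$ of $\lambda$ for $\lambda$ comparable to $2^{-j}$, together with the $L^1$ De Giorgi isoperimetric inequality, the constants only depend on $p,q$ and the number of steps, not on $\omega$. This is the same mechanism as in Lemma~\ref{lem:re}. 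The final step is to glue the estimates. Given $x,y\in\overline\Omega$, let $d=\dist(x,\partial\Omega)$. If $|x-y|\ge d/4$, compare both points with the nearest boundary point and use the boundary decay. Otherwise, apply the interior estimate in $B_{d/2}(x)$, starting from $\osc_{B_{d/2}(x)}u\le Cd^{\alpha'}$ supplied by the boundary estimate. This gives $|u(x)-u(y)|\le C|x-y|^{\alpha''}$ on $\overline\Omega$.
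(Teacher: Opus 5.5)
Your proposal is correct and follows essentially the paper's route: a boundary De Giorgi scheme where the measure density condition makes the zero extension of $(u-k_j)_+$ vanish on a fixed proportion of the ball. The isoperimetric lemma with levels $k_j=M-2^{-j}(M-k_0)$ shrinks the positivity set, the non-homogeneity is absorbed through Lemma~\ref{lem-barg} instead of rescaling, Lemma~\ref{lem-loc-bdd-obs} at the final level reduces the supremum, and the resulting oscillation decay (data term controlled via $\psi\le\vartheta$ on $\partial\Omega$) is iterated and glued to interior regularity. The differences are cosmetic: the paper takes $k_0=\max\{\sup_{\Omega\cap B_r}\psi,\sup_{B_r}\vartheta\}$ without the $(M+m)/2$ term, since both sides reduce simultaneously at the boundary, and it works with $u$ extended by $\vartheta$ outside $\Omega$. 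In your gluing step, take the $\psi$-oscillation at a scale that shrinks with $|x-y|$, or compare with a boundary point when $d$ is small relative to $|x-y|^{1/2}$; otherwise a non-decaying $Cd^{\beta}$ term remains.
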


To establish Theorem~\ref{thm-bdry-reg}, it suffices to prove its  following  localized version near the boundary.

\begin{lemma}
Assume that $\rn \setminus \Omega$ satisfies the measure density condition \eqref{eq-MDC} with $R_0>0$. Let $x_0 \in \partial\Omega$ and $0<R \leq R_0$. Suppose that either $\psi$ is H\"older continuous on $\overline{\Omega} \cap B_R(x_0)$ or $\psi \equiv -\infty$, that $\vartheta \in \mathcal{K}_{\psi, \vartheta}(\Omega)$ is H\"older continuous on $\overline{\Omega} \cap B_R(x_0)$, and that $\psi \leq \vartheta$ on $\partial \Omega \cap B_R(x_0)$. Then the solution to the $\mathcal{K}_{\psi, \vartheta}(\Omega)$-obstacle problem is H\"older continuous on $\overline{\Omega} \cap B_R(x_0)$.
\end{lemma}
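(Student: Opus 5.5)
The plan is the classical De Giorgi scheme up to the boundary, combining the Caccioppoli estimate \eqref{eq-Caccioppoli} for zero extensions with the measure density condition \eqref{eq-MDC}. The goal is an oscillation decay estimate at boundary points $x_0\in\partial\Omega$: for $0<r\le R$,
\[
\operatorname*{ess\,osc}_{\Omega\cap B_{r/4}(x_0)} u \le \gamma\, \operatorname*{ess\,osc}_{\Omega\cap B_{r}(x_0)} u + C\Bigl(\operatorname*{osc}_{\overline\Omega\cap B_r}\vartheta + \operatorname*{osc}_{\overline\Omega\cap B_r}\psi + r\Bigr),
\]
with $\gamma\in(0,1)$ independent of $r$. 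The additive $r$ term is there because, without homogeneity, all estimates are phrased via $G((u-k)_\pm/r)$, as in Lemma~\ref{lem-loc-bdd-obs}. Iterating this with the Hölder moduli of $\vartheta$ and $\psi$ gives a Hölder modulus at boundary points. Combining it with the interior Hölder continuity of Proposition~\ref{prop-known}, through the standard argument (compare $|x-y|$ with $\dist(x,\partial\Omega)$), yields Hölder continuity on $\overline\Omega\cap B_R(x_0)$.

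For the decay step, set $M_r=\esssup_{\Omega\cap B_r}u$, $m_r=\essinf_{\Omega\cap B_r}u$, and treat two cases. For the upper bound, take levels $k\ge \max\{\sup_{B_r\cap\partial\Omega}\vartheta,\sup_{\Omega\cap B_r}\psi\}$. Note that $\psi\le\vartheta$ on $\partial\Omega$ and the continuity of $\psi$ mean that $\sup\psi$ near $x_0$ exceeds $\sup_{\partial\Omega}\vartheta$ by at most $\operatorname{osc}\psi$. The zero extension $w=(u-k)_+$ then vanishes on $B_r\setminus\Omega$, which by \eqref{eq-MDC} has measure at least $c|B_r|$. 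Combining \eqref{eq-Caccioppoli} with a De Giorgi isoperimetric (or Sobolev–Poincaré for functions vanishing on a set of positive density) inequality, applied to the levels $k_j = M_r - 2^{-j}(M_r-k_0)$ with $k_0=\max\{\text{boundary/obstacle sup}, (M_r+m_r)/2\}$, shows that $|\{w_{k_j}>0\}\cap B_{r/2}|/|B_{r/2}|$ becomes small after finitely many steps. Lemma~\ref{lem-loc-bdd-obs}(i) then gives $M_{r/4}\le k_{j_0+1}$, that is, a reduction $M_{r/4}\le M_r-2^{-j_0-1}(M_r-k_0)$.

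The lower bound is symmetric via Lemma~\ref{lem-loc-bdd-obs}(ii) and the admissible function $u+(u-k)_-\eta^q$, with levels $k\le\inf_{\partial\Omega\cap B_r}\vartheta$. The obstacle is harmless here because raising $u$ keeps it above $\psi$. When $\psi\equiv-\infty$ only $\vartheta$ enters. Subtracting the two bounds gives the oscillation decay.

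The main obstacle is the non-homogeneity in the iteration. The isoperimetric step produces inequalities like $(h/2)\,|\{w>h\}\cap B| \le C r\, |B\setminus\Omega|^{-1}\ldots\int|\nabla w|$, which must be converted through $G$ and Jensen into a statement involving $G(h/r)$ alone. If $h/r$ is small relative to $1$, the $\lambda^p$ versus $\lambda^q$ constants from Lemma~\ref{lem-barg} would degenerate. I would handle this by working in the normalized quantities $G(\omega/r)$ with $\omega$ the oscillation, and using Lemma~\ref{lem-barg}(i)–(ii) to keep the constants depending only on $p,q$, since the ratio $(M_r-k_j)/(M_r-k_0)=2^{-j}$ ranges over finitely many values. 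The alternative case $\omega\le r$ (or $\omega\le r^{\beta}$) is absorbed directly into the additive $r$ term.
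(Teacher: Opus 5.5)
Your proposal is correct and follows the paper's proof essentially step by step. Both use the Caccioppoli estimate for the zero extension of $(u-k_j)_+$, De Giorgi's isoperimetric lemma combined with the measure density condition along the levels $k_j=M_r-2^{-j}(M_r-k_0)$, Lemma~\ref{lem-loc-bdd-obs} at the final level, the symmetric lower bound, and then iteration together with interior regularity. The paper additionally extends $\vartheta$ to $B_r$ by McShane--Whitney and works with $\bar u$ on the whole ball.

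The one point worth correcting is that the non-homogeneity you worried about causes no degeneration. After Jensen's inequality, Lemma~\ref{lem-barg} gives $G^{-1}(\Lambda G(t))\le \Lambda^{1/p}t$ for $\Lambda\ge1$ and $G^{-1}(\Lambda G(t))\le\Lambda^{1/q}t$ for $\Lambda\le1$, uniformly in $t$. So the paper needs neither the additive $r$ term, nor the case split $\omega\le r$, nor the midpoint level $(M_r+m_r)/2$; these are harmless but superfluous.
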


\begin{proof}
Let $0<r<R/2$ and write $B_r=B_r(x_0)$. Since $\vartheta$ is H\"older continuous on the closed set $\overline{\Omega} \cap \overline{B}_r$, we extend it to a H\"older continuous function on $\overline B_r$, still denoted by $\vartheta$, by the McShane--Whitney extension theorem. Let $u$ be the solution to the $\mathcal{K}_{\psi, \vartheta}(\Omega)$-obstacle problem. We denote by $\bar{u}$ the function defined by $\bar{u}=u$ in $\Omega \cap B_r$ and $\bar{u}=\vartheta$ in $B_r \setminus \Omega$. Note, however, that $\bar{u}$ does not necessarily belong to $W^{1,G}(B_r)$.

For $0<\rho \leq r$, we set
\begin{equation*}
M_\rho := \esssup_{B_\rho} \bar{u}, \quad m_\rho := \essinf_{B_\rho} \bar{u},
\end{equation*}
which are finite by Lemma~\ref{lem-loc-bdd-obs}, and define an increasing sequence of levels $k_j$ as follows:
\begin{equation*}
k_0: = \max\left\{ \sup_{\Omega \cap B_r} \psi,\ \sup_{B_r} \vartheta \right\} \quad\text{and}\quad k_j := M_r - \frac{M_r-k_0}{2^j} \quad\text{for }j=1, 2, \dots\,.
\end{equation*}
Note that if $\psi \equiv -\infty$, then $\sup_{\Omega \cap B_r} \psi = -\infty$ and hence $k_0=\sup_{B_r}\vartheta$. Also, for $j=0,1,2,\dots$, we define $w_j$ by the zero extension of $(u-k_j)_+$ to $B_r$, which belongs to $W^{1,G}(B_r)$, and
\begin{equation*}
A_j := B_r \cap \{w_j>0\}, \quad \widetilde{A}_j := B_{r/2} \cap \{w_j>0\},\quad D_j := \widetilde{A}_j \setminus \widetilde{A}_{j+1}.
\end{equation*}
We notice that $\widetilde{A}_{j+1} = B_{r/2} \cap \{w_j > k_{j+1} - k_j\}$ and $D_j=B_{r/2} \cap \{0 <w_j \leq k_{j+1} - k_j\}$.

Applying De Giorgi's isoperimetric lemma (see for instance \cite[Chapter~I, Lemma~2.2]{Dib93}), we obtain
\begin{equation}\label{eq-isoperimetric}
(k_{j+1} - k_j) |\widetilde{A}_{j+1}| |B_{r/2} \setminus \widetilde{A}_{j}| \le C r^{n+1} \int_{D_j} |\nabla w_j| \dx.
\end{equation}
Since $B_{r/2} \setminus \Omega \subset B_{r/2} \setminus \widetilde{A}_{j}$, it follows from the measure density condition that
\begin{equation}\label{eq-MDC-A}
|B_{r/2} \setminus \widetilde{A}_{j}| \geq |B_{r/2} \setminus \Omega| \geq \delta_0 |B_{r/2}|
\end{equation}
for some $\delta_0>0$. 
Combining \eqref{eq-isoperimetric} and \eqref{eq-MDC-A} yields
\begin{equation*}
|\widetilde{A}_{j+1}| \leq C\frac{r}{k_{j+1} - k_j} \int_{D_j} |\nabla w_j| \dx.
\end{equation*}

By using Jensen's inequality, the Caccioppoli estimate \eqref{eq-Caccioppoli}, and Lemma~\ref{lem-barg}, we have
\begin{align*}
\int_{D_j} |\nabla w_j| \dx
&\leq G^{-1} \left( \fint_{D_j} G(|\nabla w_j|) \dx \right) |D_j| \leq G^{-1} \left( \frac{C}{|D_j|} \int_{A_j} G\left( \frac{w_j}{r} \right) \dx \right) |D_j| \\
&\leq C G^{-1} \left( \frac{|A_j|}{|D_j|} G\left( \frac{M_r-k_j}{r} \right) \right) |D_j| \leq C \frac{M_r-k_j}{r} \left( \frac{|A_j|}{|D_j|} \right)^{\frac 1p} |D_j|.
\end{align*}
Since $|A_j| \leq |B_r|=|B_1|r^n$, it follows from $k_{j+1}-k_j=2^{-j-1}(M_r-k_0)$ and $M_r-k_j=2^{-j}(M_r-k_0)$ that
\begin{equation*}
|\widetilde{A}_{j+1}| \leq C r^{\frac np} |D_j|^{1-\frac 1p}.
\end{equation*}
Raising both sides to the power of $p/(p-1)$ and noting that $|D_j|=|\widetilde{A}_j| - |\widetilde{A}_{j+1}|$, we arrive at
\begin{equation*}
|\widetilde{A}_{j+1}|^{\frac{p}{p-1}} \leq C r^{\frac{n}{p-1}} \left( |\widetilde{A}_j| - |\widetilde{A}_{j+1}| \right).
\end{equation*}
We sum this inequality from $j=0$ to $N-1$ to obtain
\begin{equation*}
N |\widetilde{A}_N|^{\frac{p}{p-1}} \leq \sum_{j=0}^{N-1} |\widetilde{A}_{j+1}|^{\frac{p}{p-1}} \leq Cr^{\frac{n}{p-1}} \sum_{j=0}^{N-1} \left( |\widetilde{A}_j| - |\widetilde{A}_{j+1}| \right) \leq C |B_{r/2}|^{\frac{p}{p-1}},
\end{equation*}
in other words,
\begin{equation}\label{eq-ratio}
\frac{|\widetilde{A}_N|}{|B_{r/2}|} \leq \frac{C}{N^{1-1/p}}.
\end{equation}

Now, Lemma~\ref{lem-loc-bdd-obs}(i), applied to the level $k_N$, together with Lemma~\ref{lem-barg} and \eqref{eq-ratio}, shows that
\begin{align*}
\esssup_{\Omega \cap B_{r/4}} {(u-k_N)_+}
&\leq Cr G^{-1} \left( \fint_{B_{r/2}} G\left( \frac{w_N}{r} \right) \dx \right) \leq Cr G^{-1} \left( \frac{|\widetilde{A}_N|}{|B_{r/2}|} G\left( \frac{M_r-k_N}{r} \right) \right) \\
&\leq C \left( \frac{1}{N^{1-1/p}} \right)^{1/q} (M_r-k_N).
\end{align*}
We choose $N$ sufficiently large so that $2C \leq N^{\frac{p-1}{pq}}$. This yields $\esssup_{\Omega \cap B_{r/4}} {(u-k_N)_+} \leq (M_r-k_N)/2$. Moreover, since $\bar{u} =\vartheta \leq k_0 \leq k_N$ in $B_{r/4} \setminus \Omega$, for  $\gamma:=2^{-N-1} \in (0,1)$ we obtain
\begin{equation*}
M_{r/4} \leq (1-\gamma)M_r + \gamma k_0\,.
\end{equation*}

A symmetric argument utilizing Lemma~\ref{lem-loc-bdd-obs}(ii) yields the corresponding lower bound decay $m_{r/4} \geq (1-\gamma) m_r + \gamma \inf_{B_r} \vartheta$. Subtracting these two inequalities produces the standard oscillation estimate
\begin{equation*}
\essosc_{B_{r/4}} \bar{u} \le (1-\gamma) \essosc_{B_{r}} \bar{u} + \gamma \left( \max\left\{ \sup_{\Omega \cap B_r} \psi,\ \sup_{B_r} \vartheta \right\} - \inf_{B_r} \vartheta \right).
\end{equation*}
If $\psi \equiv -\infty$, then
\begin{equation*}
\max\left\{ \sup_{\Omega \cap B_r} \psi,\ \sup_{B_r} \vartheta \right\} - \inf_{B_r} \vartheta = \osc_{B_r} \vartheta.
\end{equation*}
Otherwise, since $\max\{a, b\} = b+(a-b)_+$ for all $a, b \in \mathbb{R}$ and $\psi(x_0) \leq \vartheta(x_0)$, we have
\begin{align*}
\max\left\{ \sup_{\Omega \cap B_r} \psi,\ \sup_{B_r} \vartheta \right\} - \inf_{B_r} \vartheta
&\leq \osc_{B_r} \vartheta + \left( \sup_{\Omega \cap B_r} \psi - \sup_{B_r} \vartheta \right)_+ \leq \osc_{B_r} \vartheta + \left( \psi(x_0) + \osc_{\Omega \cap B_r} \psi - \vartheta(x_0) \right)_+ \\
&\leq \osc_{B_r} \vartheta + \osc_{\Omega \cap B_r} \psi,
\end{align*}
and hence
\begin{equation*}
\essosc_{B_{r/4}} \bar{u} \le (1-\gamma) \essosc_{B_{r}} \bar{u} + \gamma \left( \osc_{B_r} \vartheta + \osc_{\Omega \cap B_r} \psi \right).
\end{equation*}
A standard iteration argument finishes the proof of H\"older continuity at $x_0$. Finally, global H\"older regularity in $\overline{\Omega} \cap B_R$ follows from H\"older continuity at boundary points and the interior H\"older regularity in Proposition~\ref{prop-known}.
\end{proof}

\subsection{Auxiliary facts for bounded supersolutions}\label{sec4-2}

Before developing the balayage theory, we record two auxiliary facts for bounded $\cA$-supersolutions. Let us introduce the Riesz measure of an $\cA$-supersolution $u$ as follows: since the distribution $-\DIV\cA(x,\nabla u)$ is positive, it is represented by a unique nonnegative Radon measure, denoted by $\mu[u]$, as
\begin{equation}\label{eq-Riesz-supersoln}
\int_\Omega \cA(x,\nabla u) \cdot \nabla\varphi \dx=\int_\Omega \varphi \dmu[u] \quad\text{for all }\varphi\in C^\infty_c(\Omega).
\end{equation}
The Riesz measure of a general $\cA$-superharmonic function will be introduced later in Section~\ref{sec4-5}.

We first prove a uniform local energy estimate for bounded supersolutions.

\begin{lemma}\label{lem-uniform-bound}
Let $\{u_i\}$ be a sequence of locally uniformly bounded $\cA$-supersolutions of \eqref{eq-sec2-new}. Then $\{\nabla u_i\}$ is uniformly bounded in $L^G_{\mathrm{loc}}(\Omega)$. In particular, $\{\cA(x, \nabla u_i)\}$ is uniformly bounded in $L^{G^{\ast}}_{\mathrm{loc}}(\Omega)$.
\end{lemma}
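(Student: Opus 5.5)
The plan is a Caccioppoli-type estimate obtained by testing the supersolution inequality with a nonnegative function built from the distance to the upper bound. Fix $U\Subset\Omega$ and an intermediate open set $U\Subset V\Subset\Omega$, and let $M:=\sup_i\|u_i\|_{L^\infty(V)}<\infty$. Choose a cutoff $\eta\in C^\infty_c(V)$ with $0\le\eta\le1$, $\eta=1$ on $U$ and $|\nabla\eta|\le C/\dist(U,\partial V)$. Test with
\begin{equation*}
\varphi:=(M-u_i)\eta^q ,
\end{equation*}
which is nonnegative, belongs to $W^{1,G}_0(V)$, and lies in $L^\infty$. By density, the inequality \eqref{eq-supersolution} extends to such test functions, since $\cA(x,\nabla u_i)\in L^{G^\ast}_{\rm loc}$ by \eqref{ass-op} and Lemma~\ref{lem-H}.

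Expanding $\nabla\varphi=-\eta^q\nabla u_i+q(M-u_i)\eta^{q-1}\nabla\eta$ gives
\begin{equation*}
\int_V \cA(x,\nabla u_i)\cdot\nabla u_i\,\eta^q\dx\le q\int_V |\cA(x,\nabla u_i)|(M-u_i)\eta^{q-1}|\nabla\eta|\dx .
\end{equation*}
The left-hand side is bounded below by $c_1^\cA\int G(|\nabla u_i|)\eta^q$ via \eqref{ass-op}. On the right, use $|\cA|\le c_2^\cA g(|\nabla u_i|)$ and $0\le M-u_i\le 2M$. Then apply Young's inequality \eqref{eq-Young} with $G$ and $G^\ast$ in the form
\begin{equation*}
g(t)\eta^{q-1}s\le \varepsilon G^\ast\big(g(t)\eta^{q-1}\big)+C_\varepsilon G(s).
\end{equation*}
The factor $\eta^{q-1}$ has to be absorbed: since $G^\ast$ has growth exponents $q'\le p'$, we get $G^\ast(\lambda\tau)\le\lambda^{q'}G^\ast(\tau)$ for $\lambda\le1$, and $(q-1)q'=q$. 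Together with Lemma~\ref{lem-H} this yields
\begin{equation*}
G^\ast\big(g(|\nabla u_i|)\eta^{q-1}\big)\le C\eta^qG(|\nabla u_i|).
\end{equation*}
Choosing $\varepsilon$ small absorbs this term into the left-hand side. The remainder is $C\int_V G(2M|\nabla\eta|)\dx$, which is independent of $i$.

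This gives $\sup_i\int_U G(|\nabla u_i|)\dx<\infty$, hence a uniform bound of the Luxemburg norms in $L^G(U)$. The second claim then follows from \eqref{ass-op} and Lemma~\ref{lem-H}:
\begin{equation*}
G^\ast(|\cA(x,\nabla u_i)|)\le G^\ast\big(c_2^\cA g(|\nabla u_i|)\big)\le C\,G^\ast\big(g(|\nabla u_i|)\big)\le C\,G(|\nabla u_i|),
\end{equation*}
using the doubling of $G^\ast$ that follows from \eqref{eq:Gg}. This gives uniform boundedness in $L^{G^\ast}(U)$.

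The main technical point is the absorption step: one must handle the power $\eta^{q-1}$ correctly inside $G^\ast$, which is exactly why the cutoff is raised to the power $q$. A secondary point is justifying the test function, which follows from the density of $C^\infty_c$ in $W^{1,G}_0$ combined with truncation to preserve nonnegativity.
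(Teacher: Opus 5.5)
Your proof is correct and follows the paper's argument: you use the same test function $(M-u_i)\eta^q$, absorb the gradient term via \eqref{ass-op}, and apply Lemma~\ref{lem-H} for the $L^{G^\ast}$ bound. The only cosmetic difference is in the absorption step. The paper writes $\eta^{q-1}|\nabla\eta|=\eta^q\cdot\eta^{-1}|\nabla\eta|$, applies Lemma~\ref{lem-barg}(iii), and then uses $\eta^qG(\eta^{-1}|\nabla\eta|)\le G(|\nabla\eta|)$, whereas you absorb $\eta^{q-1}$ inside $G^\ast$ through its lower index $q'$; both work equally well.
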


\begin{proof}
Let $\Omega' \Subset \Omega$ be an open set, and choose a cutoff function $\eta \in C^\infty_c(\Omega)$ such that $0\leq \eta\leq 1$ in $\Omega$ and $\eta=1$ on $\Omega'$. Set
\begin{equation*}
M:=\textstyle\sup_{i}\|u_i\|_{L^\infty(\supp\eta)}<\infty.
\end{equation*}
Fix $j\in\mathbb N$. Then the function $(M-u_j)\eta^q$ is an admissible nonnegative test function for \eqref{eq-supersolution} for $u_j$, by a standard approximation argument. Hence, using the structural conditions \eqref{ass-op} and Lemma~\ref{lem-barg}(iii) with $\delta \in (0,1)$, we obtain
\begin{align*}
	c_1^{\cA} \int_\Omega G(|\nabla u_j|)\eta^q \dx
	&\le \int_\Omega \cA(x,\nabla u_j)\cdot \eta^q \nabla u_j \dx \le q\int_\Omega \cA(x,\nabla u_j)\cdot \eta^{q-1} \nabla \eta (M - u_j) \dx \\
	&\le 2qc_2^{\cA} M \int_\Omega g(|\nabla u_j|) \eta^{q-1} |\nabla \eta| \dx \\
	&\leq 2q^2c_2^{\cA} M \int_{\Omega \cap \{\eta>0\}} \left( \delta G(|\nabla u_j|) + \delta^{1-q} G(\eta^{-1}|\nabla \eta|) \right) \eta^q \dx.
\end{align*}
By taking $\delta$ sufficiently small, we can absorb the term containing $G(|\nabla u_j|)$ in to the left-hand side. Furthermore, observing that $\eta^qG(\eta^{-1}|\nabla \eta|) \leq G(|\nabla \eta|)$ by Lemma~\ref{lem-barg}(i), we deduce that
\begin{equation*}
\int_{\Omega'} G(|\nabla u_j|) \dx \leq \int_{\Omega} G(|\nabla u_j|) \eta^q \dx \leq C \int_{\Omega \cap \{\eta>0\}} G(\eta^{-1} |\nabla \eta|)\eta^q \dx \leq C \int_\Omega G(|\nabla \eta|) \dx < \infty,
\end{equation*}
which implies that the sequence $\{\nabla u_i\}$ is uniformly bounded in $L^G(\Omega')$. Finally, it follows from \eqref{ass-op} and Lemma~\ref{lem-H} that the sequence $\{\cA(x,\nabla u_i)\}$ is uniformly bounded in $L^{G^*}(\Omega')$.
\end{proof}

The next lemma is a convergence theorem for uniformly bounded $\cA$-supersolutions. It is the main compactness result used below to pass to the limit in Riesz measures of bounded balayage and obstacle-type functions. The following proof is partly inspired by the ideas from \cite{KKP-JEE-10,LM-Diff-07}.

\begin{lemma}\label{lem-mea-weakcon}
Let $\{u_i\}$ be a sequence of locally uniformly bounded $\cA$-supersolutions of \eqref{eq-sec2-new} such that $u_i\to u$ a.e.\ in $\Omega$. Then $u$ is an $\cA$-supersolution of \eqref{eq-sec2-new} and $\mu[u_i] \rightharpoonup\mu[u]$ weakly as $i\to \infty$, that is,
\begin{equation*}
\int_\Omega \varphi \dmu[u_i] \to \int_\Omega \varphi \dmu[u] \quad\text{for all }\varphi \in C_c(\Omega).
\end{equation*}
\end{lemma}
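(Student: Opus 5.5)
The core task is to show that $\nabla u_i\to\nabla u$ a.e. in $\Omega$, at least along a subsequence. Once this holds, everything else follows by routine arguments. So the plan is to first extract weak limits, then prove a.e. convergence of gradients, and finally pass to the limit in \eqref{eq-Riesz-supersoln}.

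\textbf{Step 1: weak limits.} Fix $\Omega'\Subset\Omega$. By Lemma~\ref{lem-uniform-bound}, $\{\nabla u_i\}$ is bounded in $L^G(\Omega')$ and $\{\cA(x,\nabla u_i)\}$ is bounded in $L^{G^\ast}(\Omega')$. The $u_i$ are uniformly bounded, so they are bounded in $W^{1,G}(\Omega')$. Reflexivity, a diagonal argument and $u_i\to u$ a.e. then give $u\in W^{1,G}_{\rm loc}(\Omega)$ and $u_i\rightharpoonup u$ weakly in $W^{1,G}_{\rm loc}(\Omega)$ (up to subsequences; uniqueness of the limit removes this restriction at the end). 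Moreover $u_i\to u$ strongly in $L^G_{\rm loc}$ by dominated convergence. Testing \eqref{eq-Riesz-supersoln} with a fixed $\varphi$ also shows that $\mu[u_i](K)$ is uniformly bounded on compact sets $K$.

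\textbf{Step 2: a.e. convergence of gradients (main obstacle).} This is the step I expect to be hardest. Following \cite{KKP-JEE-10,LM-Diff-07}, fix $\eta\in C^\infty_c(\Omega)$ with $0\le\eta\le1$. For $\varepsilon>0$, I would aim to show
\begin{equation*}
I_i:=\int_\Omega \bigl(\cA(x,\nabla u_i)-\cA(x,\nabla u)\bigr)\cdot\nabla (u_i-u)\,\eta\,\chi_{\{|u_i-u|<\varepsilon\}}\dx\le C\varepsilon+o(1).
\end{equation*}
The test function is $\eta\,T_\varepsilon(u_i-u)$, which is bounded by $\varepsilon$ and lies in $W^{1,G}_0$.

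Inserting it into \eqref{eq-Riesz-supersoln} for $u_i$ gives
\begin{equation*}
\int\cA(x,\nabla u_i)\cdot\nabla(\eta T_\varepsilon(u_i-u))\dx=\int \eta T_\varepsilon(u_i-u)\dmu[u_i]\le\varepsilon\mu[u_i](\supp\eta)\le C\varepsilon.
\end{equation*}
This uses approximation and the fact that for bounded supersolutions the measure $\mu[u_i]$ acts on $W^{1,G}_0\cap L^\infty$ test functions, i.e. $\mu[u_i]\in(W^{1,G}_0)'$ locally.

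The term $\int\cA(x,\nabla u_i)\cdot\nabla\eta\,T_\varepsilon(u_i-u)$ tends to $0$, since $T_\varepsilon(u_i-u)\to0$ a.e. and boundedly while $\cA(x,\nabla u_i)$ is bounded in $L^{G^\ast}$. The term $\int\cA(x,\nabla u)\cdot\nabla(\eta T_\varepsilon(u_i-u))$ tends to $0$ by weak convergence. Here one notes that $\nabla T_\varepsilon(u_i-u)\rightharpoonup0$, because it is bounded in $L^G$ and $T_\varepsilon(u_i-u)\to0$.

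By monotonicity the integrand of $I_i$ is nonnegative. Hence for $\theta\in(0,1)$ Hölder's inequality gives
\begin{equation*}
\int\bigl[(\cA(x,\nabla u_i)-\cA(x,\nabla u))\cdot\nabla(u_i-u)\bigr]^\theta\eta\dx\le |\!\supp\eta|^{1-\theta}I_i^\theta+\int_{\{|u_i-u|\ge\varepsilon\}}(\cdots)^\theta\eta\dx.
\end{equation*}
The last term is small by uniform integrability together with $|\{|u_i-u|\ge\varepsilon\}\cap\supp\eta|\to0$. Letting $i\to\infty$ and then $\varepsilon\to0$, the integrand tends to $0$ in $L^1$, hence a.e. along a subsequence. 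Strict monotonicity \eqref{eq-monotonicity}, continuity of $\cA(x,\cdot)$ and the coercivity \eqref{ass-op} then yield $\nabla u_i\to\nabla u$ a.e., by the standard Leray--Lions-type argument: bounded sequences have limit points $\xi$ at which $(\cA(x,\xi)-\cA(x,\nabla u))\cdot(\xi-\nabla u)=0$, and coercivity excludes unbounded ones.

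\textbf{Step 3: passing to the limit.} By Step 2, $\cA(x,\nabla u_i)\to\cA(x,\nabla u)$ a.e. Since this sequence is bounded in $L^{G^\ast}_{\rm loc}$ and $G^\ast$ is superlinear, Vitali's theorem gives convergence in $L^1_{\rm loc}$. Therefore
\begin{equation*}
\int\cA(x,\nabla u)\cdot\nabla\varphi\dx=\lim_{i\to\infty}\int\varphi\dmu[u_i]\ge0\quad\text{for all }0\le\varphi\in C^\infty_c(\Omega),
\end{equation*}
so $u$ is an $\cA$-supersolution, and $\int\varphi\dmu[u_i]\to\int\varphi\dmu[u]$ for smooth $\varphi$. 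For $\varphi\in C_c(\Omega)$, approximate uniformly by smooth $\varphi_k$ supported in a fixed compact set and use the local uniform mass bound from Step 1. Finally, since the limit is identified independently of the subsequence, the full sequence converges.
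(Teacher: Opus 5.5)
Your proof is correct, but it takes a different route from the paper's.

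\textbf{Your route.} You compare each $u_i$ with the limit $u$. You first identify $u$ as the weak $W^{1,G}_{\rm loc}$-limit, so that $\cA(x,\nabla u)\in L^{G^\ast}_{\rm loc}$ and the cross term $\int\cA(x,\nabla u)\cdot\nabla(\eta T_\varepsilon(u_i-u))\dx$ vanishes. You then run the classical Leray--Lions/Boccardo--Murat scheme. The $\theta$-power H\"older trick gives
\[(\cA(x,\nabla u_i)-\cA(x,\nabla u))\cdot\nabla(u_i-u)\to 0 \quad\text{in } L^1_{\rm loc}.\]
A pointwise argument using coercivity, growth, continuity and strict monotonicity turns this into a.e.\ convergence of $\nabla u_i$ along subsequences.

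\textbf{The paper's route.} The paper never compares with $u$ in the key estimate. It shows directly that $\{\nabla u_i\}$ is Cauchy in measure. It tests the supersolution inequalities for $u_j$ and $u_k$ with the nonnegative functions $(\delta\mp T_\delta(u_j-u_k))\eta$. It then restricts to the set where $|\nabla u_j|,|\nabla u_k|\le\lambda$, using the uniform $L^G$ bound and Chebyshev; this replaces your coercivity argument excluding escaping gradients. Finally, it converts the bound $C\delta$ into smallness of measure through the pointwise modulus
\[\gamma(x)=\inf\{(\cA(x,\xi)-\cA(x,\zeta))\cdot(\xi-\zeta):\ |\xi|,|\zeta|\le\lambda,\ |\xi-\zeta|\ge\rho\},\]
which is positive a.e.\ by strict monotonicity and compactness.

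\textbf{Comparison.} The paper's version needs neither $\cA(x,\nabla u)$ nor the weak-limit bookkeeping in the core estimate. It also avoids the H\"older trick and the pointwise Leray--Lions lemma. Yours relies on standard, well-documented machinery and is equally valid.

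\textbf{A small simplification.} You can avoid justifying how $\mu[u_i]$ acts on the non-smooth function $\eta T_\varepsilon(u_i-u)$. Test the supersolution inequality directly with the nonnegative function $\eta(\varepsilon-T_\varepsilon(u_i-u))$. This gives
\[\int\cA(x,\nabla u_i)\cdot\nabla(\eta T_\varepsilon(u_i-u))\dx\le\varepsilon\int\cA(x,\nabla u_i)\cdot\nabla\eta\dx\le C\varepsilon,\]
which is exactly the device the paper uses.
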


\begin{proof}
We shall prove that the sequence $\{\nabla u_i\}$ is Cauchy in measure, which implies that it converges in measure to its distributional limit $\nabla u$. Once this is established, an arbitrary subsequence $\{\nabla u_{i_j}\}$ of $\{\nabla u_i\}$ has a further subsequence $\{\nabla u_{i_{j_k}}\}$ converging to $\nabla u$ a.e.\ in $\Omega$. Since $\cA(x, \cdot)$ is continuous for almost every $x \in \Omega$, it follows that $\cA(x, \nabla u_{i_{j_k}}) \to \cA(x, \nabla u)$ a.e.\ in $\Omega$. Combined with the uniform bound from Lemma~\ref{lem-uniform-bound}, this a.e.\ pointwise convergence implies that $\cA(x,\nabla u_{i_{j_k}})\rightharpoonup\cA(x,\nabla u)$ weakly in $L^{G^*}_{\mathrm{loc}}(\Omega)$. By Urysohn's subsequence principle, the full sequence also converges weakly, so that
\[
 \lim_{i\to\infty}\int_\Omega \cA(x,\nabla u_{i})\cdot\nabla\varphi\dx=\int_\Omega \cA(x,\nabla u)\cdot\nabla\varphi\dx,
\]
for any $\varphi\in C_c^{\infty}(\Omega)$. This proves that $u$ is an $\cA$-supersolution of \eqref{eq-sec2-new} and that $\mu[u_i] \rightharpoonup\mu[u]$ weakly.

Now, we show that the sequence $\{\nabla u_i\}$ is Cauchy in measure. Fix an open set $\Omega' \Subset \Omega$. For every $\rho, \theta >0$, we need to find $N \in \mathbb{N}$ such that $|E_{jk}| < \theta$ for all $j, k \geq N$, where
\begin{equation*}
E_{jk} = \{x \in \Omega': |\nabla u_j(x) - \nabla u_k(x)| \geq \rho\}.
\end{equation*}
Since the a.e.\ pointwise convergence $u_i \to u$ implies that $\{u_i\}$ is Cauchy in measure, for $\delta>0$ to be determined later, there exists $N \in \mathbb{N}$ such that for all $j, k \geq N$ it holds
\begin{equation*}
|\{x \in \Omega':\ |u_j(x) - u_k(x)| \ge \delta\}| < \theta/3.
\end{equation*}
Furthermore, since the sequence $\{\nabla u_i\}$ is uniformly bounded in $L^G(\Omega')$ by Lemma~\ref{lem-uniform-bound}, there exists a large constant $\lambda>0$ such that for all $j, k \geq N$ it holds
\begin{equation*}
|\{x \in \Omega':\ |\nabla u_j(x)| > \lambda\ \text{ or }\ |\nabla u_k(x)| > \lambda\}| < \theta/3.
\end{equation*}  Therefore, it suffices to derive for all $j, k \geq N$ that
\begin{equation*}
|E_{jk} \cap U_{jk}| < \theta/3,
\end{equation*} where
\begin{equation*}
U_{jk} := \{x\in\Omega':\ |u_j(x)-u_k(x)|<\delta,\ |\nabla u_j(x)| \leq \lambda,\ |\nabla u_k(x)| \leq \lambda\}.
\end{equation*}

Let us choose a nonnegative cutoff function $\eta \in C_c^\infty(\Omega)$ such that $\eta = 1$ on $\Omega'$. For  $T_\delta$ being the truncation defined in \eqref{eq-truncation}, we define the nonnegative test functions
	\[
	\eta_1:= (\delta - T_\delta(u_j - u_k))\eta \quad\text{and}\quad \eta_2 := (\delta + T_\delta(u_j - u_k))\eta.
	\]
	 Testing \eqref{eq-sec2-new} for $u_j$ and $u_k$ against $\eta_1$ and $\eta_2$, respectively, yields
	\[
	\int_\Omega \cA(x,\nabla u_j)\cdot\nabla\eta_1 \dx \ge 0 \quad\text{and}\quad
	\int_\Omega \cA(x,\nabla u_k)\cdot\nabla\eta_2 \dx \ge 0.
	\]
	Adding these two inequalities and rearranging the terms, we obtain
	\begin{align*}
	&\int_{\Omega}\left(\cA(x,\nabla u_j)-\cA(x,\nabla u_k)\right)\cdot \eta\nabla  T_{\delta}(u_j-u_k) \dx\\
	&\quad\leq  \delta\int_{\Omega}\left(  \cA(x,\nabla u_j)+\cA(x,\nabla u_k)   \right)\cdot\nabla \eta\dx  -\int_{\Omega}\left(\cA(x,\nabla u_j)-\cA(x,\nabla u_k)\right)\cdot T_{\delta}(u_j-u_k) \nabla\eta\dx.
	\end{align*}
Therefore, it follows from the uniform boundedness of $\{\cA(x, \nabla u_i)\}$ in $L^{G^{\ast}}(\supp\eta)$ (Lemma~\ref{lem-uniform-bound}) and  \eqref{eq-monotonicity} -- the monotonicity of $\cA(x, \cdot)$, that 
for some $C>0$ independent of $j$ and $k$ it holds
 \begin{equation}\label{eq-de}
 	\int_{E_{jk} \cap U_{jk}}\left(\cA(x,\nabla u_j)-\cA(x,\nabla u_k)\right)\cdot \nabla (u_j-u_k)\dx \leq C\delta.
\end{equation}
Let us define
\[
\gamma(x):=\inf\left\{\left(\cA(x,\xi)-\cA(x,\zeta)\right)\cdot(\xi-\zeta):\ |\xi|,|\zeta|\leq\lambda,\ |\xi-\zeta|\ge\rho\right\}.
\]
We then have from \eqref{eq-de} that
\begin{equation*}
\int_{E_{jk}\cap U_{jk}} \gamma(x) \dx\leq C\delta.
\end{equation*}
Since $\cA$ is a Carath\'eodory function, for almost every $x \in \Omega$, the infimum is taken over a compact set. By  \eqref{eq-monotonicity} -- the strict monotonicity of $\cA(x, \cdot)$, this infimum is attained and strictly positive for a.e.\ $x \in \Omega'$. Thus, we can choose a sufficiently small $\gamma_0>0$ such that
\begin{equation*}
|\{x \in \Omega': \gamma(x) \leq \gamma_0\}|<\theta/6.
\end{equation*}
Then, it follows that
\begin{equation*}
\gamma_0 |E_{jk} \cap U_{jk} \cap \{\gamma(x) > \gamma_0\}| \leq \int_{E_{jk}\cap U_{jk} \cap \{\gamma>\gamma_0\}} \gamma \dx\leq C\delta.
\end{equation*}
Now, we choose $\delta>0$ small enough so that $C\delta \leq \gamma_0 \theta/6$. Then, we are in a position to conclude as
\begin{equation*}
|E_{jk} \cap U_{jk}| \leq |\{x \in \Omega': \gamma(x) \leq \gamma_0\}| + |E_{jk}\cap U_{jk} \cap \{\gamma>\gamma_0\}| < \theta/3\,. \qedhere
\end{equation*}
\end{proof}

\subsection{Balayage}\label{sec4-3}

For a function $u: E \to [-\infty, \infty]$, its \emph{lsc-regularization} $\hat{u}$ is defined by
\begin{equation*}
\hat{u}(x) = \lim_{r \to 0} \inf_{E \cap B_r(x)} u.
\end{equation*}
Clearly, $\hat{u} \leq u$ on $E$. If $u$ is locally bounded below, then $\hat{u}$ is lower semicontinuous; indeed, $\hat{u}$ is the greatest lower semicontinuous minorant of $u$.

Let $\psi: \Omega \to (-\infty, \infty]$ be a function that is locally bounded below, and let
\begin{equation*}
\Phi^\psi := \{v \in \cS_{\cA}(\Omega) : v \geq \psi \text{ in } \Omega\}.
\end{equation*}
The function $R^\psi := R^\psi(\Omega) = \inf \Phi^\psi$
is called the \emph{r\'{e}duite} and its lsc-regularization
$\hat{R}^\psi := \hat{R}^\psi(\Omega)$ 
is called the \emph{balayage} of $\psi$ in $\Omega$. If $u$ is a nonnegative function on a set $E \subset \Omega$ and $\psi = u \chi_E$, we write
$$
\Phi_E^u = \Phi^\psi, \quad R_E^u = R^\psi, \quad \text{and} \quad \hat{R}_E^u = \hat{R}^\psi.
$$
We refer to $\hat{R}_E^u$ as the \emph{balayage of $u$ relative to $E$ in $\Omega$}. If $u \equiv \ell$ for some constant $\ell>0$, we write $\hat{R}_E^\ell$ accordingly. In particular, the function $\hat{R}_E^1$ is called the \emph{$\cA$-potential} of $E$ in $\Omega$. Later, in Proposition~\ref{prop57}, this potential will be identified with the capacitary potential.

As stated in the following lemmas, the balayage $\hat{R}^\psi$ is $\cA$-superharmonic in $\Omega$ and coincides with the r\'eduite $R^\psi$ a.e.\ in $\Omega$. The proof of \cite[Theorem~7.4]{HKM06} for the standard growth case applies almost verbatim to the present setting of Lemma~\ref{lem-inf-F}.

\begin{lemma}\label{lem-inf-F}
Let $\cF\subset \cS_{\cA}(\Omega)$ be a non-empty family that is locally uniformly bounded below. Then, the lsc-regularization of $\inf \cF$ belongs to $\cS_{\cA}(\Omega)$.
\end{lemma}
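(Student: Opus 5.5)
Let $w:=\inf\cF$ and let $\hat w$ be its lsc-regularization. We follow the classical argument of \cite[Theorem~7.4]{HKM06}, checking that no scaling is used. Since $\cF$ is locally uniformly bounded below, $w$ is locally bounded below, so $\hat w$ is lower semicontinuous with values in $(-\infty,\infty]$. Moreover $\hat w\le w\le v$ for any $v\in\cF$, and $v\not\equiv\infty$ on each component; hence $\hat w\not\equiv\infty$ on each component of $\Omega$. It remains to verify the comparison property in Definition~\ref{def:Ash}.

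Fix an open $U\Subset\Omega$ and $h\in C(\overline U)$ that is $\cA$-harmonic in $U$, with $\hat w\ge h$ on $\partial U$. The first step is to show $w\ge h$ in $U$; the lsc-regularization then gives $\hat w\ge h$ in $U$, because $h$ is continuous and so $\hat h=h$. The difficulty is that the inequality is only assumed for $\hat w$ on $\partial U$, and $\hat w$ may be strictly smaller than each $v$ there.

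We handle this by a compactness argument. Fix $\varepsilon>0$. For each $x\in\partial U$, lower semicontinuity of $\hat w$ and continuity of $h$ (extended continuously to a neighbourhood of $\overline U$) give $r_x>0$ with $\inf_{B_{r_x}(x)}w>h(x)-\varepsilon/2$, and hence $w>h-\varepsilon$ on $B_{r_x}(x)\cap\overline U$ after shrinking $r_x$. By compactness of $\partial U$, there is an open set $V\supset\partial U$ with $w>h-\varepsilon$ on $V\cap\overline U$. Take an open $U'\Subset U$ with $\overline U\setminus U'\subset V$, chosen regular. Then for each $v\in\cF$ we have $v>h-\varepsilon$ on $\partial U'$. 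Since $h-\varepsilon$ is $\cA$-harmonic (constants can be added, as $\cA$ depends only on $\nabla u$) and continuous on $\overline{U'}$, superharmonicity of $v$ gives $v\ge h-\varepsilon$ in $U'$. Therefore $w\ge h-\varepsilon$ in $U'$, and also on $V\cap U$, so on all of $U$. Letting $\varepsilon\to0$ yields $w\ge h$ in $U$, and then $\hat w\ge h$ there.

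The only point needing care is that adding constants preserves $\cA$-harmonicity, which holds because $\cA=\cA(x,\xi)$. No homogeneity is needed, so the argument transfers verbatim, and I expect no real obstacle. The subtlety I anticipate is that the comparison must be applied on $U'$ rather than $U$ itself; since $U'$ is an arbitrary open set with $U'\Subset\Omega$, Definition~\ref{def:Ash} applies directly, and no regularity of $U'$ is actually required.
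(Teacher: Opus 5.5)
Your argument is correct and is essentially the classical proof of \cite[Theorem~7.4]{HKM06}, which the paper cites as applying verbatim rather than writing out. You shrink from $U$ to an open $U'\Subset U$ on whose boundary every $v\in\cF$ exceeds $h-\varepsilon$, compare there, and let $\varepsilon\to0$; you also correctly note that the only structural input is that $h-\varepsilon$ stays $\cA$-harmonic, which needs no homogeneity because $\cA$ depends only on $(x,\nabla u)$.
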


\begin{lemma}\label{lem-ae}
If $\Phi^\psi \neq \emptyset$, then $\hat{R}^\psi=R^\psi$ a.e.\ in $\Omega$. If, in addition, $\hat{R}^\psi$ is locally bounded above in $\Omega$, then it is an $\cA$-supersolution of \eqref{eq-sec2-new}.
\end{lemma}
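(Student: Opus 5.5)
Our plan follows the classical scheme of \cite[Theorem~7.4 and Lemma~7.?]{HKM06}, with scaling-based steps replaced by truncation and the convergence results of Section~\ref{sec2}. Throughout, $\Phi^\psi\neq\emptyset$, so $R^\psi$ is well defined. Since $\psi$ is locally bounded below, we may replace each $v\in\Phi^\psi$ by $\min\{v,\cdot\}$-type competitors; more precisely, $\Phi^\psi$ is locally uniformly bounded below by the local lower bound of $\psi$. Hence Lemma~\ref{lem-inf-F} gives $\hat R^\psi\in\cS_{\cA}(\Omega)$. Clearly $\hat R^\psi\le R^\psi$ everywhere, so it remains to prove $R^\psi\le\hat R^\psi$ a.e.

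\textbf{Step 1: reduction to a countable family.} By Choquet's topological lemma, there is a countable subfamily $\{v_j\}\subset\Phi^\psi$ such that $v:=\inf_j v_j$ has the same lsc-regularization as $R^\psi$, i.e.\ $\hat v=\hat R^\psi$. Since $\min$ of two $\cA$-superharmonic functions is $\cA$-superharmonic (direct from Definition~\ref{def:Ash}), we may assume $v_j$ is nonincreasing. We have $R^\psi\le v$. So it suffices to show $v=\hat v$ a.e.

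\textbf{Step 2: truncation.} Fix $k\in\mathbb N$ and set $v_j^k:=\min\{v_j,k\}$, which is $\cA$-superharmonic and locally bounded. By Lemma~\ref{lem-cS-supersol}, each $v_j^k$ is an $\cA$-supersolution, and the family is locally uniformly bounded (below by the bound on $\psi$, above by $k$). Since $v_j^k\downarrow v^k:=\min\{v,k\}$ pointwise, Lemma~\ref{lem-mea-weakcon} shows that $v^k$ is an $\cA$-supersolution. By Lemma~\ref{lem-super-liminf}, $v^k$ has a representative $w^k$ with $w^k(x)=\essliminf_{y\to x}w^k(y)$. Moreover $w^k\in\cS_{\cA}(\Omega)$ by Lemma~\ref{lem-supersol-cS}.

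This is the main point to check. For a.e.\ $x$, by the Lebesgue point argument in the proof of Lemma~\ref{lem-super-liminf}, $w^k(x)=v^k(x)$. For every $x$,
\[
w^k(x)=\essliminf_{y\to x}v^k(y)\ge \liminf_{y\to x}v^k(y)=\widehat{v^k}(x)=\min\{\hat v,k\}(x).
\]
Conversely, $\widehat{v^k}$ is $\cA$-superharmonic (Lemma~\ref{lem-inf-F}) and lower semicontinuous. For a superharmonic function, $u(x)=\essliminf_{y\to x}u(y)$ holds everywhere, which follows from Lemma~\ref{lem-cS-supersol} applied to the truncation together with Lemma~\ref{lem-super-liminf}. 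Since $\widehat{v^k}\le v^k=w^k$ a.e., this gives $\widehat{v^k}\le w^k$ everywhere. Combining, $\min\{\hat v,k\}\ge w^k= v^k$ a.e. Letting $k\to\infty$ yields $\hat v\ge v\ge R^\psi$ a.e., proving the first claim. The delicate point is the identity $u=\essliminf u$ for $\cA$-superharmonic $u$, which in the unbounded case I would obtain by applying it to $\min\{u,k\}$ and letting $k\to\infty$.

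\textbf{Step 3: second claim.} If $\hat R^\psi$ is locally bounded above, then it is an $\cA$-superharmonic function that is locally bounded above. Hence Lemma~\ref{lem-cS-supersol} directly shows that it lies in $W^{1,G}_{\rm loc}(\Omega)$ and is an $\cA$-supersolution of \eqref{eq-sec2-new}.
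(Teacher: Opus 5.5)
Your architecture matches the paper's: Choquet's lemma, truncation at level $k$, Lemma~\ref{lem-mea-weakcon} to make $v^k=\min\{v,k\}$ an $\cA$-supersolution, a superharmonic representative $w^k$, then $k\to\infty$. Step~3 is also fine. The gap is in the only non-trivial inequality of Step~2. By definition $\widehat{v^k}\le v^k$ everywhere, so what must be shown is $\widehat{v^k}\ge v^k$ a.e. Both of your displayed arguments instead prove the opposite, harmless inequality $\widehat{v^k}\le w^k$:
\begin{itemize}
\item the first via $w^k(x)=\essliminf_{y\to x}v^k(y)\ge\widehat{v^k}(x)$;
\item the ``conversely'' part again via $\widehat{v^k}(x)=\essliminf_{y\to x}\widehat{v^k}(y)\le\essliminf_{y\to x}w^k(y)=w^k(x)$.
\end{itemize}
The sentence ``Combining, $\min\{\hat v,k\}\ge w^k=v^k$ a.e.'' is therefore unsupported. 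Applying the $\essliminf$-identity to $\widehat{v^k}$ cannot help, because the only a.e.\ information you have about $\widehat{v^k}$ is $\widehat{v^k}\le v^k$, which points the wrong way.

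The missing step is to apply the identity to the approximants $v^k_j$ instead. Since $v^k\le v^k_j$ and $v^k_j$ is $\cA$-superharmonic, for every $x$ and every $j$,
\[
w^k(x)=\essliminf_{y\to x}v^k(y)\le\essliminf_{y\to x}v^k_j(y)=v^k_j(x).
\]
Hence $w^k\le\inf_j v^k_j=v^k$ everywhere, and since $w^k$ is lower semicontinuous, $w^k\le\widehat{v^k}$. Combined with your first inequality this gives $w^k=\widehat{v^k}$ everywhere. So $\widehat{v^k}=v^k$ a.e.\ for each $k\in\mathbb{N}$, and $\hat v=v$ a.e.\ follows.

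There is a second, smaller issue. The identity $u(x)=\essliminf_{y\to x}u(y)$ at \emph{every} point of an $\cA$-superharmonic $u$ does not follow from Lemmas~\ref{lem-cS-supersol} and~\ref{lem-super-liminf}. The latter only yields \emph{some} representative with this property, and lower semicontinuity of $u$ gives only $u(x)\le\essliminf_{y\to x}u(y)$. The reverse inequality $u(x)\ge\essliminf_{y\to x}u(y)$ is exactly what the corrected step uses for $v^k_j$. It is a genuine property of superharmonic functions (cf.\ \cite{HKM06}), so you must cite it or prove it.
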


To prove Lemma~\ref{lem-ae}, we need a series of lemmas. We first recall the following Choquet's topological lemma; see~\cite[Lemma~8.3]{HKM06}. A family of functions $\cF$ is called \emph{downward directed} if for each $u_1, u_2 \in \cF$ there is $v \in \cF$ with $v \leq \min\{u_1, u_2\}$.

\begin{lemma}[Choquet's topological lemma]\label{lem:choquet}
Suppose that $E \subset \rn$ and that $\cF = \{u_\gamma : \gamma \in I\}$ is a family of functions $u_\gamma : E \to [-\infty, \infty]$. Let $u: = \inf \cF$. If $\cF$ is downward directed, then there is a decreasing sequence of functions $\{v_i\} \subset \cF$ with the pointwise limit $v$ such that the lsc-regularizations $\hat{u}$ and $\hat{v}$ coincide.
\end{lemma}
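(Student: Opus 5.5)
The plan is to use the second countability of $\rn$ to reduce the infimum over the possibly uncountable family $\cF$ to a countable subfamily. Downward directedness will then turn that countable subfamily into a decreasing sequence.

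\emph{Step 1: a countable base and near-minimisers.} Let $\{B_j\}_{j\in\mathbb N}$ enumerate all open balls with rational centers and rational radii that meet $E$. For each $j$ set $a_j:=\inf_{E\cap B_j}u$, which lies in $[-\infty,\infty]$. Since
\[
\inf_{E\cap B_j}u=\inf_{\gamma\in I}\,\inf_{x\in E\cap B_j}u_\gamma(x),
\]
we can choose, for each $k\in\mathbb N$, a point $x_{j,k}\in E\cap B_j$ and an index $\gamma_{j,k}\in I$ such that $u_{\gamma_{j,k}}(x_{j,k})\to a_j$ as $k\to\infty$. If $a_j=-\infty$ we require $u_{\gamma_{j,k}}(x_{j,k})\le -k$; if $a_j=+\infty$ any choice works. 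Enumerate the countable collection $\{u_{\gamma_{j,k}}\}_{j,k}$ as $w_1,w_2,\dots$.

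\emph{Step 2: making the sequence decreasing.} Put $v_1:=w_1$. Inductively, use downward directedness to pick $v_{i+1}\in\cF$ with $v_{i+1}\le\min\{v_i,w_{i+1}\}$. Then $\{v_i\}\subset\cF$ is decreasing with pointwise limit $v$, and $v\le w_i$ for every $i$. Consequently, for every $j$,
\[
\inf_{E\cap B_j}v\le \inf_k w_{j,k}(x_{j,k})\le a_j=\inf_{E\cap B_j}u.
\]
Moreover $v\ge u=\inf\cF$, since each $v_i\in\cF$, so $\hat v\ge\hat u$.

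\emph{Step 3: the reverse inequality $\hat v\le\hat u$.} This is the only point needing care: a rational ball containing $x$ need not be comparable in the right direction with $B_r(x)$, so the radii must be chosen so the inequalities point the right way. Fix $x\in E$ and $r>0$. Choose a rational point $c$ with $|x-c|<r/6$ and a rational radius $s\in(r/3+|x-c|,\,r-|x-c|)$; this interval is nonempty. Then the ball $B_s(c)$ is some $B_j$, and
\[
B_{r/3}(x)\subset B_j\subset B_r(x).
\]
It meets $E$ because $x\in E$. From Step 2 we get
\[
\inf_{E\cap B_r(x)}v\le\inf_{E\cap B_j}v\le\inf_{E\cap B_j}u\le\inf_{E\cap B_{r/3}(x)}u.
\]
Letting $r\to0$ gives $\hat v(x)\le\hat u(x)$. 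Together with Step 2 this yields $\hat u=\hat v$ on $E$, which is the claim.
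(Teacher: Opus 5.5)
Your proof is correct. It is essentially the standard argument from Heinonen--Kilpel\"ainen--Martio (Lemma~8.3), which the paper cites rather than proves: choose near-minimizing pairs $(\gamma,x)$ on a countable base of rational balls, fold them into a decreasing sequence using downward directedness, and compare infima over base balls nested between $B_{r/3}(x)$ and $B_r(x)$. The only blemish is notational: the $w_{j,k}$ in Step~2 should read $u_{\gamma_{j,k}}$.
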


We are now ready to prove Lemma~\ref{lem-ae}.

\begin{proof}[Proof of Lemma~\ref{lem-ae}]
Since the minimum of two functions in $\Phi^\psi$ belongs to $\Phi^\psi$, the family $\Phi^\psi$ is downward directed. Thus, by Choquet's topological Lemma~\ref{lem:choquet}, there is a decreasing sequence of functions $\{v_i\} \subset \Phi^\psi$ with the pointwise limit $v$ such that $\hat{v}=\hat{R}^\psi$. Since $\hat{v} = \hat{R}^\psi \leq R^\psi \leq v$, it suffices to show that $v = \hat{v}$ a.e.\ in $\Omega$.

Notice that $v_i \geq v \geq \psi$ in $\Omega$. Since $\psi$ is locally bounded below, for any integer $k>0$, the truncated functions $w_{i,k} := \min\{v_i, k\}$ are locally bounded uniformly in $i$. By Lemma~\ref{lem-cS-supersol}, each $w_{i,k}$ is an $\cA$-supersolution of \eqref{eq-sec2-new}. Since $\{w_{i,k}\}_i$ is a locally bounded decreasing sequence of $\cA$-supersolutions, its pointwise limit $w_k := \min\{v, k\}$ is also an $\cA$-supersolution by Lemma~\ref{lem-mea-weakcon}. Thus, by Lemmas~\ref{lem-supersol-cS} and \ref{lem-super-liminf}, $w_k$ admits an $\cA$-superharmonic representative, which implies that $w_k$ coincides with its lsc-regularization $\hat{w}_k$ a.e.\ in $\Omega$. Since $w_k = \hat{w}_k$ a.e.\ for all $k>0$, letting $k \to \infty$ yields $v = \hat{v}$ a.e.\ in $\Omega$.

The last assertion follows from Lemma~\ref{lem-cS-supersol}.
\end{proof}

We further investigate the properties of the balayage $\hat{R}^u_E$ of $u$ relative to $E$. Proposition~\ref{prop-bal-u-E} establishes its $\cA$-harmonicity in $\Omega \setminus \overline{E}$ and pointwise behavior. Before presenting this lemma, we provide a couple of auxiliary results.

By definition, an $\cA$-superharmonic function is not identically $\infty$. In fact, it is finite on a dense subset of $\Omega$ as shown in the following lemma. This result will be improved in Lemma~\ref{lem-super-int} and Theorem~\ref{th-1}.

\begin{proposition}
    \label{prop-dense}
If $u\in \cS_{\cA}(\Omega)$, then the set $\{x \in \Omega : u(x) < \infty\}$ is dense in $\Omega$.
\end{proposition}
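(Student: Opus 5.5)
We argue by contradiction. Suppose the set $\{u<\infty\}$ is not dense in $\Omega$. Then there is a ball $B=B_r(x_0)$ with $\overline{B}\subset\Omega$ and $u\equiv\infty$ on $B$. We will show that this forces $u\equiv\infty$ on the whole component $D$ of $\Omega$ containing $x_0$, which contradicts Definition~\ref{def:Ash}.

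To do this, set $V:=\operatorname{int}\{x\in D: u(x)=\infty\}$. This set is open and nonempty, since it contains $B$. It suffices to prove that $V$ is relatively closed in $D$, because connectedness of $D$ then gives $V=D$.

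Let $y\in D$ be a limit point of $V$. Choose $\rho>0$ with $B_{2\rho}(y)\Subset D$. Since $y$ is a limit point, we can find a small ball $B_\varepsilon(z)\subset V\cap B_\rho(y)$. The truncations $u_k:=\min\{u,k\}$ are $\cA$-superharmonic and bounded above. By Lemma~\ref{lem-cS-supersol}, each $u_k$ is an $\cA$-supersolution, and each $u_k$ is bounded below on $B_{2\rho}(y)$ by lower semicontinuity. Apply the weak Harnack inequality~\cite[Proposition 3.20]{CGZG24} to $u_k-m$ on balls covering a chain from $z$ to $y$, where $m:=\min_{\overline{B_{2\rho}(y)}}u$ is finite. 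This yields
\begin{equation*}
\essinf_{B_{\rho}(y)}(u_k-m)+\rho\ \ge\ c\left(\fint_{B_{2\rho}(y)}(u_k-m)^s\dx\right)^{1/s}\ \ge\ c\,(k-m)\left(\frac{|B_\varepsilon|}{|B_{2\rho}|}\right)^{1/s}.
\end{equation*}
The $+\rho$ term appears because of the non-homogeneous form of the weak Harnack inequality used in Lemma~\ref{lem-super-liminf}. Letting $k\to\infty$ gives $\essinf_{B_\rho(y)}u=\infty$. Lower semicontinuity and Lemma~\ref{lem-super-liminf} applied to $u_k$ then upgrade this to $u=\infty$ everywhere on $B_\rho(y)$. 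Hence $y\in V$, so $V$ is closed in $D$ and therefore $V=D$, which is the desired contradiction.

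The main obstacle is applying the weak Harnack inequality without scaling. Its constants and additive term depend on $\rho$ and on the lower bound $m$, not on $k$, and this is exactly what allows letting $k\to\infty$. A cleaner alternative for the step $u_k=k$ on $B_\rho(y)$ is the strong minimum principle, Proposition~\ref{prop-min}. Apply it to $\min\{u,k\}$ on a connected neighbourhood $W$ of $\overline{B_\varepsilon(z)}\cup\overline{B_\rho(y)}$, combined with comparison against the $\cA$-harmonic function in $W\setminus\overline{B_\varepsilon(z)}$ with boundary values $k$ on $\partial B_\varepsilon(z)$ and $m$ on $\partial W$. This comparison gives lower bounds that tend to infinity as $k\to\infty$, by Proposition~\ref{prop-conv-har} together with the fact that the limit is $\cA$-harmonic with boundary value $\infty$. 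I would try the Harnack route first.
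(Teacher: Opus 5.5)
The weak Harnack step is correct, and it gets around the lack of scaling differently from the paper. The paper rescales the operator, $\cA_k(x,\xi)=\cA(x,k\xi)$, so that $u_k/k$ lies in a class with $k$-independent boundary H\"older and Harnack constants. In your estimate, $c$ and the additive term $\rho$ do not depend on $k$, so you do obtain $\essinf_{B_\rho(y)}u=\infty$.

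\textbf{The gap is in the next sentence.} You have only shown $u=\infty$ \emph{almost everywhere} on $B_\rho(y)$, and neither tool you cite upgrades this to every point.
Lower semicontinuity gives $u(x)\le\liminf_{w\to x}u(w)\le\essliminf_{w\to x}u(w)$, which is the wrong direction. For instance, the function equal to $\infty$ off a point $x_1$ and to $0$ at $x_1$ is lower semicontinuous.
Lemma~\ref{lem-super-liminf} only says that the supersolution $\min\{u,k\}$ \emph{has} a lower semicontinuous representative satisfying \eqref{eq-u-essliminf}. It does not say that the given function $\min\{u,k\}$ is that representative.

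So you cannot conclude $y\in V$. If you define $V$ in an a.e.\ sense instead, the connectedness argument goes through, but it only yields $u=\infty$ a.e.\ in $D$, which does not contradict $u\not\equiv\infty$. You also cannot invoke a.e.\ finiteness (Lemma~\ref{lem-super-int}): it is proved later from results that rest on this proposition, via Lemma~\ref{lem-Poisson}. The pointwise issue is the whole content of the statement.

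\textbf{What is missing} is a pointwise use of Definition~\ref{def:Ash}, for example the fact that every $\cA$-superharmonic function satisfies $u(x)=\essliminf_{w\to x}u(w)$ at every $x$. There are two ways to supply it.

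First, take continuous $\cA$-superharmonic functions $v_i\nearrow\min\{u,k\}$ as in \cite[Proposition~4.5]{Che22Gen}. Then $k-v_i\ge0$ are subsolutions for $\widetilde\cA(x,\xi)=-\cA(x,-\xi)$. A local sup bound gives $\sup_{B_{\rho/2}(y)}(k-v_i)\le C\rho\,G^{-1}\bigl(\fint_{B_\rho(y)}G((k-v_i)/\rho)\dx\bigr)$, which tends to $0$ by dominated convergence. This supremum is pointwise because $v_i$ is continuous. Hence $\min\{u,k\}=k$ everywhere on $B_{\rho/2}(y)$.

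Second, apply your weak Harnack estimate to a \emph{continuous} supersolution lying below $u$. An example is the function equal to $k$ on $\overline{B_\varepsilon(z)}$ and to the $\cA$-harmonic function $h_k$ of your second route outside it. Then transfer the bound to $u(y)$ through the pointwise comparison $u\ge h_k$ that Definition~\ref{def:Ash} provides. This is the paper's proof, with your finite, $k$-independent weak Harnack chain replacing the rescaled operator $\cA_k$ and the $k$-uniform boundary H\"older estimates.

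Your second route, as written, does not close. You assert but do not prove that the increasing limit of the $h_k$ cannot be a finite $\cA$-harmonic function blowing up on $\partial B_\varepsilon(z)$. Without homogeneity, that is precisely the nontrivial point.
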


\begin{proof}
We may assume that $\Omega$ is a domain. Suppose that there is a ball $B \Subset \Omega$ such that $u = \infty$ in $\overline{B}$. Since $u \not\equiv \infty$ in $\Omega$, we can choose a smooth domain $U \Subset \Omega$ containing $\overline{B}$ and a point $y \in \Omega$ with $u(y)<\infty$. Since the boundary of $U \setminus \overline{B}$ is smooth, Proposition~\ref{prop-known} and Theorem \ref{thm-bdry-reg} with $\psi \equiv -\infty$ show that there exists a function $u_k$ that is $\cA$-harmonic in $U \setminus \overline{B}$ and continuous in $\overline{U} \setminus B$, taking the value 0 on $\partial U$ and $k$ on $\partial B$. We define $G_k(t) := k^{-1}G(kt)$ and $g_k(t):= g(kt)$ so that $G_k(t) = \int_0^t g_k(\tau)\dtau$. Then $G_k$ satisfies \eqref{eq:Gg} with the same constants $p$ and $q$, and the operator $\cA_k(x,\xi):=\cA(x,k\xi)$ satisfies the structural assumptions \eqref{eq-monotonicity} and \eqref{ass-op} with $G$ replaced by $G_k$ and with the same constants. Let $v_k=u_k/k$. Then, $v_k$ is $\cA_k$-harmonic in $U \setminus \overline{B}$ and takes the boundary value 0 on $\partial U$ and 1 on $\partial B$ continuously. Note that $0\leq v_k\leq 1$ by the weak maximum principle (see, e.g., \cite[Lemma~4.3]{CK21remov}).

We claim that $v_k \geq c$ for some universal constant $c>0$ independent of $k$. Indeed, by the H\"older estimates up to the boundary in Theorem~\ref{thm-bdry-reg}, there exist constants $\alpha \in (0, 1)$ and $C>0$, depending only on $n$, $p$, $q$, $c_1^{\cA}$, and $c_2^{\cA}$, such that
\begin{equation*}
1-v_k(x) = |v_k(x)-v_k(x_0)| \leq C|x-x_0|^\alpha
\end{equation*}
for all $x \in U$ and $x_0 \in \partial B$. Note that the constants $\alpha$ and $C$ are independent of $k$. Thus, we can choose a point $z \in U$ such that $v_k(z) \geq 1/2$. Now, a standard Harnack chain argument shows that $v_k(y) \geq c$ with $c>0$ independent of $k$. See, e.g., \cite[Theorem~5]{Che22Gen} for the Harnack inequality. This implies that $u_k(y) \geq ck$.

Since $u$ is locally bounded below, we have $m := \inf_{\overline{U}} u>-\infty$. Then, the comparison principle shows that $u_k \leq u - m$ in $U \setminus \overline{B}$ for all $k>0$. In particular, $ck \leq u_k(y) \leq u(y)-m < \infty$ for all $k>0$, which is a contradiction.
\end{proof}

For $u \in \cS_{\cA}(\Omega)$ and an open set $U \Subset \Omega$ with smooth boundary, the \emph{Poisson modification} $(u)_U$ of $u$ in $U$ is defined by
\begin{equation*}
(u)_U=
\begin{cases}
\inf\{v: v \in \cS_{\cA}(U),\ \liminf_{y \to x}v(y) \geq u(x) \text{ for each } x \in \partial U\} &\text{in }U, \\
u &\text{in }\Omega \setminus U.
\end{cases}
\end{equation*}
The following standard properties of the Poisson modification hold.

\begin{lemma}\label{lem-Poisson}
The Poisson modification $(u)_U$ of $u \in \cS_{\cA}(\Omega)$ in $U$ is $\cA$-superharmonic in $\Omega$, $\cA$-harmonic in $U$, and $(u)_U \leq u$ in $\Omega$.
\end{lemma}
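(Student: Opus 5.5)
The plan is to follow the classical route (as in \cite[Section~7]{HKM06}): identify $(u)_U$ inside $U$ with a Perron-type solution of the Dirichlet problem with boundary data $u$, obtained as the increasing limit of $\cA$-harmonic functions with continuous data. Since $u$ is lower semicontinuous and locally bounded below on $\overline U$, there is an increasing sequence $\varphi_j\in C^\infty(\rn)$ with $\varphi_j\uparrow u$ pointwise on $\partial U$; we may assume $\varphi_j\le u$ on a neighbourhood of $\partial U$, for instance by taking $\varphi_j$ to be continuous minorants of $u$ on $\overline U$. Let $h_j$ be the solution of the $\mathcal{K}_{-\infty,\varphi_j}(U)$-obstacle problem. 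By Proposition~\ref{prop-known} and Theorem~\ref{thm-bdry-reg} with $\psi\equiv-\infty$ (since $U$ is smooth, the measure density condition holds), $h_j$ is $\cA$-harmonic in $U$ and continuous on $\overline U$ with $h_j=\varphi_j$ on $\partial U$. The comparison principle gives $h_j\le h_{j+1}$. Moreover, $u\ge \varphi_j=h_j$ on $\partial U$, so the definition of $\cA$-superharmonicity yields $h_j\le u$ in $U$. By Proposition~\ref{prop-conv-har} applied on each component of $U$, the limit $h:=\lim_j h_j$ is either $\cA$-harmonic or $\equiv\infty$ there. Since $h\le u$ and $u<\infty$ on a dense set by Proposition~\ref{prop-dense}, $h$ is $\cA$-harmonic in $U$, and $h\le u$ in $U$.

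Next, we show that $(u)_U=h$ in $U$. For any admissible $v$ in the definition, $\liminf_{y\to x}v(y)\ge u(x)\ge\varphi_j(x)=h_j(x)$ on $\partial U$. The comparison principle between an $\cA$-superharmonic function and a continuous $\cA$-harmonic function with this boundary liminf inequality (via Definition~\ref{def:Ash} on slightly smaller smooth subdomains, together with compactness and a small $\varepsilon$-shift of the data) gives $v\ge h_j$, hence $v\ge h$, and therefore $(u)_U\ge h$. Conversely, $h$ itself is admissible: it is $\cA$-harmonic, hence $\cA$-superharmonic, in $U$, and $\liminf_{y\to x}h(y)\ge\lim_j h_j(x)=u(x)$ on $\partial U$, because each $h_j$ is continuous up to $\partial U$. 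Thus $(u)_U=h$ in $U$, which proves $\cA$-harmonicity in $U$, and $(u)_U\le u$ holds everywhere.

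It remains to check that $(u)_U$ is $\cA$-superharmonic in $\Omega$. For lower semicontinuity, the only issue is at $x\in\partial U$. There we have $\liminf_{y\to x,\,y\in U}h(y)\ge\liminf h_j(y)=\varphi_j(x)\to u(x)$, while outside $U$ the function coincides with $u$, which is lower semicontinuous. For the comparison property, take $D\Subset\Omega$ and $w\in C(\overline D)$ $\cA$-harmonic in $D$ with $w\le (u)_U$ on $\partial D$. Since $(u)_U\le u$, we get $w\le u$ in $D$, hence $w\le(u)_U$ in $D\setminus U$. On $D\cap U$ we compare $w$ with $h$. Boundary points of $D\cap U$ lie either in $\partial D\cap\overline U$, where $w\le(u)_U$ with $(u)_U=h$ on $U$ and $=u$ on $\partial U$, or in $\partial U\cap D$, where $w\le u$ and $\liminf h\ge u$. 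Hence $\liminf(h-w)\ge0$ on $\partial(D\cap U)$, and the same comparison principle gives $h\ge w$ in $D\cap U$.

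The main obstacle is this comparison step between $\cA$-superharmonic functions and $\cA$-harmonic functions under a merely liminf boundary inequality on a nonsmooth open set like $D\cap U$. The lack of homogeneity forbids scaling tricks, but the standard compactness argument still works. Given $\varepsilon>0$, the set $\{h<w-\varepsilon\}$ is relatively compact in $D\cap U$; we cover it by a smooth open set $V\Subset D\cap U$ on whose boundary $h\ge w-\varepsilon$, and then apply Definition~\ref{def:Ash}. Here $w-\varepsilon$ is still $\cA$-harmonic, since $\cA$ does not depend on $u$ itself. Letting $\varepsilon\to0$ concludes the argument; the same reasoning justifies the earlier uses of comparison.
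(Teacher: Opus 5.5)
Your proposal is correct and takes essentially the same route as the paper. The paper defers to the classical construction in \cite[Theorem~3]{Che22Gen}: increasing $\cA$-harmonic approximants with continuous boundary data, passed to the limit by Harnack's convergence theorem. Exactly as you do, it then invokes Proposition~\ref{prop-dense} instead of a.e.\ finiteness of $u$ to exclude the alternative $h\equiv\infty$. Your write-up simply makes explicit the steps the citation leaves implicit: the identification $(u)_U=h$, lower semicontinuity at $\partial U$, and the $\varepsilon$-shifted comparison under a boundary $\liminf$ condition.
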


\begin{proof}
This proposition was proved in \cite[Theorem~3]{Che22Gen} under the assumption that $u \in \cS_{\cA}(\Omega)$ is finite a.e.\ in $\Omega$. In their proof, the limit function $h$ of the sequence of $\cA$-harmonic functions $\{h_i\}$ is either $\cA$-harmonic or identically infinite, and the a.e.\ finiteness of $u$ is then used to rule out the possibility that $h \equiv \infty$. Alternatively, this can be done by using Proposition~\ref{prop-dense}.
\end{proof}

The balayage $\hat{R}^u_E$ enjoys the following properties. 

\begin{proposition}\label{prop-bal-u-E}
Let $u$ be a nonnegative function on a set $E \subset \Omega$. Assume that $\Phi^u_E \neq \emptyset$. The balayage $\hat{R}_E^u$ is $\cA$-harmonic in $\Omega \setminus \overline{E}$ and coincides with $R_E^u$ there. If, in addition, $u \in \cS_{\cA}(\Omega)$, then $\hat{R}_E^u = u$ in the interior of $E$.
\end{proposition}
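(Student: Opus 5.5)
Throughout, write $\psi=u\chi_E$. By Lemma~\ref{lem-ae}, $\hat R^u_E$ is $\cA$-superharmonic in $\Omega$ and equals $R^u_E$ a.e. The plan is to show $\cA$-harmonicity in $\Omega\setminus\overline E$ with Poisson modifications. Then we upgrade the a.e.\ equality to pointwise equality there using continuity. The second assertion is a direct comparison.

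\textbf{Harmonicity off $\overline E$.} Fix a ball (or smooth open set) $B\Subset\Omega\setminus\overline E$. For any $v\in\Phi^\psi$, Lemma~\ref{lem-Poisson} shows that the Poisson modification $(v)_B$ is $\cA$-superharmonic in $\Omega$ and satisfies $(v)_B\le v$. Also $(v)_B=v\ge\psi$ outside $B$, while in $B$ we have $(v)_B\ge 0=\psi$. The latter follows from the minimum principle (Proposition~\ref{prop-min}), because $v\ge0$ on $\partial B$ in the liminf sense. Hence $(v)_B\in\Phi^\psi$, so $R^\psi=\inf\{(v)_B:v\in\Phi^\psi\}$.

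The family $\{(v)_B\}$ is downward directed: $\min\{(v_1)_B,(v_2)_B\}$ dominates $(\min\{v_1,v_2\})_B$ by the comparison built into the definition. Choquet's Lemma~\ref{lem:choquet} then gives a decreasing sequence $v_i$ with $\widehat{\lim (v_i)_B}=\hat R^\psi$. On $B$ the functions $(v_i)_B$ are $\cA$-harmonic, decreasing, and bounded below by $0$. A decreasing sequence of $\cA$-harmonic functions bounded below converges to an $\cA$-harmonic limit; this is Proposition~\ref{prop-conv-har} applied to the reflected operator $\widetilde\cA$ and the functions $-(v_i)_B$, where the limit cannot be $-\infty$ because of the lower bound. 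The limit is continuous, so it equals its own lsc-regularization. Therefore $\hat R^\psi$ is $\cA$-harmonic in $B$.

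\textbf{Pointwise equality off $\overline E$.} It remains to show $\hat R^\psi= R^\psi$ in $B$ and not merely a.e. We have $\hat R^\psi\le R^\psi\le \lim_i (v_i)_B$ in $B$, since each $(v_i)_B\in\Phi^\psi$. The right-hand side is continuous and equals $\hat R^\psi$ in $B$, which closes the chain of inequalities.

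\textbf{Interior of $E$.} Assume $u\in\cS_\cA(\Omega)$, so $u\in\Phi^\psi$ and hence $R^\psi\le u$ everywhere. In the open set $\operatorname{int}E$, every $v\in\Phi^\psi$ satisfies $v\ge u$, so $R^\psi=u$ there. Then $\hat R^\psi(x)=\lim_{r\to0}\inf_{B_r(x)} R^\psi=\liminf_{y\to x}u(y)=u(x)$ for $x\in\operatorname{int}E$. The last equality is the standard fact that superharmonic functions satisfy $u(x)=\liminf_{y\to x, y\neq x}u(y)$. I would obtain it from lower semicontinuity together with Lemma~\ref{lem-super-liminf} applied to truncations $\min\{u,k\}$, which are supersolutions by Lemma~\ref{lem-cS-supersol}. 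This gives $u(x)=\essliminf u\le\liminf u$.

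\textbf{Main obstacle.} The delicate step is the harmonicity argument. One must check that Poisson modifications stay in $\Phi^\psi$, and that the decreasing limit of harmonic functions is harmonic rather than degenerate. Both are handled by the lower bound $\psi\ge0$.
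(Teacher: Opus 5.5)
Your argument is correct and is essentially the paper's proof. It uses Choquet's lemma, Poisson modifications in a ball $B\Subset\Omega\setminus\overline{E}$, Harnack's convergence theorem for the reflected operator (with the lower bound $0$ excluding the degenerate limit), and the sandwich $\hat R^u_E\le R^u_E\le w=\hat w=\hat R^u_E$ in $B$. The only difference is the order: the paper applies Choquet's lemma to $\Phi^u_E$ itself and then Poisson-modifies the resulting decreasing sequence, so it never needs the downward directedness of the modified family.

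For the interior of $E$, lower semicontinuity of $u$ alone gives $\lim_{r\to0}\inf_{B_r(x)}u=u(x)$, because the infimum includes the point $x$. So your detour through $u(x)=\liminf_{y\to x,\,y\neq x}u(y)$ is unnecessary. Moreover, Lemma~\ref{lem-super-liminf} would only give that identity for some representative, not for $u$ itself.
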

	
\begin{proof}
As in the proof of Lemma~\ref{lem-ae}, Choquet's topological lemma (Lemma~\ref{lem:choquet}) yields a decreasing sequence of functions $\{v_i\} \subset \Phi^u_E$ converging pointwise to a limit $v$ such that $\hat{v}=\hat{R}^u_E$.  Choose a ball $B \subset \Omega \setminus \overline{E}$ and let $w_i := (v_i)_B$ be the Poisson modification of $v_i$ in $B$. Then $w_i$ is $\cA$-harmonic in $B$, $w_i \in \Phi^u_E$, and $w_{i+1} \leq w_i \leq v_i$ by Lemma~\ref{lem-Poisson}. Thus,
\[
R_E^u \leq w := \lim_{i \to \infty} w_i \leq \lim_{i \to \infty}v_i = v,
\]
and therefore $\hat{R}_E^u = \hat{w}$ in $B$. This establishes the first part, because $\hat{w} = w$ is $\cA$-harmonic in $B$ by Harnack's convergence theorem (Proposition~\ref{prop-conv-har} applied to the reflected operator $\widetilde{\cA}(x, \xi):=-\cA(x, -\xi)$ and the sequence $\{-w_i\}$). The second part is immediate as $u$ is lower semicontinuous and $u \in \Phi_E^u$.
\end{proof}

When $u$ is a positive constant, we obtain the following proposition. The proof of \cite[Lemma~8.5]{HKM06} for the standard growth case applies almost verbatim here, as we are equipped with the necessary ingredients for that proof: the pasting lemma, Harnack's convergence theorems, the fact that any $\cA$-harmonic function is a quasiminimizer of the energy with Orlicz growth $G$ (see \cite[Lemma~4.3, Corollary~4.9, Theorem~2, and Lemma~3.7]{Che22Gen}), and Proposition~\ref{prop-bal-u-E}. We thus omit its proof.

\begin{proposition}
    \label{prop-bal-K}
Let $K \subset \Omega$ be compact and let $u = \hat{R}_K^\ell(\Omega)$ for some constant $\ell>0$. Let $\varphi \in C_c^\infty(\Omega)$ be such that $\varphi = \ell$ on $K$. Then $u$ is the unique $\cA$-harmonic function in $\Omega \setminus K$ with $u - \varphi \in W_0^{1,G}(\Omega \setminus K)$. Moreover, $0 \leq u \leq \ell$ in $\Omega$, and in particular $u \in W^{1,G}_0(\Omega)$. 
\end{proposition}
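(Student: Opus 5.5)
We follow the classical argument of \cite[Lemma~8.5]{HKM06}, adapted to the present non-homogeneous setting. Throughout we work with the obstacle problem instead of scaling arguments. Let $\varphi\in C_c^\infty(\Omega)$ with $\varphi=\ell$ on $K$.

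\textbf{Step 1: the competitor $h$.} Let $h$ denote the solution of the $\mathcal{K}_{\psi,\vartheta}$-obstacle problem on $\Omega\setminus K$ with $\psi\equiv-\infty$ and $\vartheta=\varphi$. Equivalently, $h$ is the unique $\cA$-solution in $\Omega\setminus K$ with $h-\varphi\in W^{1,G}_0(\Omega\setminus K)$. Existence and uniqueness come from Proposition~\ref{prop-known}, and interior continuity gives that $h$ is $\cA$-harmonic there.

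\textbf{Step 2: bounds and extension.} Comparing with the constants $0$ and $\ell$ by the weak maximum principle \cite[Lemma~4.3]{CK21remov}, we get $0\le h\le\ell$; the constants are admissible competitors. Extend $h$ by $\ell$ on $K$ and by $0$ outside $\Omega$. Then $h\in W^{1,G}_0(\Omega)$.

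\textbf{Step 3: $h$ is a supersolution on all of $\Omega$.} We show that $h$ solves the obstacle problem in $\Omega$ with obstacle $\ell\chi_K$ and boundary values $0$. For this, test with $\min\{h,\ell\}$-type truncations. Alternatively, observe that $\min\{h,\ell\}=h$ and apply the pasting lemma \cite[Lemma~3.7]{Che22Gen} to paste $h$ with the constant $\ell$ near $K$. Hence $h$ is an $\cA$-supersolution in $\Omega$. Its lsc representative given by Lemmas~\ref{lem-supersol-cS} and~\ref{lem-super-liminf} lies in $\cS_\cA(\Omega)$. Moreover, $h\ge\ell$ on $K$ in the quasi-everywhere sense, and on the interior of $K$ pointwise.

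\textbf{Step 4: identification $u=h$.} This is the main obstacle.

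For $u\le h$, we must show the lsc representative of $h$ belongs to $\Phi^\ell_K$ up to the regularization, i.e.\ $h\ge\ell$ at every point of $K$. This can fail at irregular points, but it is enough to show $h\ge\ell$ a.e.\ near $K$ and pass to balayages. Concretely, approximate $K$ from outside by smooth compacts $K_j\downarrow K$. For these, Theorem~\ref{thm-bdry-reg} gives continuity of the corresponding $h_j$ up to $\partial K_j$, so $h_j\in\Phi^\ell_{K_j}\subset\Phi^\ell_K$. Then let $h_j\downarrow h$, using uniqueness and monotone convergence of the obstacle solutions as $K_j$ shrinks. This gives $R^\ell_K\le h$, and hence $\hat R^\ell_K\le h$ after regularization, since $h$ is lsc.

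For $u\ge h$, take any $v\in\Phi^\ell_K$ and replace it by $\min\{v,\ell\}$. Comparison in $\Omega\setminus K$ gives $v\ge h$: $v\ge\ell\ge h$ near $K$ via lower semicontinuity, and $v\ge0$ while $h\to0$ at $\partial\Omega$. Justify the boundary behaviour at $\partial\Omega$ using $h\in W^{1,G}_0$ and the comparison principle for supersolutions against Sobolev boundary data, applied on exhausting subdomains. Taking the infimum yields $R^\ell_K\ge h$ in $\Omega\setminus K$.

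\textbf{Conclusion.} The two inequalities give $u=h$ in $\Omega\setminus K$. Combined with Proposition~\ref{prop-bal-u-E}, which gives $\cA$-harmonicity of $u$ off $K$ and $u=\ell$ in the interior of $K$, this identifies $u$ with $h$. Hence $0\le u\le\ell$ and $u\in W^{1,G}_0(\Omega)$.
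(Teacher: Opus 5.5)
Your strategy matches the argument of \cite[Lemma~8.5]{HKM06} that the paper invokes: introduce the Dirichlet solution $h$ in $\Omega\setminus K$, prove $u\le h$ by approximating $K$ from outside by smooth compacts $K_j$ (boundary regularity at $\partial K_j$, pasting with the constant $\ell$, decreasing limit, uniqueness), and prove $u\ge h$ by comparison with every $v\in\Phi^\ell_K$. The $u\le h$ half works as you sketch it, with two small adjustments. First, fix one $\varphi$ equal to $\ell$ on a neighbourhood of $K$; this does not change $h$, since the difference of two admissible $\varphi$'s lies in $W^{1,G}_0(\Omega\setminus K)$. Second, Theorem~\ref{thm-bdry-reg} is only needed near $\partial K_j$, where its local proof applies; $\Omega\setminus K_j$ need not satisfy the measure density condition at $\partial\Omega$.

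Two steps do not go through as written. The first is the $u\ge h$ half. There, $h$ need not tend to $0$ at irregular points of $\partial\Omega$, and no Sobolev comparison with $h$ is available. Since $v$ is only in $W^{1,G}_{\rm loc}(\Omega)$, $(h-v)_+$ need not belong to $W^{1,G}_0(\Omega\setminus K)$. On exhausting subdomains $\Omega_i\setminus K$, $h$ does not vanish on $\partial\Omega_i$, so comparing with $h$ there fails too. You must compare with different functions. Let $h^{(i)}$ solve the same Dirichlet problem in $\Omega_i\setminus K$ for smooth $\Omega_i\nearrow\Omega$; by Theorem~\ref{thm-bdry-reg} it vanishes continuously on $\partial\Omega_i$. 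Choose $K\subset V\Subset\{v>\ell-\varepsilon\}\cap\Omega_i$ and apply Definition~\ref{def:Ash} to the $\cA$-harmonic function $h^{(i)}-\varepsilon$ on $\Omega_i\setminus\overline V$. This gives $v\ge h^{(i)}-\varepsilon$ in $\Omega_i\setminus K$. Then $h^{(i)}\nearrow h$ follows from the monotonicity, energy-bound and weak-limit argument of Lemma~\ref{lem-R-obs}, together with Harnack's convergence theorem and uniqueness of $h$. This construction also handles $\partial K$, where neither $h$ nor $h^{(i)}$ need be continuous.

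The second gap is the final identification. You only know $u=h$ on $\Omega\setminus K$ and $u=\ell$ on the interior of $K$. If $\partial K$ has positive Lebesgue measure (e.g.\ a fat Cantor set), $u$ is undetermined on a set of positive measure; your argument only gives $0\le u\le\ell$ there. So $u\in W^{1,G}_0(\Omega)$ does not yet follow. The missing input is Lemma~\ref{lem-ae}: $u=R^\ell_K$ a.e., and $R^\ell_K=\ell$ on $K$ because $\ell\chi_K\le R^\ell_K\le\ell$. Hence $u$ agrees a.e.\ with your extension of $h$ by $\ell$ on $K$.

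Finally, drop Step~3. It is never used. The pasting lemma cannot be applied there, because it needs $\liminf h\ge\ell$ at every point of $\partial K$, which is exactly what fails at irregular points. The asserted q.e.\ inequality for the lsc representative would also require Theorem~\ref{thm-quasicont}, whose proof relies on the present proposition.
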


The above proposition can be used to show that the dual capacity associated with $\cA$ is comparable to the relative $G$-capacity. Our proof follows the ideas of \cite[Theorem~3.5]{KM94}, but do not make use of homogeneity~\eqref{eq-homogeneity}.

\begin{lemma}\label{lem-cap-dual}
If $E\subset\Omega$ is a Borel set, then
\begin{equation}\label{eq-cap-dual}
C_1\,\mathrm{cap}_G(E,\Omega)\leq\widetilde{\mathrm{cap}}_{\cA}(E,\Omega)\leq C_2\,\mathrm{cap}_G(E,\Omega)
\end{equation}
for some constants $C_1,C_2>0$ depending only on $p$, $q$, $c_1^\cA$, and $c_2^\cA$.
\end{lemma}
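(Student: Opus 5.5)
Both inequalities in \eqref{eq-cap-dual} will be proved first for compact sets $K\subset\Omega$ and then extended to Borel sets. For the extension we use that $\mathrm{cap}_G(\cdot,\Omega)$ is inner regular on open sets by definition, outer regular by \eqref{eq-G-U}, and Choquet. For $\widetilde{\mathrm{cap}}_\cA$ we use that any admissible $\nu$ with $\supp\nu\subset E$ is inner regular, so $\nu(\Omega)=\sup_K \nu|_K(\Omega)$. Moreover $u_{\nu|_K}\le u_\nu\le 1$ by the comparison principle for \eqref{eq-u-nu}, so $\nu|_K$ is admissible for $K$. Here $\nu|_K\in(W^{1,G}_0)'$ holds because $\nu|_K$ is dominated by $\nu$ on nonnegative test functions and charges no set of capacity zero. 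Hence $\widetilde{\mathrm{cap}}_\cA(E,\Omega)=\sup_K\widetilde{\mathrm{cap}}_\cA(K,\Omega)$. On the other side, $\mathrm{cap}_G(E,\Omega)=\sup_K\mathrm{cap}_G(K,\Omega)$ for Borel $E$ by the Choquet property together with Lemma~\ref{lem-cap-limit}. So it suffices to treat compact $K$.

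\textbf{Upper bound.} Let $\nu$ be admissible for $K$ and $u=u_\nu$, so $0\le u\le1$. Fix $\varphi\in C^\infty_c(\Omega)$ with $\varphi\ge1$ on $K$ and $0\le\varphi\le1$ (truncation does not increase $\int G(|\nabla\varphi|)$). Then
\[
\nu(\Omega)\le\int\varphi\,d\nu=\int\cA(x,\nabla u)\cdot\nabla\varphi\dx .
\]
Testing \eqref{eq-u-nu} with $u$ itself gives
\[
c_1^\cA\int G(|\nabla u|)\dx\le\int u\,d\nu\le\nu(\Omega).
\]
By Young's inequality and Lemma~\ref{lem-H}, for $\varepsilon>0$,
\[
\int\cA(x,\nabla u)\cdot\nabla\varphi\le \varepsilon\int G(|\nabla u|)+C_\varepsilon\int G(|\nabla\varphi|).
\]
Here we use $G^*(\varepsilon g(t))$-type scaling from Lemma~\ref{lem-barg}: put the small constant on $g$ and the large one on $\nabla\varphi$, using $G^*(c_2 g(t))\le CG(t)$. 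Choosing $\varepsilon$ small absorbs the first term, giving $\nu(\Omega)\le C\int G(|\nabla\varphi|)$. Taking the infimum over $\varphi$ yields $\widetilde{\mathrm{cap}}_\cA(K,\Omega)\le C_2\,\mathrm{cap}_G(K,\Omega)$. No homogeneity is used.

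\textbf{Lower bound.} This is the main step. Take $u=\hat R^1_K(\Omega)$ from Proposition~\ref{prop-bal-K}: $u\in W^{1,G}_0(\Omega)$, $0\le u\le1$, and $u$ is $\cA$-harmonic in $\Omega\setminus K$. By Lemma~\ref{lem-cS-supersol}, $u$ is an $\cA$-supersolution. Its Riesz measure $\nu=\mu[u]$ is supported in $K$, and $\nu\in(W^{1,G}_0)'$ since $\cA(x,\nabla u)\in L^{G^*}$. Then $u_\nu=u$ by uniqueness, so $\nu$ is admissible. It remains to bound $\nu(\Omega)$ from below. Since $u=1$ on the interior of $K$ and $u-\varphi\in W^{1,G}_0(\Omega\setminus K)$, testing with $u$ itself gives
\[
\int\cA(x,\nabla u)\cdot\nabla u\dx=\int u\,d\nu=\nu(\Omega).
\]
To justify this, test with $\varphi-(\varphi-u)$, where $\varphi=1$ on $K$; the quasi-everywhere value $u=1$ on $K$ comes from $u-\varphi\in W^{1,G}_0(\Omega\setminus K)$. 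Consequently
\[
\nu(\Omega)\ge c_1^\cA\int G(|\nabla u|)\dx\ge c_1^\cA\,\mathrm{cap}_G(K,\Omega),
\]
because $u$ is an admissible competitor: $u$ approximates functions $\ge1$ on $K$, which follows from the quasi-boundary condition together with \eqref{eq-G-U} and standard approximation by $(1+\varepsilon)u$ cut off. The delicate point is exactly this identification of $\int u\,d\nu=\nu(\Omega)$ and the admissibility of $u$ in the capacity, both of which hinge on $u=1$ q.e.\ on $K$. I would obtain it via Proposition~\ref{prop-bal-K}, approximating $K$ by smooth neighbourhoods $K_j\downarrow K$ where $u_j=1$ on the open set $\mathrm{int}K_j$, and passing to the limit with Lemma~\ref{lem-mea-weakcon} and Lemma~\ref{lem-cap-limit}.
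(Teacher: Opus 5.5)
Your proof is correct. The lower bound follows the paper's argument; the upper bound reaches the same estimate by a slightly more direct route.

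\textbf{Lower bound.} The paper also takes $u=\hat{R}^1_K$, notes that $\mu[u]\in(W^{1,G}_0(\Omega))'$ is supported on $K$, and deduces that $\mu[u]$ is admissible. It then writes $\mu[u](\Omega)\geq\int_\Omega u\,\mathrm{d}\mu[u]=\int_\Omega\cA(x,\nabla u)\cdot\nabla u\dx\geq c_1^{\cA}\int_\Omega G(|\nabla u|)\dx$. Finally it compares with $\mathrm{cap}_G(K,\Omega)$ by approximating $u$ with $\varphi+\varphi_j$, where $\varphi_j\in C^\infty_c(\Omega\setminus K)$ and $\varphi_j\to u-\varphi$ in $W^{1,G}(\Omega)$.

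\textbf{Upper bound.} The paper first proves the mass-independent bound $\int_\Omega G(|\nabla u_\nu|)\dx\leq C\int_\Omega G(|\nabla v|)\dx$. It does this by comparing $u_\nu$ with $v_1=\max\{v,u_\nu\}$, using that $u_\nu$ is a supersolution in $\Omega$, a solution in $\Omega\setminus K$, and that $v-v_1\in W^{1,G}_0(\Omega\setminus K)$. It then inserts this into $\nu(K)\leq qc_2^{\cA}\int_\Omega\bigl(G(|\nabla u_\nu|)+G(|\nabla v|)\bigr)\dx$. You instead use $c_1^{\cA}\int_\Omega G(|\nabla u_\nu|)\dx\leq\int_\Omega u_\nu\,\mathrm{d}\nu\leq\nu(K)$ and absorb into $\nu(K)$, which is legitimate because $\nu(K)<\infty$. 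The content is the same: the paper's inequality $\int_\Omega\cA(x,\nabla u_\nu)\cdot\nabla(v-u_\nu)\dx\geq0$ is precisely $\int_\Omega u_\nu\,\mathrm{d}\nu\leq\nu(K)$. Your version simply avoids the auxiliary function $v_1$.

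\textbf{Reduction to compact sets.} The paper only says ``without loss of generality''. You actually justify it: you restrict admissible measures to compact subsets of their support, keep $0\leq u_{\nu\lfloor_K}\leq u_\nu\leq1$ by comparison, and use Choquet capacitability of $\mathrm{cap}_G(\cdot,\Omega)$.

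\textbf{The ``delicate point'' is not delicate.} For the lower bound you only need $\int_\Omega u\,\mathrm{d}\nu\leq\nu(\Omega)$, which follows from $u\leq1$; this is all the paper uses. In any case, your own splitting $u=\varphi-(\varphi-u)$ already gives $\int_\Omega\cA(x,\nabla u)\cdot\nabla u\dx=\nu(K)$ without any quasi-everywhere information. Admissibility of $u$ for $\mathrm{cap}_G(K,\Omega)$ also follows directly from $u-\varphi\in W^{1,G}_0(\Omega\setminus K)$, via $\varphi+\varphi_j$ as above (take positive parts and mollify to get nonnegative smooth competitors).

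So drop the detour through smooth neighbourhoods $K_j\downarrow K$. It would not work as stated anyway, since Lemma~\ref{lem-cap-limit} concerns increasing unions. Also drop the ``$(1+\varepsilon)u$ cut off'' device. It is problematic because $\hat{R}^1_K$ need not be close to $1$ on any neighbourhood of $K$; it can fall below $1$ at irregular points of $K$.
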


\begin{proof}
	 	We  assume that $E=K$ is compact without loss of generality. 	Let $u=\hat{R}_{K}^{1}$ be the $\cA$-potential of $K$ in $\Omega$. By Proposition~\ref{prop-bal-K} and Lemma~\ref{lem-cS-supersol}, $u \in W^{1,G}_0(\Omega)$ is an $\cA$-supersolution of \eqref{eq-sec2-new} in $\Omega$, and hence its Riesz measure $\mu=\mu[u]$ belongs to $(W^{1,G}_0(\Omega))'$ and has support on $K$; see Section~\ref{sec4-2} for the definition of the Riesz measure of an $\cA$-supersolution.  Note that $u$ can be used as a test function for \eqref{eq-Riesz-supersoln} by approximation. Since $0 \leq u \leq 1$ in $\Omega$, we have from the definition of $\widetilde{\mathrm{cap}}_{\cA}$ that
	 	\begin{equation}\label{eq-tilde-cap-lower}
			 			\widetilde{\mathrm{cap}}_{\cA}(K,\Omega) \geq \mu(\Omega)\geq \int_{\Omega} u \dmu= \int_{\Omega}\cA(x,\nabla u)\cdot \nabla u\dx\geq c^{\cA}_1 \int_{\Omega}G(|\nabla u|)\dx.
		 	\end{equation}
We claim that
\begin{equation}\label{eq-claim}
\mathrm{cap}_G(K,\Omega) \leq \int_\Omega G(|\nabla u|) \dx.
\end{equation}
Indeed, let $\varphi \in C^\infty_c(\Omega)$ be such that $0 \leq \varphi \leq 1$ in $\Omega$ and $\varphi = 1$ in a neighborhood of $K$. By Proposition~\ref{prop-bal-K}, $u-\varphi \in W^{1,G}_0(\Omega \setminus K)$. Thus there exists a sequence $\{\varphi_j\} \subset C^\infty_c(\Omega \setminus K)$ such that
\begin{equation*}
\varphi_j \to u-\varphi \quad\text{in }W^{1,G}(\Omega).
\end{equation*}
Let $u_j:=\varphi+\varphi_j$. Then $u_j \in C^\infty_c(\Omega)$, $u_j=1$ in a neighborhood of $K$, and $u_j \to u$ in $W^{1,G}(\Omega)$. We may assume that $u_j$ is nonnegative by taking its positive part and considering a mollification if necessary. Therefore, $u_j$ is admissible for $\mathrm{cap}_G(K, \Omega)$, yielding that
\begin{equation*}
\mathrm{cap}_G(K,\Omega) \leq \int_\Omega G(|\nabla u_j|) \dx.
\end{equation*}
Passing to the limit as $j \to \infty$ verifies the claim \eqref{eq-claim}. Combining this with \eqref{eq-tilde-cap-lower} proves the first inequality in \eqref{eq-cap-dual}.

	 	For the second inequality, let $\nu$ be any nonnegative measure in $(W_{0}^{1,G}(\Omega))'$ with $\supp\nu\subset K$ and let $u_{\nu}$ be the unique function satisfying \eqref{eq-u-nu} and $0 \leq u_\nu \leq 1$ a.e.\ in $\Omega$. Take any nonnegative function $v\in C_{c}^{\infty}(\Omega)$ such that $v=1$ on $K$. Then
	 	\begin{equation}\label{eq-upp1}
		 		\nu(K)\leq \int_{\Omega}v\,\mathrm{d}\nu=\int_{\Omega}\cA(x,\nabla u_{\nu})\cdot\nabla v\dx.
		 	\end{equation}
	 By the structural assumptions \eqref{ass-op} and  Lemma~\ref{lem-barg}, 
	 	\begin{equation}\label{eq-upp2}
	 \int_{\Omega}\cA(x,\nabla u_{\nu})\cdot\nabla v\dx \le q c^{\cA}_2 \int_{\Omega} \big(G(|\nabla u_{\nu}|)+ G(|\nabla v|)\big) \dx.
	 	\end{equation}
	 
	We next estimate $\int_{\Omega}G(|\nabla u_{\nu}|) \dx$. Set $v_{1}=\max\{v,u_{\nu}\}$. Then $v_{1}-u_{\nu}\in W_{0}^{1,G}(\Omega)$ is nonnegative and $v-v_{1}\in W_{0}^{1,G}(\Omega\setminus K)$. Note that  $u_{\nu}$ is an $\cA$-solution to \eqref{eq-sec2-new} in $\Omega \setminus K$ and an $\cA$-supersolution to  \eqref{eq-sec2-new} in $\Omega$. Since $v-v_1=0$  on $K$, we have
	 	\begin{equation*}
		 		\begin{split}
			 			\int_{\Omega}\cA(x,\nabla u_{\nu})\cdot \nabla (v-u_{\nu})\dx
			 			&= \int_{\Omega}\cA(x,\nabla u_{\nu})\cdot \nabla (v_1-u_{\nu})\dx+\int_{\Omega\setminus K}\cA(x,\nabla u_{\nu})\cdot \nabla (v-v_1)\,\dx\\
			 			&= \int_{\Omega}\cA(x,\nabla u_{\nu})\cdot \nabla (v_1-u_{\nu})\dx \geq 0.
			 		\end{split}
		 	\end{equation*}
	 	 Hence, for $\delta\in (0,1)$,  by   \eqref{ass-op} and  Lemma~\ref{lem-barg}, we obtain
	 	\begin{equation*}
		 		\begin{split}
			 			c^{\cA}_1\int_{\Omega} G(|\nabla u_{\nu}|)\dx &\leq \int_{\Omega} \cA(x,\nabla u_{\nu})\cdot \nabla u_{\nu}\dx  \leq \int_{\Omega} \cA(x,\nabla u_{\nu})\cdot \nabla v\dx \\
			 			&\leq q c^{\cA}_2 \int_{\Omega}\left( \delta G(|\nabla u_{\mu}|)+ \frac{q}{p}\delta^{1-q}G(|\nabla v|)\right)\dx.
			 		\end{split}
		 	\end{equation*} 
	 	 By taking $\delta $ sufficiently small, it follows that 
	 	\begin{equation*}
		 		\int_{\Omega} G(|\nabla u_{\nu}|)\dx \leq C\int_{\Omega} G(|\nabla v|)\dx.
		 	\end{equation*}
	 Combining this with \eqref{eq-upp1} and \eqref{eq-upp2}, we deduce that 
	 	\[
	 	\nu(K) \le C_2 \int_{\Omega} G(|\nabla v|)\dx
	 	\]
	 	for some $C_2=C_2(p, q, c_1^{\cA}, c_2^{\cA})>0$. Taking the infimum over all $0\leq v\in C_0^\infty(\Omega)$ with $v=1$ on $K$, we obtain $\nu(K)\le C_2\,\mathrm{cap}_G(K,\Omega)$. Since $\nu$ was arbitrary,
	 	\[
	 	\widetilde{\mathrm{cap}}_{\cA}(K,\Omega) = \sup \nu(K) \le C_2\,\mathrm{cap}_G(K,\Omega).
	 	\]
	 	This completes the proof.
	 \end{proof}

\subsection{Balayage and obstacle problems}\label{sec4-4}

The theory of obstacle problems presented in Section~\ref{sec4-1} is intimately connected with balayage theory. This connection is particularly useful when the obstacle is supported on an open set as in the following theorem. In contrast, the corresponding statement is generally false for compact sets, where the r\'eduite and its lsc-regularization may differ pointwise.

\begin{theorem}\label{thm-obs-open}
Suppose that $u \in \cS_{\cA}(\Omega)$ is nonnegative and bounded. Let $U \Subset \Omega$ be an open set. Then $R^u_U \equiv \hat{R}^u_U$ is the unique lsc-regularized solution to the $\mathcal{K}_{u\chi_U, 0}(\Omega)$-obstacle problem.
\end{theorem}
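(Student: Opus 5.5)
Let $\psi:=u\chi_U$ and let $w$ be the lsc-regularized solution of the $\mathcal{K}_{\psi,0}(\Omega)$-obstacle problem. Set $M:=\sup_\Omega u<\infty$. By Lemma~\ref{lem-cS-supersol}, $u\in W^{1,G}_{\rm loc}(\Omega)$. Choose a cutoff $\eta\in C^\infty_c(\Omega)$ with $0\le\eta\le1$ and $\eta=1$ on $U$; then $u\eta\in W^{1,G}_0(\Omega)$ and $u\eta\ge\psi$, so $\mathcal{K}_{\psi,0}(\Omega)\neq\emptyset$. Proposition~\ref{prop-known} (existence only needs $\psi\in W^{1,G}$ near $\overline U$, or one works with $\max\{\psi,0\}$-type admissible functions) then gives a unique solution $w$. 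It is an $\cA$-supersolution and is bounded, $0\le w\le M$: test with $\min\{w,M\}$ and $\max\{w,0\}$, which are admissible. By Lemmas~\ref{lem-super-liminf} and~\ref{lem-supersol-cS}, its lsc representative satisfies \eqref{eq-u-essliminf} and $w\in\cS_\cA(\Omega)$. The plan is to show $R^u_U\le w\le \hat R^u_U$ pointwise. Since $\hat R^u_U\le R^u_U$ always holds, this gives equality everywhere.

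\textbf{Step 1: $w\in\Phi^u_U$, hence $R^u_U\le w$.} We know $w\ge u$ a.e.\ in $U$. Since $U$ is open and $w(x)=\essliminf_{y\to x}w(y)$, while $u$ is $\cA$-superharmonic with $u(x)=\essliminf_{y\to x}u(y)$ (this holds for bounded $\cA$-superharmonic functions via Lemma~\ref{lem-cS-supersol} and the proof of Lemma~\ref{lem-super-liminf}), we get $w\ge u$ everywhere in $U$. Also $w\ge0=\psi$ off $U$. So $w\in\Phi^u_U$.

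\textbf{Step 2: $w\le v$ for every $v\in\Phi^u_U$.} Fix $v\in\Phi^u_U$ and replace it by $\min\{v,M\}\in\Phi^u_U$, so that $v$ is a bounded $\cA$-supersolution. The standard argument is to compare with $\min\{w,v\}$. The function $\min\{w,v\}$ lies in $W^{1,G}_{\rm loc}$, is $\ge\psi$, and equals $w$ near $\partial\Omega$ only if $v$ is controlled at the boundary; $v\ge0$ suffices, but the zero trace of $\min\{w,v\}$ needs $0\le\min\{w,v\}\le w\in W^{1,G}_0$. The local membership $\min\{w,v\}\in W^{1,G}$ near $\partial\Omega$ is delicate because $v$ is only locally Sobolev. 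To handle this, I would use the comparison lemma for obstacle problems: on $\{w>v\}$ (an open-type set away from $\overline U$ up to capacity, since $v\ge u$ on $U$ and $w$ is a supersolution touching the obstacle only there), $w$ is an $\cA$-solution. Then test the variational inequality with $\min\{w,v\}$, i.e., use $(w-v)_+$, which is in $W^{1,G}_0(\Omega)$ because $0\le(w-v)_+\le w$. From \eqref{eq-obs} we get $\int\cA(x,\nabla w)\cdot\nabla(w-v)_+\le0$. The supersolution property of $v$ with the nonnegative test $(w-v)_+$, approximated by compactly supported functions, gives $\int\cA(x,\nabla v)\cdot\nabla(w-v)_+\ge0$. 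Subtracting and applying strict monotonicity \eqref{eq-monotonicity} yields $\nabla(w-v)_+=0$, so $w\le v$ a.e. By lsc regularization (essliminf), $w\le v$ everywhere.

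\textbf{Step 3: conclusion.} Step 2 gives $w\le R^u_U$, so $w=R^u_U$. Since $w$ is lower semicontinuous, $w=\hat w\le\hat R^u_U\le R^u_U=w$, proving $R^u_U\equiv\hat R^u_U=w$. Uniqueness follows from Proposition~\ref{prop-known} and the fact that the lsc-regularized representative is determined by the a.e.\ class.

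\textbf{Main obstacle.} The hardest point is justifying the test function $(w-v)_+$ against the supersolution $v$, which is only in $W^{1,G}_{\rm loc}$. I would first approximate: let $\eta_j$ be cutoffs increasing to $1$ and use $\min\{(w-v)_+,\,w\eta_j\}$-type functions, or truncate $w$ by compactly supported $W^{1,G}_0$ approximants $w_j\uparrow w$ and use $(w_j-v)_+$, which has compact support since $v\ge0$. Then pass to the limit with the bounds from Lemma~\ref{lem-uniform-bound}.
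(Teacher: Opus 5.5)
Your argument is correct in outline, but it takes a different route from the paper. The paper never compares the obstacle solution with an arbitrary element of $\Phi^u_U$ near $\partial\Omega$. Instead it does three things:
\begin{itemize}
\item it approximates $u\chi_U$ from below by $\psi_i\in C^\infty_c(U)$;
\item it uses Lemma~\ref{lem-R-obs}, where competitors are compared only on an exhaustion $\Omega_i\Subset\Omega$, and Lemma~\ref{lem-bal-open} to write $R^u_U=\hat R^u_U=\lim_i R^{\psi_i}$ as an increasing limit of continuous functions;
\item it checks that this limit solves the $\mathcal{K}_{u\chi_U,0}(\Omega)$-problem, using the energy bound from testing with $u\eta$, weak convergence and Minty's trick.
\end{itemize}
That route gives existence constructively and yields $R^u_U\ge u\chi_U$ pointwise and $R^u_U=\hat R^u_U$ for free. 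It also reuses lemmas the paper needs later anyway. Your direct identification ($w\in\Phi^u_U$ and $w\le v$ for all $v\in\Phi^u_U$) is shorter and skips the smooth approximation and the Minty step. However, three points need repair.

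\textbf{Existence.} In general $u\chi_U\notin W^{1,G}(\Omega)$, so Proposition~\ref{prop-known} does not apply. Existence should come from the Lions--Stampacchia theorem on the nonempty closed convex set $\mathcal{K}_{u\chi_U,0}(\Omega)\ni u\eta$.

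\textbf{Upgrading a.e.\ to everywhere.} Step~1 only uses $u(x)\le\essliminf_{y\to x}u(y)$, which is just lower semicontinuity. But upgrading $w\le v$ a.e.\ to everywhere in Step~2 needs the reverse inequality $v(x)\ge\essliminf_{y\to x}v(y)$ for an arbitrary competitor $v$. This does not follow from the proof of Lemma~\ref{lem-super-liminf}, which only produces some representative with this property. It is the standard fact that an $\cA$-superharmonic function coincides everywhere with its essential lower limit. The paper also uses it tacitly in the proof of Lemma~\ref{lem-R-obs}, where $v\le w$ a.e.\ is upgraded to everywhere. You should cite it explicitly.

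\textbf{Comparison near $\partial\Omega$.} Lemma~\ref{lem-uniform-bound} is local and cannot control the comparison there. Your approximation idea works once made precise:
\begin{itemize}
\item Let $w_j:=\min\{w,(\phi_j)_+\}$ with $\phi_j\in C^\infty_c(\Omega)$ and $\phi_j\to w$ in $W^{1,G}(\Omega)$. Set $f_j:=(w_j-v)_+$, which has compact support since $v\ge0$.
\item Because $w_j\le w$ and $v\ge u\chi_U$, one has $w-f_j\in\mathcal{K}_{u\chi_U,0}(\Omega)$. Hence $\int_\Omega\cA(x,\nabla w)\cdot\nabla f_j\dx\le0\le\int_\Omega\cA(x,\nabla v)\cdot\nabla f_j\dx$.
\item The right inequality and Lemma~\ref{lem-barg}(iii) give the global bound $\int_{\{w_j>v\}}G(|\nabla v|)\dx\le C\int_\Omega G(|\nabla w_j|)\dx$.
\item With this bound, the error term $\int_{\{w_j>v\}}(\cA(x,\nabla w)-\cA(x,\nabla v))\cdot\nabla(w-w_j)\dx$ tends to $0$.
\item Fatou's lemma, along a subsequence with $w_j\to w$ a.e., then gives $\nabla w=\nabla v$ a.e.\ on $\{w>v\}$.
\item The same bound shows $\{f_j\}$ is bounded in $W^{1,G}_0(\Omega)$. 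So $(w-v)_+\in W^{1,G}_0(\Omega)$, and therefore $(w-v)_+=0$.
\end{itemize}
Your remark that $w$ is an $\cA$-solution on $\{w>v\}$ is not needed.
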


We need a series of lemmas to prove Theorem~\ref{thm-obs-open}. We first solve the obstacle problem with a nonnegative smooth obstacle with compact support.

\begin{lemma}\label{lem-R-obs}
Let $\psi \in C^\infty_c(\Omega)$ be nonnegative. Then, $R^\psi \equiv \hat{R}^\psi$ is the unique continuous solution to the $\mathcal{K}_{\psi, 0}(\Omega)$-obstacle problem.
\end{lemma}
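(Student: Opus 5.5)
Let $u$ be the solution to the $\mathcal{K}_{\psi,0}(\Omega)$-obstacle problem given by Proposition~\ref{prop-known}; the set is nonempty since $\psi\in\mathcal{K}_{\psi,0}(\Omega)$. The plan is to show $u=R^\psi=\hat R^\psi$ through the chain
\[
R^\psi\le u\le v\quad\text{for all } v\in\Phi^\psi ,
\]
where $u$ denotes a suitable continuous representative. This gives $u=R^\psi$ everywhere, and since $u$ is continuous (hence lower semicontinuous) and $\hat R^\psi$ is the greatest lsc minorant of $R^\psi$, also $\hat R^\psi=R^\psi$.

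\textbf{Step 1: $u$ is continuous and $\cA$-superharmonic.} Since $\psi$ is smooth, hence Hölder, Proposition~\ref{prop-known} gives that $u$ is continuous in $\Omega$, is an $\cA$-supersolution, and is $\cA$-harmonic in $\{u>\psi\}$. Continuity gives \eqref{eq-u-essliminf}, so Lemma~\ref{lem-supersol-cS} yields $u\in\cS_{\cA}(\Omega)$. Moreover $u\ge\psi$ a.e., and by continuity of both functions this holds everywhere. Hence $u\in\Phi^\psi$, which gives $R^\psi\le u$.

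\textbf{Step 2 (main step): $u\le v$ for every $v\in\Phi^\psi$.} Replacing $v$ by $\min\{v,\max\psi\}$, we may assume $v$ is bounded, so $v\in W^{1,G}_{\rm loc}$ is an $\cA$-supersolution by Lemma~\ref{lem-cS-supersol}. Since $\psi\ge0$ and $v\ge\psi$ on $\supp\psi$, the strong minimum principle (Proposition~\ref{prop-min}) should give $v\ge0$ near $\partial\Omega$ in the relevant sense. Consider the open set $D=\{u>v\}$; I want to show $D=\emptyset$. On $D$ we have $u>v\ge\psi$, so $D\subset\{u>\psi\}$ and $u$ is $\cA$-harmonic on $D$.

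The cleanest route is to avoid using $v$ directly on $\partial\Omega$ and argue with compact sets. Fix $\varepsilon>0$. Since $u\in W^{1,G}_0(\Omega)$ is continuous and (by Theorem~\ref{thm-bdry-reg} applied on a smooth exhaustion, or via Lemma~\ref{lem-loc-bdd-obs}) tends to $0$ on $\partial\Omega$ in the Sobolev sense, the set $\{u-\varepsilon>v\}$ should be compactly contained in $\Omega$. Indeed, near $\partial\Omega$, outside $\supp\psi$, $u$ is $\cA$-harmonic with zero boundary values and $v\ge0$.

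To make this rigorous I would test the variational inequality \eqref{eq-obs} with $w=\min\{u,v+\varepsilon\}$. This $w$ lies in $\mathcal{K}_{\psi,0}$ because $(u-v-\varepsilon)_+$ has compact support and $w\ge\psi$. Testing gives
\[
\int\cA(x,\nabla u)\cdot\nabla(u-v-\varepsilon)_+\,dx\le 0 .
\]
On the other hand, the supersolution inequality for $v$, tested with the nonnegative compactly supported function $(u-v-\varepsilon)_+$, gives
\[
\int\cA(x,\nabla v)\cdot\nabla(u-v-\varepsilon)_+\,dx\ge0 .
\]
Subtracting the two and using strict monotonicity \eqref{eq-monotonicity}, we get $\nabla(u-v-\varepsilon)_+=0$, hence $(u-v-\varepsilon)_+=0$. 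Letting $\varepsilon\to0$ yields $u\le v$ a.e.

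\textbf{Step 3: from a.e.\ to pointwise.} To pass from ``a.e.'' to everywhere, use that $v$ is $\cA$-superharmonic. The bounded $\cA$-superharmonic $v$ satisfies $v(x)=\essliminf_{y\to x}v(y)$, because for bounded superharmonic functions the lsc-representative coincides with the essliminf (Lemma~\ref{lem-super-liminf} and lower semicontinuity). Together with continuity of $u$, this gives $u\le v$ everywhere.

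I expect the main obstacle to be justifying that $(u-v-\varepsilon)_+$ is compactly supported and lies in $W^{1,G}_0$. If the compact-support argument near $\partial\Omega$ is troublesome, I would instead compare $v$ against $u$ on smooth $U\Subset\Omega$ with $\supp\psi\subset U$, using the comparison principle for $\cA$-harmonic $u$ in $U\setminus\supp\psi$.

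Uniqueness follows from the uniqueness part of Proposition~\ref{prop-known}: two continuous solutions agreeing a.e.\ agree everywhere.
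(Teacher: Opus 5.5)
\textbf{The gap is at the boundary, in Step 2.} Since $v$ is only in $W^{1,G}_{\rm loc}(\Omega)$, two things hinge on $\{u-v\ge\varepsilon\}\Subset\Omega$: that $w=\min\{u,v+\varepsilon\}$ belongs to $\mathcal{K}_{\psi,0}(\Omega)$, and that $(u-v-\varepsilon)_+$ is an admissible test function for $v$. Your only justification is that $u(x)\to0$ as $x\to\partial\Omega$. Nothing in the paper gives this for an arbitrary bounded open $\Omega$:
\begin{itemize}
\item Theorem~\ref{thm-bdry-reg} needs the measure density condition \eqref{eq-MDC} on $\Omega$ itself. Applying it on a smooth exhaustion gives boundary continuity of the obstacle solutions on the $\Omega_i$, not of $u$.
\item Lemma~\ref{lem-loc-bdd-obs} gives only boundedness.
\end{itemize}
The claim is in fact false in general. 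Take $G(t)=t^2/2$ with $n\ge2$, so points have zero $G$-capacity, let $\Omega=B_1\setminus\{0\}$, and let $\psi\not\equiv0$. Then $W^{1,G}_0(\Omega)=W^{1,G}_0(B_1)$, so $u$ is the restriction of the obstacle solution in $B_1$. That solution is positive at $0$ by Proposition~\ref{prop-min}. Hence $u\not\to0$ at the boundary point $0$, and ``$v\ge0$'' gives no control of $u-v$ there.

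Your fallback does not help either. Comparing $v$ with $u$ on a smooth $U\Subset\Omega$ presupposes $u\le v$ on $\partial U$, which is exactly what you are trying to prove, and $u>0$ there. Your argument is correct verbatim only when $\Omega$ satisfies \eqref{eq-MDC}.

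\textbf{The missing idea is what the paper does: run your comparison on a smooth exhaustion, then pass to the limit.}
\begin{enumerate}
\item Take smooth $\Omega_i\nearrow\Omega$ with $\supp\psi\Subset\Omega_1$, and let $h_i$ solve the $\mathcal{K}_{\psi,0}(\Omega_i)$-problem. By Theorem~\ref{thm-bdry-reg}, $h_i\in C_0(\overline{\Omega}_i)$, so $(h_i-\min\{v,M\}-\varepsilon)_+$ genuinely has compact support in $\Omega_i$. Your Step 2 then yields $h_i\le v$.
\item The same comparison shows the $h_i$ are increasing.
\item Testing \eqref{eq-obs} with $\psi$ gives the uniform bound $\int_\Omega G(|\nabla h_i|)\dx\le C\int_\Omega G(|\nabla\psi|)\dx$. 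Hence $h_i\rightharpoonup h$ in $W^{1,G}_0(\Omega)$, and $h$ is continuous by the interior equicontinuity from Proposition~\ref{prop-known}.
\item Pass to the limit in the Minty form $\int_\Omega\cA(x,\nabla\varphi)\cdot\nabla(\varphi-h_i)\dx\ge0$ for compactly supported $\varphi\in\mathcal{K}_{\psi,0}(\Omega)$. Density and Minty's trick then show that $h$ solves the $\mathcal{K}_{\psi,0}(\Omega)$-problem, so $h=u$ by uniqueness.
\item Therefore $u=\lim_i h_i\le v$.
\end{enumerate}
With this in place, your Step 1, the identification $\hat R^\psi=R^\psi=u$, and the uniqueness remark are fine. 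Also, $v\ge\psi\ge0$ holds on all of $\Omega$ directly from $v\in\Phi^\psi$, so you do not need the minimum principle for that.
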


\begin{proof}
To rule out the trivial case, we may assume that $\psi \not\equiv 0$. Let $v$ be the unique continuous solution to the $\mathcal{K}_{\psi, 0}(\Omega)$-obstacle problem obtained in Proposition~\ref{prop-known}. We shall prove that $R^\psi \equiv v$.

First, we approximate $v$ by a sequence of solutions to obstacle problems. We exhaust $\Omega$ by smooth domains $\Omega_1 \Subset \Omega_2 \Subset \cdots \Subset \Omega$ such that $\supp\psi \Subset \Omega_1$ and $\bigcup_i \Omega_i=\Omega$. Let $h_i$ be the continuous solution to the $\mathcal{K}_{\psi,0}(\Omega_i)$-obstacle problem obtained in Proposition~\ref{prop-known}. Since $\Omega_i$ is smooth and the obstacle $\psi$ is smooth, Theorem~\ref{thm-bdry-reg} ensures that $h_i \in C_0(\overline{\Omega}_i)$, so we can extend $h_i$ continuously to $\Omega$ by setting $h_i=0$ on $\Omega \setminus \Omega_i$. Then, $h_i \geq h_j$ in $\Omega$ if $i \geq j$. Indeed, since $h_j-\min\{h_i, h_j\} \in W^{1,G}_0(\Omega_i)$ is nonnegative and $h_i$ is an $\cA$-supersolution in $\Omega_i$, we have
\begin{equation*}
\int_{\Omega_i} \cA(x, \nabla h_i) \cdot \nabla (h_j-\min\{h_i, h_j\}) \dx \geq 0.
\end{equation*}
Moreover, since $\min\{h_i, h_j\} \in \mathcal{K}_{\psi, 0}(\Omega_j)$, the variational inequality \eqref{eq-obs} for $h_j$ yields
\begin{equation*}
\int_{\Omega_j} \cA(x, \nabla h_j) \cdot \nabla (\min\{h_i, h_j\}-h_j) \dx \geq 0.
\end{equation*}
We thus have
\begin{align*}
0
&\leq \int_\Omega (\cA(x, \nabla h_i)-\cA(x, \nabla h_j)) \cdot \nabla (h_j - \min\{h_i, h_j\}) \dx \\
&= - \int_{\Omega \cap \{h_i<h_j\}} (\cA(x, \nabla h_i)-\cA(x, \nabla h_j)) \cdot \nabla (h_i - h_j) \dx.
\end{align*}
By the strict monotonicity \eqref{eq-monotonicity} of $\cA(x, \cdot)$, this implies that $\nabla (h_j-h_i)_+=0$ a.e.\ in $\Omega$. Since $(h_j-h_i)_+ \in W^{1,G}_0(\Omega) \cap C(\Omega)$, it follows that $(h_j-h_i)_+=0$, i.e., $h_i \geq h_j$ in $\Omega$. Similarly, $v \geq h_i$ in $\Omega$. We denote the pointwise limit of $\{h_i\}$ by $h:=\lim_{i\to \infty}h_i$. Our aim is to prove that $h \equiv v$.

Using $\psi \in \mathcal{K}_{\psi, 0}(\Omega_i)$ as a test function in the variational inequality \eqref{eq-obs} for $h_i$ and applying the structural conditions \eqref{ass-op}, we obtain
\begin{align*}
c_1^{\cA} \int_{\Omega_i} G(|\nabla h_i|) \dx
&\leq \int_{\Omega_i} \cA(x, \nabla h_i) \cdot \nabla h_i \dx \\
&\leq \int_{\Omega_i} \cA(x, \nabla h_i) \cdot \nabla \psi \dx \leq c_2^{\cA} \int_{\Omega_i} g(|\nabla h_i|)|\nabla \psi| \dx.
\end{align*}
Applying Lemma~\ref{lem-barg}(iii) with $\varepsilon > 0$ and using \eqref{eq:Gg}, we have
\begin{equation*}
\int_{\Omega_i} G(|\nabla h_i|) \dx \leq \frac{c_2^{\cA}}{c_1^{\cA}} \left( q\varepsilon \int_{\Omega_i} G(|\nabla h_i|) \dx + q\varepsilon^{1-q} \int_{\Omega} G(|\nabla \psi|) \dx \right).
\end{equation*}
Taking sufficiently small $\varepsilon$ and recalling $h_i=0$ outside $\Omega_i$ yields the uniform energy bound
\begin{equation}\label{eq-energy-bound-i}
\int_{\Omega} G(|\nabla h_i|) \dx \leq C \int_{\Omega} G(|\nabla \psi|) \dx,
\end{equation}
where $C>0$ is independent of $i$. Thus, $\{h_i\}$ converges weakly to $h$ in $W^{1,G}_0(\Omega)$. By the interior H\"older regularity for obstacle problem solutions (Proposition~\ref{prop-known}), the sequence $\{h_i\}$ is locally equicontinuous in $\Omega$. Hence the limit $h$ is continuous in $\Omega$.

Next, we show that $h \equiv v$. Let $\varphi \in \mathcal{K}_{\psi, 0}(\Omega)$ be any function with compact support in $\Omega$. For sufficiently large $i$, we have $\supp \varphi \Subset \Omega_i$, which implies $\varphi \in \mathcal{K}_{\psi, 0}(\Omega_i)$. Since $h_i$ solves the $\mathcal{K}_{\psi, 0}(\Omega_i)$-obstacle problem, we have
\begin{equation*}
\int_{\Omega_i} \cA(x, \nabla h_i) \cdot \nabla (\varphi - h_i) \dx \geq 0.
\end{equation*}
By applying the monotonicity \eqref{eq-monotonicity} of the operator $\cA(x, \cdot)$ and extending the integral over $\Omega$, we obtain
\begin{equation*}
\int_\Omega \cA(x, \nabla \varphi) \cdot \nabla (\varphi - h_i) \dx \geq 0.
\end{equation*}
Notice that $\cA(x, \nabla \varphi) \in L^{G^\ast}(\Omega)$ by \eqref{ass-op} and Lemma~\ref{lem-H}. Therefore, we pass to the weak limit $h_i \rightharpoonup h$ in $W^{1,G}(\Omega)$ to get
\begin{equation}\label{eq-var-h}
\int_\Omega \cA(x, \nabla \varphi) \cdot \nabla (\varphi - h) \dx \geq 0.
\end{equation}
By a standard density argument, \eqref{eq-var-h} holds for all $\varphi \in \mathcal{K}_{\psi, 0}(\Omega)$. Let $\eta \in \mathcal{K}_{\psi, 0}(\Omega)$ be arbitrary. For any $t \in (0, 1]$, the convex combination $\varphi_t := h + t(\eta - h)$ belongs to $\mathcal{K}_{\psi, 0}(\Omega)$. Substituting $\varphi_t$ into \eqref{eq-var-h} and dividing by $t$ yields
\begin{equation*}
\int_\Omega \cA(x, \nabla (h + t(\eta - h))) \cdot \nabla (\eta - h) \dx \geq 0.
\end{equation*}
Letting $t \to 0^+$, the continuity of $\cA(x, \cdot)$ implies that $h$ solves the variational inequality for the $\mathcal{K}_{\psi, 0}(\Omega)$-obstacle problem. Since $v$ is the unique continuous solution to this problem, we conclude that $h \equiv v$.

Now, we prove that $R^\psi \equiv v$. To prove $R^\psi \leq v$, we observe that $v \in \cS_{\cA}(\Omega)$ by Lemma~\ref{lem-supersol-cS}. Moreover, since $v \geq \psi$ a.e.\ in $\Omega$, it follows from the continuity of $v$ and $\psi$ that $v \geq \psi$ everywhere in $\Omega$. This implies $v \in \Phi^\psi$. Therefore, $R^\psi \leq v$ in $\Omega$.

To prove $v \leq R^\psi$, we fix $w \in \Phi^\psi$. By a similar argument used above, we can deduce $h_i \leq w$ a.e.\ in $\Omega_i$, and by zero extension, $h_i \leq w$ a.e.\ in $\Omega$. Taking the pointwise limit $i \to \infty$ gives $h \leq w$ a.e.\ in $\Omega$. Since $h\equiv v$, we obtain $v \leq w$ a.e.\ in $\Omega$. 
Since $v$ is continuous and $w$ is lower semicontinuous, we have $v \leq w$ everywhere in $\Omega$. Taking the pointwise infimum over $w \in \Phi^\psi$ yields $v \leq R^\psi$ everywhere in $\Omega$. Therefore, $R^\psi \equiv v$.

Finally, we observe that, since $v \leq R^\psi$ everywhere, $\hat{v} \leq \hat{R}^\psi$ everywhere in $\Omega$. Therefore, we arrive at
\begin{equation*}
v = \hat{v} \leq \hat{R}^\psi \leq R^\psi \leq v \quad \text{everywhere in } \Omega.
\end{equation*}
Consequently, $R^\psi \equiv \hat{R}^\psi \equiv v$, completing the proof.
\end{proof}

Next, we prove that the balayage $\hat{R}^u_U$ equals the r\'eduite $R^u_U$ and can be approximated by solutions to obstacle problems with smooth obstacles. Note that we do not need the boundedness of $u$ in the following lemma. We refer the reader to Lemma~\ref{lem-bal-cpt} for the analogous approximation of $R^1_K$ when $K$ is a compact set.

\begin{lemma}\label{lem-bal-open}
Suppose that $u \in \cS_{\cA}(\Omega)$ is nonnegative. Let $U \Subset \Omega$ be an open set. If $\{\psi_i\} \subset C^\infty_c(U)$ is a sequence of functions such that $\psi_i \nearrow u\chi_U$ pointwise in $\Omega$ as $i \to \infty$, then $\hat{R}^u_U=R^u_U=\lim_{i \to \infty} R^{\psi_i}$ pointwise in $\Omega$.
\end{lemma}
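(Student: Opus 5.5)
Set $v_i := R^{\psi_i}$. By Lemma~\ref{lem-R-obs}, each $v_i$ is continuous, $\cA$-superharmonic, and solves the $\mathcal{K}_{\psi_i,0}(\Omega)$-obstacle problem, with $R^{\psi_i} = \hat{R}^{\psi_i}$. Since $\psi_i \le \psi_{i+1}$, we have $\Phi^{\psi_{i+1}} \subset \Phi^{\psi_i}$, so $\{v_i\}$ is nondecreasing. Define $v := \lim_i v_i$. The plan is to prove the chain
\[
R^u_U \le v \le \hat{R}^u_U \le R^u_U .
\]
This gives all three equalities at once. The last inequality holds by definition of the lsc-regularization.

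\emph{Step 1: $v \le \hat{R}^u_U$.} Since $\psi_i \le u\chi_U$, every $w \in \Phi^u_U$ lies in $\Phi^{\psi_i}$, so $v_i \le w$ and hence $v_i \le R^u_U$. Note that $\Phi^u_U \neq \emptyset$ because $u$ itself belongs to it (here $u \ge 0$ is used). Each $v_i$ is continuous, so in particular lower semicontinuous, and $\hat{R}^u_U$ is the greatest lsc minorant of $R^u_U$; therefore $v_i \le \hat{R}^u_U$. Letting $i \to \infty$ gives $v \le \hat{R}^u_U$.

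\emph{Step 2: $R^u_U \le v$.} By Proposition~\ref{prop-conv-super}, applied on each component of $\Omega$, $v$ is either $\cA$-superharmonic or identically $\infty$ there. The second alternative is excluded because $v \le \hat{R}^u_U \le u$ and $u \not\equiv \infty$ on each component. So $v \in \cS_{\cA}(\Omega)$. It remains to check $v \ge u\chi_U$ pointwise. Outside $U$ this follows from $v_i \ge \psi_i \ge 0$. On $U$, for each $x$ we have $v(x) \ge v_i(x) \ge \psi_i(x) \nearrow u(x)$. Here $v_i \ge \psi_i$ holds everywhere: it holds a.e.\ from the obstacle problem, and everywhere by continuity, exactly as in the proof of Lemma~\ref{lem-R-obs}. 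Alternatively, it follows directly from $v_i = R^{\psi_i} = \inf \Phi^{\psi_i} \ge \psi_i$. Hence $v \in \Phi^u_U$, and so $R^u_U \le v$.

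The main subtlety is in Step 2. Pointwise convergence $\psi_i \nearrow u\chi_U$ is used at every point, including points where $u = \infty$. The superharmonicity of the limit needs Proposition~\ref{prop-conv-super}, and that proposition does not assume finiteness, as the paper notes. Everything else is monotonicity of the réduite in the obstacle, and boundedness of $u$ is never needed. One should also confirm that $\psi_i \in C^\infty_c(U) \subset C^\infty_c(\Omega)$ is nonnegative, as Lemma~\ref{lem-R-obs} requires. If the given $\psi_i$ were not assumed nonnegative, we would replace them by $(\psi_i)_+$ or note that $\psi_i \ge \psi_1$. In fact $R^{\psi} = R^{\psi_+}$ on the relevant set, since $\cA$-superharmonic majorants of $\psi$ can be replaced by their minimum with a nonnegative majorant. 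In the intended application the $\psi_i$ are nonnegative, as they increase to $u\chi_U \ge 0$ from a nonnegative starting term.
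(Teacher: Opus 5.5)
Your proof is correct and is essentially the paper's argument. Both get $v\le R^u_U$ from monotonicity of the r\'eduite and $v\ge R^u_U$ from Proposition~\ref{prop-conv-super} together with $v\ge\psi_i$, and your greatest-lsc-minorant step is the same observation as the paper's remark that $R^u_U=v$ is a supremum of continuous functions and hence lower semicontinuous.

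You also make explicit why $v\not\equiv\infty$ (via $v\le \hat R^u_U\le u$), which the paper leaves implicit. Your side remark that $R^{\psi}=R^{\psi_+}$ is not needed for the argument. Its stated justification is loose, however: the correct reason is that every $w\in\Phi^{\psi}$ satisfies $w\ge 0$, by comparison with the $\cA$-harmonic constant $0$ on a neighborhood of $\supp\psi$.
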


\begin{proof}
By the definition of the r\'eduite, the condition $\psi_i \leq \psi_{i+1} \leq u\chi_U$ implies that $\{R^{\psi_i}\}$ is an increasing sequence bounded above by $R^u_U$. Let $v:= \lim_{i \to \infty} R^{\psi_i}$. It is clear that $v \leq R^u_U$ everywhere in $\Omega$.

To prove the reverse inequality, note that each $R^{\psi_i}=\hat{R}^{\psi_i}$ is a continuous $\cA$-superharmonic function in $\Omega$ by virtue of Lemmas~\ref{lem-supersol-cS}, \ref{lem-R-obs}, and Proposition~\ref{prop-known}. By Proposition~\ref{prop-conv-super}, their increasing limit $v$ is also an $\cA$-superharmonic function in $\Omega$. Since $v \geq R^{\psi_i} \geq \psi_i$ for all $i$, we have $v \geq u\chi_U$ in $\Omega$, which implies that $v \in \Phi^u_U$. Consequently, we obtain $v \geq R^u_U$.

Finally, since $R^u_U$ is the supremum of a sequence of continuous functions, it is lower semicontinuous in $\Omega$. This immediately yields $\hat{R}^u_U=R^u_U$, completing the proof.
\end{proof}

We are in a position to prove the main theorem of this section.

\begin{proof}[Proof of Theorem~\ref{thm-obs-open}]
We may assume that $u \geq 0$. Let $\{\psi_i\} \subset C^\infty_c(U)$ be a sequence of functions such that $0\leq \psi_i \nearrow u\chi_U$ pointwise in $\Omega$ as $i \to \infty$. Then, Lemma~\ref{lem-R-obs} shows that each $v_i := R^{\psi_i} = \hat{R}^{\psi_i}$ is a continuous solution to the $\mathcal{K}_{\psi_i, 0}(\Omega)$-obstacle problem. Moreover, by Lemma~\ref{lem-bal-open},
\begin{equation*}
v:= \hat{R}^{u}_U = R^{u}_U = \lim_{i\to\infty} v_i \quad\text{pointwise in }\Omega.
\end{equation*}

Take a cutoff function $\eta \in C^\infty_c(\Omega)$ such that $0 \leq \eta \leq 1$ in $\Omega$ and $\eta=1$ on $U$. Define $\psi= u \eta$. By Lemma~\ref{lem-cS-supersol}, the boundedness of $u$ ensures that $u \in W^{1,G}_{\mathrm{loc}}(\Omega)$, and hence $\psi \in W^{1,G}_0(\Omega)$. Since $\psi_i \leq u\chi_U \leq u\eta=\psi$ everywhere in $\Omega$, it follows that $\psi \in \mathcal{K}_{\psi_i, 0}(\Omega)$. Using $\psi$ as a test function in the variational inequality \eqref{eq-obs} for $v_i$, and proceeding exactly as in the derivation of \eqref{eq-energy-bound-i} in Lemma~\ref{lem-R-obs}, we obtain
\begin{equation*}
\int_{\Omega} G(|\nabla v_i|) \dx \leq C \int_{\Omega} G(|\nabla \psi|) \dx.
\end{equation*}
This uniform energy bound implies that the sequence $\{v_i\}$ is bounded in $W^{1,G}_0(\Omega)$. Therefore, $v_i$ converges weakly to $v$ in $W^{1,G}_0(\Omega)$, which ensures $v \in W^{1,G}_0(\Omega)$. Also, $v=\lim_{i \to \infty}v_i \geq \lim_{i \to \infty} \psi_i=u\chi_U$ in $\Omega$. Thus, it suffices to prove that $v$ satisfies the variational inequality \eqref{eq-obs}.

Let $\varphi \in \mathcal{K}_{u\chi_U, 0}(\Omega)$. Then $\varphi \geq u\chi_U \geq \psi_i$, which means $\varphi \in \mathcal{K}_{\psi_i, 0}(\Omega)$. Thus, the variational inequality \eqref{eq-obs} for $v_i$ yields
\begin{equation*}
\int_{\Omega} \cA(x,\nabla v_i) \cdot \nabla (\varphi - v_i) \dx \geq 0.
\end{equation*}
By the monotonicity of the operator $\cA(x, \cdot)$, we have
\begin{equation*}
\int_{\Omega} \cA(x, \nabla \varphi) \cdot \nabla (\varphi - v_i) \dx \geq \int_{\Omega} \cA(x, \nabla v_i) \cdot \nabla (\varphi - v_i) \dx \geq 0.
\end{equation*}
Passing to the weak limit as $i \to \infty$, we obtain
\begin{equation}\label{eq-var-v}
\int_\Omega \cA(x, \nabla \varphi) \cdot \nabla (\varphi - v) \dx \geq 0 \quad \text{for all } \varphi \in \mathcal{K}_{u\chi_U, 0}(\Omega).
\end{equation}
Arguing in exactly the same way as in the proof of Lemma~\ref{lem-R-obs}, we can derive  the variational inequality for $v$ from \eqref{eq-var-v}, as desired.
\end{proof}

For unbounded $u \in \cS_{\cA}(\Omega)$, the balayage $\hat{R}^u_U$ may not be the solution to the $\mathcal{K}_{u\chi_U, 0}(\Omega)$-obstacle problem in general, as it does not necessarily belong to $W^{1,G}_0(\Omega)$. However, its truncations do belong to $W^{1,G}_0(\Omega)$.

\begin{lemma}\label{lem-R-boundary}
Suppose $u\in \cS_{\cA}(\Omega)$ is nonnegative and $U\Subset\Omega$ is an open set. Then $T_\ell(\hat{R}^u_U) \in W_0^{1,G}(\Omega)$ for all $\ell>0$.
\end{lemma}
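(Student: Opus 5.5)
The plan is to reduce to the bounded case of Theorem~\ref{thm-obs-open} by truncating $u$ rather than the balayage. Fix $\ell>0$ and set $u_\ell:=\min\{u,\ell\}$. As noted after Definition~\ref{def:Ash}, $u_\ell$ is $\cA$-superharmonic; it is also nonnegative and bounded. By Theorem~\ref{thm-obs-open}, $\hat R^{u_\ell}_U=R^{u_\ell}_U$ is the solution of the $\mathcal K_{u_\ell\chi_U,0}(\Omega)$-obstacle problem, and in particular lies in $W^{1,G}_0(\Omega)$. The whole argument therefore comes down to the identity
\begin{equation*}
T_\ell(\hat R^u_U)=\hat R^{u_\ell}_U .
\end{equation*}

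For the inequality $\hat R^{u_\ell}_U\le T_\ell(\hat R^u_U)$, take any $v\in\Phi^u_U$. Then $\min\{v,\ell\}$ is $\cA$-superharmonic and satisfies $\min\{v,\ell\}\ge \min\{u,\ell\}\chi_U=u_\ell\chi_U$, so it belongs to $\Phi^{u_\ell}_U$. Taking the infimum over $v$ gives $R^{u_\ell}_U\le \min\{R^u_U,\ell\}$. By Lemma~\ref{lem-bal-open}, $R^u_U=\hat R^u_U$ and $R^{u_\ell}_U=\hat R^{u_\ell}_U$, which yields $\hat R^{u_\ell}_U\le T_\ell(\hat R^u_U)$; note that both sides are nonnegative, so $T_\ell$ acts only as $\min\{\cdot,\ell\}$.

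For the reverse inequality, set $w:=\hat R^{u_\ell}_U$. On $U$ we have $w\ge u_\ell$, so $w\ge u$ on $U\cap\{u\le\ell\}$ and $w\ge \ell$ on $U\cap\{u>\ell\}$. The natural candidate is
\begin{equation*}
\tilde v:=\begin{cases} \min\{\hat R^u_U,\ w\} & \text{if } w<\ell,\\ \hat R^u_U &\text{if } w\ge \ell,\end{cases}
\end{equation*}
but this pasting is not obviously superharmonic, so I would instead use the approximation of Lemma~\ref{lem-bal-open}. Choose $\psi_i\in C^\infty_c(U)$ with $\psi_i\nearrow u\chi_U$. Then $\min\{\psi_i,\ell\}$ is Lipschitz with compact support in $U$, increases to $u_\ell\chi_U$, and $\min\{R^{\psi_i},\ell\}$ is continuous. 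I would show $R^{\min\{\psi_i,\ell\}}=\min\{R^{\psi_i},\ell\}$ using the obstacle characterisation. The inequality $\le$ follows from the first step. For $\ge$, note that $V:=\{R^{\min\{\psi_i,\ell\}}<\ell\}$ is open. In $V$ the function $R^{\min\{\psi_i,\ell\}}$ stays above $\psi_i$ wherever the obstacle is active, since there the obstacle is below $\ell$, so in $V$ it is a superharmonic majorant of $\psi_i$. A comparison on $V$ should then give $R^{\psi_i}\le R^{\min\{\psi_i,\ell\}}$ in $V$, using that on $\partial V$ the latter equals $\ell\ge\min\{R^{\psi_i},\ell\}$, together with $R^{\psi_i}$ being the smallest supersolution above $\psi_i$ in $V$. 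Lemma~\ref{lem-R-obs} extends verbatim to Lipschitz compactly supported obstacles via Proposition~\ref{prop-known} and Theorem~\ref{thm-bdry-reg}. Letting $i\to\infty$, Lemma~\ref{lem-bal-open} applied to both $u$ and $u_\ell$ then gives $\hat R^{u_\ell}_U=\min\{\hat R^u_U,\ell\}$.

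An alternative route avoids this identity and needs only the inequality from the first step. Set $v_i:=\min\{R^{\psi_i},\ell\}$. Each $R^{\psi_i}$ solves the $\mathcal K_{\psi_i,0}(\Omega)$-obstacle problem, hence lies in $W^{1,G}_0(\Omega)$, and therefore $v_i\in W^{1,G}_0(\Omega)$. The sequence increases pointwise to $T_\ell(\hat R^u_U)$. It then suffices to establish a uniform bound $\int_\Omega G(|\nabla v_i|)\dx\le C(\ell)$. To get this, test the obstacle inequality for $R^{\psi_i}$ with $\varphi=R^{\psi_i}-(R^{\psi_i}-\ell)_+ + \ell\eta\ldots$, which is awkward. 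More cleanly, use $\varphi:=\max\{\min\{R^{\psi_i},\ell\},\ \ell\eta\}$ with $\eta$ a cutoff equal to $1$ on $\overline U$, and handle the region $\{R^{\psi_i}>\ell\}$ separately. This estimate is the main obstacle, and it is where I expect the real work: controlling $\int_{\{R^{\psi_i}<\ell\}}G(|\nabla R^{\psi_i}|)$ by $\int_\Omega G(\ell|\nabla\eta|)$ via Lemma~\ref{lem-barg}(iii) and absorption, exactly as in \eqref{eq-energy-bound-i}. Given the bound, weak compactness in $W^{1,G}_0(\Omega)$ together with the pointwise convergence concludes the proof.
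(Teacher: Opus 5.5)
There is a genuine gap: your main route rests on a false identity, and your alternative route targets an estimate that cannot hold.

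\textbf{The main route.} The identity $T_\ell(\hat R^u_U)=\hat R^{u_\ell}_U$ is false; only your first inequality $\hat R^{u_\ell}_U\le T_\ell(\hat R^u_U)$ is true. Take the Laplacian ($G(t)=t^2/2$), $\Omega=B_3$, $U=B_1$ and $u\equiv N>\ell$. By linearity $\hat R^u_U=Nh$ and $\hat R^{u_\ell}_U=\ell h$, where $h=\hat R^1_U$ is the radial capacitary potential, and $0<h<1$ in $B_3\setminus\overline B_1$. At every point of this annulus, $\min\{Nh,\ell\}\neq \ell h$.

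Structurally, $\min\{\hat R^u_U,\ell\}$ has a kink along $\{\hat R^u_U=\ell\}\setminus\overline U$, whereas $\hat R^{u_\ell}_U$ is $\cA$-harmonic in $\Omega\setminus\overline U$. Your comparison on $V$ breaks exactly here. Bounding $R^{\psi_i}$ by $R^{\min\{\psi_i,\ell\}}$ on $V$ would need $R^{\psi_i}\le \ell$ on $\partial V$, which fails. On the other hand, $\min\{R^{\psi_i},\ell\}$ is only a supersolution, so comparison cannot place it below another supersolution. Hence $R^{\min\{\psi_i,\ell\}}=\min\{R^{\psi_i},\ell\}$ is false as well.

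\textbf{The alternative route.} Its skeleton is right and is essentially the paper's; the paper uses $\hat R^{\min\{u,k\}}_U$ and Theorem~\ref{thm-obs-open} instead of smooth obstacles. However, the key estimate is misidentified. A bound $\int_{\{R^{\psi_i}<\ell\}}G(|\nabla R^{\psi_i}|)\dx\le C\int_\Omega G(\ell|\nabla\eta|)\dx$ obtained by absorption as in \eqref{eq-energy-bound-i} cannot hold with $C$ independent of $u$. In the example above with $\psi_i\nearrow N\chi_U$, weak lower semicontinuity would force $\int_\Omega|\nabla \min\{Nh,\ell\}|^2\dx\le C$. But this energy equals $N\ell\int_\Omega|\nabla h|^2\dx$, which is unbounded in $N$.

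Two further problems with the proposed test function:
\begin{itemize}
\item $\max\{\min\{R^{\psi_i},\ell\},\ell\eta\}$ is not admissible once $\psi_i>\ell$ somewhere.
\item The part of $\supp\nabla\eta$ where $R^{\psi_i}\ge\ell$ can never be absorbed.
\end{itemize}

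The missing idea is a $u$-dependent local bound away from $\overline U$. Since $0\le R^{\psi_i}\le \hat R^u_U$, and $\hat R^u_U$ is $\cA$-harmonic and hence bounded on compact subsets of $\Omega\setminus\overline U$ (Proposition~\ref{prop-bal-u-E}), Lemma~\ref{lem-uniform-bound} bounds $\cA(x,\nabla R^{\psi_i})$ in $L^{G^*}(\supp\nabla\eta)$ uniformly in $i$, provided $\eta=1$ near $\overline U$. Test the variational inequality with the admissible competitor $(R^{\psi_i}-\ell)_++\ell\eta\in\mathcal K_{\psi_i,0}(\Omega)$. This gives
\[
c_1^{\cA}\int_\Omega G(|\nabla v_i|)\dx\le \ell\int_\Omega \cA(x,\nabla R^{\psi_i})\cdot\nabla\eta\dx\le C,
\]
which closes your argument.

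The paper uses the same ingredient but splits the domain. On $\supp\eta$ it applies Lemma~\ref{lem-uniform-bound} to the bounded supersolutions $T_\ell(v_k)$. On the complement it tests the $\cA$-harmonic equation with $(1-\eta)T_\ell(v_k)$, using $v_k\le \hat R^u_U\le M$ on a compact set off $\overline U$.
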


\begin{proof}
For each integer $k>0$, we define $u_k:=\min\{u, k\} \in \cS_{\cA}(\Omega)$. Let $v_k:=\hat{R}^{u_k}_U$ and $v:= \hat{R}^u_U$. Since $u_k$ is bounded, Theorem~\ref{thm-obs-open} shows that $v_k$ is the solution to the $\mathcal{K}_{u_k \chi_U, 0}(\Omega)$-obstacle problem. In particular, $v_k \in W^{1,G}_0(\Omega)$, and hence $T_\ell(v_k) \in W^{1,G}_0(\Omega)$ for all $\ell>0$.

We claim that the functions $T_\ell(v_k)$ satisfy the uniform energy bound
\begin{equation}\label{eq-energy-bd-vk}
\int_{\Omega} G(|\nabla T_\ell(v_k)|) \dx \leq C
\end{equation}
for some $C>0$ independent of $k$. Indeed, if we take $\eta \in C^\infty_c(\Omega)$ such that $\eta=1$ in a neighborhood $V$ of $\overline{U}$ and $0 \leq \eta \leq 1$ in $\Omega$, then $\varphi_k:= (1-\eta)T_\ell(v_k) \in W^{1,G}_0(\Omega \setminus \overline{U})$. Since $v_k$ is $\cA$-harmonic in $\Omega \setminus \overline{U}$ by Proposition~\ref{prop-bal-u-E}, we have
\begin{align*}
\int_{\Omega \setminus \overline{U}} (1-\eta) \cA(x, \nabla v_k) \cdot \nabla T_\ell(v_k) \dx
&= \int_{\Omega \setminus \overline{U}} T_\ell(v_k) \cA(x, \nabla v_k) \cdot \nabla \eta \dx \\
&\leq \ell \|\nabla \eta\|_{\infty} \int_{\supp \eta \setminus V} |\cA(x, \nabla v_k)| \dx.
\end{align*}

Note that $\supp\eta \setminus V$ is a compact subset of $\Omega \setminus \overline{U}$. Let $W$ be any open set such that $\supp\eta\setminus V \Subset W \Subset \Omega \setminus \overline{U}$. Since $v$ is continuous in $\Omega \setminus \overline{U}$, we have $M:= \max_{\overline{W}}v < \infty$. In particular, $v_k \leq v \leq M$ in $W$ for all $k$, i.e.,\ $\{v_k\}$ is a sequence of uniformly bounded $\cA$-harmonic functions in $W$. It thus follows from Lemma~\ref{lem-uniform-bound} that $\{\cA(x, \nabla v_k)\}$ is uniformly bounded in $L^{G^{\ast}}(\supp \eta \setminus V)$. This implies that
\begin{equation}\label{eq-energy-bd-vk1}
c_1^{\cA} \int_{\Omega \setminus \supp\eta} G(|\nabla T_\ell(v_k)|) \dx \leq \int_{\Omega \setminus \overline{U}} (1-\eta) \cA(x, \nabla v_k) \cdot \nabla T_\ell(v_k) \dx \leq C
\end{equation}
for some $C>0$ independent of $k$. On the other hand, since $T_\ell(v_k)$ is bounded by $\ell$, Lemma~\ref{lem-cS-supersol} shows that it is an $\cA$-supersolution of \eqref{eq-sec2-new}. Thus, again by Lemma~\ref{lem-uniform-bound}, $\{\nabla T_\ell(v_k)\}_k$ is uniformly bounded in $L^G_{\mathrm{loc}}(\Omega)$. In particular,
\begin{equation}\label{eq-energy-bd-vk2}
\int_{\supp \eta} G(|\nabla T_\ell(v_k)|) \dx \leq C
\end{equation}
with $C>0$ independent of $k$. The claim \eqref{eq-energy-bd-vk} now follows from \eqref{eq-energy-bd-vk1} and \eqref{eq-energy-bd-vk2}.

Next, we prove that $v_k \nearrow v$. It is clear by definition that $v_k \leq v_{k+1} \leq v$ for all $k$. Let $w:= \lim_{k \to \infty}v_k$, then $w \leq v$. For the reverse inequality, we observe that $w$ is $\cA$-superharmonic in $\Omega$ by Proposition~\ref{prop-conv-super}. Moreover, since $v_k \geq u_k$ in $U$ for all $k$ by Proposition~\ref{prop-bal-u-E}, taking the limit $k \to \infty$ yields $w \geq u$ in $U$. This implies that $v \leq R^u_U \leq w$. Consequently, $w\equiv v$, and therefore $v_k \nearrow v$ pointwise in $\Omega$.

The uniform bound \eqref{eq-energy-bd-vk} implies that $\{T_\ell(v_k)\}$ is bounded in $W^{1,G}_0(\Omega)$. Since $v_k\nearrow v$ pointwise, we have $T_\ell(v_k)\to T_\ell(v)$ pointwise in $\Omega$. Passing to a weakly convergent subsequence in $W^{1,G}_0(\Omega)$, the weak limit must coincide with $T_\ell(v)$. Hence $T_\ell(v)\in W^{1,G}_0(\Omega)$.
\end{proof}

\subsection{Riesz measures of \texorpdfstring{$\cA$}{A}-superharmonic functions}\label{sec4-5}

An $\cA$-superharmonic function is defined by lower semicontinuity and comparison principles, and therefore no a priori Sobolev regularity or integrability is available from the definition. In particular, its distributional gradient may not be a well-defined function. To handle this, we use the generalized gradient introduced in \eqref{eq-generalized-grad}, as we did in the definition of renormalized solution. If $u \in \cS_{\cA}(\Omega)$, then $\min\{u, k\} \in W^{1,G}_{\rm loc}(\Omega)$ for every $k>0$ by Lemma~\ref{lem-cS-supersol}. Hence $T_k(u)\in W^{1,G}_{\rm loc}(\Omega)$. Therefore, $\nabla u$ can be understood as the generalized gradient $Z_u$.

The next lemma provides the integrability and Sobolev regularity of $\cA$-superharmonic functions.

\begin{lemma}\label{lem-super-int}
If $u\in\cS_{\cA}(\Omega)$, then $g(|\nabla u|)\in L^1_{\rm loc}(\Omega)$ and $g(|u|)\in L^m_{\rm loc}(\Omega)$ for any $m<\frac{n}{n-1}$.
\end{lemma}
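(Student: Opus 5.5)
The plan is to reduce to a balayage in a small neighbourhood, obtain uniform truncation-energy bounds of the form $\int G(|\nabla T_k v|)\le Ck$ with $C$ independent of $k$, and then invoke the Marcinkiewicz argument already used in Lemma~\ref{lem-int-rem}. Both statements are local, so I fix a ball $B$ with $2B\Subset\Omega$. Since $u$ is lower semicontinuous, it is bounded below on $\overline{2B}$. Adding a constant does not affect superharmonicity, because $\cA$-harmonic functions are invariant under addition of constants. I may therefore assume $u\ge 0$ on $2B$ and work in $\Omega$ replaced by $2B$. I set $v:=\hat R^u_B(2B)$. By Proposition~\ref{prop-bal-u-E}, $v=u$ in $B$, and $v$ is $\cA$-harmonic in $2B\setminus\overline B$. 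By Lemma~\ref{lem-R-boundary}, $T_k(v)\in W^{1,G}_0(2B)$ for every $k$. On $B$ the generalized gradients of $u$ and $v$ agree. It therefore suffices to prove $g(|\nabla v|)\in L^1(B)$ and $g(v)\in L^m(B)$.

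\textbf{Step 1: energy bound for truncations.} I want the estimate $\int_{2B}G(|\nabla T_k v|)\dx\le Ck$ with $C$ independent of $k$. I approximate by $v_j:=\hat R^{u_j}_B(2B)$ with $u_j=\min\{u,j\}$. As shown in the proof of Lemma~\ref{lem-R-boundary}, these are bounded solutions of obstacle problems in $W^{1,G}_0(2B)$ and increase to $v$. Each $v_j$ is an $\cA$-supersolution with Riesz measure $\mu_j:=\mu[v_j]$ supported in $\overline B$. Testing with $T_k(v_j)\in W^{1,G}_0(2B)$ gives
\[
c_1^\cA\int G(|\nabla T_kv_j|)\le \int T_k(v_j)\,d\mu_j\le k\,\mu_j(\overline B).
\]
It remains to bound $\mu_j(\overline B)$ uniformly in $j$. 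Fix $\eta\in C_c^\infty(2B)$ with $\eta=1$ near $\overline B$. Then $\mu_j(\overline B)\le\int\cA(x,\nabla v_j)\cdot\nabla\eta\dx$, and this integral only involves $\supp\nabla\eta\Subset 2B\setminus\overline B$. On a compact neighbourhood of that region, $v_j\le v$, and $v$ is continuous, hence bounded, there. Lemma~\ref{lem-uniform-bound} then bounds $\cA(x,\nabla v_j)$ in $L^{G^*}$ uniformly, so $\mu_j(\overline B)\le C$. By weak lower semicontinuity as $j\to\infty$, using $T_k v_j\to T_k v$ pointwise with bounded energy, I obtain $\int_{2B}G(|\nabla T_kv|)\le Ck$.

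\textbf{Step 2: Marcinkiewicz estimates.} The bound from Step 1 is exactly \eqref{eq-trunc-energy} from the proof of Lemma~\ref{lem-int-rem}, now with zero boundary values on $2B$. The same Sobolev/Marcinkiewicz argument (as in \cite{CM17,CGZG19}) gives $g(v)\in L^{\frac n{n-1},\infty}(2B)$, hence $g(v)\in L^m$ for all $m<\frac n{n-1}$. For the gradient, the standard companion estimate applies: for $s>0$, the level set $\{|\nabla v|>s\}$ is contained in $\{v>k\}\cup\{|\nabla T_kv|>s\}$. Its measure is bounded by $|\{v>k\}|+Ck/G(s)$, and optimizing in $k$ yields $g(|\nabla v|)$ in a weak Lebesgue space of exponent $\frac{n}{n-1}$ (or at least above $1$). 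Hence $g(|\nabla v|)\in L^1(B)$, and this transfers to $u$ on $B$.

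\textbf{Main obstacle.} I expect the delicate point to be the uniform bound on $\mu_j(\overline B)$, together with justifying the testing on $T_k v_j$. The first requires that $v$ is finite, and hence locally bounded, on the annulus. That comes from the $\cA$-harmonicity in Proposition~\ref{prop-bal-u-E}, and I must make sure $v\not\equiv\infty$ there. This holds because $v\le u$ and $u$ is finite on a dense set by Proposition~\ref{prop-dense}, and a harmonic limit is either finite or identically infinite. The second point is the gradient Marcinkiewicz step in Orlicz form, which I will carry out as in \cite[Proposition~6.3]{CGZG19}.
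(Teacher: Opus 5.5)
Your proposal is correct. Its architecture matches the paper's: localize, replace $u$ by the balayage $v=\hat R^u_B(2B)$, prove a truncation energy bound linear in $k$, then invoke the Cianchi--Maz'ya level-set argument. The difference is in how the linear bound is obtained.

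The paper works with $v$ directly. It uses only that $T_k(v)\in W^{1,G}_0$ (Lemma~\ref{lem-R-boundary}) and that $T_{i+1}(v)$ is a supersolution. Testing with the tent functions $(1-|v-i|)_+$ shows that the band energies $a_i=\int_{\{i-1\le v<i\}}\cA(x,\nabla v)\cdot\nabla v\dx$ are nonincreasing in $i$. Hence $c_1^\cA\int_{\{v<k\}}G(|\nabla v|)\dx\le k a_1$, and $a_1<\infty$ because $T_1(v)\in W^{1,G}_0$. This needs no approximation, no limit passage, and nothing about $v$ in the annulus beyond what Lemma~\ref{lem-R-boundary} already encodes.

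You instead return to the bounded balayages $v_j$. You bound $\int\cA(x,\nabla v_j)\cdot\nabla T_k(v_j)\dx$ by $k\,\mu_j(\overline B)$, and control $\mu_j(\overline B)$ uniformly by the flux through the annulus, where the $v_j$ are uniformly bounded $\cA$-harmonic functions (Lemma~\ref{lem-uniform-bound}). You then conclude by weak lower semicontinuity. This essentially re-runs the proof of Lemma~\ref{lem-R-boundary} while tracking the dependence on $k$. It is longer, but it makes the constant explicit as a uniform bound on the Riesz masses of the approximating balayages, which is the quantity behind the capacitary estimates of Section~\ref{sec4-6}.

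Three small remarks.
\begin{itemize}
\item The inequality $\int\cA(x,\nabla v_j)\cdot\nabla T_k(v_j)\dx\le k\,\mu_j(\overline B)$ is most cleanly justified by writing $k\eta-T_k(v_j)=\eta(k-T_k(v_j))-(1-\eta)T_k(v_j)$. Use the supersolution property for the first term and $\cA$-harmonicity in $2B\setminus\overline B$ for the second (note $(1-\eta)T_k(v_j)\in W^{1,G}_0(2B\setminus\overline B)$). This avoids discussing which representative of $T_k(v_j)$ is integrated against $\mu_j$.
\item The finiteness of $v$ in the annulus, which you list as the main obstacle, is already part of Proposition~\ref{prop-bal-u-E}, since $\cA$-harmonic functions are finite and continuous.
\item In Step~2, the optimization in $k$ must be carried out with Orlicz quantities, as in \cite{CM17}. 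For example, use $G(s)=s\bar g(s)$ together with an Orlicz--Sobolev decay estimate for $|\{v>k\}|$. Using only the power bounds $G(s)\gtrsim s^p$ and $g(s)\lesssim s^{q-1}$ would give $g(|\nabla v|)\in L^1$ only under an extra restriction relating $p$, $q$, and $n$.
\end{itemize}
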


\begin{proof}
Let $U, V$ be open sets such that $U \Subset V \Subset \Omega$. Since $u$ is locally bounded below in $\Omega$, we may assume without loss of generality that $u \geq 0$ in $V$. By Lemma~\ref{lem-R-boundary}, the balayage $v:= \hat{R}^u_U(V)$ satisfies $v_k:= T_{k}(v) \in W^{1,G}_0(V)$ for all $k > 0$.

We set, for any positive integer $i$,
\begin{equation*}
a_i := \int_{V \cap \{i-1 \leq v<i\}} \cA(x, \nabla v) \cdot \nabla v \dx.
\end{equation*}
Since $v_{i+1} \in W^{1,G}_0(V)$ is an $\cA$-supersolution in $V$ and $\varphi_i:=(1-|v-i|)_+ \in W^{1,G}_0(V)$, we have
\begin{align*}
0 &\leq \int_{V} \cA(x, \nabla v_{i+1}) \cdot \nabla \varphi_i \dx \\
&= \int_{V \cap \{i-1 \leq v < i\}} \cA(x, \nabla v) \cdot \nabla v \dx - \int_{V \cap \{i \leq v < i+1\}} \cA(x, \nabla v) \cdot \nabla v \dx \\
&= a_i - a_{i+1},
\end{align*}
namely, the sequence $\{a_i\}$ is decreasing. By using the structural assumption \eqref{ass-op}, we have
\begin{equation*}
c_1^{\cA} \int_{V \cap \{v<k\}} G(|\nabla v|) \dx \leq \sum_{i=1}^k a_i \leq k a_1.
\end{equation*}
Since $v=u$ in $U$ by Proposition~\ref{prop-bal-u-E}, this proves
\begin{equation}\label{eq-grad-k}
\int_{U \cap \{u<k\}} G(|\nabla u|)\dx \leq \int_{V \cap \{v<k\}} G(|\nabla v|) \dx \leq Ck \quad\text{for all integers }k \geq 1.
\end{equation}
The standard level-set argument used in Lemma~\ref{lem-int-rem} yields $g(|\nabla u|)\in L^1_{\rm loc}(\Omega)$ and $g(|u|)\in L^m(U)$ for all $m<\frac{n}{n-1}$; see \cite[Lemmas~4.1, 4.5, and Remark~4.3]{CM17}.
\end{proof}

Lemma~\ref{lem-super-int} allows us to associate a Riesz measure with an $\cA$-superharmonic function.

\begin{proposition}
    \label{prop-Riesz}
Let $u$ be an $\cA$-superharmonic function in $\Omega$. Then there exists a unique nonnegative Radon measure $\mu[u]$ satisfying \eqref{eq:main} in the sense of distributions, that is,
\begin{equation}\label{eq-riesz}
\int_\Omega \cA(x,\nabla u) \cdot \nabla \varphi \dx=\int_\Omega \varphi \dmu[u] \quad\text{for all }\varphi\in C^\infty_c(\Omega).
\end{equation}
The measure $\mu[u]$ is called the \emph{Riesz measure} of $u$.
\end{proposition}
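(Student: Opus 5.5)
The plan is to approximate $u$ by its truncations, which are genuine $\cA$-supersolutions, and pass to the limit in their Riesz measures using the integrability from Lemma~\ref{lem-super-int}.

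For $k\in\mathbb N$ set $u_k:=\min\{u,k\}$. Then $u_k\in\cS_{\cA}(\Omega)$ is locally bounded above, so by Lemma~\ref{lem-cS-supersol} it lies in $W^{1,G}_{\rm loc}(\Omega)$ and is an $\cA$-supersolution. The distribution $\varphi\mapsto\int_\Omega\cA(x,\nabla u_k)\cdot\nabla\varphi\dx$ is therefore nonnegative on nonnegative test functions. By the Riesz representation theorem it is given by a nonnegative Radon measure $\mu[u_k]$, as in \eqref{eq-Riesz-supersoln}.

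Next I pass $k\to\infty$ in the left-hand side. Since $\nabla u_k=\chi_{\{u<k\}}\nabla u$ a.e., with $\nabla u$ the generalized gradient from \eqref{eq-generalized-grad}, and $\cA(x,0)=0$ by \eqref{ass-op}, we have $\cA(x,\nabla u_k)=\chi_{\{u<k\}}\cA(x,\nabla u)$ a.e. This is dominated by $c_2^\cA g(|\nabla u|)$, which lies in $L^1_{\rm loc}(\Omega)$ by Lemma~\ref{lem-super-int}. Moreover $\{u=\infty\}$ has Lebesgue measure zero, because $g(|u|)\in L^1_{\rm loc}$ forces $u<\infty$ a.e. Hence $\chi_{\{u<k\}}\to1$ a.e., and dominated convergence gives
\[
\int_\Omega\cA(x,\nabla u_k)\cdot\nabla\varphi\dx\to\int_\Omega\cA(x,\nabla u)\cdot\nabla\varphi\dx\quad\text{for every }\varphi\in C^\infty_c(\Omega).
\]
Define the limit functional $L(\varphi):=\int_\Omega\cA(x,\nabla u)\cdot\nabla\varphi\dx$. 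It is a pointwise limit of nonnegative functionals, so $L(\varphi)\ge0$ whenever $\varphi\ge0$. A positive distribution is represented by a unique nonnegative Radon measure, and this measure is $\mu[u]$, which gives \eqref{eq-riesz}.

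Uniqueness needs no extra work: two Radon measures that agree on $C^\infty_c(\Omega)$ agree on $C_c(\Omega)$ by uniform approximation, and hence coincide. The passage to the limit also yields local boundedness of $\mu[u_k](K)$ uniformly in $k$. Indeed, for a cutoff $\varphi\ge\chi_K$ we have $\mu[u_k](K)\le\int_\Omega c_2^\cA g(|\nabla u|)|\nabla\varphi|\dx$, so $\mu[u_k]\rightharpoonup\mu[u]$ weakly.

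The only genuinely nontrivial input is the local integrability $g(|\nabla u|)\in L^1_{\rm loc}(\Omega)$ of the generalized gradient. This is supplied by Lemma~\ref{lem-super-int} through the balayage energy estimate \eqref{eq-grad-k}, so the step that would otherwise be the main obstacle is already available.
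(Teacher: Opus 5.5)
Your proof is correct, but it establishes positivity of the distribution by a different route from the paper's. The paper fixes $U\Subset\Omega$ containing $\supp\varphi$ and shifts $u$ by a constant to make it nonnegative on $U$. It then invokes an external result, \cite[Proposition~3.14]{CGZG24}, which provides a nonnegative Radon measure for nonnegative $\cA$-superharmonic functions, and deduces $F(\varphi)\ge 0$ from $\nabla(u+C_U)=\nabla u$. You instead derive positivity inside the paper. The truncations $\min\{u,k\}$ are $\cA$-supersolutions by Lemma~\ref{lem-cS-supersol}, and $\cA(x,\nabla\min\{u,k\})=\chi_{\{u<k\}}\cA(x,\nabla u)$ a.e. Dominated convergence, using $g(|\nabla u|)\in L^1_{\rm loc}(\Omega)$ and $u<\infty$ a.e. from Lemma~\ref{lem-super-int}, then transfers nonnegativity to the limit functional.

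What your route buys:
\begin{itemize}
\item It is self-contained given Lemma~\ref{lem-super-int}.
\item It does not require checking that the measure in the cited result is built from the same generalized gradient or under compatible hypotheses.
\item It yields the weak convergence $\mu[\min\{u,k\}]\rightharpoonup\mu[u]$ as a by-product. The paper re-derives exactly this as \eqref{eq-weak-conv} in the proof of Theorem~\ref{thm-sup-ren}, by the same dominated-convergence argument.
\end{itemize}
The paper's version is shorter on the page because it outsources the positivity step. In both approaches the substantive input is the same: the integrability $g(|\nabla u|)\in L^1_{\rm loc}(\Omega)$ from Lemma~\ref{lem-super-int}.
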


\begin{proof}
By Lemma~\ref{lem-super-int}, $u$ is finite a.e. and $g(|\nabla u|)\in L^1_{\rm loc}(\Omega)$. Hence $\cA(x,\nabla u)\in L^1_{\rm loc}(\Omega;\rn)$ by the growth condition \eqref{ass-op}. Thus the functional
\begin{equation*}
F(\varphi):=\int_\Omega \cA(x,\nabla u)\cdot\nabla\varphi\dx, \quad \varphi\in C^\infty_c(\Omega),
\end{equation*}
is well defined.

We claim that $F$ is a positive distribution. Let $\varphi\in C^\infty_c(\Omega)$ be nonnegative, and choose an open set $U\Subset\Omega$ such that $\supp\varphi\Subset U$. Since $u$ is locally bounded below, there exists a constant $C_U>0$ such that $v:=u+C_U \geq 0$ in $U$. Moreover, $v$ is $\cA$-superharmonic in $U$. By \cite[Proposition~3.14]{CGZG24}, there exists a nonnegative Radon measure $\mu_U$ on $U$ such that
\begin{equation*}
\int_U \cA(x,\nabla v)\cdot\nabla\varphi\dx = \int_U \varphi \dmu_U.
\end{equation*}
Since $\nabla v=\nabla u$, we obtain
\begin{equation*}
F(\varphi)=\int_U \varphi \dmu_U \geq 0.
\end{equation*}
Thus $F$ is a positive distribution.

By the Riesz representation theorem for positive distributions, there exists a unique nonnegative Radon measure $\mu[u]$ on $\Omega$ such that \eqref{eq-riesz} holds.
\end{proof}

\subsection{Riesz measure estimates and capacitary potentials}\label{sec4-6}

This section is devoted to the Riesz measure estimates needed for the capacitary analysis of balayage functions. We first compare the total masses of Riesz measures under pointwise ordering, and then identify the $\cA$-potential of a compact set with the capacitary potential.

Throughout this section, $\mu[u]$ denotes the Riesz measure of $u$, either in the sense of Proposition~\ref{prop-Riesz} when $u$ is $\cA$-superharmonic, or in the standard weak sense when $u \in W^{1,G}_{\rm loc}(\Omega)$ is an $\cA$-supersolution.

\begin{lemma}\label{lem-total-mass1}
Assume that $\Omega$ is a bounded domain. Let $v_1, v_2 \in \cS_{\cA}(\Omega) \cap W^{1,G}_0(\Omega) \cap L^\infty(\Omega)$ be such that $0\leq v_1 \leq v_2$ in $\Omega$. Then
\begin{equation*}
\mu[v_1](\Omega) \leq \mu[v_2](\Omega).
\end{equation*}
\end{lemma}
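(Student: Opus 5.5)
The idea is to test the equations for $v_1$ and $v_2$ with a function that equals $1$ on most of the mass and lies in $W^{1,G}_0(\Omega)$, and then to exploit the ordering $v_1\le v_2$ through a boundary-layer comparison. Since $v_i\in W^{1,G}_0(\Omega)\cap L^\infty(\Omega)$ are $\cA$-supersolutions (Lemma~\ref{lem-cS-supersol}), their Riesz measures $\mu[v_i]$ lie in $(W^{1,G}_0(\Omega))'$, and we may test \eqref{eq-Riesz-supersoln} with bounded $W^{1,G}_0(\Omega)$ functions by approximation. Using quasicontinuous representatives, $\mu[v_i]$ charges no set of zero $G$-capacity.

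For $\varepsilon>0$, set
\[
\eta_\varepsilon:=\min\{v_2/\varepsilon,1\}\in W^{1,G}_0(\Omega).
\]
Then
\[
\int_\Omega \eta_\varepsilon\,\mathrm{d}\mu[v_2]=\frac1\varepsilon\int_{\{v_2<\varepsilon\}}\cA(x,\nabla v_2)\cdot\nabla v_2\dx \le \mu[v_2](\Omega).
\]
For $v_1$ I would rather use $\eta^1_\varepsilon:=\min\{v_1/\varepsilon,1\}$, which gives
\[
\int_\Omega \eta^1_\varepsilon\,\mathrm{d}\mu[v_1]=\frac1\varepsilon\int_{\{v_1<\varepsilon\}}\cA(x,\nabla v_1)\cdot\nabla v_1\dx.
\]
As $\varepsilon\to0$, the left-hand side tends to $\mu[v_1](\{v_1>0\})$. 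By the strong minimum principle (Proposition~\ref{prop-min}), either $v_1\equiv0$, in which case there is nothing to prove, or $v_1>0$ everywhere in $\Omega$. In the latter case, $\mu[v_1](\Omega)=\lim_{\varepsilon\to0}\frac1\varepsilon\int_{\{v_1<\varepsilon\}}\cA(x,\nabla v_1)\cdot\nabla v_1\dx$, and likewise for $v_2$. So the statement reduces to comparing the \emph{boundary fluxes}
\[
\frac1\varepsilon\int_{\{v_i<\varepsilon\}}\cA(x,\nabla v_i)\cdot\nabla v_i\dx.
\]

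\textbf{Main obstacle.} Without homogeneity, the two fluxes cannot be compared directly. I would instead use a single test function built from $v_2$ for both measures. Take $\varphi_\varepsilon:=\min\{v_2/\varepsilon,1\}$ and compute $\int_\Omega\varphi_\varepsilon\,\mathrm{d}\mu[v_1]-\int_\Omega\varphi_\varepsilon\,\mathrm{d}\mu[v_2]$. Since $\{v_2\ge\varepsilon\}\supset\{v_1\ge\varepsilon\}$, this difference equals
\[
\frac1\varepsilon\int_{\{v_2<\varepsilon\}}(\cA(x,\nabla v_1)-\cA(x,\nabla v_2))\cdot\nabla v_2\dx.
\]
To sign this term, use the comparison-type identity obtained by testing both equations with $\min\{(v_2-v_1),\varepsilon\}/\varepsilon$ localized to $\{v_2<\varepsilon\}$, together with monotonicity \eqref{eq-monotonicity}. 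Concretely, write $\nabla v_2=\nabla v_1+\nabla(v_2-v_1)$. The part involving $(\cA(\nabla v_1)-\cA(\nabla v_2))\cdot\nabla(v_2-v_1)$ is $\le0$. The remaining part, $\frac1\varepsilon\int_{\{v_2<\varepsilon\}}(\cA(x,\nabla v_1)-\cA(x,\nabla v_2))\cdot\nabla v_1\dx$, should be shown to vanish as $\varepsilon\to0$. Here $0\le v_1\le v_2<\varepsilon$, so this term is controlled by testing with $\min\{v_1,\varepsilon\}/\varepsilon$-type functions and by absolute continuity. This is the delicate step, and if it fails I would switch the test function to $\min\{v_1/\varepsilon,1\}$ and run the symmetric argument.

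Letting $\varepsilon\to0$, we have $\varphi_\varepsilon\to\chi_{\{v_2>0\}}=1$ pointwise in $\Omega$. By dominated convergence, this yields $\mu[v_1](\Omega)-\mu[v_2](\Omega)\le \limsup_{\varepsilon\to0}(\text{error})=0$.
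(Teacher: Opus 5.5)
\textbf{There is a genuine gap.} Your reduction to boundary fluxes is fine, and so is the identity
\[
\int_\Omega\varphi_\varepsilon\dmu[v_1]-\int_\Omega\varphi_\varepsilon\dmu[v_2]=\frac1\varepsilon\int_{\{v_2<\varepsilon\}}\bigl(\cA(x,\nabla v_1)-\cA(x,\nabla v_2)\bigr)\cdot\nabla v_2\dx,\qquad \varphi_\varepsilon=\min\{v_2/\varepsilon,1\}.
\]
But the proof stops where the difficulty lies: you never show that the $\limsup$ of the right-hand side is $\le 0$.

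Moreover, your specific claim is false. You say that, after discarding the monotone part, the remainder $\frac1\varepsilon\int_{\{v_2<\varepsilon\}}(\cA(x,\nabla v_1)-\cA(x,\nabla v_2))\cdot\nabla v_1\dx$ tends to $0$. Take $\cA(x,\xi)=|\xi|^{p-2}\xi$, $\Omega=B_1$, $v_2=\hat R^1_{\overline B_{1/2}}$ and $v_1=\hat R^1_{\overline B_{1/4}}$, so that $0\le v_1\le v_2$. In $\{1/2<|x|<1\}$ both are radial $p$-harmonic functions vanishing on $\partial B_1$. Hence $v_1=tv_2$ there for some $t\in(0,1)$. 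For small $\varepsilon$ the remainder therefore equals
\[
t(t^{p-1}-1)\int_\Omega\varphi_\varepsilon\dmu[v_2],
\]
which tends to $t(t^{p-1}-1)\,\mu[v_2](\Omega)\neq 0$. These are order-one flux quantities (a factor $1/\varepsilon$ against a layer of measure $O(\varepsilon)$). So neither absolute continuity nor testing with $\min\{v_1,\varepsilon\}/\varepsilon$ can make them small. Also, monotonicity gives no pointwise sign for $(\cA(x,\xi)-\cA(x,\zeta))\cdot\xi$ when $\xi,\zeta$ are not collinear. The fallback of running a symmetric argument is not an argument either. (Separately, subtracting $\mu[v_2](\Omega)$ presupposes that it is finite.)

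\textbf{The missing idea} is to avoid comparing two different functions in the boundary layer at all. The paper sets $w_\varepsilon:=\min\{v_1+\varepsilon,v_2\}$. This is an $\cA$-supersolution that coincides with $v_2$ on $\{v_2<\varepsilon\}$.
\begin{enumerate}
\item For $0<k<\varepsilon$, test with $\frac1k\min\{v_2,k\}$. The integrands on $\{v_2<k\}$ are literally identical, so $\mu[w_\varepsilon]$ and $\mu[v_2]$ give the same integral against this function.
\item Let $k\to0$, using $v_2>0$ from Proposition~\ref{prop-min}. This gives $\mu[w_\varepsilon](\Omega)=\mu[v_2](\Omega)$, with values allowed in $[0,\infty]$.
\item Since $w_\varepsilon\searrow v_1$ and the family is uniformly bounded, Lemma~\ref{lem-mea-weakcon} yields $\mu[w_\varepsilon]\rightharpoonup\mu[v_1]$.
\item Lower semicontinuity of the mass of the open set $\Omega$ under weak convergence then gives $\mu[v_1](\Omega)\le\liminf_{\varepsilon\to0}\mu[w_\varepsilon](\Omega)=\mu[v_2](\Omega)$.
\end{enumerate}
Thus the ordering $v_1\le v_2$ is used only to build a supersolution that equals $v_2$ near $\partial\Omega$ and converges to $v_1$. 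The comparison is handled by the compactness lemma, not by a pointwise flux estimate.
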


\begin{proof}
For $\varepsilon>0$, we define $w_\varepsilon:= \min\{v_1+\varepsilon, v_2\}$, which is an $\cA$-supersolution of \eqref{eq-sec2-new}. Since $w_\varepsilon \searrow v_1$ pointwise in $\Omega$ as $\varepsilon \to 0$, Lemma~\ref{lem-mea-weakcon} gives the weak convergence $\mu[w_\varepsilon] \rightharpoonup \mu[v_1]$ as $\varepsilon \to 0$. Hence, by the lower semicontinuity of mass on open sets,
\begin{equation}\label{eq-v1-w-eps}
\mu[v_1](\Omega) \leq \liminf_{\varepsilon \to 0} \mu[w_\varepsilon](\Omega).
\end{equation}
Now, for $0<k<\varepsilon$, we define the function
\begin{equation*}
\varphi_k = \tfrac{1}{k} \min\{v_2, k\} \in W^{1, G}_0(\Omega).
\end{equation*}
On the set $\{v_2<k\}$, since $v_1+\varepsilon \geq \varepsilon> k>v_2$, we have $w_\varepsilon=v_2$, and consequently $\nabla w_\varepsilon = \nabla v_2$ a.e.\ on $\{v_2<k\}$. Testing the equations \eqref{eq-riesz} for $w_\varepsilon$ and $v_2$ with $\varphi_k$ yields
\begin{align*}
\int_\Omega \varphi_k \dmu[w_\varepsilon]
&= \int_\Omega \cA(x, \nabla w_\varepsilon) \cdot \nabla \varphi_k \dx = \frac{1}{k} \int_{\Omega \cap \{v_2<k\}} \cA(x, \nabla w_\varepsilon) \cdot \nabla v_2 \dx \\
&= \frac{1}{k} \int_{\Omega \cap \{v_2<k\}} \cA(x, \nabla v_2) \cdot \nabla v_2 \dx = \int_{\Omega} \cA(x, \nabla v_2) \cdot \nabla \varphi_k \dx = \int_\Omega \varphi_k \dmu[v_2].
\end{align*}

By the strong minimum principle (Proposition~\ref{prop-min}), $v_2 \equiv 0$ or $v_2>0$ in $\Omega$. The former case is trivial, so we may assume that $v_2>0$ in $\Omega$. Then, $\varphi_k \nearrow 1$ in $\Omega$ as $k \searrow 0$. By the monotone convergence theorem,
\begin{equation}\label{eq-w-eps}
\mu[w_\varepsilon](\Omega) = \mu[v_2](\Omega).
\end{equation}
The desired inequality follows from \eqref{eq-v1-w-eps} and \eqref{eq-w-eps}.
\end{proof}

\begin{lemma}\label{lem-total-mass2}
Assume that $\Omega$ is a bounded domain. Let $v \in \cS_{\cA}(\Omega)$ be nonnegative in $\Omega$ and $\cA$-harmonic in $\Omega \setminus K$ for some compact subset $K$ of $\Omega$. Suppose that $T_k(v) \in W^{1,G}_0(\Omega)$ for any $k>0$. Then for any $\lambda>0$ it holds
\begin{equation*}
\mu[v](\Omega) = \mu[\min\{v, \lambda\}](\Omega).
\end{equation*}
\end{lemma}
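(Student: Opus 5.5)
The idea is to test both Riesz-measure identities with a cutoff that equals $1$ near $K$ and lives in the region where $v$ is $\cA$-harmonic. That region is $\Omega\setminus K$, where both measures vanish. The total masses then become flux integrals over a collar near $\partial\Omega$. On that collar the two fluxes agree, because $\min\{v,\lambda\}$ coincides with $v$ wherever $v<\lambda$, and near $\partial\Omega$ the function $v$ is small.

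\textbf{Step 1: mass equals a flux.} Set $v_\lambda:=\min\{v,\lambda\}$. It is $\cA$-superharmonic, bounded, and lies in $W^{1,G}_0(\Omega)$ by hypothesis. By Lemma~\ref{lem-cS-supersol} it is an $\cA$-supersolution, and its Riesz measure coincides with $\mu[v_\lambda]$ from Proposition~\ref{prop-Riesz}. Since $v$ is $\cA$-harmonic in $\Omega\setminus K$, we have $\mu[v](\Omega\setminus K)=0$. Since $v_\lambda$ is $\cA$-harmonic in the open set $(\Omega\setminus K)\cap\{v<\lambda\}$ and an $\cA$-supersolution elsewhere, $\mu[v_\lambda]$ is supported in $K\cup\{v\ge\lambda\}$.

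For the mass of $\mu[v]$, pick $\eta\in C_c^\infty(\Omega)$ with $0\le\eta\le1$ and $\eta=1$ on a neighbourhood of $K$. Testing \eqref{eq-riesz} gives
\begin{equation*}
\mu[v](\Omega)=\int_\Omega\eta\dmu[v]=\int_\Omega\cA(x,\nabla v)\cdot\nabla\eta\dx .
\end{equation*}
Here $\nabla\eta$ is supported in a compact subset of $\Omega\setminus K$, on which $v$ is continuous.

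\textbf{Step 2: choose $\eta$ with support where $v<\lambda$.} The set $\{v\ge\lambda\}\setminus K$ might approach $\partial\Omega$, so a fixed $\eta$ may not isolate it. I would exploit $T_k(v)\in W^{1,G}_0(\Omega)$ together with the Caccioppoli-type flux identity instead. Replace $\eta$ by
\begin{equation*}
\varphi_k:=1-\tfrac1k\min\{v,k\}\quad\text{for }0<k<\lambda .
\end{equation*}
Then $1-\varphi_k=\tfrac1k T_k(v)\in W^{1,G}_0(\Omega)$. Formally, and by the approximation used in Lemma~\ref{lem-total-mass1},
\begin{equation*}
\int_\Omega(1-\varphi_k)\dmu[w]=\tfrac1k\int_{\{v<k\}}\cA(x,\nabla w)\cdot\nabla v\dx \qquad\text{for } w\in\{v,v_\lambda\}.
\end{equation*}
On $\{v<k\}\subset\{v<\lambda\}$ we have $\nabla v_\lambda=\nabla v$. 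Hence the two right-hand sides coincide, exactly as in the proof of Lemma~\ref{lem-total-mass1}. Passing $k\to0$ requires $v>0$ in $\Omega$, which follows from the strong minimum principle (Proposition~\ref{prop-min}); the case $v\equiv0$ is trivial. Monotone convergence then gives $\int(1-\varphi_k)\dmu\to\mu(\Omega)$ for both measures, which yields the claim.

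\textbf{Main obstacle.} The obstacle is justifying the test function $T_k(v)/k$ against $\mu[v]$ when $v$ is unbounded. This makes sense only because $T_k(v)\in W^{1,G}_0(\Omega)$ and $\mu[v]$ restricted to $\{v<k\}$ is controlled. I would handle it by writing $\mu[v]=\mu[v_m]$ on $\{v<m\}$ for large $m$, which holds since both are harmonic off $K\cup\{v\ge m\}$. The test is then performed with the bounded supersolution $v_m$ rather than $v$, giving $\mu[v_m](\Omega)=\mu[v_\lambda](\Omega)$ for every $m>\lambda$.

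It remains to show $\mu[v_m](\Omega)\to\mu[v](\Omega)$ as $m\to\infty$. Every $\mu[v_m]$ is supported in $K\cup\{v\ge m\}$. This set lies inside a fixed compact $K'\Subset\Omega$, because $v$ is continuous off $K$ and vanishes at $\partial\Omega$ in the Sobolev sense. On $K'$, for a fixed $\eta$ equal to $1$ there,
\begin{equation*}
\mu[v_m](\Omega)=\int_\Omega\cA(x,\nabla v_m)\cdot\nabla\eta\dx,
\end{equation*}
and on the support of $\nabla\eta$ we have $v_m=v$ once $m>\max_{\supp\nabla\eta}v$. So this flux equals $\mu[v](\Omega)$ for large $m$. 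The delicate point, which I expect to be the hardest, is the claim that the superlevel sets $\{v\ge m\}$ stay away from $\partial\Omega$ uniformly. I would obtain it from the local boundedness in Lemma~\ref{lem-loc-bdd-obs} combined with harmonicity of $v$ near $\partial\Omega$.
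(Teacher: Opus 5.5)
Your proof is correct in outline, but it is organized differently from the paper's. You split the claim into two steps.

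(a) $\mu[v_m](\Omega)=\mu[v_\lambda](\Omega)$ for $m>\lambda$. You get this as in Lemma~\ref{lem-total-mass1}: pair the bounded $W^{1,G}_0$-supersolutions with $T_k(v)/k$, then let $k\to0$ using monotone convergence and $v>0$ from Proposition~\ref{prop-min}.

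(b) $\mu[v_m](\Omega)=\mu[v](\Omega)$ for large $m$. You get this with a fixed cutoff, which requires $v$ to be bounded outside a neighbourhood of $K$.

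The paper needs neither (b) nor the limit $k\to0$. It fixes $0<k<\min\{\delta,\lambda\}$ with $\delta:=\min_K v>0$. Then $T_k(v)/k\equiv1$ on the open set $\{v>k\}\supset K\cup\{v\ge\lambda\}\supset\supp\mu[\min\{v,\lambda\}]$. This gives $\mu[\min\{v,\lambda\}](\Omega)=\frac1k\int_{\{v<k\}}\cA(x,\nabla v)\cdot\nabla v\dx$ exactly.

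For $v$ itself, the paper never pairs $\mu[v]$ with $T_k(v)/k$, which is the step you flagged. It writes $\mu[v](\Omega)=\int_\Omega\cA(x,\nabla v)\cdot\nabla\eta\dx$. It then uses the $\cA$-harmonicity of $v$ in $\Omega\setminus K$, tested with $T_k(v)/k-\eta\in W^{1,G}_0(\Omega\setminus K)$ (it vanishes near $K$), to replace $\nabla\eta$ by $\nabla(T_k(v)/k)$ in the flux. So the identity you called formal for $w=v$ does hold. It comes from harmonicity in the flux integral, not from pairing with the measure. What your route buys is that every measure pairing involves a bounded $W^{1,G}_0$-supersolution. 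The price is step (b), which the paper's harmonicity swap avoids.

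Step (b) also needs a different justification from the one you propose. Lemma~\ref{lem-loc-bdd-obs} concerns $W^{1,G}$ solutions of obstacle problems, and whether $v$ is in $W^{1,G}$ near $\partial\Omega$ is exactly what is unknown. Use instead the weak maximum principle argument the paper uses in Lemma~\ref{lem-energy-lambda}:
\begin{itemize}
\item Take $U$ open with $K\subset U\Subset\Omega$, and set $M:=\max_{\partial U}v$.
\item For $m>M$ and $j>0$, the function $T_j((v-m)_+)$ vanishes near $\partial U$. Since $T_{m+j}(v)\in W^{1,G}_0(\Omega)$, it lies in $W^{1,G}_0(\Omega\setminus\overline U)$.
\item Testing the equation of $v$ with it forces $\nabla T_j((v-m)_+)=0$, hence $v\le M$ on $\Omega\setminus U$.
\end{itemize}
Then $K\cup\{v\ge m\}\subset U$ for $m>M$, and (b) follows with $\eta=1$ near $\overline U$.

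Finally, your intermediate claim $\mu[v]=\mu[v_m]$ on $\{v<m\}$ is never used. Its stated justification, harmonicity off $K\cup\{v\ge m\}$, would not cover $K\cap\{v<m\}$ in any case.
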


\begin{proof}
We may assume that $v \not\equiv 0$. By the strong minimum principle (Proposition~\ref{prop-min}) and the compactness of $K$, we have $v \geq \min_K v =: \delta >0$ on $K$. Let $w:= \min\{v, \lambda\} \in \cS_{\cA}(\Omega) \cap W^{1,G}_0(\Omega)$. Note that $w \geq \min\{\delta, \lambda\}>0$ on $\supp\mu[w] \subset K \cup \{v \geq \lambda\}$. For $0<k<\min\{\delta, \lambda\}$, we define
\begin{equation*}
\varphi_k = \tfrac{1}{k} \min\{v, k\} = \tfrac{1}{k} T_k(v) \in W^{1,G}_0(\Omega).
\end{equation*}
Then, $\varphi_k=1$ on the open set $\{v>k\}$. Since $\supp \mu[w] \subset K \cup \{v \geq \lambda\} \subset \{v>k\}$ and $\mu[w] \in (W^{1,G}_0(\Omega))'$, we have
\begin{equation*}
\mu[w](\Omega) = \int_\Omega \cA(x, \nabla w) \cdot \nabla \varphi_k \dx = \frac{1}{k} \int_{\Omega \cap \{v<k\}} \cA(x, \nabla w) \cdot \nabla v \dx.
\end{equation*}
Note, however, that $\mu[v] \notin (W^{1,G}_0(\Omega))'$ in general. Instead, we take a cutoff function $\eta \in C^\infty_c(\Omega)$ with $\eta=1$ in an open neighborhood of $K$. Since $\supp\mu[v] \subset K$, we obtain
\begin{equation*}
\mu[v](\Omega) = \int_\Omega \eta \dmu[v] = \int_\Omega \cA(x, \nabla v) \cdot \nabla \eta \dx = \int_{\Omega \setminus K} \cA(x, \nabla v) \cdot \nabla \eta \dx.
\end{equation*}
Since $\eta=1$ in a neighborhood of $K$ and $\varphi_k=1$ in the open set $\{v>k\} \supset K$, the difference $\varphi_k-\eta$ vanishes in a neighborhood of $K$. This implies $\varphi_k-\eta \in W^{1,G}_0(\Omega \setminus K)$. Since $v$ is $\cA$-harmonic in $\Omega \setminus K$, we obtain
\begin{equation*}
\int_{\Omega \setminus K} \cA(x, \nabla v) \cdot \nabla \eta \dx = \int_{\Omega \setminus K} \cA(x, \nabla v) \cdot \nabla \varphi_k \dx = \frac{1}{k} \int_{\Omega \cap \{v<k\}} \cA(x, \nabla v) \cdot \nabla v \dx.
\end{equation*}
Since $v=w$ on $\Omega \cap \{v<k\}$, we deduce that $\mu[w](\Omega) = \mu[v](\Omega)$.
\end{proof}

In the standard $p$-growth case, superharmonic functions can be rescaled to obtain suitable test functions for the capacity. However, since the class of $\cA$-superharmonic functions lacks scale invariance, we bypass this difficulty by deriving estimates for rescaled obstacles instead. Our approach here is inspired by the techniques elaborated for a parabolic problem \cite{Para-polar}.

\begin{proposition}
    \label{prop-64}
Let $K\subset \Omega$ be a compact set. Then, there exists a constant $C>0$ such that 
for any $\lambda>1$ we have
\begin{equation*}
\mu[\hat{R}^1_{K}](\Omega) \leq C \left( \lambda^{1-p}+\lambda^{\frac{1+q}{1-q}} \right) \mu[\hat{R}^{\lambda}_{K}](\Omega).
\end{equation*}
\end{proposition}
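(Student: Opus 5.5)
The plan is to compare both total masses with energies, and then to use the capacity only as an intermediate quantity. I would avoid rescaling the superharmonic functions themselves, since $\cA$-superharmonicity is not preserved under scalar multiplication. Instead I rescale only the \emph{test function} for the capacity $\mathrm{cap}_G(K,\Omega)$, where the Orlicz structure of Lemma~\ref{lem-barg} controls the scaling.

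Write $u_\lambda:=\hat R^\lambda_K$, $u_1:=\hat R^1_K$, $m_\lambda:=\mu[u_\lambda](\Omega)$ and $m_1:=\mu[u_1](\Omega)$. By Proposition~\ref{prop-bal-K}, $u_\lambda$ is $\cA$-harmonic in $\Omega\setminus K$, satisfies $0\le u_\lambda\le\lambda$, and lies in $W^{1,G}_0(\Omega)$. Moreover, $u_\lambda-\varphi\in W^{1,G}_0(\Omega\setminus K)$ whenever $\varphi\in C^\infty_c(\Omega)$ equals $\lambda$ on $K$. By Lemma~\ref{lem-cS-supersol}, $u_\lambda$ is an $\cA$-supersolution, its Riesz measure lies in $(W^{1,G}_0(\Omega))'$, and $\supp\mu[u_\lambda]\subset K$.

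\textbf{Step 1 (mass identity for $u_\lambda$).} I aim to show $\int_\Omega\cA(x,\nabla u_\lambda)\cdot\nabla u_\lambda\dx=\lambda m_\lambda$. I would repeat the argument of Lemma~\ref{lem-total-mass2} with the test function $\varphi_k=\tfrac1k T_k(u_\lambda)$, now taking $k=\lambda$. Since $u_\lambda=\lambda$ on $K$ in the Sobolev sense, the function $\varphi_\lambda-\eta$ belongs to $W^{1,G}_0(\Omega\setminus K)$ for a cutoff $\eta$ equal to $1$ near $K$. The $\cA$-harmonicity off $K$ then gives
\[
m_\lambda=\int_\Omega\cA(x,\nabla u_\lambda)\cdot\nabla\eta\dx=\frac1\lambda\int_{\{u_\lambda<\lambda\}}\cA(x,\nabla u_\lambda)\cdot\nabla u_\lambda\dx .
\]
Since $\nabla u_\lambda=0$ a.e.\ on $\{u_\lambda=\lambda\}$, this is the claimed identity. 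Combined with \eqref{ass-op}, it yields $\int_\Omega G(|\nabla u_\lambda|)\dx\le (c_1^\cA)^{-1}\lambda m_\lambda$.

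\textbf{Step 2 (capacity from above by $u_\lambda/\lambda$).} The function $u_\lambda/\lambda$ equals $1$ on $K$ in the sense of $W^{1,G}_0(\Omega\setminus K)$. The approximation argument behind \eqref{eq-claim} in Lemma~\ref{lem-cap-dual} therefore applies verbatim to $u_\lambda/\lambda$, with $\varphi$ replaced by $\varphi/\lambda$, and gives
\[
\mathrm{cap}_G(K,\Omega)\le\int_\Omega G\Big(\frac{|\nabla u_\lambda|}{\lambda}\Big)\dx\le \lambda^{-p}\int_\Omega G(|\nabla u_\lambda|)\dx\le C\lambda^{1-p}m_\lambda .
\]
Here the middle inequality is Lemma~\ref{lem-barg}(ii) applied with the factor $1/\lambda\le 1$.

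\textbf{Step 3 (mass of $u_1$ from below by the capacity).} The measure $\nu=\mu[u_1]$ is admissible in the definition of $\widetilde{\mathrm{cap}}_\cA(K,\Omega)$. Indeed, $\nu\in(W^{1,G}_0(\Omega))'$, $\supp\nu\subset K$, and $u_\nu=u_1$ by uniqueness in \eqref{eq-u-nu}, with $0\le u_1\le1$. Lemma~\ref{lem-cap-dual} then gives
\[
m_1\le\widetilde{\mathrm{cap}}_\cA(K,\Omega)\le C_2\,\mathrm{cap}_G(K,\Omega).
\]
Combining this with Step~2 yields $m_1\le C\lambda^{1-p}m_\lambda$. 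This is even stronger than the stated bound, since the extra term $\lambda^{(1+q)/(1-q)}$ on the right-hand side is nonnegative.

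\textbf{Main obstacle.} I expect the delicate point to be the rigorous justification of Step~1, namely that $\varphi_\lambda-\eta\in W^{1,G}_0(\Omega\setminus K)$. This requires that $u_\lambda$ attain the value $\lambda$ on $K$ in the Sobolev trace sense, and not merely pointwise as the lsc-regularized balayage. I would obtain it from the characterization $u_\lambda-\varphi\in W^{1,G}_0(\Omega\setminus K)$ in Proposition~\ref{prop-bal-K}. If that route resisted, I would instead use $0<k<\lambda$ exactly as in Lemma~\ref{lem-total-mass2} and let $k\nearrow\lambda$, using monotone convergence of $\int_{\{u_\lambda<k\}}\cA(x,\nabla u_\lambda)\cdot\nabla u_\lambda\dx$.
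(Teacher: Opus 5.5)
Your proof is correct. Step~1 coincides with the paper's first step, but after that you take a genuinely different route.

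In the paper, the identity $\ell\,\mu[\hat R^\ell_K](\Omega)=\int_\Omega\cA(x,\nabla \hat R^\ell_K)\cdot\nabla \hat R^\ell_K\dx$ is read off directly from $\hat R^\ell_K-\ell\eta\in W^{1,G}_0(\Omega\setminus K)$. It also uses $\hat R^\ell_K=\ell$ a.e.\ on $K$ (Lemma~\ref{lem-ae}), which is the fact you implicitly need in order to drop $K$ from the integral.

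From there the paper never passes through a capacity. It compares the energies of $u_1$ and $u_\lambda$ directly, testing the $\cA$-harmonicity of both in $\Omega\setminus K$ with $\lambda u_\lambda-\lambda^2u_1$ and $u_1-\lambda^{-1}u_\lambda$, and absorbing terms via Lemma~\ref{lem-barg}(iii). The second test function generates the cross term $\int_\Omega\cA(x,\nabla u_\lambda)\cdot\nabla u_1\dx$, and that term is what produces the extra factor $\lambda^{(1+q)/(1-q)}$.

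You instead sandwich $\mathrm{cap}_G(K,\Omega)$. From above, you use the rescaled competitor $u_\lambda/\lambda$ to bound it by $C\lambda^{1-p}m_\lambda$. This competitor is admissible in the argument for \eqref{eq-claim} because $u_\lambda/\lambda-\eta=(u_\lambda-\lambda\eta)/\lambda\in W^{1,G}_0(\Omega\setminus K)$. From below, admissibility of $\mu[u_1]$ and the upper inequality of Lemma~\ref{lem-cap-dual} bound $m_1$ by the capacity. There is no circularity, since that lemma is proved independently of this proposition.

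Your route gives the sharper estimate $m_1\le C\lambda^{1-p}m_\lambda$ with $C=C(p,q,c_1^\cA,c_2^\cA)$. This is the exact analogue of the homogeneous relation $m_\lambda=\lambda^{p-1}m_1$. The cost is reliance on the dual-capacity comparison and the approximation behind \eqref{eq-claim}, whereas the paper uses only Proposition~\ref{prop-bal-K} and Young's inequality.

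The two approaches can in fact be merged. Testing the equation of $u_1$ in $\Omega\setminus K$ with $u_\lambda/\lambda-u_1$ alone gives $\int_\Omega G(|\nabla u_1|)\dx\le C\lambda^{-p}\int_\Omega G(|\nabla u_\lambda|)\dx$. This is a self-contained version of your chain, and it shows that the paper's second test function, and with it the term $\lambda^{(1+q)/(1-q)}$, is unnecessary.
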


\begin{proof}
For $\ell>0$, we define the function $u_\ell:= \hat{R}^\ell_K$. Let $\eta \in C^\infty_c(\Omega)$ be such that $\eta=1$ on $K$. By Proposition~\ref{prop-bal-K}, $u_\ell$ is $\cA$-harmonic in $\Omega \setminus K$ and $u_\ell - \ell\eta \in W^{1,G}_0(\Omega \setminus K)$. Thus,
\begin{equation*}
\int_{\Omega \setminus K} \cA(x, \nabla u_\ell) \cdot \nabla (u_\ell - \ell \eta) \dx = 0.
\end{equation*}
On the other hand, it follows from Lemma~\ref{lem-ae} that $u_\ell = \ell = \ell \eta$ a.e.\ on $K$, and hence the above integral can be extended to all of $\Omega$. Moreover, since $\supp\mu[u_\ell] \subset K$,  by using \eqref{eq-riesz} we have
\begin{equation*}
\ell \mu[u_\ell](\Omega) = \int_\Omega \ell \eta \dmu[u_\ell] = \int_\Omega \cA(x, \nabla u_\ell) \cdot \nabla (\ell \eta) \dx = \int_\Omega \cA(x, \nabla u_\ell) \cdot \nabla u_\ell \dx.
\end{equation*}
Setting $\ell=1$ and $\ell=\lambda$, and applying the assumptions \eqref{ass-op} and \eqref{eq:Gg}, we obtain
\begin{equation}\label{eq-mu}
\mu[u_1](\Omega) \leq qc_2^{\cA} \int_\Omega G(|\nabla u_1|) \dx \quad\text{and}\quad c_1^{\cA} \int_\Omega G(|\nabla u_\lambda|) \dx \leq \lambda \mu[u_\lambda](\Omega).
\end{equation}

Let
\begin{align*}
\varphi_1&:=\lambda u_\lambda - \lambda^2 u_1 = \lambda (u_\lambda - \lambda \eta) - \lambda^2 (u_1-\eta) \in W^{1,G}_0(\Omega \setminus K), \\
\varphi_\lambda&:=u_1 - \lambda^{-1} u_\lambda = u_1-\eta - \lambda^{-1}(u_\lambda - \lambda \eta) \in W^{1,G}_0(\Omega \setminus K).
\end{align*}
Since $u_1$ and $u_\lambda$ are $\cA$-harmonic in $\Omega \setminus K$, and since $\varphi_1=\varphi_\lambda=0$ a.e.\ on $K$, we get
\begin{equation*}
\int_{\Omega} \cA(x, \nabla u_1) \cdot \nabla \varphi_1 \dx = 0 \quad\text{and}\quad \int_{\Omega} \cA(x, \nabla u_\lambda) \cdot \nabla \varphi_\lambda \dx = 0.
\end{equation*}
Adding these inequalities and applying the structural assumption \eqref{ass-op}, we obtain
\begin{align*}
		c^\cA_1 \lambda^2 \int_{\Omega}G(|\nabla u_1|) \dx &\leq \lambda^2 \int_{\Omega}\cA(x, \nabla u_1)\cdot \nabla u_1 \dx \\	
		& = \lambda \int_{\Omega}\cA(x,\nabla u_1)\cdot \nabla u_{\lambda} \dx+\int_{\Omega} \cA(x,\nabla u_{\lambda})\cdot \nabla u_1 \dx-\frac{1}{\lambda}\int_{\Omega} \cA(x,\nabla u_\lambda)\cdot \nabla u_\lambda \dx
		\\
        & \le \lambda c^{\cA}_{2} \int_{\Omega} g( |\nabla u_1|) |\nabla u_\lambda| \dx +c^{\cA}_2\int_{\Omega}g( |\nabla u_{\lambda}|) |\nabla u_1| \dx.
	\end{align*}
	By using Lemma~\ref{lem-barg} and \eqref{eq:Gg}, we have for $\varepsilon \in (0,1)$
    \begin{equation*}
\lambda^2 g(|\nabla u_1|) \frac{|\nabla u_\lambda|}{\lambda} \leq \lambda^2 \left( q\varepsilon G(|\nabla u_1|) + q\varepsilon^{1-q} \lambda^{-p} G(|\nabla u_\lambda|) \right)
    \end{equation*}
    and
    \begin{align*}
    \lambda^{-\frac{2}{q-1}} g(|\nabla u_\lambda|) \lambda^{\frac{2}{q-1}} |\nabla u_1|
    &\leq \lambda^{-\frac{2}{q-1}} \left( q\varepsilon^{-\frac{1}{p-1}} G(|\nabla u_\lambda|) + g\left( \varepsilon^{\frac{1}{p-1}} \lambda^{\frac{2}{q-1}} |\nabla u_1| \right) \lambda^{\frac{2}{q-1}} |\nabla u_1| \right) \\
    &\leq q\varepsilon^{-\frac{1}{p-1}} \lambda^{-\frac{2}{q-1}} G(|\nabla u_\lambda|) + q\varepsilon \lambda^2 G(|\nabla u_1|).
    \end{align*}
By taking $\varepsilon$ sufficiently small, we can absorb the terms involving $G(|\nabla u_1|)$ into the left-hand side. It thus follows that
\begin{equation}\label{eq-mu-energy}
\int_{\Omega} G(|\nabla u_1|) \dx \le C \left(\lambda^{-p}+\lambda^{-\frac{2q}{q-1}} \right)\int_{\Omega} G(|\nabla u_\lambda|) \dx.
\end{equation}
Combining \eqref{eq-mu} and \eqref{eq-mu-energy} yields the desired inequality.
\end{proof}

The next lemma shows that $R^1_K$ can be approximated by solutions to obstacle problems with smooth obstacles.

\begin{lemma}\label{lem-bal-cpt}
Let $K$ be a compact subset of $\Omega$. Let $\{\psi_i\} \subset C_c^\infty(\Omega)$ be a decreasing sequence such that $0\leq \psi_i\searrow \chi_K$ pointwise in $\Omega$ and $\|\psi_i\|_\infty \to 1$ as $i \to \infty$.
Then, $R^1_K = \lim_{i \to \infty} R^{\psi_i}$ pointwise in $\Omega$.
\end{lemma}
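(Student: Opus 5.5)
We prove $R^1_K = \lim_{i\to\infty} R^{\psi_i}$ by a two-sided comparison, following the scheme of Lemma~\ref{lem-bal-open} with the monotonicity reversed. By Lemma~\ref{lem-R-obs}, each $v_i := R^{\psi_i} = \hat{R}^{\psi_i}$ is the continuous solution of the $\mathcal{K}_{\psi_i,0}(\Omega)$-obstacle problem, and $v_i \in \cS_{\cA}(\Omega)$. Since $\psi_{i+1}\le\psi_i$, we have $\Phi^{\psi_i}\subset\Phi^{\psi_{i+1}}$, so $v_{i+1}\le v_i$ and the limit $v:=\lim_i v_i$ exists pointwise. Also $\psi_i\ge\chi_K$ gives $\Phi^{\psi_i}\subset\Phi^1_K$, hence $R^1_K\le v_i$ for every $i$, and so $R^1_K\le v$.

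For the reverse inequality $v\le R^1_K$, fix $w\in\Phi^1_K$ and $\varepsilon>0$; it suffices to show $v_i\le (1+\varepsilon)w+\varepsilon$ for large $i$. Since the scaling $(1+\varepsilon)w$ is not $\cA$-superharmonic, we avoid it. Instead we show that $\min\{w,1\}+\varepsilon'$ dominates $\psi_i$ for large $i$, up to the factor $\|\psi_i\|_\infty$. Concretely, set $w_\delta := w+\delta$, which is still $\cA$-superharmonic because adding constants preserves the class. On $K$ we have $w_\delta\ge 1+\delta$. By lower semicontinuity of $w$, the set $V:=\{w>1-\delta/2\}$ is open and contains $K$, so $w_\delta> 1+\delta/2$ on $V$. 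By Dini-type compactness, $\psi_i\searrow\chi_K$ with $\psi_i$ continuous, compactly supported and $\|\psi_i\|_\infty\to1$; hence for large $i$ we get $\psi_i\le \|\psi_i\|_\infty\le 1+\delta/2$ everywhere and $\psi_i<\delta$ on $\Omega\setminus V$. More precisely, $\max\{\psi_i-\delta,0\}$ has support decreasing to $K$, so it is eventually supported in $V$. Since $w\ge0$ (as $w\ge \chi_K\ge0$), this gives $w_\delta\ge\psi_i$ everywhere for large $i$, i.e.\ $w_\delta\in\Phi^{\psi_i}$. Therefore $v\le v_i\le w+\delta$. Letting $\delta\to0$ and taking the infimum over $w$ yields $v\le R^1_K$.

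The main point to check is the claim that $\psi_i<\delta$ off $V$ eventually. This follows because the sets $\{\psi_i\ge\delta\}\setminus V$ are compact, decreasing, and have empty intersection, since $\psi_i\to\chi_K$ and $K\subset V$; hence they are eventually empty. Combining both inequalities gives $R^1_K=\lim_i R^{\psi_i}$ pointwise in $\Omega$. Unlike the open-set case, no claim about lsc-regularization is made, and none is needed.
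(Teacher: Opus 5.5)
Your proof is correct and is essentially the paper's argument. Both compare $R^{\psi_i}$ with $w$ plus a small constant, using three facts: the open superlevel set $\{w>1-\text{small}\}\supset K$; the smallness of $\psi_i$ off this set, which you get from nested compact sets with empty intersection and the paper gets from Dini's theorem on $\supp\psi_1\setminus U_\varepsilon$ (the same compactness fact); and $\|\psi_i\|_\infty\to1$ on the set. Your preliminary remarks about $(1+\varepsilon)w$ and $\min\{w,1\}+\varepsilon'$, and the appeal to Lemma~\ref{lem-R-obs}, are never used and can be dropped.
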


\begin{proof}
By the definition of the r\'eduite, the condition $\psi_i \geq \psi_{i+1} \geq \chi_K$ implies that $\{R^{\psi_i}\}$ is a decreasing sequence bounded below by $R^1_K$. Let $v:=\lim_{i \to \infty} R^{\psi_i}$. Then, it is clear that $v \geq R^1_K$.

To prove the reverse inequality, let $w \in \Phi^1_K$. Since $w$ is lower semicontinuous, for any given $\varepsilon \in (0,1/2)$ the superlevel set $U_\varepsilon=\{x \in \Omega: w(x)>1-\varepsilon\}$ is an open subset of $\Omega$ that contains $K$. Since $\psi_i \searrow 0$ on the compact set $K_1:=\supp\psi_1 \setminus U_\varepsilon$, Dini's theorem shows that this convergence is uniform. Thus, there exists an integer $N$ such that $\psi_i<\varepsilon$ on $K_1$ for all $i \geq N$. Moreover, we may assume that $\psi_i \leq 1+\varepsilon$ in $\Omega$ for all $i \geq N$.

Now, we define $w_\varepsilon:=w+2\varepsilon \in \cS_{\cA}(\Omega)$. Then, $w_\varepsilon>1+\varepsilon \geq \psi_i$ in $U_\varepsilon$ and $w_\varepsilon \geq 2\varepsilon > \psi_i$ in $\Omega \setminus U_\varepsilon$ (which includes $K_1$) for all $i \geq N$. This implies $w_\varepsilon \in \Phi^{\psi_i}$, and hence $R^{\psi_i} \leq w_\varepsilon$ everywhere in $\Omega$. Taking the limit as $i \to \infty$, we obtain $v \leq w+2\varepsilon$ in $\Omega$. Since $\varepsilon \in (0, 1/2)$ was arbitrary, we deduce that $v \leq w$. Therefore, taking the infimum over all $w \in \Phi^1_K$ yields $v \leq R^1_K$.
\end{proof}

The following result gives a characterization of capacity of compact sets through the $\cA$-potentials.

\begin{proposition}\label{prop57}
Assume that $\Omega$ is a bounded domain. Let $K$ be a compact subset of $\Omega$. Then 
\begin{equation*}
\widetilde{\mathrm{cap}}_{\cA}(K,\Omega)=\mu\lbrack\hat{R}^1_{K}\rbrack(\Omega).
\end{equation*}
\end{proposition}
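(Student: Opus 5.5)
We prove the two inequalities separately; throughout, $u:=\hat R^1_K$ and $\mu:=\mu[u]$.

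\emph{The inequality $\geq$.} By Proposition~\ref{prop-bal-K}, $u\in W^{1,G}_0(\Omega)$, $0\le u\le1$, and $u$ is $\cA$-harmonic in $\Omega\setminus K$. By Lemma~\ref{lem-cS-supersol}, $u$ is an $\cA$-supersolution. Hence $\mu\in (W^{1,G}_0(\Omega))'$ is nonnegative with $\supp\mu\subset K$. Testing \eqref{eq-Riesz-supersoln} with functions of $W^{1,G}_0(\Omega)$ (by density, exactly as in Lemma~\ref{lem-cap-dual}) shows that $u$ is precisely the function $u_\mu$ from \eqref{eq-u-nu}, by uniqueness. Thus $\mu$ is admissible in the definition of $\widetilde{\mathrm{cap}}_{\cA}(K,\Omega)$, which gives $\widetilde{\mathrm{cap}}_{\cA}(K,\Omega)\ge\mu(\Omega)$.

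\emph{The inequality $\leq$.} Let $\nu$ be admissible: $0\le\nu\in (W^{1,G}_0(\Omega))'$, $\supp\nu\subset K$, and $0\le u_\nu\le1$. We aim to show $\nu(\Omega)=\mu[u_\nu](\Omega)\le\mu(\Omega)$ via the monotonicity Lemma~\ref{lem-total-mass1}, so we need a good representative of $u_\nu$ that lies below $u$.

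Step one: the lsc representative of $u_\nu$. Since $u_\nu$ is a bounded $\cA$-supersolution, Lemmas~\ref{lem-super-liminf} and \ref{lem-supersol-cS} give a representative $\tilde u_\nu\in\cS_{\cA}(\Omega)\cap W^{1,G}_0(\Omega)$ with $0\le\tilde u_\nu\le1$ everywhere (the bound holds a.e., hence everywhere by the essliminf property). Its Riesz measure is $\nu$.

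Step two: comparison $\tilde u_\nu\le u$. Take any $w\in\Phi^1_K$. We would show $\tilde u_\nu\le w$, whence $\tilde u_\nu\le R^1_K$ and, by lower semicontinuity of $\tilde u_\nu$, $\tilde u_\nu\le\hat R^1_K=u$.
\begin{itemize}
\item On $K$ this is immediate, since $w\ge1\ge\tilde u_\nu$ there.
\item On $\Omega\setminus K$, $\tilde u_\nu$ is an $\cA$-solution with zero boundary values on $\partial\Omega$ in the Sobolev sense, and it is $\le1\le w$ near $K$ in the sense that $\liminf w\ge1$ on $K$.
\end{itemize}
This is the main obstacle: $w$ need not be a Sobolev function, and $\tilde u_\nu$ is only Sobolev-zero on $\partial\Omega$. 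I would handle it through the approximation of Lemma~\ref{lem-bal-cpt}, comparing $\tilde u_\nu$ instead with the continuous obstacle solutions $R^{\psi_i}$, where $\psi_i\searrow\chi_K$.

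Step three: the variational comparison. Each $v_i:=R^{\psi_i}$ is the $\mathcal K_{\psi_i,0}(\Omega)$-obstacle solution (Lemma~\ref{lem-R-obs}), it is continuous, and $v_i\ge1$ on $K$. Test with $(\tilde u_\nu-v_i)_+\in W^{1,G}_0(\Omega)$. This function vanishes on $K$, which carries $\nu$. It is also nonnegative, and $v_i$ is an $\cA$-supersolution. Subtracting the two relations gives
\begin{equation*}
\int_\Omega(\cA(x,\nabla\tilde u_\nu)-\cA(x,\nabla v_i))\cdot\nabla(\tilde u_\nu-v_i)_+\dx\le 0 .
\end{equation*}
The vanishing on $K$ needs care: we use that $(\tilde u_\nu-v_i)_+=0$ quasi-everywhere on $K$ to conclude $\int(\tilde u_\nu-v_i)_+\,\mathrm d\nu=0$; if needed, enlarge by using $v_i\ge1$ on a neighbourhood of $K$ after replacing $v_i$ by $\min\{v_i+\varepsilon,\cdot\}$ arguments. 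Strict monotonicity then yields $\tilde u_\nu\le v_i$ a.e., hence everywhere by the essliminf property of $\tilde u_\nu$ and the continuity of $v_i$. Letting $i\to\infty$ and applying Lemma~\ref{lem-bal-cpt} gives $\tilde u_\nu\le R^1_K$, and taking lsc regularizations gives $\tilde u_\nu\le u$.

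Step four: conclusion. Lemma~\ref{lem-total-mass1} applies to $v_1=\tilde u_\nu$ and $v_2=u$, since both lie in $\cS_{\cA}(\Omega)\cap W^{1,G}_0(\Omega)\cap L^\infty$ and are ordered. It yields $\nu(\Omega)\le\mu(\Omega)$. Taking the supremum over admissible $\nu$ completes the proof.
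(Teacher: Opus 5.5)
Your argument is correct. It shares the paper's skeleton: admissibility of $\mu[\hat R^1_K]$ for $\geq$, and for $\leq$ a comparison of $u_\nu$ with the continuous obstacle solutions $R^{\psi_i}$ followed by Lemma~\ref{lem-total-mass1}. The endgame is different, though.

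The paper applies Lemma~\ref{lem-total-mass1} to the pair $v\le\hat R^{\psi_i}$ at each finite $i$. It then passes to the limit in the total masses, which requires the weak convergence $\mu[\hat R^{\psi_i}]\rightharpoonup\mu[\hat R^1_K]$ from Lemma~\ref{lem-mea-weakcon} together with the support bookkeeping in \eqref{eq-conv-mass}.

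You instead pass to the limit in the functions. From $\tilde u_\nu\le R^{\psi_i}\to R^1_K$ (Lemma~\ref{lem-bal-cpt}) and the lower semicontinuity of $\tilde u_\nu$ you get $\tilde u_\nu\le\hat R^1_K$. Lemma~\ref{lem-total-mass1} is then applied once, directly to $\tilde u_\nu\le\hat R^1_K$, which is admissible by Proposition~\ref{prop-bal-K}.

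This is slightly more economical, since no convergence of Riesz measures is needed. The paper's version only ever compares $u_\nu$ with continuous functions, and as a by-product it gives convergence of the masses $\mu[\hat R^{\psi_i}](\Omega)$. To invoke Lemma~\ref{lem-bal-cpt} you should also require $\|\psi_i\|_\infty\to1$.

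The loose spot is Step three, and there you should drop the quasi-everywhere route. It needs two facts not available in the paper at this stage: that the pairing of $\nu\in(W^{1,G}_0(\Omega))'$ with a Sobolev function is integration of its quasicontinuous representative, and that this representative of $u_\nu$ is $\le1$ q.e.

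The neighbourhood fix is the right one, and the paper builds it in by taking $\psi_i=1+\varepsilon_i$ on $K$, so that the continuous $\hat R^{\psi_i}$ exceeds $1$ on an open neighbourhood of $K$. In your setting, compare $\tilde u_\nu$ with $v_i+\varepsilon$ instead of $v_i$:
\begin{itemize}
\item $v_i+\varepsilon$ is still an $\cA$-supersolution;
\item by continuity, $v_i+\varepsilon>1\ge\tilde u_\nu$ on the open set $\{v_i>1-\varepsilon\}\supset K$;
\item hence $(\tilde u_\nu-v_i-\varepsilon)_+\in W^{1,G}_0(\Omega\setminus K)$ and the $\nu$-term vanishes trivially;
\item strict monotonicity then gives $\tilde u_\nu\le v_i+\varepsilon$ a.e., and you let $\varepsilon\to0$.
\end{itemize}
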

	
\begin{proof}
By Proposition~\ref{prop-bal-K}, the Riesz measure $\mu:=\mu[\hat{R}^1_K]$ of $\hat{R}^1_K$ is supported on $K$ and belongs to $(W^{1,G}_0(\Omega))'$. Moreover, by the weak maximum principle (see, e.g., \cite[Corollary~4.16]{Che22Gen}), we have $0 \leq \hat{R}^1_K \leq 1$ and $\mu \geq 0$. Thus, $\mu$ is admissible for the capacity $\widetilde{\mathrm{cap}}_{\cA}(K, \Omega)$, which leads to the inequality
\begin{equation*}
\widetilde{\mathrm{cap}}_{\cA}(K,\Omega) \geq \mu(K).
\end{equation*}

To prove the reverse inequality, we fix an open set $U$ such that $K \subset U \Subset \Omega$. Let $\varepsilon_i \to 0$ and choose a decreasing sequence $\{\psi_i\} \subset C_c^\infty(U)$ such that $\psi_i\to\chi_K$ pointwise in $\Omega$, $\psi_i = 1+\varepsilon_i$ on $K$, and $\psi_i \leq 1+\varepsilon_i$ in $\Omega$. It then follows from Lemma~\ref{lem-bal-cpt} that $R^{\psi_i}$ converges to $R^1_K$ pointwise in $\Omega$. Moreover, Lemma~\ref{lem-ae} shows that $\hat{R}^{\psi_i}=R^{\psi_i}$ and $\hat{R}^1_K=R^1_K$ a.e.\ in $\Omega$, and they are $\cA$-supersolutions of \eqref{eq-sec2-new}. Since $\{\psi_i\}$ is uniformly bounded, so is $\{\hat{R}^{\psi_i}\}$. Therefore, by Lemma~\ref{lem-mea-weakcon}, the sequence of measures $\{\mu[\hat{R}^{\psi_i}]\}$ converges weakly to $\mu$ as $i \to \infty$. Choose $\eta \in C^\infty_c(\Omega)$ such that $\eta=1$ on $\overline{U}$. Since $\supp\mu[\hat{R}^{\psi_i}] \subset \overline{U}$ and $\supp\mu \subset K$ by Propositions~\ref{prop-bal-u-E} and \ref{prop-bal-K}, we have
\begin{equation}\label{eq-conv-mass}
\mu[\hat{R}^{\psi_i}](\Omega) = \mu[\hat{R}^{\psi_i}](\overline{U}) = \int_{\Omega} \eta \dmu[\hat{R}^{\psi_i}] \to \int_\Omega \eta \dmu = \mu(K).
\end{equation}

Let $\nu$ be an admissible measure for $\widetilde{\mathrm{cap}}_{\cA}(K, \Omega)$ associated with $v:=u_\nu$; that is, $v \in W^{1,G}_0(\Omega)$ solving
\begin{equation*}
-\DIV\cA(x,\nabla v)=\nu \quad\text{in }\Omega
\end{equation*}
in the distributional sense, $0\leq v \leq 1$ a.e.\ in $\Omega$, and $0\leq \nu \in (W^{1,G}_0(\Omega))'$ is supported on $K$. Due to Lemmas~\ref{lem-supersol-cS} and \ref{lem-super-liminf}, we may assume that $v \in \cS_{\cA}(\Omega)$ and $0\leq v \leq 1$ in $\Omega$. Since $\hat{R}^{\psi_i}$ is continuous in $\Omega$, $\hat{R}^{\psi_i} >1$ in a neighborhood of $K$, and hence $(v-\hat{R}^{\psi_i})_+ \in W^{1,G}_0(\Omega \setminus K)$. Since $\nu$ is supported on $K$, the function $v$ is an $\cA$-solution in $\Omega \setminus K$. On the other hand, $\hat{R}^{\psi_i}$ is an $\cA$-supersolution in $\Omega \setminus K$. Hence the comparison principle in \cite[Lemma~4.3]{CK21remov} shows that $v \leq \hat{R}^{\psi_i}$ a.e.\ in $\Omega$. It thus follows from the continuity of $\hat{R}^{\psi_i}$ and the lower semicontinuity of $v$ that $v \leq \hat{R}^{\psi_i}$ pointwise in $\Omega$. By Lemma~\ref{lem-total-mass1} and Proposition~\ref{prop-bal-K}, along with the convergence \eqref{eq-conv-mass}, we obtain
\begin{equation*}
\nu(K) \leq \mu[\hat{R}^{\psi_i}](\Omega) \to \mu(K)
\end{equation*}
as $i \to \infty$. To conclude it is enough to take the supremum over all such admissible $\nu$ yielding
\begin{equation*}
\widetilde{\mathrm{cap}}_{\cA}(K,\Omega)\leq \mu(K).\qedhere
\end{equation*}
\end{proof}

Combining Propositions~\ref{prop57} and~\ref{prop-64},
we establish capacity estimates for the super-level sets of $\cA$-superharmonic functions in terms of the Riesz measure of the balayage.

\begin{lemma}\label{lem-cap-pot}
Assume that $\Omega$ is a bounded domain. Let $u \in \cS_{\cA}(\Omega)$ be nonnegative in $\Omega$. Then, there exists a constant $C>0$ such that
\begin{equation}\label{eq-cap-pot-K}
\widetilde{\mathrm{cap}}_{\cA}(\{u>\lambda\} \cap K, \Omega) \leq C \left( \lambda^{1-p}+\lambda^{\frac{1+q}{1-q}} \right) \mu[\hat{R}^u_U](\Omega)
\end{equation}
for any compact set $K$ and any open set $U$ such that $K \subset U \Subset \Omega$, and any $\lambda>1$.
\end{lemma}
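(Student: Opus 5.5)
Reduce to compact sets, compare Riesz masses by monotonicity, and conclude with Proposition~\ref{prop57} and Proposition~\ref{prop-64}.

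\textbf{Reduction to a compact set.} The set $\{u>\lambda\}\cap K$ is compact only when $\{u>\lambda\}$ is closed, which it is not; lower semicontinuity of $u$ makes it open. I will therefore work with a compact subset $K'\subset\{u>\lambda\}\cap K$. By the definition of $\widetilde{\mathrm{cap}}_{\cA}$, any admissible $\nu$ with support in $\{u>\lambda\}\cap K$ is admissible for $\supp\nu\cap K$, so it suffices to bound $\widetilde{\mathrm{cap}}_{\cA}(K',\Omega)$ uniformly over compact $K'\subset\{u>\lambda\}\cap K$. (Alternatively, Lemma~\ref{lem-cap-dual} with Choquet capacitability gives the same reduction up to constants, but the direct argument avoids the constants.)

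\textbf{Chain of estimates.} Fix such a compact $K'$. Proposition~\ref{prop57} gives $\widetilde{\mathrm{cap}}_{\cA}(K',\Omega)=\mu[\hat R^1_{K'}](\Omega)$, and Proposition~\ref{prop-64} gives
\[
\mu[\hat R^1_{K'}](\Omega)\le C\left(\lambda^{1-p}+\lambda^{\frac{1+q}{1-q}}\right)\mu[\hat R^\lambda_{K'}](\Omega).
\]
The constant $C$ there comes from \eqref{eq-mu} and \eqref{eq-mu-energy}, so it depends only on $p,q,c_1^\cA,c_2^\cA$ and not on $K'$. It remains to show
\[
\mu[\hat R^\lambda_{K'}](\Omega)\le \mu[\hat R^u_U](\Omega).
\]
Since $u>\lambda$ on $K'\subset U$, every $w\in\Phi^u_U$ satisfies $w\ge u>\lambda$ on $K'$. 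Hence $w\in\Phi^\lambda_{K'}$, which gives $R^\lambda_{K'}\le R^u_U$ and therefore $0\le\hat R^\lambda_{K'}\le\hat R^u_U$.

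\textbf{Comparison of masses.} I will set $v:=\hat R^u_U$ and $w:=\min\{v,\lambda\}$. Because $\hat R^\lambda_{K'}\le\lambda$ by Proposition~\ref{prop-bal-K}, we get $\hat R^\lambda_{K'}\le w$. Lemma~\ref{lem-R-boundary} gives $w\in W^{1,G}_0(\Omega)$, and $w$ is bounded and $\cA$-superharmonic. Proposition~\ref{prop-bal-K} gives $\hat R^\lambda_{K'}\in \cS_\cA\cap W^{1,G}_0\cap L^\infty$. Lemma~\ref{lem-total-mass1} then yields
\[
\mu[\hat R^\lambda_{K'}](\Omega)\le\mu[w](\Omega).
\]
Finally, $v$ is nonnegative, $\cA$-harmonic in $\Omega\setminus\overline U$ by Proposition~\ref{prop-bal-u-E}, and satisfies $T_k(v)\in W^{1,G}_0(\Omega)$. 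Lemma~\ref{lem-total-mass2}, applied with the compact set $\overline U$, gives $\mu[w](\Omega)=\mu[v](\Omega)$. Taking the supremum over $K'$ proves \eqref{eq-cap-pot-K}.

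\textbf{Main obstacle.} The delicate step is this last mass comparison. Lemma~\ref{lem-total-mass1} requires both functions to lie in $W^{1,G}_0$ and to be bounded, and $\hat R^u_U$ need not be bounded since $u$ may be unbounded. That is why I pass through the truncation $w=\min\{v,\lambda\}$ and invoke Lemma~\ref{lem-total-mass2}. Its hypotheses must be checked carefully: $\overline U\Subset\Omega$ is compact, $v$ is $\cA$-harmonic off $\overline U$, and the truncations of $v$ have zero boundary values.
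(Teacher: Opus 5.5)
Your proposal is correct. Its core matches the paper's argument: Proposition~\ref{prop57} identifies the dual capacity of a compact set with $\mu[\hat R^1_{K'}](\Omega)$, Proposition~\ref{prop-64} passes to the level $\lambda$, and Lemmas~\ref{lem-total-mass1} and~\ref{lem-total-mass2} compare masses after bounding $\hat R^\lambda_{K'}$ by $\min\{\hat R^u_U,\lambda\}$.

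The genuine difference is the reduction to compact sets. The paper approximates $u$ on $K$ by continuous obstacle solutions $u_i=\hat R^{\psi_i}$. It covers $\{u>\lambda\}\cap K$ by the increasing compact sets $K_i=K\cap\{u_i\ge\lambda\}$ and passes to the limit. For this it uses continuity of $\mathrm{cap}_G$ along increasing sequences (Lemma~\ref{lem-cap-limit}) together with the two-sided comparability of Lemma~\ref{lem-cap-dual}, which costs a multiplicative constant. It also needs $u_i\le R^u_U=\hat R^u_U$ from Lemma~\ref{lem-bal-open} to see that $\hat R^u_U\ge\lambda$ on $K_i$.

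You instead observe that any admissible $\nu$ for $\widetilde{\mathrm{cap}}_{\cA}(\{u>\lambda\}\cap K,\Omega)$ has a relatively closed support contained in the compact set $K$. That support is therefore a compact set $K'\subset\{u>\lambda\}\cap K$ for which $\nu$ is itself admissible, so the dual capacity is inner regular directly from its definition. On such $K'$ the inclusion $\Phi^u_U\subset\Phi^\lambda_{K'}$ is immediate because $u>\lambda$ there and $u\ge 0$. This gives $\hat R^\lambda_{K'}\le\hat R^u_U$ without Lemma~\ref{lem-bal-open}.

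Your route is shorter and avoids both the approximation machinery and the extra constants. It still needs the same uniformity as the paper: the constant in Proposition~\ref{prop-64} must depend only on $p,q,c_1^\cA,c_2^\cA$ and not on the compact set, and you checked this correctly.
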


\begin{proof}
By the strong minimum principle (Proposition~\ref{prop-min}), we may assume that $u>0$ in $\Omega$. Since $u$ is lower semicontinuous in $\Omega$, there exists an increasing sequence of smooth functions $\{\phi_i\}$ such that $\phi_i \to u$ pointwise in $\Omega$. Let $\eta \in C^\infty_c(U)$ be such that $\eta=1$ in a neighborhood of $K$ and $0\leq \eta \leq 1$, and define $\psi_i=\phi_i \eta$. Then, $\{\psi_i\}$ is an increasing sequence of smooth functions such that $\psi_i=\phi_i \nearrow u$ on $K$ and $\psi_i \leq u \chi_{U}$. Let $u_i=\hat{R}^{\psi_i}$ be the unique continuous solution to the $\mathcal{K}_{\psi_i, 0}(\Omega)$-obstacle problem; see Lemma~\ref{lem-R-obs}. Observe that
\begin{equation*}
\{u>\lambda\} \cap K \subset \textstyle\bigcup_{i=1}^\infty K_i, \quad\text{where }K_i:=K\cap \{u_i \geq \lambda\}\text{ is a compact subset of }\Omega.
\end{equation*}
Indeed, if $x \in \{u>\lambda\} \cap K$, then $\psi_i(x)>\lambda$ for sufficiently large $i$. Since $R^{\psi_i} \geq \psi_i$ and $\psi_i$ is smooth, taking the lsc-regularization gives $u_i(x) \geq \psi_i(x)$, which implies $x \in K_i$. Moreover, the sequence $\{u_i\}$ is increasing by construction. It thus follows from Lemmas~\ref{lem-cap-limit} and \ref{lem-cap-dual} that
\begin{equation*}
\widetilde{\mathrm{cap}}_{\cA}(\{u>\lambda\} \cap K, \Omega) \leq C \lim_{i \to \infty} \widetilde{\mathrm{cap}}_{\cA}(K_i, \Omega).
\end{equation*}
Furthermore, Propositions~\ref{prop57} and~\ref{prop-64} show that
\begin{equation*}
\widetilde{\mathrm{cap}}_{\cA}(K_i, \Omega) = \mu[\hat{R}^1_{K_i}](\Omega) \leq C \left( \lambda^{1-p}+\lambda^{\frac{1+q}{1-q}} \right) \mu[\hat{R}^{\lambda}_{K_i}](\Omega).
\end{equation*}

Since $\psi_i\leq u\chi_U$, the monotonicity of the r\'eduite and Lemma~\ref{lem-bal-open} imply
\begin{equation*}
u_i=R^{\psi_i}\leq R_U^u=\hat R_U^u.
\end{equation*}
Hence $\hat R_U^u\geq\lambda$ on $K_i=K \cap \{u_i\geq\lambda\}$, so $\hat R_U^u$ is admissible for $R^\lambda_{K_i}$ with the constant function $\lambda$. Therefore, we have
\begin{equation*}
\hat R^\lambda_{K_i}\leq R^\lambda_{K_i}\leq \min\{\hat R_U^u,\lambda\} = T_\lambda(\hat{R}^u_U).
\end{equation*}
Thus Lemma~\ref{lem-total-mass1}, together with Proposition~\ref{prop-bal-K} and Lemma~\ref{lem-R-boundary}, gives
\begin{equation*}
\mu[\hat R^\lambda_{K_i}](\Omega)\leq \mu[\min\{\hat R_U^u,\lambda\}](\Omega).
\end{equation*}

Combining these estimates, we arrive at
\begin{equation*}
\widetilde{\mathrm{cap}}_{\cA}(\{u>\lambda\} \cap K, \Omega) \leq C \left( \lambda^{1-p}+\lambda^{\frac{1+q}{1-q}} \right) \mu[\min\{\hat{R}^u_U, \lambda\}](\Omega).
\end{equation*}
The desired estimate \eqref{eq-cap-pot-K} now follows from Lemma~\ref{lem-total-mass2} applied to $v=\hat{R}^u_U$ with $\overline{U}$ in place of $K$; see Lemma~\ref{lem-R-boundary} and Proposition~\ref{prop-bal-K}.
\end{proof}

\subsection{\texorpdfstring{$\cA$}{A}-polar sets and \texorpdfstring{$G$}{G}-quasicontinuity}\label{sec4-7}

We now prove the capacitary negligibility of $\cA$-polar sets introduced in Definition~\ref{def-polar}, and the $G$-quasicontinuity of $\cA$-superharmonic functions. These results are the main fine potential-theoretic consequences of the capacity estimates obtained in the previous subsection.

\begin{proof}[Proof of Theorem~\ref{th-1}]
Let $E$ be an $\cA$-polar set. Then there exist an open neighborhood $\mathcal{O}$ of $E$ and an $\cA$-superharmonic function $u$ in $\mathcal{O}$ such that $E \subset F$, where $F:=\{x \in \mathcal{O}: u(x)=\infty\}$. Note that $F$ is a Borel set. Since $u$ is lower semicontinuous, we may assume that $u$ is positive in $\mathcal{O}$ (by restricting $\mathcal{O}$ to the open set $\{u>0\}$ if necessary). Take any compact subset $K$ of $F$. Then $K$ intersects only with finitely many components of $\mathcal{O}$, say $\mathcal{O}_1, \dots, \mathcal{O}_N$. Let $K_i:=K \cap \mathcal{O}_i$, then $K_i$ is a compact subset of $\mathcal{O}$. Take a bounded domain $\Omega_i$ and an open set $U_i$ satisfying $K_i \Subset U_i \Subset \Omega_i \subset \mathcal{O}_i$.

Let $\hat{R}^u_{U_i}$ be the balayage of $u$ relative to $U_i$ in $\Omega_i$. Since $K_i \subset \{u>\lambda\}$ for any $\lambda>1$, it follows from Lemmas~\ref{lem-cap-dual} and \ref{lem-cap-pot} that
\begin{equation*}
\mathrm{cap}_{G}(K_i, \Omega_i) \leq C \,\widetilde{\mathrm{cap}}_{\cA}(\{u>\lambda\} \cap K_i, \Omega_i) \leq C \left( \lambda^{1-p}+\lambda^{\frac{1+q}{1-q}} \right) \mu[\hat{R}_{U_i}^u](\Omega_i).
\end{equation*}
Since $\mu[\hat{R}^u_{U_i}]$ is a Radon measure, Proposition~\ref{prop-bal-u-E} ensures that $\mu[\hat{R}^u_{U_i}](\Omega_i)=\mu[\hat{R}^u_{U_i}](\overline{U}_i)<\infty$. Letting $\lambda \to \infty$ shows that $\mathrm{cap}_G(K_i, \Omega_i)=0$, or equivalently, $C_G(K_i)=0$ by Lemma~\ref{lem-cap-zero}. Using the subadditivity of $C_G$, we obtain $C_G(K)=0$. Since $K \subset F$ was arbitrary, it follows from \eqref{eq-Choquet} that $C_G(F)=0$. Therefore, $C_G(E) \leq C_G(F)=0$.
\end{proof}

A property is said to hold \emph{$G$-quasieverywhere} (abbreviated as $G$-q.e.) if it holds everywhere except possibly on a set of $G$-capacity zero.

\begin{definition}\label{def-quasicont}
A function $u$ is said to be \emph{$G$-quasicontinuous} in $\Omega$ if, for every $\varepsilon > 0$, there exists an open set $U \subset \Omega$ with $C_G(U) < \varepsilon$ such that $u$ is finite and continuous on $\Omega \setminus U$.
\end{definition}

It is known \cite[Theorem~17]{BHH18} that every Sobolev function in $W^{1,G}$ admits a $G$-quasicontinuous representative, and that any two such representatives coincide $G$-q.e. We are in a position to prove an analogous result for $\cA$-superharmonic functions.

\begin{proof}[Proof of Theorem~\ref{thm-quasicont}]
It is enough to prove the assertion locally. Let $\Omega' \Subset \Omega$ be a bounded domain and let $V \Subset U \Subset \Omega'$. Since $u$ is locally bounded below, we may assume that $u \geq 0$ in $U$. We first prove that $u$ is $G$-quasicontinuous in $V$.

For any integer $k>0$, the truncated function $u_k:=\min\{u, k\}$ is a bounded $\cA$-superharmonic function, and hence it belongs to $W^{1,G}(V)$ by Lemma~\ref{lem-cS-supersol}. By \cite[Proposition 4.5]{Che22Gen}, we can choose an increasing sequence of continuous $\cA$-superharmonic functions $\{v_i\}$ in $\Omega$ that converges pointwise to $u_k$ as $i \to \infty$.

Since $\{v_i\}$ is a sequence of uniformly bounded $\cA$-supersolutions to \eqref{eq-sec2-new}, Lemma~\ref{lem-uniform-bound} shows that $\{v_i\}$ is uniformly bounded in $W^{1,G}(V)$. Due to \eqref{eq:Gg}, the space $W^{1,G}$ is reflexive, and hence, up to a subsequence, $\{v_i\}$ converges to $u_k$ weakly in $W^{1,G}(V)$. Appealing to Mazur's lemma (see, e.g., \cite[Lemma~1.29]{HKM06}), for each $j \in \mathbb{N}$ we find a convex combination $w_j$ of $\{v_1, \dots, v_j\}$ such that the sequence $\{w_j\}$ converges to $u_k$ strongly in $W^{1,G}(V)$ as $j \to \infty$. Since each $v_i$ is continuous and satisfies $v_i \leq u_k$, their convex combinations $w_j$ are also continuous functions in $V$ satisfying $w_j \leq u_k$. Therefore, by \cite[Lemma~16]{BHH18}, there exists a subsequence of $\{w_j\}$ that converges to $u_k$ uniformly outside an open set of arbitrarily small $G$-capacity in $V$. This precisely means that $u_k$ is $G$-quasicontinuous in $V$.

Now, we show that $u$ is also $G$-quasicontinuous in $V$. To do so, we utilize the estimate in Lemma~\ref{lem-cap-pot}, which was the key ingredient in the proof of Theorem~\ref{th-1}. Let $\hat{R}^u_U$ be the balayage of $u$ relative to $U$ in $\Omega'$. By Lemmas~\ref{lem-cap-dual} and \ref{lem-cap-pot}, we have
\begin{equation*}
\mathrm{cap}_G(\{u>k\} \cap V, \Omega') \leq C\, \widetilde{\mathrm{cap}}_{\cA}(\{u>k\} \cap \overline{V}, \Omega') \leq C \left( k^{1-p}+k^{\frac{1+q}{1-q}} \right) \mu[\hat{R}^u_U](\Omega').
\end{equation*}
Moreover, by \cite[Theorem~27]{BHH18}, we have
\begin{equation*}
C_G(\{u>k\} \cap V) \leq C \mathrm{cap}_G(\{u>k\} \cap V, \Omega').
\end{equation*}
For a given $\varepsilon>0$, we can choose an integer $N$ sufficiently large so that $C_G(\{u>N\} \cap V) \leq \varepsilon/2$. Since $u_N$ is $G$-quasicontinuous in $V$, there exists an open set $U_N \subset V$ with $C_G(U_N)<\varepsilon/2$ such that $u_N$ is continuous on $V \setminus U_N$. Let $U=(\{u>N\} \cap V) \cup U_N$. Since $u$ is lower semicontinuous, $U$ is open. Furthermore, by subadditivity, $C_G(U) \leq C_G(\{u>N\} \cap V) + C_G(U_N)<\varepsilon$. On $V \setminus U$, it is clear to see that $u=u_N$ is continuous. This proves that $u$ is $G$-quasicontinuous in $V$.

Finally, an exhaustion argument with capacities $\varepsilon 2^{-j}$ gives the $G$-quasicontinuity of $u$ in $\Omega$.
\end{proof}

\section{Superharmonic functions are renormalized solutions}\label{sec5}

In this section, we present the proof of Theorem~\ref{thm-sup-ren}. Crucial tools for the proof are the pointwise estimates of $\cA$-superharmonic functions in terms of the Wolff potential of their Riesz measures. We recall the following result, whose proof under the additional assumption that $u$ is finite almost everywhere is given in \cite[Theorem~2]{CGZG24}. This assumption can be relaxed by using Theorem~\ref{th-1} and Lemma~\ref{lem-cap-measure}.

\begin{lemma}[Wolff potential estimates]\label{lem-Wolff}
    Let $u$ be a nonnegative $\cA$-superharmonic function in $B_{2r}(x)$, $r \in (0,1)$, with the Riesz measure $\mu=\mu[u]$. Then
    \begin{equation}\label{eq-sec2-wolff}
        \frac{1}{C_{\cW}}\left(\cW_{\mu,r}(x)-r\right)\leq u(x)\leq C_{\cW} \left(\inf_{B_r(x)}u+\cW_{\mu,r}(x)+r\right)
    \end{equation}
for some constant $C_{\cW} \geq 1$, where $\cW_{\mu, r}$ is the \emph{Wolff potential} of $\mu$ defined by
    \begin{equation*}
       \cW_{\mu,r}(x)=\int^{r}_0g^{-1}\left(\frac{\mu(B_\rho(x))}{\rho^{n-1}}\right) \mathrm{d}\rho.
    \end{equation*}
\end{lemma}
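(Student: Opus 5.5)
The plan is to reduce the statement to \cite[Theorem~2]{CGZG24}. That result gives exactly the two-sided estimate \eqref{eq-sec2-wolff}, under the single extra hypothesis that the nonnegative $\cA$-superharmonic function is finite almost everywhere in $B_{2r}(x)$. So the whole task is to show that every $u\in\cS_{\cA}(B_{2r}(x))$ is automatically finite a.e. We must also check that the Riesz measure appearing there is the same as $\mu[u]$ from Proposition~\ref{prop-Riesz}.

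\emph{Step 1: finiteness a.e.} Put $\mathcal O:=B_{2r}(x)$ and $F:=\{y\in\mathcal O: u(y)=\infty\}$. By Definition~\ref{def-polar}, $F$ is $\cA$-polar, with neighborhood $\mathcal O$ and the function $u$ itself. Theorem~\ref{th-1} then gives $C_G(F)=0$. Lemma~\ref{lem-cap-measure} yields $|F|\le c^{-1}C_G(F)=0$, so $u<\infty$ a.e. in $B_{2r}(x)$. Lemma~\ref{lem-super-int} ($g(|u|)\in L^m_{\rm loc}$) would also give local a.e. finiteness. However, the route via polar sets is what the paper announces, and it controls the full ball without a localization argument, so I would use it.

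\emph{Step 2: identification of the measure.} Once $u$ is finite a.e., $\min\{u,k\}\in W^{1,G}_{\rm loc}$ by Lemma~\ref{lem-cS-supersol}. The generalized gradient $Z_u$ from \eqref{eq-generalized-grad} is then well defined, and $g(|\nabla u|)\in L^1_{\rm loc}$ by Lemma~\ref{lem-super-int}. The measure used in \cite{CGZG24} is the distributional measure $-\DIV\cA(x,\nabla u)$ built in \cite[Proposition~3.14]{CGZG24}. In the proof of Proposition~\ref{prop-Riesz} we represented $\mu[u]$ locally by exactly this measure $\mu_U$. Both measures are characterized by \eqref{eq-riesz} on $C^\infty_c$, so they coincide by uniqueness in the Riesz representation.

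\emph{Step 3: conclusion.} With both hypotheses of \cite[Theorem~2]{CGZG24} verified, the inequality \eqref{eq-sec2-wolff} holds at every point $x$, with $C_{\cW}$ depending only on the structural data. This includes points where $u(x)=\infty$, where the lower bound is trivial. For the upper bound at such points, the cited pointwise argument still applies: it uses $\inf_{B_r(x)}u$, which is finite because $u\not\equiv\infty$ on $B_r(x)$ by Step 1.

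The only genuinely nontrivial input is Step 1, namely that polar sets have zero capacity. That fact is Theorem~\ref{th-1}, which we may assume. The main point to double-check is therefore that the proof in \cite{CGZG24} uses the finiteness hypothesis only to make $\nabla u$ and $\mu[u]$ meaningful, which Steps 1–2 supply, and not in some other way.
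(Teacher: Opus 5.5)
Your proposal is correct and follows the paper's own route. The paper invokes \cite[Theorem~2]{CGZG24} and removes its a.e.-finiteness hypothesis exactly as in your Step~1: $\{u=\infty\}$ is $\cA$-polar, hence has $G$-capacity zero by Theorem~\ref{th-1}, hence is Lebesgue-null by Lemma~\ref{lem-cap-measure}; there is no circularity, since nothing in Section~\ref{sec4} uses Lemma~\ref{lem-Wolff}, and your Step~2 identification of $\mu[u]$ with the measure of \cite[Proposition~3.14]{CGZG24} is a harmless check the paper leaves implicit.
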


We introduce an auxiliary class of functions characterized by two-sided Wolff potential bounds. For $r\in(0,1)$, $L\geq 0$, and $\Omega'\Subset\Omega$, we define a class
\begin{equation*}
    \cS_{\mu,r,L}(\Omega'):=\left\{u: C_{\cW}^{-1}\left( \cW_{\mu,r}(x)-r\right)\leq u(x) \leq L+C_{\cW}\cW_{\mu,r}(x)\text{ for all } x\in \Omega'\right\},
\end{equation*}
where $C_{\cW}\geq1$ is the constant given in Lemma~\ref{lem-Wolff}. As this class will be utilized repeatedly in subsequent arguments, it is convenient to establish a general lemma concerning its properties here.

\begin{lemma}\label{lem-Wolff-S}
Let $u$ be a nonnegative $\cA$-superharmonic function in $\Omega$ with the Riesz measure $\mu=\mu[u]$. Let $\Omega'\Subset \Omega$ and $0<r<\min\{ \dist(\Omega',\partial\Omega)/2, 1\}$. Then $u\in \cS_{\mu,r,L}(\Omega')$ for some $L>0$.
\end{lemma}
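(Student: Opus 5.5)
The plan is to apply Lemma~\ref{lem-Wolff} at each point $x\in\Omega'$ with the fixed radius $r$, and then to control the infimum term by a constant $L$ that does not depend on $x$. Since $r<\dist(\Omega',\partial\Omega)/2$, we have $B_{2r}(x)\subset\Omega$ for every $x\in\Omega'$. Moreover, $u$ restricted to $B_{2r}(x)$ is a nonnegative $\cA$-superharmonic function there, and its Riesz measure is the restriction of $\mu[u]$, because the distributional identity \eqref{eq-riesz} localizes. Hence \eqref{eq-sec2-wolff} holds at every $x\in\Omega'$ with $\mu=\mu[u]$. The lower bound $C_{\cW}^{-1}(\cW_{\mu,r}(x)-r)\le u(x)$ is then exactly the left inequality in the definition of $\cS_{\mu,r,L}(\Omega')$.

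For the upper bound, Lemma~\ref{lem-Wolff} gives
\begin{equation*}
u(x)\le C_{\cW}\Big(\inf_{B_r(x)}u+r\Big)+C_{\cW}\cW_{\mu,r}(x).
\end{equation*}
It therefore suffices to show that
\begin{equation*}
L:=C_{\cW}\Big(\sup_{x\in\Omega'}\inf_{B_r(x)}u+r\Big)<\infty,
\end{equation*}
after which one may enlarge $L$ if needed to make it positive.

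The key step is this uniform bound on the infimum over balls, and it is where some care is needed. By Lemma~\ref{lem-super-int}, $g(|u|)\in L^m_{\rm loc}(\Omega)$, so $u$ is finite almost everywhere (this is also given by Theorem~\ref{th-1} together with Lemma~\ref{lem-cap-measure}). Consequently every ball $B_r(x)$ contains points where $u<\infty$, and $\inf_{B_r(x)}u<\infty$. To make this uniform in $x$, the plan is a covering argument. The set $\overline{\Omega'}$ is compact, so it can be covered by finitely many balls $B_{r/2}(y_j)$ with $y_j\in\overline{\Omega'}$, $j=1,\dots,N$. For each $x\in\Omega'$, choose $j$ with $|x-y_j|<r/2$; then $B_{r/2}(y_j)\subset B_r(x)$, and so
\begin{equation*}
\inf_{B_r(x)}u\le\max_{1\le j\le N}\inf_{B_{r/2}(y_j)}u.
\end{equation*}
Each term on the right is finite, because $u$ is finite almost everywhere (alternatively, by Proposition~\ref{prop-dense}, which gives density of the set $\{u<\infty\}$). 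This yields a finite $L$, independent of $x$, and hence $u\in\cS_{\mu,r,L}(\Omega')$.
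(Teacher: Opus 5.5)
Your proposal is correct, and its structure matches the paper's. You read the lower bound directly off \eqref{eq-sec2-wolff}, and you reduce the upper bound to a uniform bound on $\sup_{x\in\Omega'}\inf_{B_r(x)}u$, taking $L:=C_{\cW}(\sup_{x}\inf_{B_r(x)}u+r)$. The only difference is how you get that uniform bound.

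The paper argues quantitatively. By Lemma~\ref{lem-super-int}, $u\in L^\gamma_{\rm loc}(\Omega)$ for some $\gamma>0$, so for every $y\in\Omega'$
\[
\inf_{B_r(y)}u\le\Big(\fint_{B_r(y)}|u|^\gamma\dx\Big)^{1/\gamma}\le C\Big(r^{-n}\int_{\Omega'+B_r(0)}|u|^\gamma\dx\Big)^{1/\gamma},
\]
which is finite and clearly independent of $y$.

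You instead use compactness. Finitely many balls $B_{r/2}(y_j)$ cover $\overline{\Omega'}$, and for each $x$ one of them lies inside $B_r(x)$. Each $\inf_{B_{r/2}(y_j)}u$ is finite simply because $\{u<\infty\}$ is dense (Proposition~\ref{prop-dense}). This is the same as noting that $x\mapsto\inf_{B_r(x)}u$ is upper semicontinuous and finite on the compact set $\overline{\Omega'}$.

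What each route buys:
\begin{itemize}
\item Yours is more elementary. It needs no integrability of $u$, only density of the set where $u$ is finite.
\item The paper's gives an explicit $L$ controlled by $r$ and $\|u\|_{L^\gamma(\Omega'+B_r(0))}$. That bound stays uniform over any family of $\cA$-superharmonic functions with uniformly bounded local $L^\gamma$ norms, whereas the covering argument gives no such quantitative control.
\end{itemize}
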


\begin{proof}
The Wolff potential estimate~\eqref{eq-sec2-wolff} immediately yields the first inequality in the definition of $\cS_{\mu,r, L}$. To establish the second inequality, it suffices to find an upper bound for $\inf_{B_r(y)}u$ for any  $y\in \Omega'$. This follows directly from Lemma~\ref{lem-super-int}, which guarantees the existence of a constant $\gamma>0$ such that for all $y\in \Omega'$, it holds that
    \begin{equation*}
        \inf_{B_r(y)} u\leq \left(\fint_{B_r(y)}|u|^\gamma\dx\right)^{1/\gamma}\leq C \left(r^{-n} \int_{\Omega'+B_{r}(0)}|u|^\gamma\dx\right)^{1/\gamma}<\infty.
    \end{equation*}
This completes the proof.
\end{proof}

A crucial ingredient in our argument is the fact that the singular measure $\mu_s$ is concentrated on the set $\{u=\infty\}$ not only for renormalized solutions (as noted in Remark~\ref{rmk-ren}), but also for arbitrary $\cA$-superharmonic functions. We establish this in the following proposition.

\begin{proposition}\label{prop-mu-s}
Let $u$ be an $\cA$-superharmonic function in $\Omega$ with the Riesz measure $\mu=\mu[u]$. Then $\mu_s(\{u<\infty\})=0$.
\end{proposition}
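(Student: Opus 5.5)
We argue by showing that $\mu_s$ does not charge the set $\{u\le k\}$ for any fixed level $k$, and then exhaust $\{u<\infty\}=\bigcup_k\{u\le k\}$. The statement is local, so we fix a domain $\Omega'\Subset\Omega$. Since $u$ is lower semicontinuous and locally bounded below, we may add a constant and assume $u\ge 0$ near $\overline{\Omega'}$. By the definition of the decomposition, there is a Borel set $F$ of $G$-capacity zero with $\mu_s(\Omega\setminus F)=0$. It therefore suffices to prove $\mu(F\cap\{u\le k\}\cap\Omega')=0$ for each $k$.

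The tool we want is a comparison of Riesz measures on the sublevel set $\{u<k\}$, which we expect to be robust without any homogeneity. Set $u_k=\min\{u,k\}$. By Lemma~\ref{lem-cS-supersol}, $u_k\in W^{1,G}_{\rm loc}$ is an $\cA$-supersolution, and its Riesz measure $\mu[u_k]$ lies in $(W^{1,G}_0(\Omega''))'$ locally, since it is generated by the $L^{G^*}$ field $\cA(x,\nabla u_k)$. A nonnegative measure in the dual of $W^{1,G}_0$ does not charge sets of $G$-capacity zero. This is the standard fact that such measures are diffuse: one tests with capacitary functions of a compact $K$ with $\mathrm{cap}_G(K)=0$, using the dual capacity comparison of Lemma~\ref{lem-cap-dual}, or arguing directly. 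Hence $\mu[u_k](F)=0$.

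Next we compare $\mu[u]$ and $\mu[u_{k+1}]$ on the open set $\{u>k\}$'s complement, or more precisely on $\{u<k+1\}$. Since $u$ is lower semicontinuous, $\{u\le k\}$ is closed in $\Omega$ but not open, so the comparison has to be done with test functions. For $\varphi\in C_c^\infty(\Omega')$ with $\varphi\ge0$, we test \eqref{eq-riesz} for $u$ against $\varphi\,h_\varepsilon(u)$, where $h_\varepsilon(t)=\min\{1,(k+1-t)_+/\varepsilon\}$ for $\varepsilon\le 1$. This is justified by approximation, since $h_\varepsilon(u)$ only sees $T_{k+1}(u)\in W^{1,G}_{\rm loc}$; it requires extending \eqref{eq-riesz} to such test functions, and we use quasicontinuity (Theorem~\ref{thm-quasicont}) to give $h_\varepsilon(u)$ a meaning $\mu$-a.e. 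The identity $\nabla u=\nabla u_{k+1}$ a.e.\ on $\{u<k+1\}$ then gives
\[
\int \varphi h_\varepsilon(u)\dmu[u]=\int\cA(x,\nabla u_{k+1})\cdot\nabla(\varphi h_\varepsilon(u_{k+1}))\dx=\int\varphi h_\varepsilon(u_{k+1})\dmu[u_{k+1}],
\]
where the last equality again needs the $(W^{1,G}_0)'$ pairing with quasicontinuous representatives. Letting $\varepsilon\to0$, we obtain $\mu[u]\lfloor\{u< k+1\}\le\mu[u_{k+1}]$, at least when restricted to the support of $\varphi$. Therefore $\mu(F\cap\{u\le k\}\cap\Omega')\le\mu[u_{k+1}](F)=0$. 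Summing over $k$ yields $\mu_s(\{u<\infty\})=0$, because $\mu_s\le\mu$ and $\mu_s$ lives on $F$.

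The main obstacle is the rigorous justification of testing the distributional identity \eqref{eq-riesz}, which is a priori only for smooth $\varphi$ and with a measure that is not in the dual space, against $\varphi h_\varepsilon(u)$. The difficulty is the singular part: pointwise values of $h_\varepsilon(u)$ on $F$ are exactly what matter there. We would try to avoid this by an approximation. Replace $u$ near $\Omega'$ by balayages $\hat R^{u}_U$ with $T_\ell(\hat R^u_U)\in W^{1,G}_0$ (Lemma~\ref{lem-R-boundary}), mollify $h_\varepsilon(u)$ through the continuous approximants of $u_{k+1}$ (as in the proof of Theorem~\ref{thm-quasicont}), and pass to the limit using weak convergence of Riesz measures (Lemma~\ref{lem-mea-weakcon}). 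Here lower semicontinuity of $u$ is essential: it makes $\{u>k+1-\varepsilon\}$ open, so that a smooth cutoff equal to one on a closed set inside $\{u\le k\}$ and supported in $\{u<k+1\}$ can stand in for $h_\varepsilon(u)$. An alternative route, if this becomes messy, is to combine the Wolff lower bound of Lemma~\ref{lem-Wolff} with the fact that $F$ has zero capacity. For $\mu$-a.e.\ point of $F$ the singular part has infinite upper density relative to capacity, so $\cW_{\mu,r}=\infty$ $\mu_s$-a.e., and then $u=\infty$ $\mu_s$-a.e.\ by \eqref{eq-sec2-wolff}. However, the density statement itself seems harder to verify in the Orlicz setting.
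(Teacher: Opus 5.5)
Your reduction to showing $\mu(F\cap\{u\le k\}\cap\Omega')=0$ matches the paper's. So does your last step, that a nonnegative measure in $(W^{1,G}_0)'$ cannot charge a set of $G$-capacity zero. The genuine gap is the comparison $\mu[u]\lfloor_{\{u<k+1\}}\le\mu[u_{k+1}]$.

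Testing \eqref{eq-riesz} with $\varphi h_\varepsilon(u)$ is not a technical extension of the distributional identity. The identity $\int\cA(x,\nabla u)\cdot\nabla(h(u)\varphi)\dx=\int h(u)\varphi\dmu$, for Lipschitz $h$ with compactly supported $h'$, is exactly the renormalized formulation \eqref{eq-unified-ren}, i.e.\ the content of Theorem~\ref{thm-sup-ren}. In the paper that theorem is proved using the very proposition you are proving, through Corollary~\ref{cor-rhs-identification} and the choice of $K_\varepsilon\subset\{u=\infty\}$. Quasicontinuity does not help, because it fixes $h_\varepsilon(u)$ only up to sets of zero capacity, and $F$ is such a set.

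Your proposed workaround also fails. Lower semicontinuity makes $\{u\le k\}$ closed, but it does not make $\{u<k+1\}$ a neighbourhood of $\{u\le k\}$; that would need upper semicontinuity. An $\cA$-superharmonic function can in general be finite at a point yet unbounded in every neighbourhood of it, so a smooth cutoff equal to $1$ on $\{u\le k\}$ and supported in $\{u<k+1\}$ need not exist. Approximating by $\mu[u_m]\rightharpoonup\mu$ fails for a related reason: weak convergence only gives $\mu(K)\le\liminf_m\mu[u_m](O)$ for open $O\supset K$. The mass of $\mu[u_m]$ carried by $O\cap\{u\ge k+1\}$ is not controlled uniformly in $m$, and it is exactly this mass that could concentrate on $F\cap\{u\le k\}$ in the limit.

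The missing ingredient is a quantitative link between finiteness of $u$ and the energy of $\mu$. Your alternative route points at this, but no density statement is needed. The paper takes a compact $K\subset F\cap\{u<k\}$ and normalizes $u+a\ge0$ near $K$. It then combines the lower Wolff bound \eqref{eq-sec2-wolff} with monotonicity of the Wolff potential in the measure, giving $\cW_{\mu\lfloor_{K},r}\le\cW_{\mu,r}\le C_{\cW}(k+a)+1$ on $K$. Hence $\int\cW_{\mu\lfloor_{K},r}\,\mathrm{d}(\mu\lfloor_{K})\le C\mu(K)<\infty$. The Wolff--Hedberg theorem \cite[Theorem~3]{CGZG24} then yields $\mu\lfloor_{K}\in(W^{1,G}_0(\Omega))'$. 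Testing with $\varphi_j\eta$, where $\varphi_j\ge1$ near $K$ and $\|\varphi_j\eta\|_{W^{1,G}}\to0$, gives $\mu(K)=0$.

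Restricting $\mu$ to $K$ is the key move. It turns the pointwise bound $u\le k$ into a finite-energy statement for a measure that still sees the singular part. It also never requires testing the equation for $u$ against functions of $u$.
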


Throughout the proof below and in what follows, for a given measure $\mu$ and a $\mu$-measurable set $E \subset \rn$, we denote the restriction of $\mu$ to $E$ by $\mu\lfloor_{E}$.

\begin{proof}
By the definition of $\mu_s$, there exists a Borel set $E\subset\Omega$ of $G$-capacity zero such that $\mu_{s}(\Omega\setminus E)=0$. For each integer $k\geq 1$, we define $E_{k}=E\cap\{u<k\}$. Since
\begin{equation*}
\mu_s(\{u<\infty\}) \leq \mu_s(\Omega\setminus E)+\sum_{k=1}^{\infty}\mu_s(E_k),
\end{equation*}
it suffices to prove that $\mu_s(E_k)=0$ for all $k \geq 1$. By the inner regularity of Radon measures, this reduces to showing that for a fixed $k\geq1$, $\mu_s(K)=0$ for every compact subset $K \subset E_k$.

Fix a compact subset $K$ of $E_k$. Choose $0<r<\min\{\dist(K, \partial \Omega)/2, 1\}$. Since $u$ is locally bounded below, we may choose $a>0$ such that $u+a \geq 0$ in a neighborhood of $\{x: \dist(x, K)<2r\}$. The first inequality in the Wolff potential estimate \eqref{eq-sec2-wolff} then shows that for every $x \in K$ it holds
\begin{equation*}
\cW_{\mu\lfloor_{K}, r}(x) \leq \cW_{\mu, r}(x) \leq C_{\cW}(u(x)+a)+r \leq C_{\cW}(k+a)+1.
\end{equation*}
Hence
\begin{equation*}
\int_{\Omega} \cW_{\mu\lfloor_{K}, r} \,\mathrm{d}(\mu\lfloor_K) \leq C \mu(K)<\infty.
\end{equation*}
By the Wolff--Hedberg Theorem \cite[Theorem 3]{CGZG24}, this implies $\mu\lfloor_K \in (W^{1,G}_0(\Omega))'$.

Since $C_G(K)=0$, there exists a sequence of functions $\{\varphi_j\} \subset C^\infty_c(\rn)$ such that $\varphi_j \geq 1$ in a neighborhood of $K$ and $\|\varphi_j\|_{W^{1,G}(\rn)} \to 0$ as $j \to \infty$. Take a cutoff function $\eta \in C^\infty_c(\Omega)$ such that $\eta=1$ on $K$. Then $\varphi_j \eta \geq 1$ on $K$, $\varphi_j \eta \in C^\infty_c(\Omega)$, and $\|\varphi_j \eta\|_{W^{1,G}(\Omega)} \to 0$ as $j \to \infty$. Thus,
\begin{equation*}
\mu(K) = \mu\lfloor_K(K) \leq \int_\Omega \varphi_j\eta \,\mathrm{d}(\mu\lfloor_K) \leq \|\varphi_j\eta\|_{W^{1,G}(\Omega)} \|\mu\lfloor_K\|_{(W^{1,G}_0(\Omega))'} \to 0.
\end{equation*}
Consequently, $\mu_s(K)=0$.
\end{proof}

Proposition~\ref{prop-mu-s} has two corollaries.

\begin{corollary}\label{cor-rhs-identification}
Let $u$ be an $\cA$-superharmonic function in $\Omega$ with the Riesz measure $\mu=\mu[u]$. Then, for every $\varphi\in C^\infty_c(\Omega)$ and every $h\in W^{1,\infty}(\mathbb R)$ such that $h'$ has compact support,
\begin{equation*}
\int_\Omega h(u)\varphi\dmu=\int_\Omega h(u)\varphi\dmu_0+h(\infty)\int_\Omega\varphi\dmu_s.
\end{equation*}
Here we use the convention $h(u)=h(\infty)$ on $\{u=\infty\}$.
\end{corollary}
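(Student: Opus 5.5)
The plan is to split $\mu=\mu_0+\mu_s$ and compute each part separately. For $\mu_s$, Proposition~\ref{prop-mu-s} says $\mu_s$ is concentrated on $\{u=\infty\}$. With the convention $h(u)=h(\infty)$ there, this gives
\begin{equation*}
\int_\Omega h(u)\varphi\dmu_s=\int_{\{u=\infty\}}h(\infty)\varphi\dmu_s=h(\infty)\int_\Omega\varphi\dmu_s .
\end{equation*}
It then remains to check that $\int_\Omega h(u)\varphi\dmu=\int_\Omega h(u)\varphi\dmu_0+\int_\Omega h(u)\varphi\dmu_s$. This follows from linearity of the integral, once each term is shown to be well defined and finite.

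For well-definedness, first note that $u$ is lower semicontinuous, hence Borel. Also $h$ is bounded and continuous on $\R$ with a limit at $\infty$, so $h(u)$ (with $h(\infty)$ on $\{u=\infty\}$) is a bounded Borel function. Since $\varphi$ is bounded with compact support and $\mu_0,\mu_s$ are locally finite, every integral converges absolutely.

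The only point needing care is the $\mu_0$-part, where we want a representative-independent meaning. By Theorem~\ref{th-1}, $\{u=\infty\}$ has $G$-capacity zero, so $\mu_0(\{u=\infty\})=0$. Hence the value of $h(u)$ there is irrelevant for $\mu_0$. Theorem~\ref{thm-quasicont} shows that $u$ is $G$-quasicontinuous, so integrating against the diffuse part does not depend on the choice among quasi-everywhere equal representatives. The main obstacle is really just Proposition~\ref{prop-mu-s} itself, which is already available, so I expect the proof to be a short bookkeeping argument.
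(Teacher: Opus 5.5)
Your proposal is correct and matches the paper, which states this corollary as an immediate consequence of Proposition~\ref{prop-mu-s}: $\mu_s$ is concentrated on $\{u=\infty\}$, where $h(u)=h(\infty)$, and the identity then follows from $\mu=\mu_0+\mu_s$ and linearity. Your extra remarks ($\mu_0(\{u=\infty\})=0$ via Theorem~\ref{th-1}, and $G$-quasicontinuity) are true but not needed here, since $u$ is a fixed Borel function and all the integrals are defined pointwise.
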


\begin{corollary}\label{cor-ren-unified}
Let $u$ be a renormalized solution to \eqref{eq:main} with a nonnegative Radon measure $\mu$, and let $\tilde{u}$ be the $\cA$-superharmonic representative of $u$ given by Theorem~\ref{thm-ren-sup}. Then
\begin{equation}\label{eq-unified-ren}
\int_\Omega \cA(x,\nabla \tilde{u}) \cdot \nabla(h(\tilde{u})\varphi) \dx = \int_\Omega h(\tilde{u})\varphi \dmu
\end{equation}
for every $\varphi\in C^\infty_c(\Omega)$ and every $h\in W^{1,\infty}(\mathbb R)$ such that $h'$ has compact support.
\end{corollary}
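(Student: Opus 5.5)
The plan is to start from the renormalized formulation \eqref{eq-renormalized-intro} satisfied by $u$ and pass it to $\tilde u$ without changing the left-hand side. Then I will use Corollary~\ref{cor-rhs-identification} to merge the two terms on the right into a single integral against $\mu$.

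For the left-hand side, Theorem~\ref{thm-ren-sup} gives $\tilde u=u$ a.e., so $T_k(\tilde u)=T_k(u)$ a.e. for every $k$. By uniqueness of the generalized gradient in \eqref{eq-generalized-grad}, this yields $\nabla\tilde u=\nabla u$ a.e. In addition, $h(\tilde u)=h(u)$ a.e. and $h'(\tilde u)\nabla\tilde u=h'(u)\nabla u$ a.e., because $h'$ vanishes outside $[-k,k]$. The left-hand side of \eqref{eq-renormalized-intro} involves only Lebesgue integrals, so it is unchanged when $u$ is replaced by $\tilde u$.

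The right-hand side needs more care, since $\mu_0$ may charge Lebesgue-null sets and $h(u)\varphi\,d\mu_0$ a priori depends on the representative. I would argue as follows.

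\textbf{Step 1: identify the measures.} I first show that $\mu[\tilde u]=\mu$. Taking $h$ with $h=1$ on $[-k,\infty)$, which is admissible after cutting off at $-\infty$ using the local lower bound from Proposition~\ref{prop:bound2}, gives $\int \cA(x,\nabla u)\cdot\nabla\varphi\,dx=\int\varphi\,d\mu$ on compact sets where $u>-k$ a.e. This is exactly the distributional equation \eqref{eq-riesz} for $\tilde u$, so uniqueness in Proposition~\ref{prop-Riesz} gives $\mu[\tilde u]=\mu$. Consequently $\mu_0$ and $\mu_s$ are the diffuse and singular parts of $\mu[\tilde u]$.

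\textbf{Step 2: replace $u$ by $\tilde u$ in the $\mu_0$ term.} I need $h(u)=h(\tilde u)$ $\mu_0$-a.e., and this is the main obstacle. The standard theory of renormalized solutions (as in \cite{DMOP99,Chl23}) interprets $h(u)$ in \eqref{eq-renormalized-intro} via the $G$-quasicontinuous representative of $h(u)=h(T_k u)\in W^{1,G}_{\rm loc}$. By Theorem~\ref{thm-quasicont}, $\tilde u$ is $G$-quasicontinuous, and hence so is $h(\tilde u)$, since $h$ is Lipschitz and $\tilde u$ is finite q.e. by Theorem~\ref{th-1}. Two quasicontinuous functions that agree a.e. agree $G$-q.e. by \cite[Theorem~17]{BHH18}. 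Because $\mu_0\ll C_G$, they then agree $\mu_0$-a.e. So the right-hand side becomes $\int h(\tilde u)\varphi\,d\mu_0+h(\infty)\int\varphi\,d\mu_s$.

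\textbf{Step 3: merge the terms.} Applying Corollary~\ref{cor-rhs-identification} to $\tilde u$ with Riesz measure $\mu$ converts this expression into $\int h(\tilde u)\varphi\,d\mu$, which yields \eqref{eq-unified-ren}.
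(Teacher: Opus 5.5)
Your proposal is correct and follows essentially the same route as the paper. The paper also identifies $\mu[\tilde u]=\mu$, uses uniqueness of $G$-quasicontinuous representatives together with $\mu_0\ll C_G$ to replace $h(u)$ by $h(\tilde u)$ in the $\mu_0$-term, and then merges the two right-hand terms via Corollary~\ref{cor-rhs-identification}. One simplification: in Step~1 you can take $h\equiv 1$ directly, since $h'\equiv 0$ and $h(\infty)=1$, so the detour through Proposition~\ref{prop:bound2} is unnecessary.
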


\begin{proof}
We first observe that the Riesz measure of $\tilde{u}$ is $\mu$. Indeed, taking $h\equiv 1$ in the renormalized formulation \eqref{eq-renormalized-intro} for $u$ and observing that $u=\tilde{u}$ a.e.\ and $\nabla u=\nabla \tilde{u}$ a.e., we obtain
\begin{equation*}
\int_\Omega \cA(x, \nabla \tilde{u}) \cdot \nabla \varphi \dx = \int_\Omega \varphi \dmu \quad\text{for all }\varphi \in C^\infty_c(\Omega).
\end{equation*}
Thus, $\mu[\tilde{u}]=\mu$.

Let now $h\in W^{1,\infty}(\mathbb R)$ be such that $h'$ has compact support, and let $\varphi\in C^\infty_c(\Omega)$. Choose $k>0$ such that $\supp h' \subset [-k, k]$. Then $h(t)=h(T_k(t))$ for all $t \in \mathbb{R}$. Since $T_k(u)=T_k(\tilde{u})$ a.e.\ in $\Omega$ and both sides belong to $W^{1,G}_{\rm loc}(\Omega)$, the uniqueness of $G$-quasicontinuous representative of Sobolev functions \cite[Theorem~17]{BHH18} implies that their $G$-quasicontinuous representatives coincide $G$-q.e. Hence $h(u)=h(\tilde{u})$ $G$-q.e.\ in $\Omega$. Since $\mu_0 \ll C_G$, we have
\begin{equation*}
\int_\Omega h(u)\varphi \dmu_0 = \int_\Omega h(\tilde{u})\varphi \dmu_0.
\end{equation*}
Using the renormalized formulation for $u$, together with $u=\tilde{u}$ a.e.\ and $\nabla u= \nabla \tilde{u}$ a.e., we therefore get
\begin{equation*}
\int_\Omega \cA(x,\nabla \tilde{u}) \cdot \nabla(h(\tilde{u})\varphi) \dx = \int_\Omega h(\tilde{u})\varphi \dmu_0 + h(\infty) \int_\Omega \varphi \dmu_s.
\end{equation*}
Applying Corollary~\ref{cor-rhs-identification} to $\tilde{u}$ identifies this split right-hand side with the single integral against $\mu$.
\end{proof}

Before we give the proof of Theorem~\ref{thm-sup-ren}, let us present the following energy bound for solutions to Dirichlet problems with zero boundary data.

\begin{lemma}\label{lem-energy-lambda}
Let $\Omega' \Subset \Omega$ be an open set, and let $\mu$ be a nonnegative bounded Radon measure with $K:=\supp\mu \Subset \Omega'$. Suppose that $u$ is a nonnegative $\cA$-superharmonic renormalized solution of \eqref{eq:main} in $\Omega$ (given by Corollary~\ref{cor-DP}), which satisfies $T_k(u) \in W^{1,G}_0(\Omega)$ for all $k>0$. Then, for any $0<r<\min{\{\dist(\Omega', \partial \Omega)/2, \dist(K, \partial \Omega'), 1\}}$, there exist constants $L, C>0$ such that
\begin{equation*}
u \in \cS_{\mu,r,L}(\Omega),
\end{equation*}
and 
for every $\lambda>L$ it holds
\begin{equation*}
\int_{\Omega} G(|\nabla (\min\{u, 2\lambda\}-\lambda)_+|) \dx\leq C \lambda \mu\left(\left\{\cW_{\mu,r}>\frac{\lambda}{L}\right\} \right).
\end{equation*} 
\end{lemma}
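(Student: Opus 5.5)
The plan is to fix $L$ first from pointwise bounds on $u$, and then obtain the energy estimate by testing the equation with $h(u)$, where $h(t):=(\min\{t,2\lambda\}-\lambda)_+$. The choice $\lambda>L$ will make $h(u)$ compactly supported in $\Omega$.

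\textbf{Step 1: $u\in\cS_{\mu,r,L}(\Omega)$.}
\begin{itemize}
\item The lower inequality $C_\cW^{-1}(\cW_{\mu,r}(x)-r)\le u(x)$ follows from Lemma~\ref{lem-Wolff} wherever $B_{2r}(x)\subset\Omega$.
\item At points with $\dist(x,K)\ge r$ we have $\mu(B_\rho(x))=0$ for all $\rho<r$. Hence $\cW_{\mu,r}(x)=0$, and the lower inequality is trivial there because $u\ge0$.
\item For the upper bound on $\Omega'$ (in particular on $\{\dist(\cdot,K)\le r\}\subset\Omega'$), argue as in Lemma~\ref{lem-Wolff-S}. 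Combine \eqref{eq-sec2-wolff} with the integrability of Lemma~\ref{lem-super-int} to bound $\inf_{B_r(x)}u$ uniformly in $x$. This gives $u\le L_0+C_\cW\cW_{\mu,r}$ on $\Omega'$.
\item On the compact set $S:=\{\dist(\cdot,K)=r\}$ we have $\cW_{\mu,r}=0$, so $u\le L_0$ on $S$.
\item Now use the comparison principle (\cite[Lemma~4.3]{CK21remov}) in $D:=\Omega\cap\{\dist(\cdot,K)>r\}$. There $u$ is $\cA$-harmonic by Proposition~\ref{prop-bal-u-E}, and $(T_k(u)-L_0)_+$ vanishes near $S$ and lies in $W^{1,G}_0(\Omega)$. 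This gives $u\le L_0$ in $D$, hence $u\le L_0\le L+C_\cW\cW_{\mu,r}$ on all of $\Omega$.
\item Finally choose $L\ge\max\{2L_0,2C_\cW\}$, for the reason explained in Step 3.
\end{itemize}

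\textbf{Step 2: testing the equation.} Fix $\lambda>L$. By Step 1, $u\le L_0<\lambda$ outside the compact set $\{\dist(\cdot,K)\le r\}$, so $h(u)$ vanishes there. Moreover $h(u)=(T_{2\lambda}(u)-\lambda)_+\in W^{1,G}_0(\Omega)$ has compact support in $\Omega$. Apply Corollary~\ref{cor-ren-unified} with this $h$ (note $h'$ has compact support) and with $\varphi\in C^\infty_c(\Omega)$ chosen equal to $1$ on a neighbourhood of $\supp h(u)$. The term containing $\nabla\varphi$ then vanishes, and we get
\[
\int_{\{\lambda<u<2\lambda\}}\cA(x,\nabla u)\cdot\nabla u\dx=\int_\Omega h(u)\dmu\le\lambda\,\mu(\{u>\lambda\}\cap K).
\]
Here $u$ is the $\cA$-superharmonic representative, which is the one for which Corollary~\ref{cor-ren-unified} is valid, with the convention $h(\infty)=\lambda$. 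By the coercivity in \eqref{ass-op}, the left-hand side dominates $c_1^\cA\int_\Omega G(|\nabla(\min\{u,2\lambda\}-\lambda)_+|)\dx$.

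\textbf{Step 3: converting the level set.} On $K\subset\Omega'$, the inequality $u>\lambda$ together with $u\le L_0+C_\cW\cW_{\mu,r}$ gives $\cW_{\mu,r}>(\lambda-L_0)/C_\cW$. Since $\lambda>L\ge 2L_0$ and $L\ge 2C_\cW$, we have
\[
\frac{\lambda-L_0}{C_\cW}\ge\frac{\lambda}{2C_\cW}\ge\frac{\lambda}{L}.
\]
Hence $\{u>\lambda\}\cap K\subset\{\cW_{\mu,r}>\lambda/L\}$, and the claimed estimate follows from Step 2.

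\textbf{Main obstacle.} The delicate point is Step 1: obtaining a uniform bound $u\le L_0$ on all of $\Omega$, up to the boundary, using only $T_k(u)\in W^{1,G}_0(\Omega)$. The comparison step in $D$ must be justified carefully, using truncations and the lower semicontinuity of $u$ to pass from an a.e.\ bound to a pointwise one. This bound is exactly what lets us take $\varphi\equiv1$ near $\supp h(u)$ and thereby avoid any boundary flux term.
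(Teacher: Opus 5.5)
This is essentially the paper's proof. You use Lemma~\ref{lem-Wolff-S} on $\Omega'$ and an energy/comparison argument based on $T_k(u)\in W^{1,G}_0(\Omega)$ to bound $u$ outside; you cut along $\{\dist(\cdot,K)=r\}$ where the paper cuts along $\partial\Omega'$. You then test Corollary~\ref{cor-ren-unified} with $h(t)=(\min\{t,2\lambda\}-\lambda)_+$ and convert $\{u>\lambda\}$ into a Wolff superlevel set with the same choice $L\geq\max\{2L_0,2C_\cW\}$. Your choice of $\varphi\equiv1$ near the compact support of $h(u)$ is a slightly more direct substitute for the paper's splitting into $\varphi$ and $1-\varphi$.

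One justification needs fixing. Proposition~\ref{prop-bal-u-E} concerns balayages and does not apply to $u$, so it cannot give $\cA$-harmonicity of $u$ off $K$. Argue instead, as the paper does: Lemma~\ref{lem-Wolff} makes $u$ locally bounded in $\Omega\setminus K$, so $u\in W^{1,G}_{\rm loc}(\Omega\setminus K)$ by Lemma~\ref{lem-cS-supersol}, and $\mu[u]=\mu$ vanishes there. The resulting continuity of $u$ near $S$ is also what lets $(T_k(u)-m)_+$ vanish near $S$. So run the comparison at levels $m>L_0$ and then let $m\searrow L_0$, rather than at $L_0$ itself.
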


\begin{proof}
It follows from Lemma~\ref{lem-Wolff-S} that there exists $L_1>0$ such that $u\in \cS_{\mu,r,L_1}(\Omega')$. By the Wolff potential estimate, $u$ is locally bounded in $\Omega \setminus K$. Hence, by Lemma~\ref{lem-cS-supersol}, $u \in W^{1,G}_{\rm loc}(\Omega \setminus K)$. Taking $h \equiv 1$ in \eqref{eq-unified-ren} shows that $u$ is an $\cA$-solution in $\Omega \setminus K$. Thus, we may assume that $u$ is $\cA$-harmonic in $\Omega \setminus K$. In particular, $u$ is continuous in a neighborhood of $\partial \Omega'$. Set
\begin{equation*}
L_2:= \sup_{\partial \Omega'}u < \infty.
\end{equation*}
If $m>L_2$, then the function $T_k((u-m)_+)$ vanishes in a neighborhood of $\partial \Omega'$ for every $k>0$. Moreover, since $T_k(u) \in W^{1,G}_0(\Omega)$ for all $k>0$, we have $T_k((u-m)_+) \in W^{1,G}_0(\Omega \setminus \overline{\Omega'})$. Using $T_k((u-m)_+)$ as a test function and applying \eqref{ass-op}, we obtain
\begin{equation*}
0 = \int_{\Omega \setminus \overline{\Omega'}} \cA(x, \nabla u) \cdot \nabla T_k((u-m)_+) \dx \geq c_1^{\cA} \int_{\Omega \setminus \overline{\Omega'}} G\left( |\nabla T_k((u-m)_+)| \right) \dx.
\end{equation*}
This implies that $T_k((u-m)_+)=0$ a.e.\ in $\Omega \setminus \overline{\Omega'}$. Letting $k \to \infty$ and then $m \searrow L_2$ shows that
\begin{equation*}
u \leq L_2 \quad\text{in }\Omega \setminus \Omega'.
\end{equation*}
Now, let $x \in \Omega \setminus \Omega'$. Since $r<\dist(K, \partial \Omega')$, we have $\cW_{\mu, r}(x)=0$. Thus, we obtain
\begin{equation*}
C_{\cW}^{-1} (\cW_{\mu, r}(x) - r) = -\frac{r}{C_{\cW}} \leq u(x) \leq L_2=L_2+C_{\cW} \cW_{\mu, r}(x).
\end{equation*}
If we set $L_0=\max\{L_1, L_2\}$, then $u \in \cS_{\mu, r, L}(\Omega)$ for any $L \geq L_0$. We now fix $L:= 2C_{\cW}\max\{L_0, 1\}$. For $\lambda > L$, we define
\[h(t) = (\min\{t, 2\lambda\}-\lambda)_+,\]
which is clearly admissible in \eqref{eq-unified-ren}, as $h$ is Lipschitz continuous and $h'$ has compact support. Note that $\nabla h(u)=\chi_{\{\lambda<u<2\lambda\}} \nabla u$ a.e.\ in $\Omega$. Let $\varphi \in C^\infty_c(\Omega)$ be a nonnegative function such that $\varphi=1$ in a neighborhood of $K$. Applying Corollary~\ref{cor-ren-unified}, we have
\begin{equation*}
\int_{\Omega} \cA(x,\nabla u) \cdot \nabla(h(u)\varphi) \dx = \int_{\Omega} h(u)\varphi \dmu = \int_{\Omega} h(u) \dmu.
\end{equation*}
Since $1-\varphi$ vanishes in a neighborhood of $K$, the function $h(u)(1-\varphi)$ belongs to $W^{1,G}_0(\Omega \setminus K)$. Since $u$ is $\cA$-harmonic in $\Omega \setminus K$, we obtain
\begin{equation*}
\int_{\Omega} \cA(x,\nabla u) \cdot \nabla(h(u)(1-\varphi)) \dx = 0.
\end{equation*}
Combining these equalities and using \eqref{ass-op}, we have
\begin{equation*}
c_1^{\cA} \int_{\Omega} G(|\nabla h(u)|) \dx \leq \int_{\Omega} \cA(x, \nabla u) \cdot \nabla h(u) \dx = \int_{\Omega} h(u) \dmu.
\end{equation*}

Note that $\{h(u) \neq 0\} = \{u>\lambda\}$. Since $u \in \cS_{\mu, r, L_0}(\Omega)$, we have the inclusion $\{u>\lambda\} \subset \{\cW_{\mu, r}>(\lambda-L_0)/C_{\cW}\}$. However, it is straightforward to see that
\begin{equation*}
\frac{\lambda-L_0}{C_{\cW}} > \frac{\lambda - \lambda/2}{C_{\cW}} \geq \frac{\lambda}{L}.
\end{equation*}
Therefore, we obtain $\{h(u)\neq 0\} \subset \{\cW_{\mu, r}>\lambda /L\}$. This together with $h \leq \lambda$ leads to the conclusion as
\begin{equation*}
\int_{\Omega}h(u)\varphi\dmu \leq \lambda  \mu\left(\left\{\cW_{\mu,r}>\frac{\lambda}{L}\right\} \right).
\end{equation*} 
\end{proof}

Finally, we provide the proof of Theorem~\ref{thm-sup-ren}. We adapt the proof strategy from \cite{Kil11super}. The primary difficulty lies in handling the singular part $\mu_s$ of the measure $\mu$, which is concentrated on the set where the Wolff potential diverges, namely $\{\cW_{\mu, 1} = \infty\}$; this is a consequence of Proposition~\ref{prop-mu-s} and the Wolff potential estimates in Lemma~\ref{lem-Wolff}. To overcome this, we need to isolate these singularities to focus on the regular part $\mu_0$.

\begin{proof}[Proof of Theorem~\ref{thm-sup-ren}]
Let us write $\mu=\mu[u]$. Let $\varphi \in C^\infty_c(\Omega)$ and let $h \in W^{1,\infty}(\mathbb{R}) \subset C^{0,1}_{\rm loc}(\R)$ be such that $h'$ has compact support. We may assume that $\supp \varphi$ is contained in a single component of $\Omega$. Take nested domains $\supp \varphi \Subset \Omega_4 \Subset \Omega_3 \Subset \Omega_2 \Subset \Omega_1 \Subset \Omega$. Since $u$ is locally bounded below, we may also assume that it is nonnegative in $\Omega_1$, by adding a constant to $u$ in $\Omega_1$ and replacing $h(t)$ by a translated test function if necessary.

By Proposition~\ref{prop-mu-s}, $\mu_s$ is concentrated on $\{u=\infty\}$. By inner regularity of measure, for any $\varepsilon>0$ we can choose a compact set $K_\varepsilon \subset \{u=\infty\} \cap \Omega_3$ such that
\begin{equation*}
\mu_s(\Omega_3 \setminus K_\varepsilon) = \mu_s((\{u=\infty\} \cap \Omega_3) \setminus K_\varepsilon) < \varepsilon.
\end{equation*}
If we define the restricted measure
\begin{equation*}
\mu_\varepsilon :=\mu\lfloor_{\Omega_3 \setminus K_\varepsilon},
\end{equation*}
then $\supp\mu_\varepsilon \subset \overline{\Omega}_3 \subset \Omega_2$ and $\mu_{\varepsilon}(E)\leq \mu_0(E \cap \overline{\Omega}_3)+\varepsilon$ for any Borel set $E$.

Let $k>0$ be such that $\supp h' \subset [-k, k]$. This implies that $h(u) = h(u_k)$, where $u_k:=\min\{u, k\}$. Since $u$ is lower semicontinuous, the sublevel set $\{u\leq k\}$ is closed. Thus, the distance between the compact set $K_\varepsilon$ and the disjoint closed set $\{u \leq k\}$ is strictly positive, allowing us to define
\begin{equation*}
r:= \tfrac{1}{2}\min\left\{\dist(\Omega_4, \partial\Omega_3), \dist(\Omega_3, \partial\Omega_2), \mathrm{dist}(\Omega_2,\partial\Omega_1), \dist(\Omega_1, \partial \Omega), \mathrm{dist}(K_\varepsilon,\{u\leq k\}), 1 \right\}>0.
\end{equation*}
Then
\begin{equation*}
S_{\varepsilon}:= \{x\in \mathbb{R}^n:\dist(x,K_{\varepsilon})\leq r\}
\end{equation*}
is compactly contained in $\{u>k\} \cap \Omega_2$. Take $\theta_{\varepsilon}\in C_c^{\infty}(\{u>k\} \cap \Omega_2)$ such that $ 0\leq \theta_\varepsilon \leq 1$ and $\theta_{\varepsilon}=1$ on $S_{\varepsilon}$. Notice that $h(u)=h(k)$ is a constant on the support of $\theta_{\varepsilon}$. 

By the choice of $\theta_\varepsilon$ and \eqref{eq-riesz}, we have
\begin{equation}\label{eq-theta-riesz}
\int_{\Omega}h(u)\varphi \theta_{\varepsilon}\dmu= h(k)\int_{\Omega}\varphi\theta_{\varepsilon}\dmu=h(k)\int_{\Omega}\cA(x,\nabla u)\cdot\nabla (\varphi\theta_{\varepsilon})\dx =\int_{\Omega}\cA(x,\nabla u)\cdot\nabla (h(u)\varphi\theta_{\varepsilon}) \dx.
\end{equation}
Our goal is hence to show that
\begin{equation}\label{eq-theta}
\lim_{\varepsilon\to0} \left|\int_{\Omega}h(u)\varphi(1-\theta_{\varepsilon})\dmu- \int_{\Omega}\cA(x,\nabla u)\cdot\nabla(h(u)\varphi(1-\theta_{\varepsilon}))\dx \right| =0,
\end{equation}
which eventually yields the result of the theorem.

Note that $\mu_\varepsilon$ is a nonnegative bounded Radon measure on $\Omega_3$. By (the proof of) Corollary~\ref{cor-DP}, there exists an $\cA$-superharmonic renormalized solution $w_\varepsilon$ to
\begin{equation*}
-\DIV\cA(x, \nabla w_\varepsilon)=\mu_{\varepsilon} \quad\text{in }\Omega_1
\end{equation*}
such that $T_\ell(w_\varepsilon) \in W^{1,G}_0(\Omega_1)$ for all $\ell>0$. Applying Lemma~\ref{lem-energy-lambda} (with $\Omega=\Omega_1$ and $\Omega'=\Omega_2$), we obtain a constant $L_1>0$ such that $w_\varepsilon \in \cS_{\mu_\varepsilon, r, L_1}(\Omega_1)$, and for every $\lambda>L_1$,
\begin{align}\label{eq-w-eps-energy}
\begin{split}
\int_{\Omega_1}G(|\nabla (\min\{w_\varepsilon, 2\lambda\}-\lambda)_+|) \dx
&\leq C \lambda\mu_\varepsilon\left(\left\{\cW_{\mu_\varepsilon,r}>\frac{\lambda}{L_1}\right\} \right) \\
&\leq C\lambda \left( \mu_0\left(\left\{\cW_{\mu,r}>\frac{\lambda}{L_1}\right\} \cap \overline{\Omega}_3 \right)+\varepsilon\right).
\end{split}
\end{align}
Since $\cW_{\mu_\varepsilon,r}(x)=\cW_{\mu,r}(x)$ for all $x \in \Omega_4 \setminus S_\varepsilon$, we have $w_\varepsilon \in \cS_{\mu, r, L_1}(\Omega_4 \setminus S_\varepsilon)$. 
Moreover, Lemma~\ref{lem-Wolff-S} shows that 
$u \in \cS_{\mu,r,L_2}(\Omega_4)$ for some $L_2>0$. If we set $L=\max\{L_1, L_2\}$, then we have
\begin{equation*}
u, w_\varepsilon \in \cS_{\mu, r, L}(\Omega_4 \setminus S_\varepsilon).
\end{equation*}

To prove \eqref{eq-theta}, we approximate each integral through a regularization procedure. For the first integral in \eqref{eq-theta}, we consider the Riesz measure $\mu_m:= \mu[u_m]$ associated with the truncations $u_m:=\min\{u, m\}$ for $m \in \mathbb{N}$. Note that $\mu_m \rightharpoonup \mu$ as $m \to \infty$. Indeed, for every $\eta\in C^\infty_c(\Omega)$,
\begin{equation*}
\int_\Omega\eta\,d\mu[u_m] = \int_\Omega \cA(x,\nabla u_m)\cdot\nabla\eta\dx = \int_{\{u<m\}}\cA(x,\nabla u)\cdot\nabla\eta\dx.
\end{equation*}
Since $|\cA(x,\nabla u)\cdot\nabla\eta|\leq c_2^{\cA} \|\nabla \eta\|_{\infty}g(|\nabla u|)$ and $g(|\nabla u|)\in L^1_{\rm loc}(\Omega)$ by \eqref{ass-op} and Lemma~\ref{lem-super-int}, the dominated convergence theorem gives
\begin{equation*}
\int_\Omega \eta \dmu[u_m] \to \int_\Omega \eta \dmu.
\end{equation*}
By approximations, this distributional convergence implies the weak convergence against $C_c(\Omega)$, i.e.,
\begin{equation}\label{eq-weak-conv}
\int_\Omega \eta \dmu[u_m] \to \int_\Omega \eta \dmu \quad\text{for all }\eta \in C_c(\Omega).
\end{equation}

Recall that $h(u)=h(u_k)$. Since $u_k$ lacks sufficient regularity to serve directly as a test function for this weak convergence, we need to approximate it. By Lemma~\ref{lem-cS-supersol} and Theorem~\ref{thm-quasicont}, $u_k \in W^{1,G}_{\mathrm{loc}}(\Omega)$ and is $G$-quasicontinuous in $\Omega$. Thus, we can find a sequence of smooth functions $\{u_{k, j}\}_{j}$ that converges to $u_k$ in $W^{1, G}(\Omega_1)$ and $G$-q.e.\ in $\Omega_1$ as $j \to \infty$. This, in particular, implies that $u_{k, j} \to u_k$ $\mu_0$-a.e.\ as $j \to \infty$.

We approximate the first integral in \eqref{eq-theta} by
\begin{equation}\label{eq-theta-kj}
\int_{\Omega}h(u_{k,j})\varphi(1-\theta_{\varepsilon}) \dmu_m,
\end{equation}
which converges to
\begin{equation*}
\int_{\Omega}h(u_{k,j})\varphi(1-\theta_{\varepsilon}) \dmu = \int_{\Omega}h(u_{k,j})\varphi(1-\theta_{\varepsilon}) \dmu_0 + \int_{\Omega}h(u_{k,j})\varphi(1-\theta_{\varepsilon}) \dmu_s
\end{equation*}
as $m \to \infty$, due to the weak convergence \eqref{eq-weak-conv}, since $h(u_{k, j}) \varphi (1-\theta_\varepsilon) \in C_c(\Omega)$. Since $u_{k, j} \to u_k$ $G$-q.e.\ in $\Omega_1$, $\supp \varphi \subset \Omega_1$, and $h(u_k)=h(u)$, the dominated convergence theorem guarantees that
\begin{equation*}
\lim_{j \to \infty} \int_{\Omega}h(u_{k,j})\varphi(1-\theta_{\varepsilon})\dmu_0 = \int_{\Omega}h(u)\varphi(1-\theta_{\varepsilon})\dmu_0.
\end{equation*}
For the integrals with respect to the singular measure $\mu_s$, the choice of $\theta_\varepsilon$ ensures that $1-\theta_\varepsilon$ vanishes on $K_\varepsilon$. Thus, we have the estimate
\begin{equation*}
\left| \int_{\Omega} (h(u_{k,j})-h(u))\varphi(1-\theta_{\varepsilon})\dmu_s \right| \leq 2\|h\|_{\infty}\|\varphi\|_{\infty} \mu_s(\Omega_3 \setminus K_\varepsilon) < 2\varepsilon \|h\|_{\infty}\|\varphi\|_{\infty}.
\end{equation*}
Therefore, we obtain
\begin{equation}\label{eq-sec4-35}
\limsup_{j\to\infty}\lim_{m\to\infty}\left|\int_{\Omega} h(u_{k,j})\varphi(1-\theta_{\varepsilon})\dmu_m-\int_{\Omega}h(u)\varphi(1-\theta_{\varepsilon})\dmu\right|\leq C \varepsilon,
\end{equation}
where $C>0$ is independent of $\varepsilon$.

To further investigate \eqref{eq-theta-kj}, we introduce the function
\begin{equation*}
	\psi_{\lambda}:=\frac{(\min\{w_{\varepsilon}, 2\lambda\}-\lambda)_+}{\lambda},
\end{equation*}
and split the integral in \eqref{eq-theta-kj} into two parts as
\begin{equation}\label{eq-sec4-36}
\int_{\Omega}h(u_{k,j})\varphi(1-\theta_{\varepsilon})\psi_{\lambda}\dmu_m+ \int_{\Omega}h(u_{k,j})\varphi(1-\theta_{\varepsilon})(1-\psi_{\lambda})\dmu_m.
\end{equation}
The first term in \eqref{eq-sec4-36} is estimated as
\begin{equation}\label{eq-sec4-37}
\left| \int_{\Omega}h(u_{k,j})\varphi(1-\theta_{\varepsilon})\psi_{\lambda}\dmu_m \right| \leq \|h\|_{\infty}\int_{\Omega}|\varphi|(1-\theta_{\varepsilon})\psi_{\lambda}\dmu_m.
\end{equation}
Recall from Lemma~\ref{lem-cS-supersol} that $u_m \in W^{1,G}(\Omega_1)$, which ensures that $\mu_m \in (W^{1,G}_0(\Omega_1))'$. Furthermore, since $w_\varepsilon$ is a renormalized solution, it follows that $\psi_\lambda \in W^{1, G}_{\mathrm{loc}}(\Omega)$. Consequently, we have $|\varphi| (1-\theta_\varepsilon)\psi_\lambda \in W^{1,G}_0(\Omega_1) \cap L^\infty(\Omega_1)$. Thus, by the definition of the Riesz measure $\mu_m$, we obtain
\begin{equation}\label{eq-sec4-38}
    \begin{split}
        \int_{\Omega}|\varphi|(1-\theta_{\varepsilon})\psi_\lambda\dmu_m 
        &=\int_{\Omega}\cA(x,\nabla u_m)\cdot\nabla (|\varphi|(1-\theta_{\varepsilon})\psi_{\lambda})\dx\\
        &= \int_{\Omega}\cA(x,\nabla u_m)\cdot (\nabla (|\varphi|(1-\theta_{\varepsilon}))\psi_{\lambda} + |\varphi|(1-\theta_\varepsilon)\nabla \psi_\lambda)\dx.
    \end{split}
\end{equation}

It follows from Lemma~\ref{lem-cap-pot} that  $\widetilde{\mathrm{cap}}_{\cA}( \{ \psi_{\lambda}>0\} \cap \overline{\Omega}_4, \Omega_1) \to 0$  as $\lambda \to \infty$. Hence, by Lemmas~\ref{lem-cap-dual} and \ref{lem-cap-measure}, $|\{\psi_\lambda>0\} \cap \overline{\Omega}_4| \to 0$ as $\lambda \to \infty$. Since $g(|\nabla u|) \in L^1_{\mathrm{loc}}(\Omega)$ by Lemma~\ref{lem-super-int},  by using \eqref{ass-op} and $\supp\varphi \Subset \Omega_4$, we have
\begin{equation}\label{eq-sec4-312}
\begin{split}
& \left| \int_{\Omega}\cA(x,\nabla u_m)\cdot \nabla (|\varphi|(1-\theta_\varepsilon)) \,\psi_\lambda \dx \right|   \leq c_2^{\cA} \|\nabla (|\varphi|(1-\theta_\varepsilon))\|_\infty \int_{\Omega_4\cap \{w_\varepsilon>\lambda\}} g(|\nabla u|) \dx \to 0
\end{split}
\end{equation}
as $\lambda \to \infty$. On the other hand, since $\supp\varphi \Subset \Omega_4$ and $1-\theta_\varepsilon=0$ on $S_\varepsilon$, we have
\begin{equation*}
\left| \int_{\Omega}\cA(x,\nabla u_m)\cdot \nabla \psi_\lambda \, |\varphi|(1-\theta_{\varepsilon}) \dx\right| \leq \|\varphi\|_\infty \int_{\Omega_4 \setminus S_\varepsilon} |\cA(x, \nabla u_m) \cdot \nabla \psi_\lambda| \dx.
\end{equation*}
Since $u, w_\varepsilon \in \cS_{\mu,r,L}(\Omega_4\setminus S_{\varepsilon})$, we have the bound
\begin{equation}\label{eq-sec4-39}
    u\leq L+C_{\cW}\cW_{\mu,r}\leq L+C_{\cW} (r+C_{\cW}w_\varepsilon) <L+C_{\cW}^2(1+2\lambda) \quad \text{in }\{w_\varepsilon<2\lambda\}\cap(\Omega_4 \setminus S_{\varepsilon}),
\end{equation}
which implies that $u\leq c_0\lambda$ for some $c_0>0$ in the intersection of the support of $\nabla \psi_{\lambda}$ and $\Omega_4 \setminus S_{\varepsilon}$ for all sufficiently large $\lambda$. In this region, $u_m=u_{c_0\lambda}$ for all $m\geq c_0\lambda$. By \eqref{ass-op} and Lemma~\ref{lem-barg}(iii), for $\delta\in (0,1)$ it follows 
\begin{align}\label{eq-sec4-310}
    \begin{split}
    \int_{\Omega_4\setminus S_{\varepsilon}} &|\cA(x,\nabla u_m)\cdot\nabla \psi_{\lambda}| \dx  = \int_{\Omega_4 \setminus S_{\varepsilon}} |\cA(x,\nabla u_{c_0\lambda}) \cdot\nabla \psi_{\lambda}| \dx \\
    &\quad \leq \frac{C\delta}{\lambda} \int_{\Omega_4 \cap \{u\leq c_0\lambda\}} G(|\nabla u|) \dx + \frac{C\delta^{1-q}}{\lambda} \int_{\Omega_1} G(|\nabla (\min\{w_{\varepsilon}, 2\lambda\}-\lambda)_+|)\dx.
    \end{split}
\end{align}
By \eqref{eq-grad-k} applied in $\Omega_4$, we have
\begin{equation*}
\int_{\Omega_4 \cap\{u\leq c_0 \lambda\}}G(|\nabla u|)\dx \leq C\lambda.
\end{equation*}
Moreover, by \eqref{eq-w-eps-energy}, we have
\begin{equation*}
\int_{\Omega_1}G(|\nabla (\min\{w_{\varepsilon}, 2\lambda\}-\lambda)_+|)\dx \leq C\lambda\left( \mu_0\left(\left\{\cW_{\mu,r}>\frac{\lambda}{L}\right\}\cap \overline{\Omega}_3 \right)+\varepsilon\right)
\end{equation*}
for every $\lambda>L$, where $C$ is independent of $\lambda$. Taking $\delta=\varepsilon^{1/q}$, we obtain from \eqref{eq-sec4-310} that
\begin{equation*}
\int_{\Omega_4\setminus S_{\varepsilon}} |\cA(x,\nabla u_m)\cdot\nabla \psi_{\lambda}| \dx \leq C\varepsilon^{1/q} + C\varepsilon^{-\frac{q-1}{q}} \mu_0\left(\left\{\cW_{\mu,r}>\frac{\lambda}{L}\right\}\cap \overline{\Omega}_3 \right).
\end{equation*}

Since $\{\cW_{\mu,r}=\infty\}\cap \overline{\Omega}_3$ is contained in $\{u=\infty\}$ by the Wolff potential estimates, Theorem~\ref{th-1} gives $C_G(\{\cW_{\mu,r}=\infty\}\cap \overline{\Omega}_3) \leq C_G(\{u=\infty\})=0$. Moreover, since $\mu_0 \ll C_G$ and the sets $\{\cW_{\mu,r}>\frac{\lambda}{L}\}\cap \overline{\Omega}_3$ decrease to $\{\cW_{\mu,r}=\infty\}\cap \overline{\Omega}_3$, we have
\begin{equation}\label{eq-sec4-3101}
\limsup_{\lambda\to\infty} \limsup_{m\to\infty} \left| \int_{\Omega_1} \cA(x,\nabla u_m)\cdot\nabla\psi_\lambda\, |\varphi|(1-\theta_\varepsilon)\,dx \right| \le C\varepsilon^{1/q}.
\end{equation}
Combining \eqref{eq-sec4-37}, \eqref{eq-sec4-38}, \eqref{eq-sec4-312}, and \eqref{eq-sec4-3101}, we get
\begin{equation*}
\limsup_{\lambda\to\infty}\limsup_{j\to\infty}\limsup_{m\to\infty} \left| \int_\Omega h(u_{k,j})\varphi(1-\theta_\varepsilon)\psi_\lambda\,d\mu_m \right| \le C\varepsilon^{1/q}.
\end{equation*}
Hence by~\eqref{eq-sec4-35} and \eqref{eq-sec4-36}, for $C$  independent of $\varepsilon$, it holds
\begin{equation}\label{eq-sec4-314}
\limsup_{\lambda\to\infty}\limsup_{j\to \infty}\limsup_{m\to\infty} \left|\int_{\Omega} h(u_{k,j})\varphi(1-\theta_{\varepsilon})(1-\psi_{\lambda})\dmu_m-\int_{\Omega}h(u)\varphi(1-\theta_{\varepsilon})\dmu\right|\leq C(\varepsilon + \varepsilon^{1/q}).
\end{equation}

Next, we consider the first term on the left-hand side in \eqref{eq-sec4-314}. By~\eqref{eq-sec4-39} we have that $\varphi(1-\psi_{\lambda})(1-\theta_{\varepsilon})$ vanishes outside $\{u\leq c_0\lambda\}\cap (\Omega_4\setminus S_{\varepsilon})$ for sufficiently large $\lambda$. Hence, for $m\geq c_0\lambda$, it holds
\begin{equation*}
    \begin{split}
      \int_{\Omega} &h(u_{k,j})(1-\psi_{\lambda})(1-\theta_{\varepsilon})\varphi\dmu_m \\
        &\quad=\int_{\Omega_4 \setminus S_{\varepsilon}}\cA(x,\nabla u_{c_0\lambda})\cdot\nabla \varphi h(u_{k,j})(1-\theta_{\varepsilon})(1-\psi_{\lambda})\dx\\
        &\quad\quad+\int_{\Omega_4 \setminus S_{\varepsilon}}\cA(x,\nabla u_{c_0\lambda})\cdot\nabla(1-\theta_{\varepsilon})(1-\psi_{\lambda})\varphi h(u_{k,j})\dx\\
        &\quad\quad+\int_{\Omega_4 \setminus S_{\varepsilon}}\cA(x,\nabla u_{c_0\lambda})\cdot\nabla h(u_{k,j}) (1-\theta_{\varepsilon})(1-\psi_{\lambda})\varphi \dx\\
       &\quad\quad-\int_{\Omega_4 \setminus S_{\varepsilon}}\cA(x,\nabla u_{c_0\lambda})\cdot\nabla \psi_{\lambda} h(u_{k,j}) (1-\theta_{\varepsilon})\varphi \dx.
    \end{split}
\end{equation*}

We now take the limits of the above expression with respect to $m$, $j$, and $\lambda$ in sequence, starting with $m\to\infty$, followed by $j\to \infty$ and finally $\lambda\to \infty$.

The limits of the first two terms follow by the dominated convergence theorem, yielding
\begin{equation*}
    \begin{split}
        &\lim_{\lambda\to\infty}\lim_{j\to \infty}\lim_{m\to\infty}\int_{\Omega_4 \setminus S_{\varepsilon}}\cA(x,\nabla u_{c_0\lambda})\cdot \nabla \varphi h(u_{k,j})(1-\theta_{\varepsilon})(1-\psi_{\lambda})\dx =\int_{\Omega}\cA(x,\nabla u)\cdot\nabla \varphi h(u)(1-\theta_{\varepsilon})\dx
    \end{split}
\end{equation*}
and
\begin{equation*}
    \begin{split}
        &\lim_{\lambda\to\infty}\lim_{j\to \infty}\lim_{m\to\infty}\int_{\Omega_4 \setminus S_{\varepsilon}}\cA(x,\nabla u_{c_0\lambda})\cdot\nabla (1-\theta_{\varepsilon}) \varphi h(u_{k,j})(1-\psi_{\lambda})\dx =\int_{\Omega}\cA(x,\nabla u)\cdot\nabla (1-\theta_{\varepsilon}) \varphi h(u)\dx.
    \end{split}
\end{equation*}

The limit of the third term is justified by the weak convergence of $\nabla h(u_{k,j})$ to $\nabla h(u)$ together with the dominated convergence theorem, giving
\begin{equation*}
    \begin{split}
        &\lim_{\lambda\to\infty}\lim_{j\to \infty}\lim_{m\to\infty}\int_{\Omega_4 \setminus S_{\varepsilon}}\cA(x,\nabla u_{c_0\lambda})\cdot\nabla h(u_{k,j})  (1-\theta_{\varepsilon}) \varphi (1-\psi_{\lambda})\dx =\int_{\Omega}\cA(x,\nabla u)\cdot\nabla h(u) (1-\theta_{\varepsilon}) \varphi \dx.
    \end{split}
\end{equation*}

For the fourth term, we have
\begin{equation*}
\left| \int_{\Omega_4 \setminus S_{\varepsilon}}\cA(x,\nabla u_{c_0\lambda})\cdot\nabla \psi_{\lambda} h(u_{k,j})  (1-\theta_{\varepsilon}) \varphi\dx \right| \leq \|h\|_\infty\|\varphi\|_\infty \int_{\Omega_4\setminus S_\varepsilon} |\cA(x,\nabla u_{c_0\lambda})\cdot\nabla\psi_\lambda| \dx.
\end{equation*}
By the estimate leading to \eqref{eq-sec4-3101}, with $m=c_0\lambda$, we therefore obtain
\begin{equation*}
\limsup_{\lambda\to\infty}\limsup_{j\to\infty} \left| \int_{\Omega_4\setminus S_\varepsilon} \cA(x,\nabla u_{c_0\lambda})\cdot\nabla\psi_\lambda\, h(u_{k,j})(1-\theta_\varepsilon)\varphi \dx \right| \leq C\varepsilon^{1/q}.
\end{equation*}

Inserting the above estimates into \eqref{eq-sec4-314}, we infer that
\begin{equation*}
    \left| \int_{\Omega}h(u)(1-\theta_{\varepsilon})\varphi\dmu-\int_{\Omega}\cA(x,\nabla u)\cdot\nabla (h(u)(1-\theta_{\varepsilon})\varphi)\dx\right|\leq C(\varepsilon+\varepsilon^{1/q}),
\end{equation*}
which is precisely \eqref{eq-theta}. This together with \eqref{eq-theta-riesz} yields
\begin{equation*}
    \left|\int_{\Omega}h(u)\varphi\dmu-\int_{\Omega}\cA(x,\nabla u)\cdot\nabla (h(u)\varphi)\dx
    \right|\leq C(\varepsilon+\varepsilon^{1/q}).
\end{equation*}
Letting $\varepsilon\to0$, we obtain
\begin{equation*}
\int_\Omega\cA(x,\nabla u)\cdot\nabla(h(u)\varphi)\dx = \int_\Omega h(u)\varphi\dmu=\int_\Omega h(u)\varphi\dmu_0+h(\infty)\int_\Omega\varphi\dmu_s,
\end{equation*}
where the last equality holds due to Corollary~\ref{cor-rhs-identification}. Hence $u$ satisfies the renormalized formulation \eqref{eq-renormalized-intro}. Together with $T_k(u)\in W^{1,G}_{\rm loc}(\Omega)$ and $g(|\nabla u|)\in L^1_{\rm loc}(\Omega)$ from Lemma~\ref{lem-super-int}, this proves that $u$ is a renormalized solution.
\end{proof}

\subsection*{Acknowledgment}
IC acknowledges support of Narodowe Centrum Nauki (grant 2019/34/E/ST1/00120). MK is supported 
by the National Research Foundation of Korea (NRF) grant funded by the Korean government (MSIT) (RS-2026-25481961). YL and CZ are supported by  the National Natural Science Foundation of China (No.\ 12471128).

\subsection*{Conflict of interest} The authors declare that there is no conflict of interest. We also declare that this manuscript has no associated data.

\subsection*{Data availability} Data sharing is not applicable to this article as no datasets were generated or analyzed during the current study.

    
\end{document}